\CheckCommand*\refstepcounter[1]{\stepcounter{#1}%
    \protected@edef\@currentlabel
       {\csname p@#1\endcsname\csname the#1\endcsname}%
}
\renewcommand*\refstepcounter[1]{\stepcounter{#1}%
  \protected@edef\@currentlabel
    {\csname p@#1\expandafter\endcsname\csname the#1\endcsname}%
}
\def\labelformat#1{\expandafter\def\csname p@#1\endcsname##1}
\DeclareRobustCommand\Ref[1]{\protected@edef\@tempa{\ref{#1}}%
   \expandafter\MakeUppercase\@tempa
}
\newcommand{\numberlike}[2]{%
    \expandafter\def\csname c@#1\endcsname{%
        \expandafter\csname c@#2\endcsname}%
}
\def\DefaultNumberWithin{subsection}
\theoremstyle{plain}
\newtheorem{Lemma}{Lemma}
   \numberwithin{Lemma}{\DefaultNumberWithin}
   \numberwithin{Claim}{\DefaultNumberWithin}
\newtheorem{Theorem}{Theorem}
   \numberwithin{Theorem}{\DefaultNumberWithin}
\newtheorem{Corollary}{Corollary}
   \numberwithin{Corollary}{\DefaultNumberWithin}
\newtheorem{Proposition}{Proposition}
   \numberwithin{Proposition}{\DefaultNumberWithin}
\theoremstyle{definition}
\newtheorem{Definition}{Definition}
   \numberwithin{Definition}{\DefaultNumberWithin}
\theoremstyle{remark}
\newtheorem{Remark}{Remark}
   \numberwithin{Remark}{\DefaultNumberWithin}
\newtheorem{Case}{Case}
   \numberwithin{Case}{Lemma}
   \numberwithin{Step}{Lemma}
\let\RealQEDSymbol\qedsymbol
\newcommand{\MyQEDSymbol}{{\huge\ensuremath{\circ}}}
\theoremstyle{definition}
\newtheorem{MyExample}{Example}
   \numberwithin{MyExample}{\DefaultNumberWithin}
\newenvironment{Example}[1][]%
  {\renewcommand{\qedsymbol}{\MyQEDSymbol}%
    \begin{MyExample}[#1]\pushQED{\qed}}%
  {\popQED\end{MyExample}%
    \renewcommand{\qedsymbol}{\RealQEDSymbol}}
\numberwithin{equation}{section}
\newcommand{\convexset}[1]{%
\pspolygon[linestyle=solid,linecolor=#1,fillstyle=hlines,linewidth=0.02,
hatchcolor=#1, hatchwidth=0.01, hatchsep=0.3]}
\newif\ifhrule\hrulefalse
\newcommand{\defn}[1]{\textit{\textbf{#1}}}
\renewcommand{\O}{\varnothing} 
\newcommand{\acb}{(\alpha\circ\beta)}
\newcommand{\bca}{(\beta\circ\alpha)}
\newcommand{\Gx}[1]{\Gamma(#1)\cup\xi(#1)}
\newcommand{\sse}{\subseteq}
\newcommand{\sseq}{\sse}
\newcommand{\res}{\operatorname{res}}
\newcommand{\con}{\operatorname{con}}
\newcommand{\bottom}{\varepsilon} 
\newcommand{\mb}{\mathbb}
\newcommand{\m}{\backslash} 
\newcommand{\F}{\mathscr F} 
\newcommand{\OM}{\mathscr O} 
\newcommand{\OIG}{\mathcal G} 
\newcommand{\TG}{\mathsf T} 
\newcommand{\UOM}{\overline \OIG} 
\newcommand{\supp}{\operatorname{supp}} 
\newcommand{\conv}{\operatorname{conv}} 
\newcommand{\restd}{|_{\Gamma(\O)}} 
\newcommand{\SeparationSet}{S} 
\newcommand{\ext}{\operatorname{ext}} 
\newcommand{\uig}{upper interval greedoid\xspace}
\newcommand{\uigs}{upper interval greedoids\xspace}
\newcommand{\RH}{\Theta}
\newcommand{\hOIG}{\hat\OIG} 
\newcommand{\hzero}{\hat0}
\newcommand{\hone}{\hat1}
\newcommand{\pt}{\mathcal T}
\newcommand{\Flat}[1]{\widehat#1}
\renewcommand{\emptyset}{\O}
\newcommand{\Arrangement}{\mathcal A}
\newcommand{\IntersectionLattice}{\mathcal L}
\begin{document}

\title{Oriented Interval Greedoids}
\author{Franco Saliola \and Hugh Thomas}
\email{saliola@gmail.com, hugh@math.unb.ca}

\begin{abstract}
We propose a definition of an \defn{oriented interval greedoid} that
simultaneously generalizes the notion of an oriented matroid and the
construction on antimatroids introduced by L.~J. Billera, S.~K. Hsiao,
and J.~S. Provan in \emph{Enumeration in convex geometries and
associated polytopal subdivisions of spheres} [Discrete Comput. Geom.
\textbf{39} (2008), no.~1-3, 123--137]. As for of oriented matroids,
associated to each oriented interval greedoid is a spherical
simplicial complex whose face enumeration depends only on the
underlying interval greedoid.
\end{abstract}

\maketitle

\begin{spacing}{0.6}
\tableofcontents
\end{spacing}

\section{Introduction}

Consider a hyperplane arrangement in $\mathbb R^n$, with all of the 
hyperplanes containing the origin.  Intersecting this arrangement 
with a sphere centred at
the origin, one obtains a regular cell decomposition of
the sphere.  Taking the barycentric subdivision, one obtains a spherical
simplicial complex.  

Oriented matroids are a generalization of real 
hyperplane arrangements; the Sphericity Theorem of Folkman and Lawrence
\cite{FolkmanLawrence1978} for oriented
matroids says that any oriented matroid induces a certain
regular 
cell decomposition of the sphere (and thus also a spherical simplicial
complex) just as hyperplane arrangements do.  (Terms not defined in
the introduction will be defined later in the paper.)

Billera, Hsiao, and Provan showed in \cite{BilleraHsiaoProvan2008} 
that there is also a certain spherical
simplicial complex
associated to a {\it convex geometry} or {\it antimatroid}.  
These simplicial complexes are not a 
special case of the spheres arising from oriented matroids, but they are
similar in some respects (see \ref{subsec:face} in particular).  

The goal of this paper is to provide a general theory which includes 
both of these as special cases.  Following a suggestion in 
\cite{BilleraHsiaoProvan2008} 
(attributed to Anders Bj\"orner), our approach
is via the notion of interval greedoid.  The precise definition appears
in the next section, but for now, it suffices to know that interval 
greedoids are a common generalization of matroids and antimatroids.  

In this paper, we define the notion of an oriented interval greedoid.  
This is an additional structure on top of the interval greedoid structure.
For a given interval greedoid, there may be no such additional
structure possible, or one,
or more than one.  

For an interval greedoid which is a matroid, an oriented interval
greedoid structure amounts to (the collection of covectors defining) 
an oriented matroid.  In contrast, 
if the underlying interval greedoid
is an antimatroid, it always admits exactly one oriented interval greedoid 
structure.  

Our main result is 
an analogue of the Sphericity Theorem for oriented interval greedoids,
providing a CW-sphere and (by barycentric subdivision) a 
spherical simplicial complex associated to any oriented interval greedoid.
Our proof is based on the proof of the Sphericity Theorem given in 
\cite{OrientedMatroids1993}.  
The spherical simplicial complex associated to the unique 
oriented structure for an 
antimatroid, coincides with that constructed by 
\cite{BilleraHsiaoProvan2008}.  

Along the way, we 
give versions for oriented interval greedoids of a number of 
constructions for oriented matroids, such as restriction and contraction.

\section{Interval Greedoids}

Much of the material in this section, except for \ref{sss:MapsMuAndXi}, is
drawn from \cite{BjornerZiegler1992:a} or \cite{KorteLovaszSchrader1991}.
The material in \ref{sss:MapsMuAndXi} and, by extension, the treatment in
\ref{sss:LatticeOfFlats}, seems to be new.

\subsection{Definition}
Let $E$ denote a finite set and $\F$ a set of subsets of $E$. An
\defn{interval greedoid} is a pair $(E,\F)$ satisfying the following
properties for all $X,Y,Z\in\F$:
\begin{itemize}
\item[(IG1)] 
If $X\neq\O$, then there exists an $x \in X$ such that $X - x \in \F$.
\item[(IG2)] 
If $|X| > |Y|$, then there exists an $x \in X \m Y$ such that $Y \cup
x \in \F$.
\item[(IG3)] 
If $X \subseteq Y \subseteq Z$ and $e \in E \m Z$ with $X \cup e \in
\F$ and $Z \cup e \in \F$, then $Y \cup e \in \F$.
\end{itemize}

The set $E$ is called the \defn{ground set} of the interval greedoid
$(E,\F)$. Elements of $\F$ are called the \defn{feasible sets} of
$(E,\F)$. If $\F$ is a nonempty collection of subsets of $E$
satisfying (IG1), then $\F$ is said to be an \defn{accessible set
system}. A \defn{greedoid} is a pair $(E,\F)$ that satisfies (IG1) and
(IG2). In the literature, (IG3) is often called the \defn{interval
property}.

In the next few sections we present several examples of interval
greedoids.

\subsection{Example: Matroids (\emph{Lower} Interval Greedoids)}
A \defn{matroid} is a pair $(E,\mathscr I)$ where $E$ is a finite set
and $\mathscr I$ is a collection of subsets of $E$ satisfying the
following two properties:
\begin{itemize}
\item[(M1)] 
If $X \in \mathscr I$ and $Y \subseteq X$, then $Y \in \mathscr I$.
\item[(M2)] 
For all $X,Y \in \mathscr I$, if $|X| > |Y|$, then there exists an $x
\in X \m Y$ such that $Y \cup x \in \mathscr I$.
\end{itemize}
Since (M2) is (IG2) and (M1) is a strengthing of (IG1) that implies the
interval property (IG3), a matroid
$(E,\mathscr I)$ is an interval greedoid.  Conversely, any greedoid
$(E,\F)$ satisying the following strengthening of (IG3) is a matroid.
\begin{itemize}
\item[(LIP)] Suppose $X,Y \in \F$ with $X \subseteq Y$. 
If $e \in E \m Y$ and $Y \cup e \in \F$, then $X \cup e \in \F$.
\end{itemize}
The above is called the \defn{interval property without lower bounds},
so a matroid is a \defn{lower interval greedoid}.

\begin{Example}[Vector matroids]\label{x:VectorMatroid}
Let $V=\mathbb R^2$, $\vec x=(-3,1)$, $\vec y=(2,1)$ and $\vec
z=(4,1)$. See \ref{f:VectorMatroid}. 
Let $\mathscr I$ be the collection of subsets of $E=\{\vec x,\vec
y,\vec z\}$ that consist of linearly independent vectors. That is,
\begin{gather*}
\mathscr I = 
\Big\{ \O, \{\vec x\}, \{\vec y\}, \{\vec z\},
\{\vec x,\vec y\}, \{\vec x,\vec z\}, \{\vec y,\vec z\}
\Big\}.
\end{gather*}
Then $(E,\mathscr I)$ is a matroid.
\begin{figure}[htb!]
\centering
\begin{pspicture}(3,2)(-0.5,0)
\psgrid[gridcolor=gray,subgriddiv=1,griddots=10,gridlabels=6pt](-3,0)(4,1)
\psline{->}(-3,1)
\psline{->}(2,1)
\psline{->}(4,1)
\uput[60](-3,1){$\vec x$}
\uput[60](2,1){$\vec y$}
\uput[60](4,1){$\vec z$}
\end{pspicture}
\caption{
}
\label{f:VectorMatroid}
\end{figure}
\end{Example}

\subsection{Example: Antimatroids (\emph{Upper} Interval Greedoids)}
\label{ss:UIG}

Another class of interval greedoids arise from convex geometries.

\subsubsection{Convex geometries}
\label{sss:ConvexGeometry}
Just as matroids can be viewed as an abstraction of linear
independence of vectors in $\mathbb R^n$, convex geometries can be
viewed as an abstraction of convexity of vectors in $\mathbb R^n$.  In
the following, think of $E$ as a finite subset of $\mathbb R^n$ and
$\tau$ as the convex hull operator: $\tau(A) = \operatorname{conv}(A)
\cap E$ for $A \subseteq E$.

A \defn{convex geometry} is a pair $(E,\tau)$, where $E$ is a finite
set and $\tau: 2^E \to 2^E$ is an increasing, monotone and idempotent
function, satisfying the following \defn{anti-exchange} axiom.
\begin{itemize}
\item[(AE)] If $x,y \not\in \tau(X), x \neq y$, and $y \in \tau(X \cup
x)$, then $x \not\in \tau(X \cup y)$.
\end{itemize}

The subsets $A \subseteq E$ satisfying $\tau(A) = A$ are called
\defn{closed sets} of the convex geometry. The \defn{extreme points}
$\ext(A)$ of $A \subseteq E$ are the points $x \in A$ satisfying $x
\not\in \tau(A \m x)$. The extreme points form a \emph{minimal
generating set} for the closed sets: 
if $X\subseteq E$ is a closed set, then 
$X = \tau(\ext(X))$,
and $\ext(X)\subseteq Y$ for all $Y\subseteq E$
satisfying $\tau(Y)=X$
(\cite[Proposition 8.7.2]{BjornerZiegler1992:a} or
\cite[Theorem III.1.1]{KorteLovaszSchrader1991}).

\begin{Example}[Convex geometries from convexity]
\label{x:ConvexGeometryFromConvexity}
The canonical example of a convex geometry is a finite subset $E
\subseteq \mathbb R^n$ with $\tau(A) = \operatorname{conv}(A) \cap E$,
where $\conv(A)$ is the convex hull of the points in
$A$. Then the extreme points of $A$ are precisely the extreme points
of the convex hull of $A$.
\end{Example}

\subsubsection{Antimatroids, or upper interval greedoids}
\label{sss:UIG}
If $(E,\tau)$ is a convex geometry, then the complements of the closed
sets of $E$
\begin{gather*}
\F = \{ E \m \tau(A) : A \subseteq E \}
\end{gather*}
are the feasible sets of an interval greedoid on the ground set $E$.
Moreover, $(E,\F)$ satisfies the following \defn{interval property
without upper bounds}.
\begin{itemize}
\item[(UIP)] Suppose $X,Y \in \F$ with $X \subseteq Y$. 
If $e \in E \m Y$ and $X \cup e \in \F$, then $Y \cup e \in \F$.
\end{itemize}

If $(E,\F)$ is a greedoid satisfying (UIP), then it is said to be an
\defn{\uig,} or an \defn{antimatroid}.
All \uigs arise from convex geometries:
if $(E,\F)$ is an \uig, then the complements of
the feasible sets are the closed sets of the convex geometry
$(E,\tau)$, where $\tau$ is defined for $X\subseteq E$ by
\begin{gather*}
\tau(X) = 
\bigcap_{X\subseteq Y\subseteq E \atop E\m Y \in \F} Y.
\end{gather*}
In other words, $\tau(X)$ is the smallest set in $\F^c=\{E\m Y:
Y\in\F\}$ containing $X$. 
For a proof of this result, see
\cite[Theorem III.1.3]{KorteLovaszSchrader1991} 
or
\cite[Proposition 8.7.3]{BjornerZiegler1992:a}.

\begin{Example}[Antimatroid from three colinear points]
\label{x:ThreeColinearPoints}
Let $x,y,z$ be three colinear points in the plane, $y$ between $x$ and $z$,
and consider the
convex geometry with closure operator $\tau(X)=\conv(X)\cap\{x,y,z\}$
(see \ref{x:ConvexGeometryFromConvexity}).
The closed sets are the subsets
\begin{gather*}
\O,\{x\},\{y\},\{z\},\{x,y\},\{y,z\},\{x,y,z\}.
\end{gather*}
Then $(E,\F)$ is an \uig,
where $E=\{x,y,z\}$ and $\F$ is
\begin{gather*}
\F = 
\Big\{ 
\{x,y,z\}, \{y,z\}, \{x,z\}, \{x,y\}, \{z\}, \{x\}, \O
\Big\}. \qedhere
\end{gather*}
\end{Example}

\begin{Remark}
Upper interval greedoids have been studied under several different
names including antimatroid, APS-structures, discs, and shelling
structures. Some care is required in reading the literature, as some
authors have used the term antimatroid for a convex geometry. 
By antimatroid, we will always mean an \uig.
\end{Remark}

\subsection{Example: Interval greedoids from semimodular lattices}

Let $L$ be a finite lattice. $L$ is said to be \defn{(lower)
semimodular} if it has the following property for all $x,y\in L$: if
$x \lessdot z$ and $y \lessdot z$ for some $z \in L$, then $x\wedge y
\lessdot x$ and $x\wedge y \lessdot y$. An element $e\in L$ such 
that $e\ne \hat 1$ is called
\defn{meet-irreducible} if $e = x \wedge y$ implies $x = e$ or $y =
e$. 

\begin{Proposition}
\label{p:IGsFromSemimodularLattices}
Suppose $L$ is a finite lower semimodular lattice.
Let $E$ be the set of \emph{meet-irreducible} elements of $L$,
and let 
\begin{align*}
\F = \big\{
\{e_1,e_2,\ldots,e_k\}\subseteq E:
\hat1 \gtrdot e_1 \gtrdot (e_1\wedge e_2)
\gtrdot\cdots\gtrdot (e_1\wedge e_2 \wedge \cdots \wedge e_k)
\big\}.
\end{align*}
Then $(E,\F)$ is an interval greedoid.
\end{Proposition}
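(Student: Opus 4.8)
The plan is to verify the three axioms (IG1)--(IG3) directly, using the rank function of $L$ together with a single covering lemma. I first invoke the classical fact that a finite lower semimodular lattice is graded; write $r$ for its rank function and $n=r(\hat1)$. Lower semimodularity then yields the supermodular rank inequality $r(x\wedge y)+r(x\vee y)\ge r(x)+r(y)$ for all $x,y\in L$, and this is the only quantitative input I will need. I read the definition of $\F$ as follows: a set $S\sse E$ is feasible precisely when its elements can be listed as $e_1,\dots,e_k$ so that the partial meets $m_0=\hat1$ and $m_i=m_{i-1}\wedge e_i$ satisfy $m_{i-1}\gtrdot m_i$ for every $i$. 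Since meets are order independent, the endpoint $m_k=\bigwedge S$ depends only on $S$, and saturation gives $r(m_i)=n-i$; in particular the listed elements are automatically distinct and $r(\bigwedge S)=n-|S|$. The reverse implication is not needed, so I keep this chain description as the working definition of feasibility.

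The engine of the proof is the following covering lemma, valid in any finite lower semimodular lattice: if $b\le a$, $a\wedge e\lessdot a$, and $b\not\le e$, then $b\wedge e\lessdot b$. To prove it, note that $b\not\le e$ gives $b\wedge e<b$, while $b\le a$ gives $b\wedge(a\wedge e)=b\wedge e$ and $b\vee(a\wedge e)\le a$; supermodularity applied to $b$ and $a\wedge e$ then yields $r(b\wedge e)\ge r(b)+r(a\wedge e)-r(b\vee(a\wedge e))\ge r(b)+(r(a)-1)-r(a)=r(b)-1$, which together with $b\wedge e<b$ forces $b\wedge e\lessdot b$. (Note that $e$ need not be meet irreducible here.)

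With this lemma the three axioms are short. For (IG1), deleting the last letter $e_k$ of a feasible listing leaves the truncated chain $\hat1\gtrdot m_1\gtrdot\cdots\gtrdot m_{k-1}$ saturated, so $S-e_k$ is feasible. For (IG2), let $S,T$ be feasible with $|S|=k>j=|T|$, and set $p=\bigwedge T$. Since $r(p)=n-j>n-k=r(\bigwedge S)$, we cannot have $\bigwedge S\ge p$, so along a feasible chain $\hat1=m_0\gtrdot\cdots\gtrdot m_k=\bigwedge S$ for $S$ there is a first index $i$ with $m_{i-1}\ge p$ but $m_i\not\ge p$. Then $p\not\le e_i$ (otherwise $p\le m_{i-1}\wedge e_i=m_i$), whence $e_i\notin T$ (since $p\le t$ for all $t\in T$). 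Applying the covering lemma with $a=m_{i-1}$, $b=p$, $e=e_i$ — legitimate because $a\wedge e_i=m_i\lessdot a$ and $p\not\le e_i$ — gives $p\wedge e_i\lessdot p$, so appending $e_i$ to a feasible listing of $T$ exhibits $T\cup e_i$ as feasible with $e_i\in S\m T$. Finally, for (IG3) suppose $X\sse Y\sse Z$ are feasible, $e\in E\m Z$, and $X\cup e$ and $Z\cup e$ are feasible. Writing $a=\bigwedge X\ge b=\bigwedge Y\ge c=\bigwedge Z$, the rank count above turns feasibility of $X\cup e$ and $Z\cup e$ into $a\wedge e\lessdot a$ and $c\wedge e\lessdot c$; in particular $c\not\le e$, hence $b\not\le e$. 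The covering lemma applied to $a\ge b$ then gives $b\wedge e\lessdot b$, and appending $e$ to a feasible listing of $Y$ proves $Y\cup e$ feasible.

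The substantive step — the one I expect to be the main obstacle — is the exchange axiom (IG2), where one must simultaneously locate an exchangeable element of $S$ and certify that adjoining it to $T$ drops the meet by a single cover. Isolating the covering lemma is what makes this, and at the same time the interval property (IG3), routine, so the real work concentrates in that one lemma; the only external ingredient is the classical gradedness of finite lower semimodular lattices, which underwrites both the rank function and the supermodular inequality. It is worth noting that meet irreducibility of the elements of $E$ is never actually invoked in verifying the axioms — it serves only to pin down the canonical ground set.
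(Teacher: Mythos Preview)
Your argument is correct. The covering lemma is exactly the right isolation: once you have it, (IG1) is trivial, (IG2) reduces to locating along a saturated chain for $S$ the first step that drops below $\bigwedge T$, and (IG3) is a direct application with $a=\bigwedge X$, $b=\bigwedge Y$. Your use of the rank identity $r(\bigwedge S)=n-|S|$ for feasible $S$ to convert ``$X\cup e$ feasible'' into ``$(\bigwedge X)\wedge e\lessdot \bigwedge X$'' is also sound, since $e\notin X$ forces the meet to strictly decrease while the rank drops by exactly one.

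As for comparison: the paper does not actually prove this proposition. It states the result and refers the reader to \cite[Theorem~8.8.7]{BjornerZiegler1992:a}. So there is no in-paper argument to weigh yours against; you have supplied a self-contained proof where the paper defers to the literature. Your approach---gradedness plus the supermodular rank inequality, distilled into a single covering lemma---is the standard route and is in the same spirit as what one finds in the cited reference. Your closing observation that meet-irreducibility of the elements of $E$ plays no role in verifying (IG1)--(IG3) is correct and worth keeping: the choice of $E$ matters only for the later identification of the flat lattice with $L$ (see \ref{x:SemimodularFlats}), not for the greedoid axioms themselves.
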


For a proof of this result see 
\cite[Theorem 8.8.7]{BjornerZiegler1992:a}.

\subsection{Feasible Orderings}

Let $(E,\F)$ denote an interval greedoid. Let $X \in \F$. An ordering
$x_1 < x_2 < \cdots < x_r$ of the elements of $X = \{x_1, x_2, \ldots,
x_r\}$ is denoted by $X = \{x_1 < x_2 < \cdots < x_r\}$.  An ordering
$X = \{x_1 < x_2 < \cdots < x_r\}$ is a \defn{feasible ordering} of
$X$ if $\{x_1, \ldots, x_i\} \in \F$ for all $1 \leq i \leq r = |X|$.
Repeated application of (IG1) shows that every $X \in \F$ has a
feasible ordering. 

\begin{Proposition}
\label{p:StrongExchange}
Let $(E,\F)$ be an interval greedoid. Let $X,Y\in\F$ and $|X| > |Y|$. 
Suppose $X = \{x_1 < \cdots < x_r\}$ is a feasible ordering.
Then there is a subset $\{x_{i_1} < \cdots < x_{i_k}\}$ of $X \m Y$ of
size $|X|-|Y|$ such that
$Y \cup \{x_{i_1}, \ldots, x_{i_j}\} \in \F$ for all $1 \leq j \leq k$.
\end{Proposition}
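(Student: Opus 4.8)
The plan is to construct the subsequence greedily by a single left-to-right scan of the given feasible ordering. Initialize $Z := Y$ and examine $x_1, x_2, \ldots, x_r$ in turn; whenever $x_i \notin Z$ and $Z \cup x_i \in \F$, replace $Z$ by $Z \cup x_i$ and record the index $i$. No element of $Y$ is ever recorded (it already lies in $Z$), so every recorded index belongs to $X \m Y$; the recorded indices increase because we scan in increasing order; and each set $Y \cup \{x_{i_1}, \ldots, x_{i_j}\}$ produced along the way is feasible by construction. Consequently, if the scan records at least $k := |X| - |Y|$ indices, we may discard all but the first $k$ and obtain exactly the subsequence demanded by the proposition. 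Since $Z = Y \sqcup (\text{recorded elements})$ with the recorded elements drawn from $X \m Y$, recording at least $k$ indices is the same as the terminal set satisfying $|Z| \ge |X|$. This is the one nontrivial point.

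Suppose to the contrary that the scan halts with $|Z| < |X|$. As $Z, X \in \F$, (IG2) yields some $x \in X \m Z$ with $Z \cup x \in \F$, so it suffices to show that the terminal $Z$ is closed under adding elements of $X$, i.e. $Z \cup x \notin \F$ for every $x \in X \m Z$; this contradicts (IG2). The basic device is the following reading of the interval property (IG3): if $A \subseteq B$ are feasible, $e \notin B$, $A \cup e \in \F$ and $B \cup e \notin \F$, then $C \cup e \notin \F$ for every feasible $C$ with $B \subseteq C$ and $e \notin C$ --- otherwise (IG3), applied to $A \subseteq B \subseteq C$ together with $A \cup e, C \cup e \in \F$, would give the excluded $B \cup e \in \F$. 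I first apply this to the element $x_m \in X \m Z$ of least index. Because $x_m$ is the smallest missing index, every $x_l$ with $l < m$ was already present when $x_m$ was examined, so the set $Z^{(m)}$ current at that moment contains the feasible prefix $X_{m-1} = \{x_1, \ldots, x_{m-1}\}$, while $X_{m-1} \cup x_m = X_m \in \F$. Taking $A = X_{m-1}$, $B = Z^{(m)}$, $e = x_m$ (the addition was rejected, so $Z^{(m)} \cup x_m \notin \F$), the device gives $Z \cup x_m \notin \F$.

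The main obstacle is to propagate this from the least missing element to all of $X \m Z$. I would argue by induction on the missing indices in increasing order: having shown that $x_{m_1}, \ldots, x_{m_{j-1}}$ are \emph{dead} --- unaddable to any feasible set reached after they were rejected --- I pass to the restriction $\F' = \{F \in \F : F \cap \{x_{m_1}, \ldots, x_{m_{j-1}}\} = \O\}$, which is again an interval greedoid because (IG1)--(IG3) are inherited by any such restriction to a subset of the ground set. Deleting the already-dead elements restores the prefix-containment lost along the way, so that the surviving initial segment of the feasible ordering once more supplies a lower anchor for $x_{m_j}$, and the device of the previous paragraph then shows $x_{m_j}$ dead as well. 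The delicate step --- the one I expect to require the interval property a second and more careful time --- is exactly the verification that the surviving prefixes remain feasible after the dead elements are excised, equivalently that a dead element can be removed from the prefixes of the ordering without destroying feasibility. Once every element of $X \m Z$ is shown to be dead, (IG2) is contradicted; hence $|Z| \ge |X|$ and the greedy scan delivers the required subsequence.
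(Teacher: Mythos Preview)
Your greedy left-to-right scan is a genuinely different strategy from the paper's. The paper argues by induction on $|Y|$: it peels an element off $Y$ via (IG1), applies the induction hypothesis to $Y\setminus y$ to obtain a subsequence $x_{i_1}<\cdots<x_{i_k}$ of $X$, then re-augments $Y$ from this subsequence using (IG2), always choosing the earliest admissible $z_l$. A careful case analysis, with one essential use of (IG3), shows that the resulting $z$-sequence is, after possibly dropping a single $x_{i_a}$, an initial segment of the $x_i$-sequence in the right order. The feasible ordering of $X$ is never mutilated in this argument.

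Your proposal, by contrast, does mutilate it, and that is where the gap lies. You correctly handle the first rejected index $m_1$ with the anchor $X_{m_1-1}$, but for later rejected indices you need the ``surviving prefix'' $X_{m_j-1}\setminus\{x_{m_1},\dots,x_{m_{j-1}}\}$ to be feasible (and to remain feasible after adjoining $x_{m_j}$) so that it can serve as the lower set $A$ in your (IG3) device. You flag this as the delicate step and do not prove it. Unfortunately the principle it rests on is false: take $E=\{a,b,c\}$ and $\F=\{\emptyset,\{a\},\{a,b\},\{c\},\{b,c\}\}$, which is an interval greedoid. With $X=\{a,b\}$ in the feasible ordering $a<b$ and $Y=\{c\}$, the scan rejects $a$ (and $a$ is genuinely dead over any feasible superset of $Y$) and accepts $b$. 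Excising the dead element $a$ from the prefix $\{a,b\}$ yields $\{b\}\notin\F$. So ``a dead element can be removed from the prefixes of the ordering without destroying feasibility'' fails already in rank~$2$; the restriction $\F'$ you pass to need not carry a feasible ordering on the surviving elements of $X$. In this particular example $b$ is not itself missing, so your induction does not invoke the bad anchor here --- but the example shows that the mechanism you propose for manufacturing anchors at later missing indices is unsound, and you give no argument that it miraculously holds exactly where you need it.

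The upshot: the scan may well always reach $|Z|\ge|X|$, but your outline does not establish it. Either you must supply a different anchor $A\subseteq Z^{(m_j)}$ with $A\cup x_{m_j}\in\F$ that does not depend on deleting elements from the feasible ordering of $X$, or abandon the ``dead element'' bookkeeping. The paper's induction on $|Y|$ sidesteps the issue entirely by keeping the feasible ordering of $X$ intact and instead adjusting which of its elements get selected.
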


\begin{proof}
Let $x_1 < \cdots < x_r$ be a feasible ordering of $X$ and suppose
$Y \in \F$ with $|Y| < |X|$. 
We proceed by induction on $|Y|$. If $|Y| = 0$, then the feasible ordering
$x_1 < \cdots < x_r$ of $X$ provides the required subset.

Suppose the result holds for all feasible sets of cardinality less than
$|Y|$. By (IG1), since $Y \in \F$, there is a $y \in Y$ such that $Y\m y
\in \F$. Since $|Y\m y| < |Y| < |X|$, the induction hypothesis gives the
existence of a subset $\{x_{i_1} < \cdots < x_{i_k}\}$ of $X$ of size
$k = |X| - (|Y| - 1)$ such that $Y\m y \cup \{x_{i_1}, \cdots,
x_{i_j}\} \in \F$ for all $1 \leq j \leq k$. 

Since $|Y| < |X| = |(Y \m y) \cup \{x_{i_1}, \ldots, x_{i_k}\}|$,
it follows from repeated application of (IG2) that there exist elements 
$z_j$ in $\{x_{i_1}, \ldots, x_{i_k}\}$ such that 
$Y \cup \{z_1\}, Y \cup
\{z_1,z_2\}, \ldots, Y \cup 
\{z_1,\ldots,z_{k-1}\}$ are in $\F$. Suppose
that for each $1 \leq l < k$ the element $z_l$ is chosen to be the first
element (with respect to the feasible ordering on $X$)
satisfying $(Y \cup \{z_1, \ldots, z_{l-1}\}) \cup z_l \in \F$.

Since $|Y \cup \{z_1, \ldots, z_{l-1}\}| < |(Y \m y) \cup \{x_{i_1}, \ldots,
x_{i_{l+1}}\}|$, it follows from (IG2) that there is an element 
$z \in \{x_{i_1}, \ldots, x_{i_{l+1}}\} \m \{z_1, \ldots, z_{l-1}\}$ 
such that $(Y \cup \{z_1, \ldots, z_{l-1}\}) \cup z \in \F$. 
The minimality of $z_{l}$ implies $z_l$ is amongst these elements. That is,
$z_{l} \in \{x_{i_1}, \ldots,
x_{i_{l+1}}\} \m \{z_1, \ldots, z_{l-1}\}$.

Let $a$ be the first index for which $z_a \neq x_{i_a}$. Then from the
last sentence in the previous paragraph, 
\begin{align*}
z_a 
&\in \{x_{i_1}, \ldots, x_{i_{a+1}}\} \m \{z_1, \ldots, z_{a-1}\} \\
&= \{x_{i_1}, \ldots, x_{i_{a+1}}\} \m \{x_{i_1}, \ldots, x_{i_{a-1}}\} \\
&= \{x_{i_a}, x_{i_{a+1}}\}. 
\end{align*}
Thus, $z_a = x_{i_{a+1}}$.

Suppose 
$z_b = x_{i_a}$ for some index $b$. Induction on $l$
gives $z_l = x_{i_{l+1}}$ for all $l$ such that $a < l < b$
since
$z_l \in \{x_{i_1}, \ldots, x_{i_{l+1}}\} \m \{z_1, \ldots, z_{l-1}\}
= \{x_{i_a}, x_{i_{l+1}}\}$
and $z_l \neq x_{i_a}$. 
Consider the following three sets:
$
((Y \m y) \cup \{z_1, \ldots, z_{a-1}\}) \subseteq
(Y \cup \{z_1, \ldots, z_{a-1}\}) 
\subseteq (Y \cup \{z_1, \ldots, z_{b-1}\}).
$
The first is $(Y \m y) \cup \{x_{i_1}, \ldots, x_{i_{a-1}}\}$, which
is in $\F$ by definition of the element $x_{i_l}$. 
The latter two sets are in $\F$
by definition of the elements $z_l$.
Substituting $z_l = x_{i_{l}}$ for $1 \leq l < a$ and $z_l = x_{i_{l+1}}$
for $a \leq l < b$ gives
\begin{align*}
\Big((Y \m y) \cup \{x_{i_1}, \ldots, x_{i_{a-1}}\}\Big) \subseteq &
\Big(Y \cup \{x_{i_1}, \ldots, x_{i_{a-1}}\}\Big) \\
& \subseteq
\Big(Y \cup \{x_{i_1}, \ldots, x_{i_{a-1}}, x_{i_{a+1}}, \ldots, x_{i_b}\}\Big).
\end{align*}
Applying the interval property (IG3) to the above sets and $x_{i_a}$
gives that $Y \cup \{z_1, \ldots, z_{a-1}, x_{i_{a}}\} = Y \cup \{x_{i_1}, \ldots, x_{i_{a}}\} \in \F$.
This contradicts the minimality of $z_a = x_{i_{a+1}}$. Therefore,
no such $b$ exists.

Therefore, $z_b \neq x_{i_a}$ for all $b > a$. Induction on $l$
(as above) gives $z_l = x_{i_{l+1}}$ for all $l$ such that
$a < l < k$.
Then 
$Y \cup \{x_{i_1}, \ldots, \widehat{x_{i_a}}, \ldots, x_{i_l}\} \in
\F$ for all $1 \leq l \leq k$ and the proposition holds.
\end{proof}

\subsection{The Lattice of Flats}

\subsubsection{Contractions}
Let $(E,\F)$ denote an interval greedoid and let $X\in\F$.
Let $\F/X$ denote the collection of subsets that can be added to
$X$ preserving feasibility:
\begin{align*}
 \F/X & = \{Y \subseteq E \m X : X \cup Y \in \F\}.
\end{align*}
The pair $(\bigcup_{Y\in\F/X}Y,\F/X)$ is an interval greedoid,
which we call the \defn{contraction} of $(E,\F)$ by $X$. Properties of
contractions will be further developed in later sections. For now we record
the following result, which is crucial to much of what follows.

\begin{Proposition}
\label{l:MaximalSubsetsAreEquivalent}
Suppose $(E,\F)$ is an interval greedoid and let $A \subseteq E$. Let
$U$ and $V$ be maximal with respect to inclusion among the feasible
sets contained in $A$. Then $\F/U=\F/V$.
\end{Proposition}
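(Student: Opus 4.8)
The plan is to prove the two contractions coincide by establishing the single inclusion $\F/U \sse \F/V$; equality then follows by interchanging the roles of $U$ and $V$. The first step is to record that $U$ and $V$ are equicardinal: if we had $|U| > |V|$, then (IG2) would produce an $x \in U \m V \sse A$ with $V \cup x \in \F$, and since $V \cup x \sse A$ this contradicts the maximality of $V$ among the feasible subsets of $A$. Hence $|U| = |V|$. The same maximality yields the running observation that no $a \in A \m V$ can satisfy $V \cup a \in \F$ (and symmetrically for $U$); this will be exactly what excludes the degenerate overlaps later.

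Now fix $Y \in \F/U$, so $U \cup Y \in \F$ and $Y \cap U = \O$; the goal is to show $Y \cap V = \O$ and $V \cup Y \in \F$, which together say $Y \in \F/V$. I would argue by induction on $|Y|$, the case $Y = \O$ being immediate since $V \in \F$. For the inductive step, apply \ref{p:StrongExchange} to the feasible set $U \cup Y$ (in the role of the larger set) and $U$, together with a feasible ordering of $U \cup Y$: this orders $Y$ as $y_1 < \cdots < y_m$ with every $U \cup \{y_1,\ldots,y_j\}$ feasible. Setting $y = y_m$ and $Y' = Y \m y$, we get $U \cup Y' \in \F$, so $Y' \in \F/U$, and the induction hypothesis gives $V \cup Y' \in \F$ with $Y' \cap V = \O$. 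It remains to transfer the single augmentation by $y$ from the base $U$ to the base $V$: from the two feasible sets $U \cup Y' \cup y$ and $V \cup Y'$, which agree on $Y'$, one must deduce $y \notin V$ and $V \cup Y' \cup y \in \F$.

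This transfer is where essentially all the difficulty is concentrated. The natural tool is again \ref{p:StrongExchange}, applied to the larger set $U \cup Y' \cup y$ and the smaller feasible set $V \cup Y'$, whose difference is $(U \m V) \cup (\{y\} \m V)$; it returns one element of this difference that may be feasibly appended to $V \cup Y'$. If that element is $y$ we are finished, but the exchange may instead deliver an element of $U \m V$, and the genuinely subtle point is that such an element \emph{can} legitimately be appended to $V \cup Y'$ even though, by the maximality remark above, it cannot be appended to $V$ alone. To force the procedure to deliver $y$ itself, I would run a minimal-index argument along the feasible ordering, in the exact spirit of the proof of \ref{p:StrongExchange}: whenever the procedure attempts to introduce an element of $U \m V$ before reaching $y$, one builds a chain and applies the interval property (IG3) to contradict the minimality of the chosen augmenting element. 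The main obstacle is thus not any single axiom but the bookkeeping that combines (IG2)-style augmentation with the interval property (IG3) to pin the augmenting element down as $y$, while the maximality of $V$ in $A$ is what rules out the degenerate cases in which $y$, or an element of $U \m V$, already lies in $V$.
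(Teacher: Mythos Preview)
Your overall framework matches the paper: show $\F/U \sse \F/V$ and conclude by symmetry, after first noting $|U|=|V|$ via (IG2). The gap is in the inductive step. Having reduced to showing that the single element $y$ can be added to $V\cup Y'$, you correctly observe that applying \ref{p:StrongExchange} to $U\cup Y'\cup y$ and $V\cup Y'$ may return an element of $U\m V$ rather than $y$, and you propose to repair this with ``a minimal-index argument \dots\ in the exact spirit of the proof of \ref{p:StrongExchange}.'' But this is precisely where the content lies, and you have not carried it out: you do not specify which three nested feasible sets feed into (IG3), nor why minimality forces the augmenting element to be $y$. (Note that the maximality of $V$ in $A$ only blocks appending elements of $U\m V$ to $V$ itself; once $Y'$ has been added, that obstruction evaporates, so there is no evident contradiction to extract.) Your induction has pushed the difficulty into a smaller-looking but actually harder statement, and the final hand-wave does not close it.

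The paper avoids this trap by applying \ref{p:StrongExchange} once, directly to the pair $U\cup W$ and $V$ --- not to $V\cup Y'$ --- using a feasible ordering of $U\cup W$ that lists all of $U$ first and $W$ second. The size difference is now $|W|$, so \ref{p:StrongExchange} returns an ordered subset $z_1<\cdots<z_{|W|}$ of $(U\cup W)\m V$ with every $V\cup\{z_1,\dots,z_i\}$ feasible. If $z_1\in U$ then $V\cup z_1\sse A$ contradicts the maximality of $V$; hence $z_1\in W$, and because the $z_i$ respect the given ordering (in which $U$ precedes $W$), every $z_i$ lies in $W$. Since there are $|W|$ of them, $\{z_1,\dots,z_{|W|}\}=W$, whence $V\cup W\in\F$ and $W\cap V=\O$. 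The point you missed is that applying the strong exchange against $V$ itself lets the maximality of $V$ in $A$ rule out $z_1\in U$, after which the ordering built into \ref{p:StrongExchange} does all the remaining work --- no induction on $|Y|$ and no separate (IG3) argument are needed.
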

\begin{proof}
Let $U$ and $V$ be two maximal feasible subsets of $A$. Then $|U| =
|V|$ (otherwise we can enlarge the smaller one using (IG2)). Suppose
$W \in \F/U$ with $W \neq \O$.  Then $U \cup W \in \F$. Let $U
= \{u_1< \cdots< u_r\}$ be a feasible ordering of $U$. Repeated
application of (IG2) to $U$ and $U \cup W$ gives a feasible ordering
$\{u_1< \cdots< u_r< w_1 \cdots< w_s\}$ of $U \cup W$.
\ref{p:StrongExchange} applied to $U \cup W$ and $V$ gives an ordered
subset $\{z_1< \cdots< z_t\}$ of $U \cup W$ such that $V \cup \{z_1,
\ldots, z_i\} \in \F$ for each $1 \leq i \leq t$, where $t = |U \cup W|
- |V| = |W|$. If $z_1 \in U$, then $V \cup \{z_1\} \in \F$ and $V \cup
\{z_1\} \subseteq A$, contradicting the maximality of $V$. Therefore,
$z_1 \in W$ and the ordering of the $z_i$ implies $z_i \in W$ for all
$1 \leq i \leq t$. Since $t = |W|$, we have $W = \{z_1, \ldots,
z_t\}$. Thus, $V \cup W \in \F$, or equivalently, $W \in \F/V$.
Reversing the roles of $U$ and $V$ gives the reverse containment $\F/V
\subseteq \F/U$. Thus, $\F/U = \F/V$. 
\end{proof}

\begin{Example}[Convex geometry on three colinear points]
\label{x:FlatsOfConvexGeoemtryOn3ColinearPoints}
Consider the convex geometry on three colinear points from 
\ref{x:ThreeColinearPoints}. The feasible sets are
\begin{gather*}
\F = \Big\{ 
\O,
\{x\}, 
\{z\}, 
\{x,y\}, 
\{x,z\}, 
\{y,z\}, 
\{x,y,z\}
\Big\}.
\end{gather*}
The following table shows $\F/X$ for $X \in \F$.
\begin{gather*}
\begin{array}{c|c}
X & \F/X \\ \hline
\O        & \F\\ 
\{x\}     & \big\{ \O, \{y\}, \{z\}, \{y,z\} \big\} \\ 
\{z\}     & \big\{ \O, \{x\}, \{y\}, \{x,y\} \big\} \\ 
\{x,y\}   & \big\{ \O, \{z\} \big\} \\ 
\{x,z\}   & \big\{ \O, \{y\} \big\} \\ 
\{y,z\}   & \big\{ \O, \{x\} \big\} \\ 
\{x,y,z\} & \big\{ \O \big\}
\end{array}
\end{gather*}
From the table we notice that $\F/X = \F/Y$ implies $X=Y$. It turns
out this is true for any antimatroid; see \ref{x:AntimatroidFlats}.
\end{Example}

\subsubsection{Continuations}
Let $(E,\F)$ denote an interval greedoid and let $X\in\F$.
The set of \defn{continuations} $\Gamma(X)$ of $X$ is the set of
elements that can be added to $X$ preserving feasibility:
\begin{gather*}
\Gamma(X) = \{ x \in E \m X : X \cup x \in \F\}.
\end{gather*}

Of course, if $X,Y\in\F$ and $\F/X = \F/Y$, then
$\Gamma(X)=\Gamma(Y)$. The converse does not hold for arbitrary
greedoids, but it does hold for interval greedoids.

\begin{Proposition}
\label{p:SameContinuationsImpliesEquivalence}
Suppose $(E,\F)$ is an interval greedoid. Then for all $X, Y \in \F$, 
we have $\Gamma(X) = \Gamma(Y)$ if and only if $\F/X=\F/Y$.
\end{Proposition}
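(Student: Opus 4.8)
The plan is to treat the two implications separately, leaning on \ref{l:MaximalSubsetsAreEquivalent} for the substantive direction. The implication $\F/X = \F/Y \Rightarrow \Gamma(X) = \Gamma(Y)$ is immediate, since $\Gamma(X)$ is recovered from $\F/X$ as its set of singletons: one has $x \in \Gamma(X)$ exactly when $\{x\} \in \F/X$. This is already remarked in the text preceding the statement, so I would simply record it in a single line.

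For the converse, assume $\Gamma(X) = \Gamma(Y)$. Rather than manipulate arbitrary elements of $\F/X$ directly, I would set $A = X \cup Y$ and aim to show that both $X$ and $Y$ are maximal with respect to inclusion among the feasible subsets of $A$; then \ref{l:MaximalSubsetsAreEquivalent} yields $\F/X = \F/Y$ at once. The key observation is that the continuations of a feasible set are by definition disjoint from it, so $\Gamma(X) \cap X = \O$ and $\Gamma(Y) \cap Y = \O$. Combined with the hypothesis $\Gamma(X) = \Gamma(Y)$, this gives $\Gamma(X) \cap Y = \Gamma(Y) \cap Y = \O$, and symmetrically $\Gamma(Y) \cap X = \O$.

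To deduce maximality of $X$ in $A$, I would argue by contradiction: suppose some feasible $X'$ satisfies $X \subsetneq X' \sse A$. Applying (IG2) to $X'$ and $X$ (with $|X'| > |X|$) produces an element $x \in X' \m X \sse Y \m X$ with $X \cup x \in \F$, that is, $x \in \Gamma(X) \cap Y$, contradicting $\Gamma(X) \cap Y = \O$. Hence $X$ is maximal among the feasible subsets of $A$, and the same argument with the roles of $X$ and $Y$ reversed shows that $Y$ is as well. Invoking \ref{l:MaximalSubsetsAreEquivalent} with $U = X$ and $V = Y$ then completes the proof.

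The proof is short, and the only real decision is the choice of the ambient set $A = X \cup Y$; once that is made, the disjointness $\Gamma(X) \cap Y = \O$ does all the work. I do not anticipate any genuine obstacle beyond checking that (IG2) upgrades ``no continuation of $X$ lies in $Y \m X$'' into honest inclusion-maximality, which is exactly the accessibility-type step performed above.
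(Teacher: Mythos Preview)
Your proposal is correct and follows essentially the same approach as the paper: set $A = X \cup Y$, show that both $X$ and $Y$ are maximal feasible subsets of $A$ (the paper's contradiction ``there exists $y \in Y$ with $y \in \Gamma(X)$'' is exactly your (IG2) step made implicit), and then invoke \ref{l:MaximalSubsetsAreEquivalent}; the reverse implication is dismissed in one line in both.
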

\begin{proof}
Suppose $\Gamma(X) = \Gamma(Y)$. We argue that $X$ is maximal among
the feasible sets contained in $X \cup Y$. If not, then there exists
$y \in Y$ such that $y \in \Gamma(X)$. Since $\Gamma(X) = \Gamma(Y)$,
we have $y \in \Gamma(Y)$, contradicting that $Y \cap \Gamma(Y) = \O$.
Therefore, $X$ is maximal among the feasible sets contained in $X \cup
Y$. Similarly, $Y$ is maximal among the feasible sets contained in $X
\cup Y$. Therefore, $\F/X=\F/Y$ by \ref{l:MaximalSubsetsAreEquivalent}. 
The reverse implication follows immediately from the definitions.
\end{proof}

\begin{Example}[Vector Matroids] Let $V$ be a vector space, and $E$ a 
collection of vectors in $V$.  $\mathscr I$ consists of the linearly
independent subsets of $E$. (See \ref{x:VectorMatroid}.)  Let $X\in 
\mathscr I$.  
Then $\Gamma(X)$ consists of those vectors from $E$ not in the span of 
$X$.  
\end{Example}

\begin{Example}[Antimatroids]
\label{sss:ContinuationsInAntimatroids}
\label{l:ExtremePointsAndContinuations}
Let $(E,\tau)$ be a convex geometry and $(E,\F)$ the corresponding
antimatroid. If $X\in\F$, then $\Gamma(X)=\ext(E\m X)$.
\end{Example}

\begin{Example}[Convex geometry on three colinear points]
The following table shows that continuations of the feasible sets of
the antimatroid in \ref{x:ThreeColinearPoints}.
\begin{gather*}
\begin{array}{c|ccccccc}
X & \O& \{x\} & \{z\} & \{x,y\} & \{x,z\} & \{y,z\} & \{x,y,z\} \\ \hline
\Gamma(X) & \{x,z\} & \{y,z\} & \{x,y\} & \{z\} & \{y\} & \{x\} & \O
\end{array}\qedhere
\end{gather*}
\end{Example}

\subsubsection{Flats}
Let $(E,\F)$ be an interval greedoid.
Define an equivalence relation on $\F$ by setting $X \sim Y$ if and
only if $\F/X = \F/Y$. In light of
\ref{p:SameContinuationsImpliesEquivalence}, $X\sim Y$ if and only if
$\Gamma(X)=\Gamma(Y)$. We write $[X]$ for the equivalence class of
$X$:
\begin{gather*}
[X] 
= \{Y\in\F: \F/X = \F/Y\}
= \{Y\in\F: \Gamma(X) = \Gamma(Y)\}.
\end{gather*}
These equivalence classes are called the \defn{flats} of $(E,\F)$.

The set $\Phi$ of flats of $(E,\F)$ is a poset with partial order
induced by reverse inclusion: 
\begin{gather*}
[X] \leq [Y] \text{ iff there exists } Z \in \F/Y 
\text{ such that } Y \cup Z \sim X.
\end{gather*}
In particular, if $Y \subseteq X$, then $[X] \leq [Y]$. (Note that
some authors choose to use inclusion rather than reverse-inclusion to
induce the partial order on $\Phi$.) 

The following result shows that $\Phi$ is a lower semimodular poset. In
fact, $\Phi$ is a semimodular \emph{lattice}; see
\ref{p:PhiIsSemimodularLattice}.

\begin{Proposition}
\label{c:Semimodularity}
Suppose $(E,\F)$ is an interval greedoid. Let $X \in \F$ and suppose
$X \cup x \in \F$ and $X \cup y \in \F$. If $[X\cup x] \neq [X \cup
y]$, then $X \cup \{x,y\} \in \F$.
\end{Proposition}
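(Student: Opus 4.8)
The plan is to prove the contrapositive: assuming $X\cup\{x,y\}\notin\F$, I will show that $[X\cup x]=[X\cup y]$. Since the flat relation is defined by $[U]=[V]$ if and only if $\F/U=\F/V$, this reduces the problem to proving that the two contractions $\F/(X\cup x)$ and $\F/(X\cup y)$ coincide, and \ref{l:MaximalSubsetsAreEquivalent} is exactly the tool that produces such an equality from a pair of maximal feasible sets sitting inside a common set.

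So the key step is to exhibit $X\cup x$ and $X\cup y$ as two maximal feasible subsets of a single set $A$, the natural choice being $A=X\cup\{x,y\}$. Both $X\cup x$ and $X\cup y$ are feasible subsets of $A$ by hypothesis, and each has cardinality $|A|-1$. Any feasible subset of $A$ strictly containing $X\cup x$ would have to be all of $A$, since that is the only larger subset available; but $A\notin\F$ under the contrapositive hypothesis, so $X\cup x$ is maximal among the feasible subsets of $A$, and the same argument applies to $X\cup y$. Applying \ref{l:MaximalSubsetsAreEquivalent} with $U=X\cup x$ and $V=X\cup y$ then yields $\F/(X\cup x)=\F/(X\cup y)$, that is $[X\cup x]=[X\cup y]$, which is the desired contrapositive conclusion.

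I do not expect a serious obstacle here: once the contrapositive is set up, the whole argument is the cardinality observation that $X\cup x$ and $X\cup y$ are maximal feasible in $A=X\cup\{x,y\}$, after which \ref{l:MaximalSubsetsAreEquivalent} does all the work. The only point requiring a moment of care is the degenerate possibility that $x=y$, or that $x$ or $y$ already lies in $X$; but in any such case $X\cup\{x,y\}$ would coincide with $X$, with $X\cup x$, or with $X\cup y$, and hence be feasible, so these situations never occur under the assumption $X\cup\{x,y\}\notin\F$ and require no separate treatment.
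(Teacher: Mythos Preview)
Your proof is correct and follows essentially the same approach as the paper: both argue the contrapositive by observing that if $X\cup\{x,y\}\notin\F$ then $X\cup x$ and $X\cup y$ are maximal feasible subsets of $X\cup\{x,y\}$, and then invoke \ref{l:MaximalSubsetsAreEquivalent} to conclude $[X\cup x]=[X\cup y]$.
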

\begin{proof}
Suppose $(E,\F)$ is an interval greedoid and let $X \in \F$ with
$X \cup x \in \F$ and $X \cup y \in \F$.
If $X \cup \{x,y\} \not\in \F$, then $X \cup x$ and $X \cup y$ are
maximal among the feasible sets contained in $X \cup \{x,y\}$. Then
\ref{l:MaximalSubsetsAreEquivalent} implies $X \cup x \sim
X \cup y$. That is, $[X \cup x] = [X \cup y]$.
\end{proof}

\begin{Example}[Vector Matroids] \label{x:vmFlats}
Let $V$ be a vector space, $E$ a collection
of vectors from $V$, and $\mathscr I$ the subsets of $E$ that are 
linearly independent.  For $X,Y\in \mathscr I$, $X\sim Y$ iff $X$ and $Y$
span the same subspace; and $[X]\leq [Y]$ iff the span of $Y$ is contained in
the span of $X$.   
\end{Example}

\begin{Example}[Convex geometry on three colinear points]
\label{x:3PtsFlats}
Consider the convex geometry on three colinear points
(\ref{x:ThreeColinearPoints}). The contractions of the corresponding
antimatroid were described in
\ref{x:FlatsOfConvexGeoemtryOn3ColinearPoints}.
The poset of flats is illustrated in 
\ref{f:PosetFlatsOfThreeColinearPoints}.
\begin{figure}[htb]
\begin{gather*}
\xymatrix@R=1.5em@C=0.75em{%
& \big[\O\big] \ar@{-}[dl] \ar@{-}[dr] & \\
\big[\{x\}\big] \ar@{-}[d] \ar@{-}[dr] & & \big[\{z\}\big] \ar@{-}[d] \ar@{-}[dl] \\
\big[\{x,y\}\big] \ar@{-}[dr] & \big[\{x,z\}\big] \ar@{-}[d] &
\big[\{y,z\}\big] \ar@{-}[dl] \\
& \big[\{x,y,z\}\big] & 
}
\end{gather*}
\caption{The poset of flats of the convex geometry on three colinear
points.}
\label{f:PosetFlatsOfThreeColinearPoints}
\end{figure}
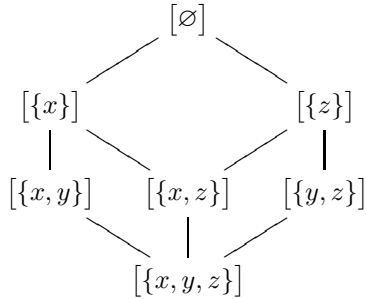
\end{Example}

\begin{Example}[Antimatroids]
\label{x:AntimatroidFlats}
If $(E,\F)$ is an antimatroid, then $[X]=\{X\}$ for all $X\in\F$.
Indeed, if $(E,\tau)$ is the corresponding convex geometry and
$\Gamma(X)=\Gamma(Y)$, then $E\m X=\tau(\ext(E\m X))=\tau(\ext(E\m
Y))=E\m Y$ by \ref{l:ExtremePointsAndContinuations}. Thus, the poset
of flats $\Phi$ of $(E,\F)$ is isomorphic to
$\F$ ordered by reverse inclusion.
\end{Example}

\begin{Example}[Semimodular lattices]\label{x:SemimodularFlats}
Let $L$ be a finite lower semimodular lattice.  Let $E$ be the 
meet-irreducible elements of $L$, and let $\F$ be the set system
as in \ref{p:IGsFromSemimodularLattices}.  The poset of 
flats of $(E,\F)$ is naturally isomorphic to $L$.

Consider the following map $\phi:\F\rightarrow L$:  
$$\phi(X)=\bigwedge_{e\in X} e.$$  It is constant on flats
of $(E,\F)$, and therefore descends to a map from $\Phi$ to $L$, which 
is a poset isomorphism.  See \cite[Theorem 8.8.7]{BjornerZiegler1992:a}.  
\end{Example}

\subsubsection{Maps $\mu$ and $\xi$}
\label{sss:MapsMuAndXi}

Let $(E,\F)$ be an interval greedoid and $\Phi$ its poset of flats.
Define two maps $\mu: 2^E \to \Phi$ and $\xi: \Phi \to 2^E$ as
follows.

\begin{enumerate}
\item[($\mu$)]
Define $\mu: 2^E \to \Phi$ on arbitrary subsets $A \subseteq E$ by
$\mu(A) = [X]$, where $X$ is maximal with respect to inclusion among
the feasible sets contained in $A$. 
\item[($\xi$)]
Define $\xi: \Phi \to 2^E$ for $X\in\F$ by $\xi([X]) = \bigcup_{X'
\sim X} X'$.
\end{enumerate}
It follows from \ref{l:MaximalSubsetsAreEquivalent} that $\mu$ is
well-defined. These maps are very important to what follows, and will
be used to describe the meets and joins in $\Phi$.

\begin{Proposition}
\label{p:MuAndXiProperties}
Suppose $(E,\F)$ is an interval greedoid, $\Phi$ its lattice of flats
and $\mu$ and $\xi$ the maps defined above. 
\begin{enumerate}
\item 
$(\mu \circ \xi)(A) = A$ for all $A\in\Phi$. So $\xi$ is injective.
\item 
$\mu:(2^E,\subseteq)\to(\Phi,\leq)$ is order-reversing.
\item 
$\xi:(\Phi,\leq)\to(2^E,\subseteq)$ is order-reversing.
\item 
$A \leq B$ if and only if $\xi(B) \subseteq \xi(A)$ for
all $A,B\in\Phi$.
\item 
For all $Y\in\F$ and $A\in\Phi$, if $Y \subseteq \xi(A)$, then
$A \leq [Y]$.
\end{enumerate}
\end{Proposition}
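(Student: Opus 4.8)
The plan is to establish the five statements in the listed order, treating (1) as the foundation and deducing (4) formally from (1)--(3). For (1), I fix $A=[X]$ and recall $\xi(A)=\bigcup_{X'\sim X}X'$, so in particular $X\subseteq\xi(A)$. The crucial observation is that $X$ is already \emph{maximal} among the feasible sets contained in $\xi(A)$: if some $x\in\xi(A)\setminus X$ satisfied $X\cup x\in\F$, then $x\in\Gamma(X)$, while $x\in X'$ for some $X'\sim X$; since $\Gamma(X')=\Gamma(X)$ by \ref{p:SameContinuationsImpliesEquivalence}, this would give $x\in\Gamma(X')$ with $x\in X'$, which is impossible because $\Gamma(X')\subseteq E\setminus X'$. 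Hence $X$ is a maximal feasible subset of $\xi(A)$, and so $\mu(\xi(A))=[X]=A$ by the definition of $\mu$ together with the well-definedness furnished by \ref{l:MaximalSubsetsAreEquivalent}. Injectivity of $\xi$ is then immediate, since $\xi(A)=\xi(B)$ forces $A=\mu(\xi(A))=\mu(\xi(B))=B$.

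For (2), let $A\subseteq B$ in $2^E$ and choose $U$ maximal among the feasible sets in $A$, so $\mu(A)=[U]$. Because $U\subseteq A\subseteq B$, I extend $U$ (possible by finiteness) to a feasible set $U'\subseteq B$ that is maximal among all feasible subsets of $B$; any feasible set strictly containing $U'$ inside $B$ would also contain $U$, contradicting maximality. Then $\mu(B)=[U']$ by \ref{l:MaximalSubsetsAreEquivalent}, and the containment $U\subseteq U'$ gives $[U']\leq[U]$ by the property that a larger feasible set yields a smaller flat. Thus $\mu(B)\leq\mu(A)$, as required.

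For (3), let $A=[X]\leq[Y]=B$ and argue representative by representative. For each $Y'\sim Y$ we have $[X]\leq[Y']$, since $\leq$ is a well-defined relation on $\Phi$; the defining condition of the order then supplies some $Z'\in\F/Y'$ with $Y'\cup Z'\sim X$, and hence $Y'\subseteq Y'\cup Z'\in[X]$, giving $Y'\subseteq\xi(A)$. Taking the union over all $Y'\sim Y$ yields $\xi(B)\subseteq\xi(A)$. Assertion (4) now follows at once: the forward implication is exactly (3), while for the converse the hypothesis $\xi(B)\subseteq\xi(A)$ combined with (1) and the order-reversal of $\mu$ from (2) gives $A=\mu(\xi(A))\leq\mu(\xi(B))=B$.

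Finally, for (5), suppose $Y\in\F$ with $Y\subseteq\xi(A)$ where $A=[X]$. Since $Y$ is a feasible subset of $\xi(A)$, I extend it to a maximal feasible subset $Y^{*}$ of $\xi(A)$; by the maximality of $X$ inside $\xi(A)$ established in (1) and by \ref{l:MaximalSubsetsAreEquivalent}, we get $Y^{*}\sim X$, so $[Y^{*}]=A$. Then $Y\subseteq Y^{*}$ gives $A=[Y^{*}]\leq[Y]$, completing the argument. I expect the main obstacle to be pinning down (1) precisely, since the maximality of the representative $X$ within $\xi(A)$ is exactly what powers parts (2), (4), and (5); the only other delicate point is the appeal in (3) to well-definedness of the order on $\Phi$, which is what lets me pass from the fixed representative $Y$ to an arbitrary $Y'\sim Y$.
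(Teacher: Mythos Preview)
Your proof is correct and follows essentially the same approach as the paper's. The only cosmetic differences are that in (1) you phrase the contradiction via $\Gamma(X)=\Gamma(X')$ whereas the paper uses $\F/X=\F/X'$ (equivalent by \ref{p:SameContinuationsImpliesEquivalence}), and in (3) you argue set-by-set over $Y'\sim Y$ while the paper argues element-by-element; the logical content is identical throughout.
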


\begin{proof}
(1) Suppose that $X$ is not maximal with respect to inclusion among
the feasible sets contained in $\xi([X])$. Then there exists $x \in
\xi([X]) - X$ such that $X \cup x \in \F$. Therefore, $x \in \F/X$ and
$x \in X'$ for some $X' \sim X$ with $X' \neq X$. But $X' \sim X$ if
and only if
$\F/X = \F/X'$, so $x \in \F/X'$. This is a contradiction since $x
\not\in \F/X'$ if $x \in X'$. Thus, $X$ is maximal, and so
$\mu(\xi([X])) = [X]$. 

(2) Suppose $A \subseteq B$. Let $X$ be maximal with respect to
inclusion among the feasible sets contained in $A$. Then there exists
$Y$ such that $X \subseteq Y \subseteq B$ and $Y$ is maximal among the
feasible sets contained in $B$. Therefore, $[Y] \leq [X]$. Hence,
$\mu(B) \leq \mu(A)$.

(3) Suppose $[X] \leq [Y]$. If $e \in \xi([Y])$, then $e \in Y'$ for
some $Y' \sim Y$. So $[X] \leq [Y] = [Y']$. Thus, there is a $Z \in
\F/Y'$ such that $Y' \cup Z \sim X$. Therefore, $e \in Y' \subseteq
(Y' \cup Z) \subseteq \xi([X])$. 

(4) This follows from (1), (2) and (3).

(5) Suppose $Y \in \F$ and $Y \subseteq \xi([X])$. Then there exists
$Z$ containing $Y$ that is maximal among the feasible sets contained
in $\xi([X])$. Then $[Y] \geq [Z]$ since $Y \subseteq Z$ and $[Z] =
[X]$ by \ref{l:MaximalSubsetsAreEquivalent}.
\end{proof}

\begin{Remark}
Let $(E,\F)$ be an interval greedoid.  For $X\subset E$, the \defn{rank} of 
$X$ is the size of a maximal feasible set contained in $X$.  $X$ is closed
if any proper superset of $X$ has larger rank than $X$ does.  The
\defn{closure} of $X$ is the smallest closed set containing $X$.  
(The uniqueness here follows from (IG2).)

If $(E,\F)$ is a matroid without loops, then 
$(\xi\circ\mu)(A)$ is the closure of 
$A$ (see \ref{x:MatroidXi} below).  In general, though, all we can say is
that $(\xi\circ\mu)(A)$ is contained in the closure of $A$.  The
containment follows
from the fact that $(\mu\circ\xi\circ\mu)(A)=\mu(A)$ by
\ref{p:MuAndXiProperties}(1).  The fact that the containment is not necessarily
an equality is shown in the following example.  

Consider the convex
geometry on the three colinear points $x,y,z$ of
\ref{x:ThreeColinearPoints}. The empty set is feasible in the
corresponding antimatroid and we have $\xi(\O)=\O$. But the closure of
$\O$ is $\{y\}$ since the latter is not a feasible set (because
$\{x,z\}$ is not a closed set in the convex geometry).
\end{Remark}

\begin{Example}[Matroids]\label{x:MatroidXi}
Let $(E,\F)$ be a matroid without loops.  In this case $\xi(\Phi)$ 
consists exactly of the closed sets of the matroid.  

Let $A\subset E$.  
As already remarked, \ref{p:MuAndXiProperties}(1)
implies that $(\xi\circ\mu)(A)$ 
is contained in the closure of $A$.  Conversely,
suppose that $e$ is in the closure of $A$.  Since $e$ is not a loop,
$\{e\}$ is feasible, and can therefore be extended to a maximal feasible
set $X$ in $A\cup\{e\}$.  Let $Y$ be a maximal feasible set in $A$.  
Since $e$ is in the closure of $A$, we have that $Y$ is also a 
maximal feasible set inside $A\cup \{e\}$, and thus $X\sim Y$ by
\ref{l:MaximalSubsetsAreEquivalent}.  It follows that $e\in Y
\subset (\xi\circ\mu)(A)$.  Thus $(\xi\circ\mu)(A)$ equals the closure of
$A$.  
\end{Example}

\begin{Example}[Antimatroids]  Let $(E,\F)$ be an antimatroid.  Let
$X$ be feasible.  Since $[X]=\{X\}$, $\xi([X])=X$.  Thus 
$\xi(\Phi)$ consists precisely of the feasible sets.  
\end{Example}

\begin{Example}[Semimodular lattices]\label{x:SemimodularXi}
Let $L$ be a lower semimodular lattice,
and $(E,\F)$ the associated interval greedoid.  
Let $\phi$ be the isomorphism from $\Phi$ 
to $L$, defined in \ref{x:SemimodularFlats}.
Let $X$ be a feasible set.    
Then $\xi([X])$ consists
of the set of meet-irreducibles $f$ such that $f\geq \phi([X])$.  

If $f$ is in a feasible set $Y\sim X$, then $\phi([X])=\phi([Y])\leq f$, which
proves one containment.  For the other direction, let $f\geq \phi([X])$.  
Let $Z$ be a feasible set with $\phi([Z])=f$.  Since $f$ is 
meet-irreducible, $f\in Z$.  Since $f\geq \phi([X])$, we know
$[Z]\geq [X]$, which implies that $f\in Z\subset \xi([X])$, as desired.  
\end{Example}

\subsubsection{Lattice of flats}
\label{sss:LatticeOfFlats}

We have seen that $\Phi$ is a lower semimodular poset. It is also graded:
the corank of any element $A \in \Phi$ is the size of any feasible set in
$A$. The next result establishes that $\Phi$ is also a lattice.

\begin{Proposition}
\label{p:PhiIsSemimodularLattice}
If $(E,\F)$ is an interval greedoid, then $\Phi$ is a lower
semimodular lattice whose lattice operations are given by:
\begin{align*}
A\vee B = \mu\big(\xi(A)\cap\xi(B)\big) 
\quad\text{ and }\quad
A\wedge B = \mu\big(\xi(A) \cup \xi(B)\big)
\end{align*}
for all $A, B \in \Phi$.
That is, $A \vee B = [X]$, where $X$ is maximal among the feasible
sets contained in $\xi(A) \cap \xi(B)$, and $A \wedge B = [X]$, where
$X$ is maximal among the feasible sets contained in $\xi(A)\cup\xi(B)$.
\end{Proposition}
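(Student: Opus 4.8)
The plan is to verify the two displayed formulas directly from the properties of $\mu$ and $\xi$ collected in \ref{p:MuAndXiProperties}. Once every pair of flats is shown to possess both a meet and a join given by those formulas, $\Phi$ is a lattice; since it has already been observed to be lower semimodular as a poset (via \ref{c:Semimodularity}) and graded, it will then be a lower semimodular lattice. Throughout I would use that $\mu(\xi(A))=A$ for $A\in\Phi$ (\ref{p:MuAndXiProperties}(1)), that $\mu$ and $\xi$ are order-reversing (\ref{p:MuAndXiProperties}(2)--(3)), the criterion $A\le B \iff \xi(B)\subseteq\xi(A)$ (\ref{p:MuAndXiProperties}(4)), and the ``absorption'' statement \ref{p:MuAndXiProperties}(5).

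I would treat the join first. Set $J=\mu(\xi(A)\cap\xi(B))$. Since $\xi(A)\cap\xi(B)\subseteq\xi(A)$ and $\mu$ is order-reversing, applying $\mu$ gives $J\ge\mu(\xi(A))=A$, and symmetrically $J\ge B$, so $J$ is an upper bound for $A$ and $B$. To see it is the least upper bound, let $C$ be any upper bound; then $\xi(C)\subseteq\xi(A)$ and $\xi(C)\subseteq\xi(B)$ by \ref{p:MuAndXiProperties}(4), whence $\xi(C)\subseteq\xi(A)\cap\xi(B)$. Choose $W$ maximal among feasible sets contained in $\xi(C)$, so that $[W]=\mu(\xi(C))=C$. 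Because $W\subseteq\xi(A)\cap\xi(B)$, I would enlarge $W$ to a set $W'$ that is maximal among feasible sets contained in $\xi(A)\cap\xi(B)$; then $[W']=J$ by definition of $\mu$ (and well-definedness via \ref{l:MaximalSubsetsAreEquivalent}), while $W\subseteq W'$ gives $[W']\le[W]$. Therefore $J=[W']\le[W]=C$, so $J=A\vee B$.

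The meet is handled symmetrically, but more directly. Set $M=\mu(\xi(A)\cup\xi(B))$. As $\xi(A)\subseteq\xi(A)\cup\xi(B)$ and $\mu$ is order-reversing, $M\le\mu(\xi(A))=A$, and likewise $M\le B$, so $M$ is a lower bound. If $C$ is any lower bound, then \ref{p:MuAndXiProperties}(4) gives $\xi(A)\subseteq\xi(C)$ and $\xi(B)\subseteq\xi(C)$, hence $\xi(A)\cup\xi(B)\subseteq\xi(C)$. Taking $Y$ maximal among the feasible sets contained in $\xi(A)\cup\xi(B)$, so that $M=[Y]$, I note that $Y$ is feasible with $Y\subseteq\xi(C)$, and \ref{p:MuAndXiProperties}(5) then yields $C\le[Y]=M$. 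Thus $M$ is the greatest lower bound, i.e. $A\wedge B=M$.

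Given \ref{p:MuAndXiProperties}, most of this is routine; the one point requiring care is the minimality half of the join, where no single listed property applies. The key move there is to represent the upper bound $C$ by a maximal feasible set $W$ inside $\xi(C)$, enlarge $W$ inside $\xi(A)\cap\xi(B)$, and invoke \ref{l:MaximalSubsetsAreEquivalent} to identify the enlargement's flat with $J$. This asymmetry with the meet reflects that $\xi(C)$ is in general a union of feasible sets rather than a single feasible set, so the join must be accessed through maximal feasible subsets, whereas the meet can be read off directly from property~(5).
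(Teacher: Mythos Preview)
Your proof is correct and follows essentially the same route as the paper's: both verify that $\mu(\xi(A)\cap\xi(B))$ and $\mu(\xi(A)\cup\xi(B))$ are the join and meet by appealing to the properties in \ref{p:MuAndXiProperties}, with the key step for the join's minimality being the enlargement of a feasible representative of $C$ to a maximal feasible subset of $\xi(A)\cap\xi(B)$. One small remark: the asymmetry you flag at the end is not actually forced, since the join's minimality can also be obtained in one line from $\xi(C)\subseteq\xi(A)\cap\xi(B)$ together with $\mu$ order-reversing and $\mu\circ\xi=\mathrm{id}$; but the paper, like you, takes the enlargement route there as well.
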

\begin{proof}
By \ref{c:Semimodularity}, $\Phi$ is a lower semimodular poset. It
remains to show that $\Phi$ is a lattice. For $A,B \in \Phi$, define
$j(A,B) = [X]$, where $X \in \F$ is maximal among the feasible sets
contained in $\xi(A) \cap \xi(B)$. \ref{l:MaximalSubsetsAreEquivalent}
implies that $j(A,B)$ is well-defined. (Equivalently, $j(A, B) =
\mu(\xi(A) \cap \xi(B))$.) Since $X \subseteq \xi(A)$, if follows from
\ref{p:MuAndXiProperties} that $A \leq [X] = j(A,B)$. Similarly, $B
\leq j(A,B)$. Therefore, $j(A,B)$ is an upper bound of $A$ and $B$. 

It remains to show that $j(A,B)$ is the least upper bound. Suppose $A,
B \leq [Y]$. Then $\xi([Y]) \subseteq \xi(A)$ and $\xi([Y]) \subseteq
\xi(B)$ since $\xi$ is order-reversing (\ref{p:MuAndXiProperties}).
Therefore, $\xi([Y]) \subseteq \xi(A) \cap \xi(B)$. So there exists
$X' \in \F$ such that $Y \subseteq X' \subseteq \xi(A) \cap \xi(B)$
and $X'$ is maximal among the feasible sets contained in $\xi(A) \cap
\xi(B)$.  Therefore, by the maximality of $X'$ and since $Y \subseteq
X'$, we have $j(A,B) = [X'] \leq [Y]$.

For $A, B \in \Phi$, let $m(A,B) = [X]$, where $X$ is maximal among
the feasible sets contained in $\xi(A) \cup \xi(B)$. Let $A = [Y]$.
Then $Y\subseteq\xi(A)$, so there exists $X'\supseteq Y$ such that
$X'$ is maximal among the feasible sets contained in
$\xi(A)\cup\xi(B)$. Therefore, $m(A,B) = [X'] \leq [Y] = A$.
Similarly, $m(A,B) \leq B$. 

It remains to show that $m(A,B)$ is the greatest lower bound. Suppose
$C \leq A$ and $C \leq B$. Then $\xi(A) \cup \xi(B) \subseteq \xi(C)$.
So there exists a subset $X'\supseteq X$ that is maximal among the
feasible sets contained in $\xi(C)$. Thus, $m(A,B) = [X] \geq [X'] = C$.
\end{proof}

\begin{Example}[Antimatroids]
\label{x:AntimatroidMeets}
Let $(E,\tau)$ be a convex geometry and $(E,\F)$ the corresponding
antimatroid. If $X,Y\in\F$, then $[X]\vee[Y] = [U]$, where $U$ is
maximal among the feasible sets contained in $X\cap Y$. By
\ref{x:AntimatroidFlats}, $U$ is unique and it follows that $U$ is the
complement of the closure of $(E\m X)\cup(E\m Y)$. Hence,
\begin{align*}
[X]\vee[Y]=\left[E\m \tau\Big( (E\m X) \cup (E\m Y)\Big)\right]
\end{align*}
for all $X,Y\in\F$.
\end{Example}

\section{Oriented Interval Greedoids}

Throughout this section $(E,\F)$ will denote an interval greedoid.

\subsection{Signed flats}

A \defn{signed flat} of an interval greedoid $(E,\F)$ is a pair $(A,
\Flat\alpha)$ consisting of a flat $A$ and a map $\Flat\alpha: \Gamma(A) \to
\{+,-\}$. 

Define a partial order on signed flats as follows. 
If $(A,\Flat\alpha)$ and $(B,\Flat\beta)$ are signed flats of $(E,\F)$, let
$(A,\Flat\alpha) \leq (B,\Flat\beta)$ if $A \leq B$ (as flats in $\Phi$) and if
$\Flat\alpha$ and $\Flat\beta$ agree on $\Gamma(A) \cap \Gamma(B)$. (Reflexivity
and anti-symmetry are straightforward to verify; transitivity follows
by a simple application of (IG3).)

Define the product $(A,\Flat\alpha) \circ (B,\Flat\beta)$ of two signed flats
$(A,\Flat\alpha)$ and $(B,\Flat\beta)$ by
\begin{align*}
 (A, \Flat\alpha) \circ (B, \Flat\beta) = (A \vee B, \Flat\alpha \circ \Flat\beta),
\end{align*}
where, for $x \in \Gamma(A \vee B)$,
\begin{align*}
 (\Flat\alpha \circ \Flat\beta)(x) =
 \begin{cases}
  \Flat\alpha(x), & \text{if } x \in \Gamma(A), \\
  \Flat\beta(x), & \text{otherwise}.
 \end{cases}
\end{align*}
This product is well-defined because 
$\Gamma(A \vee B) \subseteq \Gamma(A) \cup \Gamma(B)$
(\ref{p:PropertiesOfContinuations} below).

\begin{Example}[{Antimatroids}]
\label{x:SignedFlatProductForAntimatroids}
Suppose $(E,\F)$ is an antimatroid. As we saw in
\ref{sss:ContinuationsInAntimatroids}, the continuations of a feasible
set $X$ are the extreme points of the complement $E\m X$ in the convex
geometry. Therefore, a signed flat $([X],\Flat\alpha)$ of the antimatroid
is an assignment of $+$ or $-$ to each extreme point of $E\m X$.
\ref{f:TwoCovectorsInAConvexGeometry} depicts a closed set $C$ of a
convex geometry; the extreme points of $C$ are labelled by $+$ or $-$,
the non-extreme points in $C$ are labelled by $1$, and the points in
the exterior of $C$ are labelled by $0$.
\begin{figure}[!ht]
\centering
\begin{pspicture}(10,7.5)(0,-0.5)
\border(-0.75,-0.75)(10.5,7.5)
\convexset{blue}(6,0)(10,4)(4,7)
\uput[0](6,0){$+$}
\uput[0](7,1){$1$}
\uput[90](10,4){$+$}
\uput[0](4,7){$-$}
\uput[180](5,3.5){$1$}
\uput[-40](6.25,5){$1$}
\uput[0](7.5,3){$1$}
\uput[0](0,0){$0$}
\uput[0](2,0){$0$}
\uput[0](3,0.25){$0$}
\uput[45](1,1){$0$}
\uput[0](3.5,2.5){$0$}
\uput[0](0.5,6){$0$}
\psdot(0,0)\psdot(2,0)\psdot(3,0.25)\psdot(1,1)
\psdot(6,0)
\psdot(7,1)\psdot(10,4)\psdot(4,7)\psdot(5,3.5)\psdot(6.25,5)\psdot(7.5,3)
\psdot(3.5,2.5)
\psdot(0.5,6)
\uput[-90](5,-0.75){$\alpha$}
\end{pspicture}
\qquad
\begin{pspicture}(10,7.5)(0,-0.5)
\border(-0.75,-0.75)(10.5,7.5)
\convexset{red}(0,0)(6,0)(1,1)
\psdot(0,0)\psdot(2,0)\psdot(3,0.25)\psdot(1,1)
\psdot(6,0)
\psdot(7,1)\psdot(10,4)\psdot(4,7)\psdot(5,3.5)\psdot(6.25,5)\psdot(7.5,3)
\psdot(3.5,2.5)
\psdot(0.5,6)
\uput[90](0,0){$-$}
\uput[150](2,0){$1$}
\uput[55](3,0.25){$1$}
\uput[45](1,1){$+$}
\uput[0](6,0){$-$}
\uput[0](7,1){$0$}
\uput[90](10,4){$0$}
\uput[0](4,7){$0$}
\uput[180](5,3.5){$0$}
\uput[-40](6.25,5){$0$}
\uput[0](7.5,3){$0$}
\uput[0](3.5,2.5){$0$}
\uput[0](0.5,6){$0$}
\uput[-90](5,-0.75){$\beta$}
\end{pspicture}
\caption{Two covectors $\alpha$ and $\beta$ of an antimatroid.}
\label{f:TwoCovectorsInAConvexGeometry}
\end{figure}
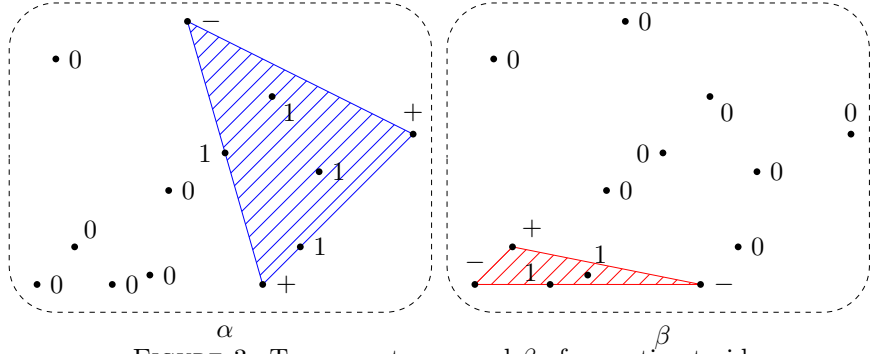

The product of two signed flats $([X],\Flat\alpha)$ and $([Y],\Flat\beta)$ has
a geometric interpretation. If $X'=E\m X$ and $Y'=E\m Y$, then form a
new closed set $Z'$ by taking the closure of $X'\cup Y'$; that is,
$Z'=\tau(X'\cup Y')$. Note that the extreme points of $Z'$ are
contained in $\ext(X')\cup\ext(Y')$. The sign for each $z\in\ext(Z')$
is $\Flat\alpha(z)$ if $z\in\ext(X')$, and $\Flat\beta(z)$ otherwise.
\end{Example}

\begin{Example}[{Matroids}]
Suppose $(E,\F)$ is a matroid with no loops. If $A$ is a flat of the
matroid, then $\xi(A) = E \m \Gamma(A)$ is a closed set of the matroid.
Therefore, a signed flat $(A,\Flat\alpha)$ is an assignment of a sign $+$
or $-$ to each element of the complement of the closed set $A$. If we
extend this by assigning $0$ to each element of $A$, then $\Flat\alpha$
induces a \emph{covector} in the sense of oriented matroids. (See
\ref{sss:OrientedMatroids}.)
\end{Example}

Among other things, the following establishes that the product of
signed flats is well-defined.

\begin{Proposition}
\label{p:PropertiesOfContinuations}
Let $(E,\F)$ be an interval greedoid, $A, B \in \Phi$ and $X \in \F$.
\begin{enumerate}
\item 
If $B \leq [X]$ and $x \in \Gamma(X)$, then either 
$x \in \Gamma(B)$ or $B \leq [X \cup x]$.
\item 
If $A \leq B$, then $\Gamma(B) \subseteq \Gamma(A) \cup \xi(A)$.
\item
\label{i:GammaOfAvB}
$\Gamma(A \vee B) \subseteq \Gamma(A) \cup \Gamma(B)$.
\item 
$\Gamma(A \vee B) \cup \xi(A \vee B) \subseteq 
       (\Gamma(A) \cup \xi(A)) \cap (\Gamma(B) \cup \xi(B))$.
\end{enumerate}
\end{Proposition}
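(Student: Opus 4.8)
The plan is to prove the four items more or less in the order they're stated, since each one feeds into the next. The key technical tool throughout will be the characterizations of the order on $\Phi$ and the maps $\mu,\xi$ from \ref{p:MuAndXiProperties}, together with the interval property (IG3) and the semimodularity result \ref{c:Semimodularity}. Item (3) is the one that actually makes the product of signed flats well-defined, and I expect it to follow from (1) and (2) rather than being proved directly, so the real work is in establishing (1) and (2).

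For item (1), suppose $B \leq [X]$ and $x \in \Gamma(X)$, so $X \cup x \in \F$. I would argue by contradiction: assume $x \notin \Gamma(B)$ and $B \not\leq [X \cup x]$. The plan is to pick a feasible representative $Y$ of $B$ (or work through $\xi$), use $B \leq [X]$ to relate continuations of $Y$ and $X$, and then apply the interval property to the nested sets built from $Y$, $X$, and $X \cup x$. Concretely, since $B \le [X]$, \ref{p:PropertiesOfContinuations}(2) (or its proof) gives control over where the elements of $X$ sit relative to $\Gamma(B) \cup \xi(B)$; the dichotomy ``$x \in \Gamma(B)$ or $B \le [X\cup x]$'' should emerge by asking whether adding $x$ to a feasible set representing $B$ stays feasible. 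This is essentially a case analysis driven by (IG3), and I'd use \ref{p:SameContinuationsImpliesEquivalence} to translate between the $\Gamma$ and the $\F/(\cdot)$ descriptions of flats whenever convenient.

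For item (2), assume $A \leq B$ and take $x \in \Gamma(B)$; I must show $x \in \Gamma(A) \cup \xi(A)$, i.e.\ either $x$ extends a feasible representative of $A$ or $x$ already lies in $\xi(A)$. Choose feasible sets $Y \in B$ and $W \in A$ with $W \subseteq Y$ (possible since $A \le B$ means, via reverse inclusion on representatives, that some representative of $A$ contains one of $B$). Then $Y \cup x \in \F$. If $x \notin \xi(A)$, the goal is to show $W \cup x \in \F$, which is exactly an instance of the interval property applied to $W \subseteq Y$ and the element $x$: from $Y \cup x \in \F$ together with feasibility lower down one pushes the extension down to $W$. The subtlety is handling the case $x \in \xi(A)$ cleanly, where $x$ belongs to the union of the representatives of $A$ and there is nothing to prove.

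Item (3) should then be a short deduction: take $x \in \Gamma(A \vee B)$ and set $X$ to be a feasible set representing $A \vee B$. Since $A, B \le A \vee B$, item (2) applied to each gives $x \in (\Gamma(A) \cup \xi(A)) \cap (\Gamma(B)\cup\xi(B))$; then I rule out the possibility that $x \in \xi(A) \cap \xi(B)$ by noting that $A \vee B = \mu(\xi(A)\cap\xi(B))$ from \ref{p:PhiIsSemimodularLattice}, so $x \in \xi(A)\cap\xi(B)$ together with $X \cup x \in \F$ and $X$ maximal in $\xi(A)\cap\xi(B)$ would contradict maximality --- forcing $x \in \Gamma(A) \cup \Gamma(B)$. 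Item (4) packages (2) and (3): since $A, B \le A\vee B$, part (2) gives $\Gamma(A\vee B) \subseteq (\Gamma(A)\cup\xi(A)) \cap (\Gamma(B)\cup\xi(B))$, and one checks $\xi(A\vee B)$ lies in the same intersection directly from the fact that $\xi$ is order-reversing (\ref{p:MuAndXiProperties}(3)), giving $\xi(A\vee B) \subseteq \xi(A)$ and $\xi(A \vee B)\subseteq \xi(B)$. The main obstacle, I expect, is the case bookkeeping in (1): correctly extracting the contradiction from (IG3) when neither alternative of the dichotomy holds, since that is where the interval greedoid structure is genuinely used rather than the formal lattice-theoretic properties of $\mu$ and $\xi$.
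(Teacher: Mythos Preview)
Your approaches to (3) and (4) are sound; in fact your argument for (3), deducing it from (2) together with a maximality argument in $\xi(A)\cap\xi(B)$, is a valid alternative to the paper's direct appeal to (1). The genuine gap is in (1), and it propagates to (2).

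In (1), the interval property (IG3) by itself cannot produce the dichotomy. With $B=[Y]$ and $X\subseteq Y$ (which is the correct containment, since $B\le[X]$ means some representative of $B$ contains $X$), knowing only $X\cup x\in\F$ tells you nothing about $Y\cup x$ via (IG3): that axiom requires feasibility of $e$-extensions at \emph{both} ends of a chain before it yields feasibility in the middle, and here you have it only at the bottom. The paper's argument instead builds a saturated chain $X=X_0\subset X_1\subset\cdots\subset X_r$ of feasible sets with $[X_r]=B$, and at each step applies semimodularity (\ref{c:Semimodularity}): from $x,z_{i+1}\in\Gamma(X_i)$ one obtains either $[X_i\cup x]=[X_{i+1}]$ (whence $B=[X_r]\le[X_i\cup x]\le[X\cup x]$ and the process halts) or $X_{i+1}\cup x\in\F$ (whence continue). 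If the process never halts, then $x\in\Gamma(X_r)=\Gamma(B)$. This inductive ladder using \ref{c:Semimodularity} is the key mechanism; the vague reference to ``nested sets built from $Y$, $X$, and $X\cup x$'' does not supply it.

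For (2) there is also a direction error: with $W\in A$ and $Y\in B$ you should have $Y\subseteq W$, as your own parenthetical notes, not $W\subseteq Y$. Once corrected, your plan to push the extension from $Y$ to $W$ via (IG3) fails for exactly the same reason as in (1): you want $W\cup x\in\F$ from $Y\cup x\in\F$ with $Y\subseteq W$, which is going \emph{up}, not to an intermediate set. The paper instead deduces (2) immediately from (1): since $A\le B=[X]$ and $x\in\Gamma(X)$, item (1) gives $x\in\Gamma(A)$ or $A\le[X\cup x]$, and in the second case $x\in X\cup x\subseteq\xi([X\cup x])\subseteq\xi(A)$ because $\xi$ is order-reversing.
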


\begin{proof}
(1) Pick $Y \in \F$ such that $B = [Y]$. Suppose $[Y] \leq [X]$ and
let $x \in \Gamma(X)$. Then there exists $Z \in \F/X$ such that $X
\cup Z \sim Y$.  Applying axiom (IG2) repeatedly to $X$ and $X \cup Z$
yields a sequence $X \subset (X \cup z_1) \subset (X \cup \{z_1,z_2\})
\subset \cdots \subset (X \cup Z)$ of feasible sets. Put $X_0 = X$ and
let $X_i = X_{i-1} \cup z_i$ for $1 \leq i \leq r = |Z|$. 
\begin{gather*}
\xymatrix@R=0pt{
& X \cup \{z_1\} \ar@{^{(}->}[r] 
& X \cup \{z_1,z_2\} \ar@{^{(}->}[r] 
& \cdots \ar@{^{(}->}[r]
& X \cup Z \sim Y \\
X \ar@{^{(}->}[ur] \ar@{^{(}->}[dr] \\
& X \cup \{x\} \ar@{^{(}->}[r] 
& X \cup \{z_1, x\} \ar@{^{(}->}[r] 
& \cdots \ar@{^{(}->}[r]
& X \cup Z \cup \{x\}
}
\end{gather*}
Since $x \in \Gamma(X)$, $X \cup x \in \F$. If $[X \cup x] = [X \cup
z_1]$, then $[Y] = [X \cup Z] \leq [X \cup z_1] = [X \cup x]$ since $X
\subseteq X \cup Z$. If $[X \cup x] \neq [X \cup z_1]$, then $X \cup
\{x,z_1\} \in \F$ by \ref{c:Semimodularity}
since $x, z_1 \in \Gamma(X)$.  Thus, $x, z_2 \in \Gamma(X \cup z_1)$.
If $[X \cup \{z_1,x\}] = [X \cup \{z_1,z_2\}]$, then $[Y] \leq [X \cup
x]$ using a similar argument as in the previous case. If $[X \cup
\{z_1,x\}] \neq [X \cup \{z_1,z_2\}]$, then $X \cup \{z_1, z_2, x\}
\in \F$ by \ref{c:Semimodularity}. Continuing
in this manner we get either that $[Y] \leq [X \cup x]$ or $(X \cup Z)
\cup x \in \F$. That is, either $B = [Y] \leq [X \cup x]$, or $x \in
\Gamma(Y) = \Gamma(B)$. This proves the statement.

(2) Pick $X,Y \in \F$ such that $A = [Y]$ and $B = [X]$. If $[Y] \leq
[X]$ and $x \in \Gamma(Y)$, then $x \in \Gamma(Y)$ or $[Y] \leq [X
\cup x]$ by (1). In the latter case, $x \in \xi(X \cup x) \subseteq
\xi(Y)$ since $\xi$ is order-reversing. Thus, $x \in \Gamma(Y)$ or $x
\in \xi(Y)$.

(3) Pick $X \in \F$ such that $A \vee B = [X]$. Let $x \in \Gamma(X)$.
Then $X \cup x \in \F$ and $[X \cup x] < [X]$. Since $[X] = A \vee B$,
it follows that $[X \cup x]$ is not above both $A$ and $B$. If $A
\not\leq [X \cup x]$, then the above applied to $A \vee B$ and $A$
gives that $x \in \Gamma(A)$ since $A \not\leq [X \cup x]$. Similarly,
if $B \not\leq [X \cup x]$, then $x \in \Gamma(B)$. Hence, $x \in
\Gamma(A) \cup \Gamma(B)$.

(4) Since $\xi$ is order-reversing, it follows that $\xi(A \vee B)
\subseteq \xi(A) \cap \xi(B)$. (4) now follows from (2).
\end{proof}

The next result collects some properties of the product and
partial order of signed flats.

\begin{Proposition}
\label{p:ProductAndPartialOrder}
Let $(A, \Flat\alpha)$ and $(B, \Flat\beta)$ denote two signed flats over an
interval greedoid $(E, \F)$. Then
\begin{enumerate}
\item
$(A,\Flat\alpha) \leq (B,\Flat\beta)$ if and only if $(A,\Flat\alpha) \circ
(B,\Flat\beta) =
(B,\Flat\beta)$.
\item 
$(A,\Flat\alpha) \leq (A,\Flat\alpha) \circ (B,\Flat\beta)$.
\item
If $A \leq B$, then $(B,\Flat\beta) \circ (A,\Flat\alpha) = (B,\Flat\beta)$.
\item
The product $\circ$ is associative.
\item
$(A,\Flat\alpha) \circ (B, \Flat\beta) \circ (A,\Flat\alpha) = (A,\Flat\alpha) \circ
(B,\Flat\beta).$
\item
$(A,\Flat\alpha) \circ (A,\Flat\alpha) = (A,\Flat\alpha).$
\end{enumerate}
\end{Proposition}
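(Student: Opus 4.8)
The plan is to treat the six parts in a convenient order, deducing the later ones from the earlier and isolating the associativity statement~(4) as the only genuine computation. Throughout I will lean on two elementary facts about $\Gamma$ and $\xi$: first, the well-definedness inputs $\Gamma(A\vee B)\subseteq\Gamma(A)\cup\Gamma(B)$ together with $\Gamma(B)\subseteq\Gamma(A)\cup\xi(A)$ whenever $A\le B$ (both from \ref{p:PropertiesOfContinuations}); and second, the observation that $\xi(A)\cap\Gamma(A)=\O$, which is immediate from the definitions, since any $x\in\xi(A)$ lies in some feasible $X'\sim A$ and hence $x\notin\Gamma(X')=\Gamma(A)$.

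Parts (6), (2), (1) and (3) are quick unwindings of the definitions. For (6), $A\vee A=A$ and each $x\in\Gamma(A)$ satisfies $x\in\Gamma(A)$, so $\Flat\alpha\circ\Flat\alpha=\Flat\alpha$. For (2), the product has flat $A\vee B\ge A$, and on $\Gamma(A)\cap\Gamma(A\vee B)$ the composite sign is $\Flat\alpha$ by definition, so $(A,\Flat\alpha)$ sits below it. For (1), if $(A,\Flat\alpha)\le(B,\Flat\beta)$ then $A\vee B=B$, and on $\Gamma(B)$ the composite returns $\Flat\beta(x)$ in both the case $x\in\Gamma(A)$ (using the agreement of signs) and the case $x\notin\Gamma(A)$ (by definition); conversely, equality of the product with $(B,\Flat\beta)$ forces $A\vee B=B$, hence $A\le B$, and recovers the agreement of signs on $\Gamma(A)\cap\Gamma(B)$. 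For (3), when $A\le B$ we have $B\vee A=B$, so the composite is evaluated only on $\Gamma(B)$, where it returns $\Flat\beta$.

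The heart of the matter is associativity~(4). Both triple products have flat $A\vee B\vee C=:D$ because $\vee$ is associative in $\Phi$ (\ref{p:PhiIsSemimodularLattice}). I will show both sign functions on $\Gamma(D)$ equal the ``first-defined'' assignment $\sigma$ sending $x$ to $\Flat\alpha(x)$ if $x\in\Gamma(A)$, else to $\Flat\beta(x)$ if $x\in\Gamma(B)$, and otherwise to $\Flat\gamma(x)$. That $\sigma$ is well-defined on $\Gamma(D)$ follows from $\Gamma(D)\subseteq\Gamma(A\vee B)\cup\Gamma(C)\subseteq\Gamma(A)\cup\Gamma(B)\cup\Gamma(C)$ (two applications of \ref{p:PropertiesOfContinuations}), which guarantees the third clause only fires when $\Flat\gamma(x)$ is defined. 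The right-associated product $(A,\Flat\alpha)\circ\bigl((B,\Flat\beta)\circ(C,\Flat\gamma)\bigr)$ matches $\sigma$ essentially by inspection: on $\Gamma(A)$ it returns $\Flat\alpha$, and off $\Gamma(A)$ the point lies in $\Gamma(B\vee C)$, so the inner product takes over and reproduces the remaining two clauses.

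The one place needing care is the left-associated product $\bigl((A,\Flat\alpha)\circ(B,\Flat\beta)\bigr)\circ(C,\Flat\gamma)$ on the set $\Gamma(D)\m\Gamma(A\vee B)$, where it returns $\Flat\gamma(x)$; I must check this agrees with $\sigma$, i.e.\ that such an $x$ lies in neither $\Gamma(A)$ nor $\Gamma(B)$. This is exactly where the disjointness fact enters: since $A\vee B\le D$, \ref{p:PropertiesOfContinuations} gives $\Gamma(D)\subseteq\Gamma(A\vee B)\cup\xi(A\vee B)$, so $x\in\xi(A\vee B)\subseteq\xi(A)\cap\xi(B)$ (as $\xi$ is order-reversing, \ref{p:MuAndXiProperties}), whence $\xi(A)\cap\Gamma(A)=\O=\xi(B)\cap\Gamma(B)$ forces $x\notin\Gamma(A)\cup\Gamma(B)$, giving $\sigma(x)=\Flat\gamma(x)$. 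With associativity in hand, part~(5) is immediate: writing $P=(A,\Flat\alpha)\circ(B,\Flat\beta)$, whose flat $A\vee B$ satisfies $A\le A\vee B$, part~(3) applied to $P$ and $(A,\Flat\alpha)$ yields $P\circ(A,\Flat\alpha)=P$, which is the claimed identity. The main obstacle is thus the bookkeeping in~(4), and in particular the left-association subtlety, which is resolved by the $\xi$--$\Gamma$ disjointness rather than by any (false) monotonicity of $\Gamma$ along $\le$.
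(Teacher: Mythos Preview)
Your proof is correct and follows essentially the same approach as the paper for parts (1), (2), (3), and (6), unwinding the definitions directly. The paper actually leaves (4), (5), and (6) as ``straightforward to verify using similar arguments,'' so your detailed treatment of associativity---in particular isolating the only nontrivial case $x\in\Gamma(D)\setminus\Gamma(A\vee B)$ and resolving it via the disjointness $\xi(A)\cap\Gamma(A)=\O$---goes beyond what the paper writes out, and your derivation of (5) from (3) and (4) is a clean way to organize the remaining claim.
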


\begin{proof}
(1) Suppose $(A,\Flat\alpha)\circ(B,\Flat\beta) = (B,\Flat\beta)$. Since $(A,\Flat\alpha)
\circ (B,\Flat\beta) = (A \vee B, \Flat\alpha \circ \Flat\beta)$, it follows that $B
= A \vee B$ and that $\Flat\beta = \Flat\alpha \circ \Flat\beta$. Therefore, $A \leq
B$ and $\Flat\beta(x) = (\Flat\alpha \circ \Flat\beta)(x) = \Flat\alpha(x)$ for all $x \in
\Gamma(A) \cap \Gamma(B)$. Thus, $(A,\Flat\alpha) \leq (B,\Flat\beta)$.

Conversely, suppose $(A,\Flat\alpha) \leq (B,\Flat\beta)$. Then $A \leq B$ and
$\Flat\alpha(x) = \Flat\beta(x)$ for all $x \in \Gamma(A) \cap \Gamma(B)$.
Therefore, $A \vee B = B$. It remains to show that $(\Flat\alpha \circ
\Flat\beta)(x) = \Flat\beta(x)$ for all $x \in \Gamma(A \vee B) = \Gamma(B)$.
Let $x \in \Gamma(B)$. If $x \in \Gamma(A)$, then $(\Flat\alpha \circ
\Flat\beta)(x) = \Flat\alpha(x)$ and $\Flat\alpha(x) = \Flat\beta(x)$ since $\Flat\alpha$ and
$\Flat\beta$ agree on $\Gamma(A) \cap \Gamma(B)$. If $x \not\in \Gamma(A)$,
then $(\Flat\alpha\circ \Flat\beta)(x) = \Flat\beta(x)$. Therefore, $\Flat\beta$ and
$\Flat\alpha \circ \Flat\beta$ agree on $\Gamma(A \vee B)$. 

(2) First note that $A \leq A \vee B$ by the definition of $\vee$.  We
need only show that $\Flat\alpha$ and $\Flat\alpha \circ \Flat\beta$ agree on
$\Gamma(A) \cap \Gamma(A \vee B)$, which follows from the definition of
$\Flat\alpha\circ\Flat\beta$.  
Therefore, $(A, \Flat\alpha) \leq (A \vee B, \Flat\alpha
\circ \Flat\beta) = (A,\Flat\alpha)\circ(B,\Flat\beta)$.  

(3) If $A \leq B$, then $A \vee B = B$, so $\Gamma(A \vee B) =
\Gamma(B)$.  So the domains of $\Flat\beta \circ \Flat\alpha$ and $\Flat\beta$ are
the same.  And from the definition of $\circ$, if $x \in \Gamma(B)$,
then $(\Flat\beta \circ \Flat\alpha)(x) = \Flat\beta(x)$.

(4), (5) and (6) are straightforward to verify using similar arguments.
\end{proof}

\subsection{Covectors}

Let $(A,\Flat\alpha)$ denote a signed flat. Then $\Flat\alpha: \Gamma(A) \to
\{+,-\}$ can be extended to a map $\alpha: E \to \{0,+,-,1\}$
as follows,
\begin{align*}
 \alpha(e) =
 \begin{cases}
 \Flat\alpha(e), & \text{if } e \in \Gamma(A), \\
 \hfill 0, & \text{if } e \in \xi(A), \\
 \hfill 1, & \text{otherwise}.
 \end{cases}
\end{align*}
This map $\alpha$ is called the \defn{covector} of the signed flat $(A,\Flat\alpha)$.

\begin{Example}[Antimatroids]
\label{x:AntimatriodCovectors}
Let $E$ be a finite subset of $\mathbb R^n$ and $\tau(X)=\conv(X)\cap
E$. Let $(E,\F)$ denote the corresponding \uig.
Suppose $(A,\Flat\alpha)$ is a signed flat of $(E,\F)$ and let $X\in\F$
with $A=[X]$. Then the covector $\alpha$ of the signed flat is
obtained by assigning $0$ to the points in the exterior of $E\m X$,
$\Flat\alpha(x)$ to the points $e\in\ext(E\m X)$, and $1$ to the
non-extreme points contained in $E\m X$.
See \ref{f:TwoCovectorsInAConvexGeometry} for an example.
\end{Example}

Note that a signed flat $(A,\Flat\alpha)$ can be recovered from its 
covector $\alpha$.
Indeed, the set of
indices $e \in E$ such that $\alpha(e) = 0$ is precisely the set
$\xi(A)$, from which $A$ can be recovered (\ref{p:MuAndXiProperties}).
Therefore, there exists a map from the set
of covectors of $(E,\F)$ to the lattice of flats $\Phi$,
\begin{gather*}
 \supp(\alpha) = \mu\left( \{x \in E : \alpha(x) = 0\} \right).
\end{gather*}

The product on signed flats can be formulated for covectors as
follows. Let $\alpha,\beta: E \to \{0,+,-,1\}$ be the covectors of
the signed flats $(A,\Flat\alpha)$, $(B,\Flat\beta)$, respectively.  Define a
partial order on the symbols $0,+,-,1$ according to the following
Hasse diagram.
\begin{align*}
\xymatrix@=0.7em{
& 1 \ar@{-}[dr] \ar@{-}[dl] & \\
+ \ar@{-}[dr] & & - \ar@{-}[dl]  \\
& 0 & 
}
\end{align*}
Define
\begin{align*}
(\alpha \star \beta)(e) =
\begin{cases}
\beta(e), & \text{if } \beta(e) > \alpha(e), \\
\alpha(e), & \text{otherwise}
\end{cases}
\end{align*}
and 
\begin{align*}
(\alpha \circ \beta)(e) =
\begin{cases}
(\alpha \star \beta)(e), & \text{if } e \in 
\Gamma(A \vee B) \cup \xi(A \vee B), \\
\hfill 1, & \text{otherwise}.
\end{cases}
\end{align*}

\begin{Example}[Covector multiplication in an antimatroid]
Let $\alpha$ and $\beta$ be covectors of $(E,\F)$ from
\ref{x:AntimatriodCovectors} and $X'$ and $Y'$ their underlying closed
sets.
%
%
The covector $\alpha\circ\beta$ is obtained as follows. Let $Z' =
\tau(X'\cup Y')$. Then $(\alpha\circ\beta)(z)$ is $0$ if $z$ is in
the exterior of $Z'$, $1$ if $z$ is a non-extreme point of $Z'$,
$\alpha(z)$ if $z\in\ext(X')$, and $\beta(z)$ otherwise. 
\ref{f:ProductOfCovectorsInAConvexGeometry} depicts the product of the
covectors from \ref{f:TwoCovectorsInAConvexGeometry}.
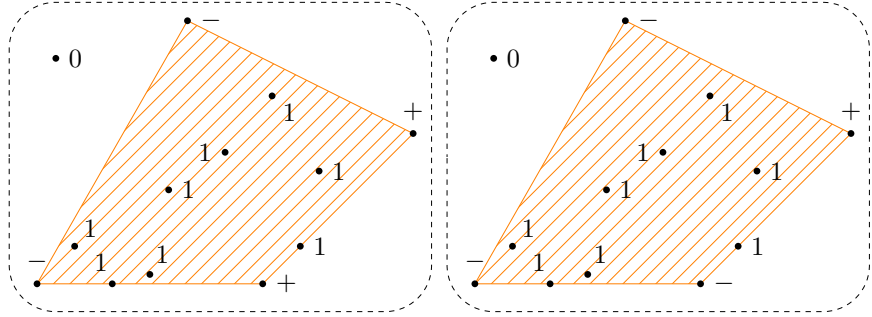
\begin{figure}[!ht]
\centering
\begin{pspicture}(10,7)
\border(-0.75,-0.75)(10.5,7.5)
\convexset{orange}(6,0)(10,4)(4,7)(0,0)
\uput[0](6,0){$+$}
\uput[0](7,1){$1$}
\uput[90](10,4){$+$}
\uput[0](4,7){$-$}
\uput[180](5,3.5){$1$}
\uput[-40](6.25,5){$1$}
\uput[0](7.5,3){$1$}
\uput[90](0,0){$-$}
\uput[120](2,0){$1$}
\uput[55](3,0.25){$1$}
\uput[45](1,1){$1$}
\uput[0](3.5,2.5){$1$}
\uput[0](0.5,6){$0$}
\psdot(0,0)\psdot(2,0)\psdot(3,0.25)\psdot(1,1)
\psdot(6,0)
\psdot(7,1)\psdot(10,4)\psdot(4,7)\psdot(5,3.5)\psdot(6.25,5)\psdot(7.5,3)
\psdot(3.5,2.5)
\psdot(0.5,6)
\end{pspicture}
\qquad
\begin{pspicture}(10,7)
\border(-0.75,-0.75)(10.5,7.5)
\convexset{orange}(6,0)(10,4)(4,7)(0,0)
\psdot(0,0)\psdot(2,0)\psdot(3,0.25)\psdot(1,1)
\psdot(6,0)
\psdot(7,1)\psdot(10,4)\psdot(4,7)\psdot(5,3.5)\psdot(6.25,5)\psdot(7.5,3)
\psdot(3.5,2.5)
\psdot(0.5,6)
\uput[0](6,0){$-$}
\uput[0](7,1){$1$}
\uput[90](10,4){$+$}
\uput[0](4,7){$-$}
\uput[180](5,3.5){$1$}
\uput[-40](6.25,5){$1$}
\uput[0](7.5,3){$1$}
\uput[90](0,0){$-$}
\uput[120](2,0){$1$}
\uput[55](3,0.25){$1$}
\uput[45](1,1){$1$}
\uput[0](3.5,2.5){$1$}
\uput[0](0.5,6){$0$}
\end{pspicture}
\caption{The products $\alpha\circ\beta$ (left) and $\beta\circ\alpha$
(right) of the covectors $\alpha$ and $\beta$ in
\ref{f:TwoCovectorsInAConvexGeometry}.}
\label{f:ProductOfCovectorsInAConvexGeometry}
\end{figure}
\end{Example}

\begin{Proposition}
\label{p:CovectorOfProduct}
Suppose $\alpha$ and $\beta$ are the covectors of the signed flats 
$(A,\Flat\alpha)$ and $(B,\Flat\beta)$, respectively. 
Then the covector $\gamma$ of $(A,\Flat\alpha) \circ (B,\Flat\beta)$ 
is $\alpha\circ\beta$.
\end{Proposition}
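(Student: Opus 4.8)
The plan is to prove the equality $\gamma = \alpha\circ\beta$ by comparing the two covectors pointwise on $E$, partitioning $E$ according to the position of $e$ relative to the flat $A\vee B$. By construction both $\gamma$ and $\alpha\circ\beta$ take the value $1$ on every $e\notin\Gamma(A\vee B)\cup\xi(A\vee B)$, so it will suffice to check agreement separately on $\xi(A\vee B)$ and on $\Gamma(A\vee B)$. The two structural inputs I would lean on are that $\xi$ is order-reversing (\ref{p:MuAndXiProperties}), which since $A,B\le A\vee B$ gives $\xi(A\vee B)\subseteq\xi(A)\cap\xi(B)$, together with \ref{p:PropertiesOfContinuations}(3) and (4), which give $\Gamma(A\vee B)\subseteq\Gamma(A)\cup\Gamma(B)$ and $\Gamma(A\vee B)\cup\xi(A\vee B)\subseteq(\Gamma(A)\cup\xi(A))\cap(\Gamma(B)\cup\xi(B))$.

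First I would record the behavior of $\star$ directly from its Hasse diagram: $(\alpha\star\beta)(e)=0$ holds exactly when $\alpha(e)=\beta(e)=0$, and whenever $\alpha(e)\in\{+,-\}$ one has $(\alpha\star\beta)(e)=\alpha(e)$ unless $\beta(e)=1$, since $1$ is the only symbol strictly above $\alpha(e)$. This override asymmetry is precisely what will match the convention in $\Flat\alpha\circ\Flat\beta$, which takes $\Flat\alpha$'s value on $\Gamma(A)$.

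For $e\in\xi(A\vee B)$ one has $\gamma(e)=0$, and since $\xi$ is order-reversing, $e\in\xi(A)\cap\xi(B)$, so $\alpha(e)=\beta(e)=0$ and therefore $(\alpha\circ\beta)(e)=(\alpha\star\beta)(e)=0=\gamma(e)$. For $e\in\Gamma(A\vee B)$ one has $\gamma(e)=(\Flat\alpha\circ\Flat\beta)(e)$, and by \ref{p:PropertiesOfContinuations}(4) both $e\in\Gamma(A)\cup\xi(A)$ and $e\in\Gamma(B)\cup\xi(B)$, so neither $\alpha(e)$ nor $\beta(e)$ equals $1$. If $e\in\Gamma(A)$, then $\alpha(e)=\Flat\alpha(e)\in\{+,-\}$ and, as $\beta(e)\neq1$, the tabulation gives $(\alpha\star\beta)(e)=\alpha(e)=\Flat\alpha(e)=(\Flat\alpha\circ\Flat\beta)(e)$. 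If instead $e\notin\Gamma(A)$, then $e\in\Gamma(B)$ by \ref{p:PropertiesOfContinuations}(3) and $e\in\xi(A)$ by \ref{p:PropertiesOfContinuations}(4), so $\alpha(e)=0$ while $\beta(e)=\Flat\beta(e)\in\{+,-\}$ is strictly above $0$, whence $(\alpha\star\beta)(e)=\beta(e)=\Flat\beta(e)=(\Flat\alpha\circ\Flat\beta)(e)$. In all cases $(\alpha\circ\beta)(e)=\gamma(e)$, which completes the argument.

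The proof is essentially a finite case check, and the only place that uses genuine content from the earlier propositions is the subcase $e\in\Gamma(A\vee B)\smallsetminus\Gamma(A)$: there I must invoke \ref{p:PropertiesOfContinuations}(4) to force $\alpha(e)=0$, so that $\beta$'s sign legitimately wins in $\star$. I expect this point, together with the bookkeeping needed to reconcile the override convention in $\star$ with the one defining $\Flat\alpha\circ\Flat\beta$, to be the only subtlety; everything else is routine.
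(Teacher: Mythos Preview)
Your proof is correct and follows essentially the same approach as the paper: both partition $E$ into the three regions determined by $A\vee B$, dispose of the $1$-region and the $\xi(A\vee B)$-region immediately, and then split the $\Gamma(A\vee B)$ case according to whether $e\in\Gamma(A)$. Your version is slightly more explicit in invoking \ref{p:PropertiesOfContinuations}(4) to rule out $\beta(e)=1$ in the $e\in\Gamma(A)$ subcase, whereas the paper leaves this implicit.
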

\begin{proof}
By definition, the covector $\gamma$ of the signed flat $(A\vee B,
\Flat\alpha\circ\Flat\beta)$ is given by: $\gamma(e) =
(\Flat\alpha\circ\Flat\beta)(e)$ if $e \in \Gamma(A \vee B)$; $\gamma(e) =
0$ if $e \in \xi(A \vee B)$; and $\gamma(e) = 1$ otherwise.

Suppose $e \not\in \Gamma(A \vee B) \cup \xi(A \vee B)$. By the definition
of the product of covectors, $(\alpha\circ\beta)(e) = 1$. Hence,
$(\alpha\circ\beta)(e)=\gamma(e)$.  

Suppose $e\in\xi(A\vee B)$. Then $e\in\xi(A)$ and $e\in\xi(B)$ since $\xi$
is order-reversing. This implies that $\alpha(e)=\beta(e)=0$, hence
$(\alpha\circ\beta)(e)=0$. Therefore, $\acb(e)=\gamma(e)$. 

Suppose $e \in \Gamma(A \vee B)$. 
By \ref{p:PropertiesOfContinuations}(3), $e \in \Gamma(A) \cup \Gamma(B)$. 
If $e\in\Gamma(A)$, then $\beta(e)\not>\alpha(e)$, so
$\acb(e)=\alpha(e)=\Flat\alpha(e)$. 
If $e\not\in\Gamma(A)$, then $e\in\Gamma(B)\cap\xi(A)$ and so
$\beta(e)>0=\alpha(e)$. Hence, $\acb(e)=\beta(e)=\Flat\beta(e)$.
Therefore, $\acb(e)=(\Flat\alpha\circ\Flat\beta)(e)$ for all
$e\in\Gamma(A\vee B)$.
\end{proof} 

\begin{Example}
\label{x:Rank1ComplexArrangement}
Let $E = \{x,y\}$ and $\F = \{\O, \{y\}, \{x,y\}\}$. Then $(E,\F)$ is
an \uig. 
There are five covectors of $(E,\F)$,
described in the following table. 
\begin{gather*}
\begin{array}{c|c|c}
[X] & \Gamma(X) & \text{covectors over } [X] \\ \hline
\null[\O] & \{y\} & (1,+),\ (1,-) \\
\null[\{y\}] & \{x\} & (+,0),\ (-,0) \\
\null[\{x,y\}] & \O & (0,0)
\end{array}
\end{gather*}
The partial order on these covectors is illustrated below.
\begin{gather*}
\xymatrix@R=1em@C=1em{
(1,+)\ar@{-}[drr]\ar@{-}[d] && (1,-)\ar@{-}[d]\ar@{-}[dll] \\
(+,0)\ar@{-}[dr] && (-,0)\ar@{-}[dl] \\
& (0,0)}
\end{gather*}
Observe that the product of two covectors $\alpha$ and $\beta$ can be
computed using $\star$, or using the following identity:
\begin{align*}
\alpha\circ\beta = 
\begin{cases}
\beta, & \text{if } \beta > \alpha, \\
\alpha, & \text{otherwise}.
\end{cases}
\end{align*}
For example, $(+,0)\circ(-,0)=(+,0)$
and $(+,0)\circ(1,-)=(1,-)$.
\end{Example}

Let $\alpha$ and $\beta$ be covectors of $(E,\F)$.
The \defn{separation set} of $\alpha$ and $\beta$ is
\begin{gather*}
\SeparationSet(\alpha,\beta) 
  = \{e \in E: \alpha(e) = -\beta(e) \in \{+,-\}\}.
\end{gather*}
Note that $\SeparationSet(\alpha,\beta) \subseteq
\Gamma(\supp(\alpha))\cap\Gamma(\supp(\beta))$.

The next result establishes some properties about covectors.
See also \ref{p:ProductAndPartialOrder}.

\begin{Lemma}
\label{l:CovectorsAndSeparationSets}
Let $\alpha$ and $\beta$ be covectors of an interval greedoid $(E,\F)$. 
\begin{enumerate}
\item
\label{i:CovectorProductAndPartialOrder}
$\alpha\leq\beta$ if and only if $\alpha\circ\beta = \beta$.
\item
\label{i:CriterionForLEQ2}
$\alpha\leq\beta$ if and only if $\alpha(e)\leq\beta(e)$ for all $e\in E$.
\item 
\label{i:CriterionForLEQ1}
$\alpha \leq \beta$ if and only if 
$\SeparationSet(\alpha, \beta) 
= \O$ and $\supp(\alpha) \leq \supp(\beta)$.
\item
\label{i:CovectorProductInvolving1}
If $\alpha(e) = 1$ or $\beta(e) = 1$, 
 then $(\alpha \circ \beta)(e) = 1 = (\beta \circ \alpha)(e)$.
\end{enumerate}
\end{Lemma}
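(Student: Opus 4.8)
The plan is to prove the four statements in an order that lets each build on the previous ones, rather than attacking them independently. The natural backbone is the pointwise criterion in part \eqref{i:CriterionForLEQ2}, so I would reorder the logical dependencies as follows: first unwind the definitions to understand how the product $\alpha\circ\beta$ behaves relative to the partial order on the four symbols $\{0,+,-,1\}$, then establish \eqref{i:CovectorProductAndPartialOrder} and \eqref{i:CriterionForLEQ2} essentially together, and finally derive \eqref{i:CriterionForLEQ1} and \eqref{i:CovectorProductInvolving1} as consequences.

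For part \eqref{i:CovectorProductInvolving1} I would actually dispatch this first, since it is the most elementary: by the definition of $\star$ on the symbol poset, if either $\alpha(e)=1$ or $\beta(e)=1$ then $(\alpha\star\beta)(e)=1$, because $1$ is the top element. One then checks that $e$ satisfies $\alpha(e)=1$ or $\beta(e)=1$ only when $e\notin\xi(A)\cup\xi(B)$, and by \ref{p:PropertiesOfContinuations}(4) this forces $e\notin\Gamma(A\vee B)\cup\xi(A\vee B)$, so the product takes value $1$ by its defining case split. The symmetric argument gives $(\beta\circ\alpha)(e)=1$ as well. For \eqref{i:CovectorProductAndPartialOrder} and \eqref{i:CriterionForLEQ2}, I would translate back through \ref{p:CovectorOfProduct}: since $\alpha\circ\beta$ is exactly the covector of $(A,\Flat\alpha)\circ(B,\Flat\beta)$, the statement $\alpha\circ\beta=\beta$ is equivalent to $(A,\Flat\alpha)\circ(B,\Flat\beta)=(B,\Flat\beta)$, which by \ref{p:ProductAndPartialOrder}(1) holds if and only if $(A,\Flat\alpha)\leq(B,\Flat\beta)$. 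This reduces \eqref{i:CovectorProductAndPartialOrder} to showing that the partial order on signed flats matches the partial order on covectors, which is precisely the content of \eqref{i:CriterionForLEQ2}, so the two parts should be proved as a single argument comparing the signed-flat order with the pointwise symbol order.

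The heart of the matter is therefore \eqref{i:CriterionForLEQ2}: to show $\alpha\leq\beta$ (meaning $A\leq B$ and $\Flat\alpha,\Flat\beta$ agree on $\Gamma(A)\cap\Gamma(B)$) is equivalent to the pointwise inequality $\alpha(e)\leq\beta(e)$ for all $e$. In the forward direction, I would do a case analysis on where $e$ lives: if $e\in\xi(A)$ then $\alpha(e)=0$, which is the bottom symbol, so the inequality is automatic; if $e\in\Gamma(A)\cap\Gamma(B)$ then agreement gives $\alpha(e)=\beta(e)$; and if $e\in\Gamma(A)\setminus\Gamma(B)$ then by \ref{p:PropertiesOfContinuations}(2) applied to $A\leq B$ we get $e\in\xi(B)$ forcing $\beta(e)=0$, but I must then verify $\alpha(e)\leq 0$, which only holds if $\alpha(e)=0$ too — this is the delicate point and I expect it to be the main obstacle. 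The resolution should be that $e\in\Gamma(A)$ gives $\alpha(e)\in\{+,-\}$ while the condition $A\leq B$ together with the structure of $\Gamma(A)\cap\Gamma(B)$ must be re-examined; more carefully, the correct reading is that $\Gamma(A)\cap\Gamma(B)$ is exactly where both covectors are signed, and for $e\in\Gamma(A)\setminus\Gamma(B)$ one has $\beta(e)=1$ (not $0$), since \ref{p:PropertiesOfContinuations}(2) places $e$ in $\Gamma(B)\cup\xi(B)$, and $e\notin\Gamma(B)$ combined with the hypotheses about where $\beta=1$ must be resolved. I would pin down this case by tracing through the definition of $\beta$ on $E\setminus(\Gamma(B)\cup\xi(B))$, where $\beta(e)=1=\top$, making $\alpha(e)\leq\beta(e)$ automatic.

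The converse direction of \eqref{i:CriterionForLEQ2} is cleaner: assuming $\alpha(e)\leq\beta(e)$ pointwise, I would first recover $\xi(B)\subseteq\xi(A)$ from the fact that $\alpha(e)=0$ wherever $\beta(e)=0$ (since $0$ is the bottom symbol, $\beta(e)=0$ forces $\alpha(e)=0$), which gives $A\leq B$ via \ref{p:MuAndXiProperties}(4); then agreement on $\Gamma(A)\cap\Gamma(B)$ follows because on that set both values lie in $\{+,-\}$ and the pointwise inequality between two incomparable signs can only hold as equality. Finally, for part \eqref{i:CriterionForLEQ1}, I would observe that $\SeparationSet(\alpha,\beta)=\O$ rules out exactly the configuration $\alpha(e)=-\beta(e)\in\{+,-\}$, and combined with $\supp(\alpha)\leq\supp(\beta)$ — which by the definition $\supp(\alpha)=\mu(\{e:\alpha(e)=0\})$ encodes the flat inequality $A\leq B$ — this is equivalent to the pointwise condition of \eqref{i:CriterionForLEQ2}; so \eqref{i:CriterionForLEQ1} follows by citing the already-proved \eqref{i:CriterionForLEQ2} and matching up the separation-set and support conditions with the pointwise symbol inequalities.
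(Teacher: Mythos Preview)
Your approach is essentially the paper's: part~\eqref{i:CovectorProductAndPartialOrder} via \ref{p:ProductAndPartialOrder} and \ref{p:CovectorOfProduct}, part~\eqref{i:CriterionForLEQ2} by case analysis on where $e$ sits, part~\eqref{i:CriterionForLEQ1} directly from the signed-flat definition of $\leq$, and part~\eqref{i:CovectorProductInvolving1} via \ref{p:PropertiesOfContinuations}(4). Two small issues in your case analysis for the forward direction of \eqref{i:CriterionForLEQ2} are worth flagging.

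First, you misread \ref{p:PropertiesOfContinuations}(2): it says $\Gamma(B)\subseteq\Gamma(A)\cup\xi(A)$ when $A\leq B$, not the reverse inclusion, so it does not by itself place $e\in\Gamma(A)\setminus\Gamma(B)$ into $\Gamma(B)\cup\xi(B)$. The correct reason that $\beta(e)=1$ in this case is that $A\leq B$ gives $\xi(B)\subseteq\xi(A)$ (since $\xi$ is order-reversing), and $e\in\Gamma(A)$ forces $e\notin\xi(A)\supseteq\xi(B)$; combined with $e\notin\Gamma(B)$ this yields $\beta(e)=1$. Second, your case split omits the case $e\notin\xi(A)\cup\Gamma(A)$, i.e.\ $\alpha(e)=1$; here you must check $\beta(e)=1$, which follows because \ref{p:PropertiesOfContinuations}(2) together with $\xi(B)\subseteq\xi(A)$ gives $\Gamma(B)\cup\xi(B)\subseteq\Gamma(A)\cup\xi(A)$. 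The paper organises the same cases slightly differently (splitting first on whether $e\in\Gamma(B)\cup\xi(B)$, then on whether $e\in\xi(A)$), which avoids both pitfalls at once. Finally, note that \eqref{i:CovectorProductAndPartialOrder} does not actually require \eqref{i:CriterionForLEQ2}: the partial order on covectors \emph{is} by definition the partial order on signed flats, so \ref{p:ProductAndPartialOrder}(1) plus \ref{p:CovectorOfProduct} already finishes it.
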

\begin{proof}
Let $A = \supp(\alpha)$ and $B=\supp(\beta)$.
By definition, $\alpha\leq\beta$ if and only if $A\leq B$ and $\alpha$
and $\beta$ agree on $\Gamma(A)\cap\Gamma(B)$. 

\eqref{i:CovectorProductAndPartialOrder}
This follows from \ref{p:ProductAndPartialOrder} and
\ref{p:CovectorOfProduct}.

\eqref{i:CriterionForLEQ2}
Suppose $\alpha\leq\beta$ and let $e\in E$. If
$e\notin\xi(B)\cup\Gamma(B)$, then $\beta(e)=1$, so
$\alpha(e)\leq\beta(e)$. If $e\in\xi(A)$, then $\alpha(e)=0$, so
$\alpha(e)\leq\beta(e)$. So suppose $e\in\xi(B)\cup\Gamma(B)$ and
$e\notin\xi(A)$. Then $e\in\Gamma(A)\cap\Gamma(B)$, by
\ref{p:PropertiesOfContinuations}. Then $\alpha(e)\leq\beta(e)$
because $\alpha$ and $\beta$ agree on $\Gamma(A)\cap\Gamma(B)$.

Conversely, suppose $\alpha(e)\leq\beta(e)$ for all $e\in E$. Since
$\{e : \beta(e)=0\} \subseteq \{e: \alpha(e)=0\}$, we have
$\supp(\alpha)\leq\supp(\beta)$. If $e\in\Gamma(A)\cap\Gamma(B)$, then
$\alpha(e),\beta(e)\in\{+,-\}$, which implies $\alpha(e)=\beta(e)$
because $\alpha(e)\leq\beta(e)$. Thus, $\alpha\leq\beta$.

\eqref{i:CriterionForLEQ1}
If $\alpha \leq \beta$, then $A \leq B$, and
$\SeparationSet(\alpha,\beta) = \O$ because $\SeparationSet(\alpha,
\beta) \subseteq \Gamma(A) \cap \Gamma(B)$. Conversely, if $A\leq B$
and $\SeparationSet(\alpha,\beta)=\O$, then $\alpha(e)=\beta(e)$ for
all $e\in\Gamma(A)\cap\Gamma(B)$, so $\alpha\leq\beta$.

\eqref{i:CovectorProductInvolving1}
If $(\alpha \circ \beta)(e) \neq 1$, then $e \in \Gamma(A \vee B)
\cup \xi(A \vee B) \subseteq (\Gamma(A) \cup \xi(A)) \cap (\Gamma(B)
\cup \xi(B))$ by \ref{p:PropertiesOfContinuations}(4).
Hence, $\alpha(e) \neq 1$ and $\beta(e) \neq 1$.
\end{proof}

\begin{Remark}
The converse of \eqref{i:CovectorProductInvolving1} is false.
Counter-examples are depicted in
\ref{f:ConverseOfLemmaOnCovectorsAndSeparationSets}. They also
illustrate that the following containments can be proper.
\begin{gather*}
\Gamma(A \vee B) \cup \xi(A \vee B) \subseteq
 (\Gamma(A) \cup \xi(A) ) \cap (
 (\Gamma(B) \cup \xi(B) ),\\
 \xi(A' \vee B') \subseteq \xi(A') \cap \xi(B').
\end{gather*}
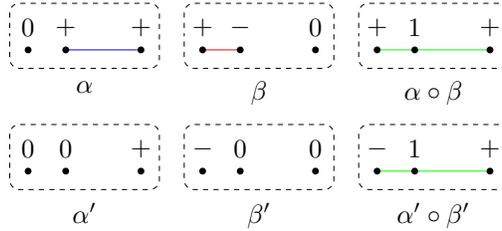
\begin{figure}[!ht]
\centering
\def\xO{-0.5}\def\yO{-0.5}
\def\xl{3.5}\def\yl{1.25}
\begin{pspicture}(3,1)(0,-0.75)
\border(-0.5,-0.5)(3.5,1.25)
\psline[linecolor=blue,linewidth=0.02](1,0)(3,0)
\psdot(0,0)\psdot(1,0)\psdot(3,0)
\uput[90](0,0){$0$}
\uput[90](1,0){$+$}
\uput[90](3,0){$+$}
\uput[-90](1.5,-0.5){$\alpha$}
\end{pspicture}
\qquad
\begin{pspicture}(3,1)(0,-0.75)
\border(-0.5,-0.5)(3.5,1.25)
\psline[linecolor=red,linewidth=0.02](0,0)(1,0)
\psdot(0,0)\psdot(1,0)\psdot(3,0)
\uput[90](0,0){$+$}
\uput[90](1,0){$-$}
\uput[90](3,0){$0$}
\uput[-90](1.5,-0.5){$\beta$}
\end{pspicture}
\qquad
\begin{pspicture}(3,1)(0,-0.75)
\border(-0.5,-0.5)(3.5,1.25)
\psline[linecolor=green,linewidth=0.02](0,0)(3,0)
\psdot(0,0)\psdot(1,0)\psdot(3,0)
\uput[90](0,0){$+$}
\uput[90](1,0){$1$}
\uput[90](3,0){$+$}
\uput[-90](1.5,-0.5){$\alpha\circ\beta$}
\end{pspicture} 
\\\vspace{2em}
\begin{pspicture}(3,1)(0,-0.75)
\border(-0.5,-0.5)(3.5,1.25)
\psdot(0,0)\psdot(1,0)\psdot(3,0)
\uput[90](0,0){$0$}
\uput[90](1,0){$0$}
\uput[90](3,0){$+$}
\uput[-90](1.5,-0.5){$\alpha'$}
\end{pspicture}
\qquad
\begin{pspicture}(3,1)(0,-0.75)
\border(-0.5,-0.5)(3.5,1.25)
\psdot(0,0)\psdot(1,0)\psdot(3,0)
\uput[90](0,0){$-$}
\uput[90](1,0){$0$}
\uput[90](3,0){$0$}
\uput[-90](1.5,-0.5){$\beta'$}
\end{pspicture}
\qquad
\begin{pspicture}(3,1)(0,-0.75)
\border(-0.5,-0.5)(3.5,1.25)
\psline[linecolor=green,linewidth=0.02](0,0)(3,0)
\psdot(0,0)\psdot(1,0)\psdot(3,0)
\uput[90](0,0){$-$}
\uput[90](1,0){$1$}
\uput[90](3,0){$+$}
\uput[-90](1.5,-0.5){$\alpha'\circ\beta'$}
\end{pspicture}
\caption{Counter-examples to the converse of \ref{l:CovectorsAndSeparationSets}
\eqref{i:CovectorProductInvolving1}.}
\label{f:ConverseOfLemmaOnCovectorsAndSeparationSets}
\end{figure}
\end{Remark}

\subsection{Oriented interval greedoids}

For any covector $\alpha$, let $-\alpha$ be the covector obtained from
$\alpha$ by replacing $+$ with $-$ and $-$ with $+$.

\begin{Definition}
\label{d:OrientedIntervalGreedoid}
An \defn{oriented interval greedoid} is a triple $(E,\F, \OIG)$, where
$(E,\F)$ is an interval greedoid and $\OIG$ is a set of covectors of
$(E,\F)$ satisfying the following axioms.
\begin{itemize}
\item[(OG1)] 
The map $\supp:\OIG \to \Phi$ is surjective.
\item[(OG2)] 
If $\alpha \in \OIG$, then $-\alpha \in \OIG$.
\item[(OG3)] 
If $\alpha, \beta \in \OIG$, then $\alpha \circ \beta \in \OIG$.
\item[(OG4)] 
If $\alpha,\beta\in\OIG$, $x\in\SeparationSet(\alpha,\beta)$ and
$(\alpha\circ\beta)(x)\neq1$, then there exists $\gamma\in\OIG$ such
that $\gamma(x)=0$ and for all $y\notin\SeparationSet(\alpha,\beta)$,
if $(\alpha\circ\beta)(y)\neq1$, then 
$\gamma(y)=(\alpha\circ\beta)(y)=(\beta\circ\alpha)(y)$.

\end{itemize}
\end{Definition}

As we will see in Section \ref{sss:OrientedMatroids}, these conditions
are modelled on the covector axioms for oriented matroids. In the next
section we will present various examples of oriented interval
greedoids. We record here the following observation.

\begin{Lemma}
Suppose $\alpha$ and $\beta$ are covectors of an oriented interval
greedoid. If $(\alpha\circ\beta)(y)\neq(\beta\circ\alpha)(y)$, then
$\alpha(y)=-\beta(y)\in\{+,-\}$ (that is,
$y\in\SeparationSet(\alpha,\beta)$.)
\end{Lemma}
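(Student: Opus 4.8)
The plan is to reduce the claim to a short inspection of the four-element poset $\{0,+,-,1\}$, after first isolating the symmetry built into the definition of the product. Write $A = \supp(\alpha)$ and $B = \supp(\beta)$. The key initial observation is that the predicate deciding when a product of covectors takes the value $1$ is symmetric in its two arguments: because the join in $\Phi$ is commutative, $A \vee B = B \vee A$, so the set $\Gx{A\vee B}$ used in defining $\acb$ coincides with the one used in defining $\bca$. Hence for any $y \in E$, either $y \notin \Gx{A\vee B}$, in which case $\acb(y) = 1 = \bca(y)$, or $y \in \Gx{A\vee B}$, in which case $\acb(y) = \adb(y)$ and $\bca(y) = \bda(y)$. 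In particular, $\acb(y) \neq \bca(y)$ forces $y \in \Gx{A\vee B}$ together with $\adb(y) \neq \bda(y)$.

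It then remains to show that $\adb(y) \neq \bda(y)$ can only occur when $\alpha(y)$ and $\beta(y)$ are the incomparable pair $\{+,-\}$. Abbreviating $a = \alpha(y)$ and $b = \beta(y)$, the rule for $\star$ returns $b$ when $b > a$ and $a$ otherwise. I would check directly that this is commutative whenever $a$ and $b$ are comparable: if $a = b$ then both products equal $a$; if $a < b$ then both equal $b$; and if $a > b$ then both equal $a$. Thus $a \star b \neq b \star a$ is possible only for an incomparable pair $a, b$, and the Hasse diagram shows that the unique incomparable pair in $\{0,+,-,1\}$ is $\{+,-\}$. Therefore $\{\alpha(y), \beta(y)\} = \{+,-\}$, i.e. $\alpha(y) = -\beta(y) \in \{+,-\}$, which is exactly the assertion that $y \in \SeparationSet(\alpha,\beta)$.

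I do not expect a genuine obstacle here: once the symmetry of the $1$-predicate is recorded, the remainder is a finite verification on four symbols, and the oriented structure $\OIG$ plays no role. The only substantive content is the recognition that the two products can disagree at $y$ solely through the $\star$-rule, and that $\star$ fails to commute precisely on opposite signs.
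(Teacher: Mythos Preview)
Your proof is correct and follows essentially the same route as the paper's. Both arguments first use the commutativity of the join $A\vee B$ to see that the two products agree wherever one of them equals $1$, and then reduce the remaining discrepancy to the $\star$-operation on $\{0,+,-,1\}$; the paper separately notes that the $0$-value is also symmetric before concluding $\alpha(y),\beta(y)\in\{+,-\}$, while you handle this in one stroke by checking that $\star$ commutes on every comparable pair. Your observation that the axioms (OG1)--(OG4) play no role is accurate.
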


\begin{proof}
Let $C=\supp(\alpha\circ\beta)=\supp(\beta\circ\alpha)$.  
Then $(\alpha\circ\beta)(y)=1$ iff $y \not\in \Gamma(C)\cup\xi(C)$
iff $(\beta\circ\alpha)(y)=1$.
Similarly,  $(\alpha\circ\beta)(y)=0$ iff $y \in\xi(C)$
iff $(\beta\circ\alpha)(y)=0$.  Thus $\alpha(y),\beta(y)
\subset \{+,-\}$.  The result follows.    
\end{proof}

\begin{Corollary}
Suppose $\alpha$ and $\beta$ are covectors of an oriented interval
greedoid. Then $(\alpha\circ\beta)(y)=(\beta\circ\alpha)(y)$ for all
$y\notin\SeparationSet(\alpha,\beta)$. 
\end{Corollary}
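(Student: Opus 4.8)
The plan is to observe that this corollary is nothing more than the contrapositive of the immediately preceding Lemma, so essentially no fresh work is required. That Lemma asserts that whenever $\acb(y) \neq \bca(y)$, the element $y$ must lie in $\SeparationSet(\alpha,\beta)$. Reading this the other way around gives exactly the statement to be proved: if $y \notin \SeparationSet(\alpha,\beta)$, then $y$ cannot be one of the elements on which the two products disagree, and hence $\acb(y) = \bca(y)$.

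Concretely, I would fix an arbitrary $y \notin \SeparationSet(\alpha,\beta)$ and argue by contradiction. Suppose $\acb(y) \neq \bca(y)$. Then the Lemma forces $y \in \SeparationSet(\alpha,\beta)$, contradicting the choice of $y$. Therefore $\acb(y) = \bca(y)$, and since $y$ was arbitrary among the elements outside $\SeparationSet(\alpha,\beta)$, the claim follows for all such $y$.

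There is no genuine obstacle to overcome: all of the substance has already been discharged inside the Lemma itself, where the three possible value-regimes $0$, $1$, and $\{+,-\}$ of $\acb$ and $\bca$ were compared using the common support $\supp(\acb) = \supp(\bca)$ together with the characterizations of $\xi(C)$ and $\Gamma(C) \cup \xi(C)$. Accordingly, the corollary should be stated with a one-line proof that simply invokes the Lemma as its contrapositive, rather than re-running any of that case analysis.
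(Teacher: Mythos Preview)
Your proposal is correct and matches the paper's approach: the corollary is stated without proof in the paper precisely because it is the contrapositive of the preceding Lemma, exactly as you observe. A one-line invocation of the Lemma is all that is needed.
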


\subsection{Examples} 
This section presents some examples of oriented interval greedoids.

\subsubsection{Oriented Matroids}
\label{sss:OrientedMatroids}
Let $E$ be a finite set. An \defn{oriented matroid} is a collection
$\OM$ of maps from $E$ to $\{0,+,-\}$ that satisfies the following
axioms.
\begin{itemize}
\item[(OM1)] $\OM$ contains the map $z(e)=0$ for all $e\in E$.
\item[(OM2)] If $\alpha \in \OM$, then $-\alpha \in \OM$.
\item[(OM3)] If $\alpha,\beta \in \OM$, then $\alpha \circ \beta \in \OM$, where
\begin{gather*}
 (\alpha \circ \beta)(e) =
 \begin{cases}
 \alpha(e), & \text{if } \alpha(e) \neq 0, \\
 \beta(e), & \text{if } \alpha(e) = 0.
 \end{cases}
\end{gather*}
\item[(OM4)] Suppose $\alpha,\beta \in \OM$ and let
$S(\alpha,\beta) = \{ e\in E: \alpha(e) = - \beta(e) \neq 0\}$.
For every $e \in S(\alpha,\beta)$ there exists $\gamma \in \OM$ 
with $\gamma(e) = 0$ and $\gamma(f) = (\alpha \circ \beta)(f)
= (\beta \circ \alpha)(f)$ for all $f \not\in S(\alpha,\beta)$.
\end{itemize}

If $\OM$ is an oriented matroid, then the set of zeros of the elements
of $\OM$ form the closed sets of a matroid $(E,\F)$. The matroid
$(E,\F)$ is the \defn{underlying matroid} of the oriented matroid and
$\OM$ is said to be an oriented matroid on $(E,\F)$.

\begin{Theorem}
\label{p:OMAndOIGs}
Suppose $(E,\F)$ is a matroid without loops. 
Then $\OM$ is an oriented matroid with
underlying matroid $(E,\F)$ if and only if $(E,\F,\OM)$ is an oriented
interval greedoid.
\end{Theorem}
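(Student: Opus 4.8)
The plan is to build a dictionary identifying the covectors of the interval greedoid $(E,\F)$ with the $\{0,+,-\}$-valued maps of oriented matroid theory, and then to match the two lists of axioms term by term. Both directions of the equivalence will follow once this dictionary is in place.

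First I would exploit looplessness. By \ref{x:MatroidXi}, for every flat $A$ one has $\xi(A)=E\m\Gamma(A)$, so $\Gamma(A)\cup\xi(A)=E$. Hence the covector $\alpha$ of a signed flat $(A,\Flat\alpha)$ never takes the value $1$: it is exactly a map $\alpha\colon E\to\{0,+,-\}$ whose zero set $\{e:\alpha(e)=0\}=\xi(A)$ is a flat, equivalently a closed set, of the matroid. Conversely, any such map is the covector of the unique signed flat $(A,\alpha|_{\Gamma(A)})$ with $A=\supp(\alpha)$. This yields a bijection between the covectors of $(E,\F)$ and the $\{0,+,-\}$-valued maps with closed zero set, which are precisely the objects governed by (OM1)--(OM4).

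Next I would check that the two products coincide. Because $A\vee B$ is a flat, $\Gamma(A\vee B)\cup\xi(A\vee B)=E$, so $\acb=\adb$ everywhere. A short case analysis on the Hasse diagram of $\{0,+,-,1\}$ shows that $\adb(e)=\alpha(e)$ when $\alpha(e)\neq0$ and $\adb(e)=\beta(e)$ when $\alpha(e)=0$, which is precisely the oriented matroid composition of (OM3); the value $1$ never arises. The separation set $\SeparationSet(\alpha,\beta)$ is defined identically in both settings, and the clause ``$\acb(x)\neq1$'' in (OG4) is vacuous here. Consequently (OG2), (OG3), (OG4) become literally (OM2), (OM3), (OM4) for the same maps, so each of these axioms holds on one side exactly when it holds on the other.

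It remains to reconcile (OM1) with (OG1), after which the genuine content is complete. The zero map is the unique covector with zero set $E$, and since $\xi(\mu(E))=E$ for a loopless matroid (again \ref{x:MatroidXi}), it is the covector with support $\mu(E)$; thus (OG1) forces $\mu(E)$ into the image of $\supp$ and hence the zero map into $\OM$, giving (OM1), while conversely for an oriented matroid the zero sets of its covectors are all the closed sets, that is all $\xi(A)$ with $A\in\Phi$, and since $\xi$ is injective with $\mu\circ\xi$ the identity (\ref{p:MuAndXiProperties}), $\supp$ is surjective, giving (OG1). Finally, once $\OM$ is known to be an oriented matroid, its underlying matroid has closed sets exactly $\{\xi(A):A\in\Phi\}$, namely the closed sets of $(E,\F)$, so the underlying matroid is $(E,\F)$. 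I expect the only real obstacle to be the dictionary of the middle two paragraphs, and specifically the two observations that looplessness eliminates the value $1$ and that the interval greedoid product then collapses onto the oriented matroid product; after that the axiom matching and the $(\mathrm{OM1})\leftrightarrow(\mathrm{OG1})$ argument via the zero map are routine.
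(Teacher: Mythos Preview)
Your proposal is correct and follows essentially the same approach as the paper: both arguments hinge on the observation that looplessness gives $\xi(A)=E\setminus\Gamma(A)$, so covectors never take the value $1$, after which the two axiom lists match term by term. You supply more detail than the paper does (in particular on the product comparison and on the $(\mathrm{OM1})\leftrightarrow(\mathrm{OG1})$ correspondence), but the underlying strategy is identical.
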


\begin{proof}
Let $(E,\F)$ be a matroid and let $(E,\F,\OIG)$ be an oriented
interval greedoid. Since $\xi(A) = E \m \Gamma(A)$ for any flat $A$ of
a matroid without loops, a covector of $(E,\F)$ takes values in $\{0,+,-\}$.
Therefore, $\OIG$ is a collection of maps from $E$ to $\{0,+,-\}$, and
$\OIG$ satisfies (OM1)--(OM4) since it satisfies (OG1)--(OG4). So
$\OIG$ is an oriented matroid.

Conversely, suppose that $\OM$ is an oriented matroid with underlying
matroid $(E,\F)$. If $\alpha \in \OM$, then the set $\zeta(\alpha)$ of
zeros of $\alpha$ is a closed set of the matroid, and there is a
unique flat $A$ satisfying $\xi(A) = \zeta(\alpha)$. Therefore,
$\alpha$ gives a signed flat $(A, \alpha|_{\Gamma(A)})$, and the
covector of this signed flat is $\alpha$. So $\OM$ is a set of
covectors of the interval greedoid $(E,\F)$. It is straightforward to
check that the axioms for an oriented interval greedoid are satisfied
by $\OM$.
\end{proof}

\subsubsection{Antimatroids}
Next we show that the set of all covectors of an antimatroid forms an
oriented interval greedoid.  This collection of covectors, viewed as a 
poset, is the central object of study in the work of Billera, Hsiao,
and Provan \cite{BilleraHsiaoProvan2008}.  We also show that 
this is the only oriented interval greedoid structure on an antimatroid.

We begin with an example to illustrate how
to obtain a covector $\gamma$ satisfying (OG4).

\begin{Example}
Let $\alpha$ and $\beta$ be the covectors in 
\ref{f:TwoCovectorsInAConvexGeometry}. Then
$\SeparationSet(\alpha,\beta)=\{x\}$, where $x$ is the vertex that is
circled in \ref{f:OG4InConvexGeometry}. Let $\gamma$ be the covector
in \ref{f:OG4InConvexGeometry}. Then $\gamma(x)=0$; and for all
$y\notin\SeparationSet(\alpha,\beta)$:
\begin{enumerate}
\item 
if $(\alpha\circ\beta)(y)=0$, then $\gamma(y)=(\beta\circ\alpha)(y)=0$.
\item
if $(\alpha\circ\beta)(y)=+$, then $\gamma(y)=(\beta\circ\alpha)(y)=+$.
\item
if $(\alpha\circ\beta)(y)=-$, then $\gamma(y)=(\beta\circ\alpha)(y)=-$.
\item
if $(\alpha\circ\beta)(y)=1$, then $\gamma(y)\neq0$. \qedhere
\end{enumerate}
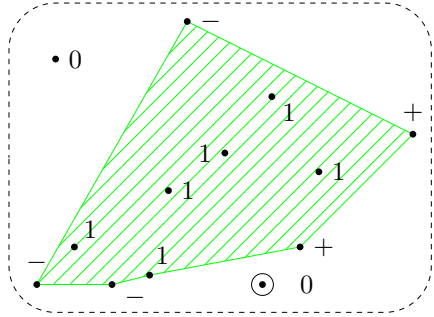
\begin{figure}[!ht]
\centering
\begin{pspicture}(10,7)
\border(-0.75,-0.75)(10.5,7.5)
\convexset{green}(0,0)(2,0)(3,0.25)(7,1)(10,4)(4,7)(0,0)
\psdot(0,0)\psdot(2,0)\psdot(3,0.25)\psdot(1,1)
\psdot(6,0)\pscircle[linewidth=0.02](6,0){0.3}
\psdot(7,1)\psdot(10,4)\psdot(4,7)\psdot(5,3.5)\psdot(6.25,5)\psdot(7.5,3)
\psdot(3.5,2.5)
\psdot(0.5,6)
\uput{1.0}[0](6,0){$0$}
\uput[0](7,1){$+$}
\uput[90](10,4){$+$}
\uput[0](4,7){$-$}
\uput[180](5,3.5){$1$}
\uput[-40](6.25,5){$1$}
\uput[0](7.5,3){$1$}
\uput[90](0,0){$-$}
\uput[-30](2,0){$-$}
\uput[55](3,0.25){$1$}
\uput[45](1,1){$1$}
\uput[0](3.5,2.5){$1$}
\uput[0](0.5,6){$0$}
\end{pspicture}
\caption{If $\alpha$ and $\beta$ are the two covectors in
\ref{f:TwoCovectorsInAConvexGeometry} and $x$ is the circled vertex,
then the covector $\gamma$ illustrated here satisfies the conditions
of (OG4).}
\label{f:OG4InConvexGeometry}
\end{figure}
\end{Example}

\begin{Theorem}\label{thm:antiOIG}
Suppose $(E,\F)$ is an \uig. Let $\OIG$ denote
the set of all covectors of $(E,\F)$. Then $(E,\F,\OIG)$ is
an oriented interval greedoid.
\end{Theorem}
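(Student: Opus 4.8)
The plan is to verify axioms (OG1)--(OG4) for $\OIG$, the set of \emph{all} covectors of $(E,\F)$, using throughout the dictionary with the convex geometry $(E,\tau)$: a feasible set $X$ corresponds to the closed set $X'=E\m X$, the continuations are $\Gamma([X])=\ext(X')$ (\ref{l:ExtremePointsAndContinuations}), and $\xi([X])=X$ because $[X]=\{X\}$ (\ref{x:AntimatroidFlats}). Axioms (OG1)--(OG3) are essentially formal consequences of $\OIG$ being \emph{all} covectors. For (OG1): every flat has the form $[X]$, and choosing any sign map $\Flat\alpha\colon\Gamma([X])\to\{+,-\}$ yields a covector $\alpha$ with $\supp(\alpha)=[X]$, so $\supp$ is surjective. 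For (OG2): $-\alpha$ is the covector of $([X],-\Flat\alpha)$, hence lies in $\OIG$. For (OG3): by \ref{p:CovectorOfProduct}, $\acb$ is the covector of the product signed flat $([X],\Flat\alpha)\circ([Y],\Flat\beta)$, so it too is a covector.

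The substance is (OG4). Fix $\alpha,\beta\in\OIG$ with underlying feasible sets $X,Y$ and closed sets $X'=E\m X$, $Y'=E\m Y$, and set $Z'=\tau(X'\cup Y')$. As recorded in the covector-multiplication example, $\acb$ is $0$ outside $Z'$, is $1$ on the non-extreme points of $Z'$, and on $\ext(Z')$ equals $\alpha$ on $\ext(X')$ and $\beta$ elsewhere. Let $x\in\SeparationSet(\alpha,\beta)$ with $\acb(x)\neq1$. Since $\SeparationSet(\alpha,\beta)\subseteq\ext(X')\cap\ext(Y')\subseteq Z'$, the point $x$ lies in $Z'$; as $\acb(x)\neq1$ forbids $x$ from being a non-extreme point of $Z'$, I would first conclude $x\in\ext(Z')$.

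Next I would use the basic fact about convex geometries that deleting an extreme point from a closed set leaves a closed set: if $x\in\ext(C)$ then $\tau(C\m x)=C\m x$, directly from $x\notin\tau(C\m x)$. Hence $W':=Z'\m x$ is closed and $W:=E\m W'$ is feasible; the same computation gives $\ext(Z')\m x\subseteq\ext(W')$. I then let $\gamma$ be the covector of the signed flat $([W],\Flat\gamma)$ where $\Flat\gamma(y)=\acb(y)$ for $y\in\ext(Z')\m x$ and $\Flat\gamma$ is arbitrary on any further extreme points of $W'$.

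Finally I would check that $\gamma$ satisfies (OG4), which is where the care lies. Since $x\notin W'$ we have $\gamma(x)=0$. For $y\notin\SeparationSet(\alpha,\beta)$ with $\acb(y)\neq1$: if $\acb(y)=0$ then $y\notin Z'\supseteq W'$, so $\gamma(y)=0$; if $\acb(y)\in\{+,-\}$ then $y\in\ext(Z')$ with $y\neq x$, so $y\in\ext(Z')\m x\subseteq\Gamma([W])$ and $\gamma(y)=\acb(y)$ by construction. Thus $\gamma(y)=\acb(y)$, and $\acb(y)=\bca(y)$ off the separation set by the Corollary following \ref{d:OrientedIntervalGreedoid}. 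The delicate points, on which I would focus, are justifying $x\in\ext(Z')$ and confirming that no $y$ with $\acb(y)\neq1$ falls among the non-extreme points of $W'$ where $\gamma$ is unconstrained — both handled by the case analysis above.
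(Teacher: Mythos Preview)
Your proof is correct and follows essentially the same approach as the paper: both verify (OG1)--(OG3) formally from $\OIG$ being all covectors, and for (OG4) both identify $x$ as an extreme point of $\tau(X'\cup Y')$, delete it to obtain a closed set, and build $\gamma$ by copying $\acb$ on the surviving extreme points. The only differences are notational (your $W'$ is the paper's $Z'$ in its Step~2) and that you spell out explicitly the inclusion $\ext(Z')\setminus\{x\}\subseteq\ext(W')$, which the paper uses implicitly.
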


\begin{proof}
We will show that (OG1)--(OG4) hold.

(OG1) Suppose $A\in\Phi$ is a flat. Let $(A,\Flat\alpha)$ be a signed flat
of $(E,\F)$ and let $\alpha$ denote the covector of this signed flat.
Then $\alpha\in\OIG$ and $\supp(\alpha)=A$.

(OG2) Suppose $\alpha\in\OIG$. Let $A=\supp(\alpha)$. Then
$(A,-\alpha|_{\Gamma(A)})$ is a signed flat of $(E,\F)$. The covector
of this signed flat is precisely $-\alpha$. So $-\alpha\in\OIG$.

(OG3) If $\alpha,\beta\in\OIG$, then $\alpha\circ\beta$ is a covector
of $(E,\F)$, so $\alpha\circ\beta\in\OIG$.

(OG4) Suppose $\alpha,\beta\in\OIG$ and
$x\in\SeparationSet(\alpha,\beta)$ satisfies
$(\alpha\circ\beta)(x)\neq1$. Let $(E,\tau)$ denote the convex
geometry that is complementary to $(E,\F)$ (see \ref{sss:UIG}). Let
$A=\supp(\alpha)$ and $B=\supp(\beta)$. Then $A=[X]$ and $B=[Y]$ for
some $X,Y\in\F$. Let $X'=E\m X$ and $Y'=E\m Y$. Then $X'$ and $Y'$ are
closed sets in $(E,\tau)$.

\emph{Step 1: We show that $x$ is an extreme point of $\tau(X'\cup
Y')$.} 
Since $(\alpha\circ\beta)(x)\not\in\{0,1\}$, we have $x\in\Gamma(A \vee
B)$. 
Since $A\vee B = [E\m\tau(X'\cup Y')]$ (\ref{x:AntimatroidMeets}), we
have $\Gamma(A\vee B) = \ext(\tau(X'\cup Y'))$ by
\ref{l:ExtremePointsAndContinuations}. Thus, $x\in\ext(\tau(X'\cup
Y'))$.

\emph{Step 2: We define $\gamma$.}
Let $Z'=\tau(X'\cup Y')-x$. Since $x$ is an extreme point of
$\tau(X'\cup Y')$, it follows that $x$ is not in $Z'$. Hence $Z'$ is a
closed set not containing $x$. Let $Z=E\m Z'$. Then $Z\in\F$ and $x\in
Z$. Define a map $\gamma':\Gamma(Z)\to\{+,-\}$ for $y\in\Gamma(Z)$ as
follows: if $y\in\Gamma(A\vee B)$, then set
$\gamma'(y)=(\alpha\circ\beta)(y)$; otherwise arbitrarily set
$\gamma'(y)$ to be $+$ or $-$. Let $\gamma$ be the covector of the
signed flat $([Z],\gamma')$ 

\emph{Step 3: $\gamma$ has the desired properties.} First note that
$\gamma\in\OIG$ since $\gamma$ is a covector of $(E,\F)$. Next observe
that $\gamma(x)=0$ since $x\in Z\subseteq\xi([Z])$. Let
$y\notin\SeparationSet(\alpha,\beta)$.


Suppose that $(\alpha\circ\beta)(y)\neq1$. Then $y\in\Gamma(A\vee
B)\cup\xi(A\vee B)$. Since $\xi(A\vee B)=E\m\tau(X'\cup Y')\sseq
Z\sseq\xi([Z])$, if $y\in\xi(A\vee B)$, then
$\gamma(y)=0=\acb(y)=\bca(y)$. On the other hand, if $y\in\Gamma(A\vee
B)=\ext(\tau(X'\cup Y'))$, then $y$ is an extreme point of
$Z'=\tau(X'\cup Y')-x$ (since $y\neq x$). Equivalently,
$y\in\Gamma(Z)$. So, by definition of $\gamma$ and because
$y\notin\SeparationSet(\alpha,\beta)$, $\gamma(y)=\acb(y)=\bca(y)$.
\end{proof}

\begin{Proposition} Let $(E,\F)$ be an antimatroid.  Then the only oriented
structure on $(E,\F)$ is that constructed in \ref{thm:antiOIG}.  
\end{Proposition}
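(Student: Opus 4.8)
The plan is to prove the ostensibly stronger statement that \emph{every} covector of the antimatroid $(E,\F)$ already lies in $\OIG$. Since $\OIG$ is by definition a set of covectors and \ref{thm:antiOIG} takes $\OIG$ to be \emph{all} covectors, this equality of covector sets is exactly what the word ``only'' asserts. Throughout I would use that for an antimatroid each flat is a singleton $[X]=\{X\}$ with $\xi([X])=X$ and $\Gamma(X)=\ext(E\m X)$ (\ref{x:AntimatroidFlats}, \ref{l:ExtremePointsAndContinuations}), so a covector over $[X]$ is nothing but a choice of sign at each extreme point of $E\m X$, taking the value $0$ on $X$ and $1$ on the non-extreme points of $E\m X$. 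I would then prove the claim by induction on $|X|$, \emph{decreasing} from the maximal feasible sets downward: at each stage I establish that every covector with support $[X]$ belongs to $\OIG$. Only (OG1)--(OG3) will be used; (OG4) plays no role.

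For the base case take $X$ maximal in $\F$; then $\Gamma(X)=\O$, so there is a single covector over $[X]$, and (OG1) forces it into $\OIG$. More generally, whenever $|\Gamma(X)|\le 1$ there are at most two covectors over $[X]$ (the two possible signs at the unique extreme point, if any), both supplied by (OG1) together with (OG2). This small case must be handled separately, because when $|\Gamma(X)|=1$ every feasible set strictly containing $X$ in fact contains $X\cup p$ for the unique continuation $p$, so no join of strictly lower flats equals $[X]$ and the composition argument below cannot reach it.

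The heart of the matter is the inductive step with $\ext(E\m X)=\{p_1,\dots,p_m\}$, $m\ge 2$, and a prescribed sign pattern $s\colon \ext(E\m X)\to\{+,-\}$. The key structural fact is that for $i\ne j$ the set $X\cup\{p_i,p_j\}$ is feasible: the flats $[X\cup p_i]$ and $[X\cup p_j]$ are distinct singletons, so this is exactly \ref{c:Semimodularity}. Hence $p_j\in\Gamma(X\cup p_i)=\ext\big((E\m X)\m p_i\big)$, which means a covector over $[X\cup p_i]$ genuinely assigns a sign (never the value $1$) to each $p_j$ with $j\ne i$. Using the induction hypothesis on the strictly larger feasible sets $X\cup p_1$ and $X\cup p_2$, I would pick $\beta\in\OIG$ over $[X\cup p_1]$ with $\beta(p_j)=s(p_j)$ for all $j\ge 2$, and $\gamma\in\OIG$ over $[X\cup p_2]$ with $\gamma(p_1)=s(p_1)$ (remaining signs arbitrary). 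Because $\xi([X\cup p_1])\cap\xi([X\cup p_2])=(X\cup p_1)\cap(X\cup p_2)=X$, we get $[X\cup p_1]\vee[X\cup p_2]=[X]$ by \ref{p:PhiIsSemimodularLattice}, and I claim $\beta\circ\gamma$ is precisely the covector over $[X]$ with signs $s$; it then lies in $\OIG$ by (OG3), closing the induction.

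The step requiring the most care is verifying this last claim straight from the definition of the covector product $\circ$, and this is where I expect the only real obstacle. On $X=\xi([X])$ both $\beta$ and $\gamma$ vanish, so $\beta\circ\gamma$ is $0$ there; off $\Gamma(X)\cup\xi(X)=\ext(E\m X)\cup X$ the product equals $1$ by definition, which correctly records the non-extreme points of $E\m X$ and absorbs the arbitrary choices made at points that are extreme in $(E\m X)\m p_1$ but not in $E\m X$. At $p_1$ we have $\beta(p_1)=0$, so the product takes the value $\gamma(p_1)=s(p_1)$; at each $p_j$ with $j\ge 2$ the value $\beta(p_j)=s(p_j)$ is already a sign, and since $\gamma(p_j)\in\{0,+,-\}$ is never $1$ there, it dominates under $\star$, giving $s(p_j)$. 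The genuinely delicate point is arranging the sign bookkeeping so that the ``first nonzero wins'' behaviour of $\circ$ delivers the prescribed sign at \emph{every} extreme point simultaneously; the two-block construction is engineered precisely so that $\gamma$ controls $p_1$ while $\beta$ controls all the remaining $p_j$.
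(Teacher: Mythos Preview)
Your proof is correct, but takes a different route from the paper's. The paper argues directly, without induction: given a covector $\alpha$ with support $[Z]$ and $\Gamma(Z)=\{x_1,\dots,x_r\}$, it forms for each $i$ the feasible set $Y_{x_i}=Z\cup(\Gamma(Z)\setminus\{x_i\})$, uses (OG1) and (OG2) to produce $\beta_{x_i}\in\OIG$ with support $[Y_{x_i}]$ and the correct sign at $x_i$, and checks that $\beta_{x_1}\circ\cdots\circ\beta_{x_r}=\alpha$. So the paper goes \emph{deep} (to flats $r-1$ levels below $[Z]$) and composes $r$ factors, each controlling a single sign; you go \emph{shallow} (one level below, to $[X\cup p_1]$ and $[X\cup p_2]$) and compose only two factors, one controlling $m-1$ signs and the other controlling the last, with induction supplying the freedom to choose those signs. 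The paper's argument is shorter and avoids the case split on $|\Gamma(X)|\le 1$; your argument makes the inductive structure explicit and never needs to verify that $Z\cup(\Gamma(Z)\setminus\{x_i\})$ is feasible. One small wording point: where you write ``it dominates under $\star$'' the antecedent should be $\beta(p_j)$, not $\gamma(p_j)$; the computation itself is fine.
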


\begin{proof} Let $(E,\F,\OIG)$ be an oriented interval greedoid.  
Let $\alpha$ be an arbitrary covector.  We wish to show that 
$\alpha\in\OIG$.  

Let $\Gamma(\supp(\alpha))=X=\{x_1,\dots,x_r\}$.  Let $Y=E\setminus (X\cup \supp(\alpha))$.  
For any $x\in X$, $Y_x=\supp(\alpha)\cup (X\setminus \{x\})$ is feasible.  
Also, $\Gamma(Y_x)\cap\Gamma(\supp(\alpha))=\{x\}$.  By (OIG1), we can
find a covector $\beta_x\in \OIG$ with $\supp(\beta_x)=Y_x$.  
By (OIG2), we can choose $\beta_x$ so that $\beta_x$ agrees with
$\alpha$ on $x$.  Now $\beta_{x_1}\circ\dots\circ\beta_{x_r}=\alpha$ is
in $\OIG$.  
\end{proof}

\subsubsection{Complexified Hyperplane Arrangements}

An (essential) \defn{real hyperplane arrangement} is a finite set of 
hyperplanes $\{\RH_1,\RH_2,\dots,\RH_n\}$ in $\mathbb R^d$ satisfying
$\bigcap \RH_i=\{\vec0\}$.  
Let
$E=\{1,2,\ldots,n\}$ and for each $e\in E$ fix a linear form
$\ell_e:\mathbb R^d\to\mathbb R$ such that $\RH_e=\ker(\ell_e)$. 
Extending scalars, we can also think of $\ell_e$ as defining a linear map from
$\mathbb C^d$ to $\mathbb C$.  Define $H_e$ to be the kernel of this map.
It is a hyperplane in $\mathbb C^d$.  The collection 
$\Arrangement=\{H_1,\dots,H_n\}$ forms a \defn{complexified hyperplane
arrangement}.  
Also define
$H_e^\Re
= \{\vec z\in\mb C^d: \Im(\ell_e(\vec z))=0\}$. 

(Note that not all complex hyperplane arrangements are complexified
arrangments;
that is to say, not all complex hyperplane arrangements arise from a 
real hyperplane arrangement in the way we have just described.)


For any $z=x+iy\in\mathbb C$, let 
\begin{align*}
\sigma_\Re(x+iy) =
\begin{cases}
\hfill1, & \text{if } y \neq 0, \\
\hfill+, & \text{if } y = 0, x > 0, \\
\hfill-, & \text{if } y = 0, x < 0, \\
\hfill0, & \text{if } y = 0, x = 0,
\end{cases}
\qquad
\sigma_\Im(x+iy) =
\begin{cases}
\hfill+, & \text{if } y > 0, \\
\hfill-, & \text{if } y < 0, \\
\hfill0, & \text{if } y = 0,
\end{cases}
\end{align*}
and for every $\vec z\in\mathbb C^d$, let 
\begin{align*}
\alpha_{\vec z}(h) =
\begin{cases}
\sigma_\Im(\ell_i(\vec z)), & \text{if } h = H_i^\Re, \\
\sigma_\Re(\ell_i(\vec z)), & \text{if } h = H_i.
\end{cases}
\end{align*}
Note that
$(\alpha(H_i),\alpha(H_i^\Re))\in\{(0,0),(+,0),(-,0),(1,+),(1,-)\}$
for all $1\leq e\leq n$. 

\begin{Example}
\label{x:Rank1ComplexArrangementCovectors}
There is a unique complexified hyperplane arrangement in $\mb C$, namely
$\Arrangement = \{H_0=\{\vec 0\}\}$. In this case $\{\alpha_{\vec z}:
\vec z\in\mb C\} = \{(0,0),(+,0),(-,0),(1,+),(1,-)\}$ is the set of
covectors of the interval greedoid $(E,\F)$ with $E=\{H_0,H_0^\Re\}$
and $\F=\{\O,\{H_0^\Re\},\{H_0,H_0^\Re\}\}$ (cf.
\ref{x:Rank1ComplexArrangement}).
\ref{f:PartialOrderOnComplexCovectors} illustrates the partial order
on these covectors.
\end{Example}
\begin{figure}
\begin{gather*}
\xymatrix@R=1em@C=1em{
(1,+)\ar@{-}[drr]\ar@{-}[d] && (1,-)\ar@{-}[d]\ar@{-}[dll] \\
(+,0)\ar@{-}[dr] && (-,0)\ar@{-}[dl] \\
& (0,0)}
\end{gather*}
\caption{The poset of covectors of the complex hyperplane arrangement
in $\mb C$.}
\label{f:PartialOrderOnComplexCovectors}
\end{figure}
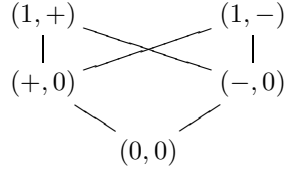

Let $\Arrangement=\{H_1,\ldots,H_n\}$ be a complexified hyperplane
arrangement in $\mb C^d$. Let $\IntersectionLattice$ be the lattice of
all intersections of subspaces from the set
\begin{align*}
E_\Arrangement=\{H_1,\ldots,H_n,H_1^\Re,\ldots,H_n^\Re\},
\end{align*}
ordered by inclusion. Then $\IntersectionLattice$ is a lower
semimodular lattice and $E_\Arrangement$ is the set of
meet-irreducible elements of $\IntersectionLattice$
\cite{BjornerZiegler1992:b}.
By \ref{p:IGsFromSemimodularLattices},
$(E_\Arrangement,\F_\Arrangement)$ is an interval greedoid, where 
\begin{align*}
\F_\Arrangement = \left\{\{h_1,h_2,\ldots,h_k\}\subseteq E_\Arrangement : 
\mb C^d \gtrdot h_1 \gtrdot 
\cdots \gtrdot (h_1\cap h_2\cap \cdots\cap h_k) 
\right\}.
\end{align*}

\begin{Lemma} Let $\Arrangement$ be a complexified hyperplane arrangement,
and let $(E_\Arrangement,\F_\Arrangement)$ be the interval greedoid as
defined above.  Then, for $X\in\F_\Arrangement$, we have
\begin{gather}\label{eqone}
\xi(X) = \left\{h\in E: \bigcap_{h'\in X} h' \subseteq h\right\}, \\
\label{eqtwo}
\Gamma(X) = 
\left\{ H_e^\Re \in E : \bigcap_{h\in X} h \not\subseteq H_e^\Re\right\}
\cup
\left\{ H_e \in E : 
\bigcap_{h\in X} h \subseteq H_e^\Re \text{ and } 
\bigcap_{h\in X} h \not\subseteq H_e 
\right\}.
\end{gather}
\end{Lemma}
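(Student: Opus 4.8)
The plan is to compute $\xi(X)$ and $\Gamma(X)$ directly from their definitions using the explicit description of the interval greedoid arising from the semimodular intersection lattice $\IntersectionLattice$. The key tool is \ref{x:SemimodularXi}, which already identifies $\xi([X])$ for an interval greedoid coming from a lower semimodular lattice: under the isomorphism $\phi:\Phi\to\IntersectionLattice$ of \ref{x:SemimodularFlats} sending a feasible set to the meet of its elements, $\xi([X])$ consists of exactly those meet-irreducibles $h$ with $h\geq\phi([X])$. In our lattice $\IntersectionLattice$, ordered by inclusion, the meet is intersection, so $\phi([X])=\bigcap_{h'\in X}h'$, and ``$h\geq\phi([X])$'' means $\bigcap_{h'\in X}h'\subseteq h$. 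This gives \eqref{eqone} almost immediately, once one notes that $E_\Arrangement$ is precisely the set of meet-irreducibles.

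For \eqref{eqtwo}, I would proceed from the description $\Gamma(X)=\{h\in E_\Arrangement\m X : X\cup h\in\F_\Arrangement\}$. Writing $P=\bigcap_{h'\in X}h'$ for the current intersection, feasibility of $X\cup h$ (with $h$ appended to a feasible ordering of $X$) requires $P\gtrdot (P\cap h)$, i.e.\ $P\cap h$ is covered by $P$ in $\IntersectionLattice$. The heart of the argument is to translate this covering condition into the geometric conditions on the right-hand side of \eqref{eqtwo}. First, $P\cap h\lessdot P$ forces $h\not\supseteq P$ (otherwise $P\cap h=P$ and there is no drop in rank), which is why the condition $P\not\subseteq H_e^\Re$ (resp.\ $P\not\subseteq H_e$) appears. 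Second, one must rule out the case where appending $h$ drops the rank by more than one; this is where the special structure of complexified arrangements enters. The complex hyperplane $H_e$ and its ``real part'' $H_e^\Re$ satisfy $H_e\subseteq H_e^\Re$ (both contain the locus $\Im\ell_e=0$ together with $\ell_e=0$), with $H_e^\Re$ having real codimension $1$ and $H_e$ real codimension $2$. The extra hypothesis ``$\bigcap_{h\in X}h\subseteq H_e^\Re$'' in the second set is exactly the condition ensuring that adjoining $H_e$ drops the dimension by a single covering step rather than by two.

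Concretely, I would split into the two cases $h=H_e^\Re$ and $h=H_e$. For $h=H_e^\Re$: since $H_e^\Re$ has real codimension $1$, intersecting any subspace $P$ with it either leaves $P$ fixed (if $P\subseteq H_e^\Re$) or drops real dimension by exactly $1$, which is always a single covering step in $\IntersectionLattice$; hence $H_e^\Re\in\Gamma(X)$ iff $P\not\subseteq H_e^\Re$, matching the first set. For $h=H_e$: intersecting with $H_e$ drops real dimension by $2$ when $P\not\subseteq H_e^\Re$, which is \emph{not} a single cover in $\IntersectionLattice$ because the intermediate subspace $P\cap H_e^\Re$ lies strictly between; so feasibility fails unless $P\subseteq H_e^\Re$, in which case intersecting with $H_e$ (equivalently with $H_e^\Re\cap H_e=H_e$) drops dimension by exactly $1$. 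Combined with $P\not\subseteq H_e$, this yields the second set. I expect the main obstacle to be justifying rigorously that a single covering relation in $\IntersectionLattice$ corresponds exactly to a drop of one in real dimension, and in particular verifying the codimension claims $\codim_\Re H_e^\Re=1$ and $\codim_\Re H_e=2$ together with the covering structure of the chain $P\supsetneq P\cap H_e^\Re\supsetneq P\cap H_e$ when $P\not\subseteq H_e^\Re$; this requires invoking the fact \cite{BjornerZiegler1992:b} that $\IntersectionLattice$ is graded by real codimension, so that covers are precisely codimension-one jumps.
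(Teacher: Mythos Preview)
Your plan matches the paper's proof closely: both derive \eqref{eqone} immediately from \ref{x:SemimodularXi}, and both handle \eqref{eqtwo} by a case split on whether $h=H_e^\Re$ or $h=H_e$, reducing the question to whether $P\cap h$ is covered by $P=\bigcap_{h'\in X}h'$ in $\IntersectionLattice$.

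There is one point where your proposed justification is not quite sufficient. In the case $h=H_e$ with $P\not\subseteq H_e^\Re$, you assert that intersecting with $H_e$ drops real dimension by exactly~$2$, because $P\cap H_e^\Re$ lies \emph{strictly} between $P$ and $P\cap H_e$. The strictness of $P\supsetneq P\cap H_e^\Re$ is clear, but the strictness of $P\cap H_e^\Re \supsetneq P\cap H_e$ does \emph{not} follow from knowing that $\IntersectionLattice$ is graded by real codimension: that grading tells you covers are codimension-one drops, but it does not by itself compute $\dim(P\cap H_e)$. A~priori one could have $P\cap H_e^\Re\subseteq H_e$, in which case $P\cap H_e = P\cap H_e^\Re$ would have codimension~$1$ in $P$ and $H_e$ \emph{would} lie in $\Gamma(X)$. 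Ruling this out is exactly where the complexified structure enters, and is the substance of the paper's argument. The paper observes that every element of $\IntersectionLattice$, being an intersection of subspaces of the form $H_i$ and $H_j^\Re$, decomposes as $M=\Re(M)\oplus\Im(M)$ with (after identifying $\Im(\mathbb C^d)$ with $\Re(\mathbb C^d)$ via multiplication by~$i$) $\Im(M)\subseteq\Re(M)$. Then $P\not\subseteq H_e^\Re$ forces $\Im(P)\not\subseteq\RH_e$, hence $\Re(P)\not\subseteq\RH_e$, producing a real point of $P$ on which $\ell_e$ is real and nonzero --- a point of $P\cap H_e^\Re$ not in $H_e$. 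Once you supply this observation, your argument is complete and coincides with the paper's.
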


\begin{proof}
\ref{eqone} follows directly from \ref{x:SemimodularXi}.  We now
show  \ref{eqtwo}.  Let $M=\cap_{h\in X}h\subset \mathbb C^d$.
Thinking of $\mathbb C^d$ as a $2d$-dimensional real vector space, we can
decompose it into real and complex parts as $\mathbb C^d = 
\Re(\mathbb C^d)\oplus \Im(\mathbb C^d)$, where each of the summands is a $d$-dimensional
real vector space, and multiplication by $i$ provides an isomorphism
from $\Re(\mathbb C^d)$ to $\Im(\mathbb C^d)$.  
Note that $H_i$ and $H_i^\Re$ can also be expressed as a direct sum of 
a real and a complex part.  
(This relies on the fact that our arrangement is a complexified
real arrangment,
rather than being an arbitrary complex arrangement.)  
Note further that in either case, the 
imaginary part corresponds to a subspace of the real part.   
It follows that
$M$, also, can be written as $M= \Re(M)\oplus \Im(M)$, with $\Im(M)$
a subspace of $\Re(M)$.  
  
Observe first that if $H_e^\Re \not\geq M$, then, since 
$H_e^\Re$ is real codimension one in $\mathbb C^d$, we have 
$M\gtrdot M\cap H_e^\Re$, so 
$H_e^\Re\in \Gamma(X)$.  
Also, in this case, we have $M \cap H_e^\Re \gtrdot M\cap H_e$, because 
$\RH_e\not\geq \Im(M)$, and thus $\RH_e\not\geq \Re(M)$ either.  
It follows that in this case $H_e^\Re\in \Gamma(X)$ and $H_e\not\in\Gamma(X)$.

Finally, if $H_e^\Re \geq M$, we observe that $H_e$ is codimension one
in $H_e^\Re$, and thus that either $H_e \geq M$ or $M\gtrdot M\cap H_e$.  
This completes the proof of the lemma. \end{proof}

\begin{Lemma}
Let $\Arrangement=\{H_1,\ldots,H_n\}$ be a complexified hyperplane
arrangement in $\mb C^d$. Then $\alpha_{\vec z}$ is a covector over
the interval greedoid $(E_\Arrangement, \F_\Arrangement)$, for every
$\vec z\in\mb C^d$. 
\end{Lemma}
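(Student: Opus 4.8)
The plan is to exhibit a signed flat of $(E_\Arrangement,\F_\Arrangement)$ whose covector is exactly $\alpha_{\vec z}$. Recall that a covector is determined by a flat $A$ together with a map $\Flat\alpha:\Gamma(A)\to\{+,-\}$, the covector taking the value $0$ on $\xi(A)$, the value $\Flat\alpha(h)$ on $\Gamma(A)$, and $1$ elsewhere. So I must (i) identify the flat $A$, (ii) show that the zero set of $\alpha_{\vec z}$ is exactly $\xi(A)$, (iii) check that $\alpha_{\vec z}$ takes values in $\{+,-\}$ on $\Gamma(A)$, and (iv) check that $\alpha_{\vec z}$ equals $1$ off $\xi(A)\cup\Gamma(A)$.

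First I would unwind the definitions of $\sigma_\Re$ and $\sigma_\Im$ in terms of $\ell_e(\vec z)$, writing $\ell_e(\vec z)=x_e+iy_e$. The key observation is that the zero set of $\alpha_{\vec z}$ is precisely $\{h\in E_\Arrangement:\vec z\in h\}$: indeed $\alpha_{\vec z}(H_e)=0$ iff $x_e=y_e=0$ iff $\vec z\in H_e$, and $\alpha_{\vec z}(H_e^\Re)=0$ iff $y_e=0$ iff $\vec z\in H_e^\Re$. I would set $M=\bigcap\{h\in E_\Arrangement:\vec z\in h\}$, an element of $\IntersectionLattice$, and let $A=\phi\inv(M)$ be the corresponding flat under the isomorphism $\phi$ of \ref{x:SemimodularFlats}. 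Since $\vec z\in M$, one has $M\subseteq h\iff\vec z\in h$ for every $h\in E_\Arrangement$, so by \ref{eqone} the zero set of $\alpha_{\vec z}$ equals $\xi(A)$, establishing (ii).

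For (iii) and (iv) I would feed the explicit description \ref{eqtwo} of $\Gamma(A)$ through the sign definitions, translating each lattice condition $M\subseteq h$ into the membership $\vec z\in h$ via the dictionary above. For a continuation $H_e^\Re\in\Gamma(A)$ we have $M\not\subseteq H_e^\Re$, hence $y_e\neq0$ and $\alpha_{\vec z}(H_e^\Re)=\sigma_\Im(\ell_e(\vec z))\in\{+,-\}$; for a continuation $H_e\in\Gamma(A)$ we have $M\subseteq H_e^\Re$ but $M\not\subseteq H_e$, i.e.\ $y_e=0$ and $x_e\neq0$, so $\alpha_{\vec z}(H_e)=\sigma_\Re(\ell_e(\vec z))\in\{+,-\}$, giving (iii). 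For (iv), note first that every $H_e^\Re$ lies in $\xi(A)\cup\Gamma(A)$, since the conditions $M\subseteq H_e^\Re$ and $M\not\subseteq H_e^\Re$ are complementary; thus only the real hyperplanes $H_e$ need be considered. If $H_e\notin\xi(A)\cup\Gamma(A)$, then combining the negations of the two clauses in \ref{eqtwo} with $M\not\subseteq H_e$ forces $M\not\subseteq H_e^\Re$, i.e.\ $y_e\neq0$, whence $\alpha_{\vec z}(H_e)=1$.

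Having verified (i)--(iv), I conclude that $\alpha_{\vec z}$ is the covector of the signed flat $(A,\alpha_{\vec z}|_{\Gamma(A)})$, and is therefore a covector of $(E_\Arrangement,\F_\Arrangement)$. The only genuinely delicate work is the case analysis in (iii)--(iv), where one must confirm that the purely lattice-theoretic descriptions of $\xi(A)$ and $\Gamma(A)$ coming from the previous lemma line up with the analytic sign data; this bookkeeping becomes routine once the single equivalence $M\subseteq h\iff\vec z\in h$ is recorded, which rests on $\vec z$ lying in $M$. No issue involving the complexified-versus-arbitrary-complex distinction enters at this stage, as that was already absorbed into the proof of \ref{eqtwo}.
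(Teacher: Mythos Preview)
Your argument is correct and follows the same overall plan as the paper: identify the flat, verify that the zero set of $\alpha_{\vec z}$ is $\xi(A)$, and then use the explicit description \ref{eqtwo} of $\Gamma(A)$ to check the signs. The only notable difference is in how you establish $\xi(A)=\{h:\vec z\in h\}$. The paper takes $X$ maximal feasible in the zero set and proves the two inclusions separately, the harder one ($\{h:\vec z\in h\}\subseteq\xi(X)$) via greedoid augmentation (IG2) and \ref{l:MaximalSubsetsAreEquivalent}. You instead invoke the isomorphism $\phi:\Phi\to\IntersectionLattice$ from \ref{x:SemimodularFlats}, set $M=\phi(A)$, and reduce everything to the single equivalence $M\subseteq h\iff\vec z\in h$, which is immediate from $\vec z\in M$. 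This is a genuine streamlining: it replaces a hands-on feasibility argument by an appeal to structure already established, and makes the subsequent sign bookkeeping for (iii) and (iv) entirely mechanical.
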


\begin{proof}
Recall that a map $\alpha:E\to\{0,+,-,1\}$ is a covector of an
interval greedoid $(E,\F)$ if and only if there exists $X\in\F$ such
that $\alpha(e)=0$ if and only if $e\in\xi(X)$, $\alpha(e)=\pm$ if and
only if $e\in\Gamma(X)$, and $\alpha(e)=1$ otherwise.

Let $\alpha=\alpha_{\vec z}$ be defined as above. Observe that
$\alpha(h)=0$ if and only if $\vec z\in h$. Let
\begin{align*}
A &= 
\left\{ h\in E: \alpha(h) = 0 \right\} =
\left\{ H_i : \vec z \in H_i \right\} \cup
\left\{ H_i^\Re : \vec z \in H_i^\Re \right\} \subseteq E
\end{align*}
and let $X$ be maximal among the elements of $\F_\Arrangement$
contained in $A$. 

We show that $\alpha(h) = 0$ if and only if $h\in\xi(X)$ by showing
that $\xi(X)=A$. Suppose $h\in\xi(X)$. Then $h \supseteq \cap_{h'\in
X}h'$. Since $X\subseteq A$, it follows that $\alpha(h')=0$ for all
$h'\in X$. Thus, $\vec z\in h'$ for all $h'\in X$. It follows that
$\vec z\in h$. Thus, $h\in A$.

Conversely, suppose $h\in A$. Then $\alpha(h)=0$. If $h = H_i^\Re$,
then $\{h\}\in\F_\Arrangement$, so we can augment $\{h\}$ from $X$
until we get a set $Y$ of cardinality $|X|$. Since $X$ is maximal
among the feasible sets contained in $A$ and $|X|=|Y|$, $Y$ is maximal
as well. Thus, $X \sim Y$, so $Y \subseteq \xi(X)$. In particular,
$h\in\xi(X)$. On the other hand, if $h = H_i$, then $\ell_i(\vec
z)=0$, so $H_i^\Re\in A$ also. Since
$\{H_i,H_i^\Re\}\in\F_\Arrangement$, the same argument shows that
$h\in\xi(X)$. Thus, $A\subseteq\xi(X)$.

Next we show that $\alpha(h)\in\{+,-\}$ if and only if
$\alpha(h)\in\Gamma(X)$. Let $h\in\Gamma(X)$. Since $\xi(X)$ and
$\Gamma(X)$ are disjoint, it follows from the above that
$\alpha(h)\neq0$. So it suffices to show that $\alpha(h)\in\{0,+,-\}$.
By construction, this is true for $h=H_i^\Re$ since
$\alpha(H_i^\Re)=\sigma_\Im(\ell_i(\vec z))\in\{0,+,-\}$. If $h=H_i$,
then, by the above description of $\Gamma(X)$, we have $H_i^\Re\in
\xi(X)$. So, $\sigma_\Im(\ell_i(\vec z))=0$, which implies
$\alpha(h)=\sigma_\Re(\ell_i(\vec z))\in\{0,+,-\}$. 

Conversely, suppose $\alpha(h)\in\{+,-\}$. Since $\alpha(h)\neq0$, we
have $h\notin\xi(X)$, or equivalently, $\bigcap_{x\in X}x\not\subseteq
h$. So if $h=H_i^\Re$, then $h\in\Gamma(X)$. If $h=H_i$, then we need
to show that $H_i^\Re\supseteq\bigcap_{x\in X}x$, or equivalently,
$H_i^\Re\in\xi(X)$. Well, $\sigma_\Re(\ell_i(\vec
z))=\alpha(H_i)\in\{+,-\}$, so $\sigma_\Im(\ell_i(\vec z))=0$. This
implies $H_i^\Re\in\xi(X)$. Hence, $h=H_i\in\Gamma(X)$. 

Finally, it follows from the above that $\alpha(h)=1$ if and only if
$h\notin\Gamma(X)\cup\xi(X)$. Therefore, $\alpha$ is a covector of
$(E_\Arrangement,\F_\Arrangement)$.
\end{proof}

\begin{Remark}
As in \ref{x:Rank1ComplexArrangement}, the product of two covectors
$\alpha$ and $\beta$ can be computed component-wise, or pair-wise
using the identity:
\begin{align*}
 \Big(\acb(H_i),&\ \acb(H_i^\Re)\Big) \\
&=
 \begin{cases}
 \big(\beta(H_i),\beta(H_i^\Re)\big), & \text{if }  
 \big(\beta(H_i),\beta(H_i^\Re)\big) > \big(\alpha(H_i),\alpha(H_i^\Re)\big), \\
 \big(\alpha(H_i),\alpha(H_i^\Re)\big), & \text{otherwise},
 \end{cases}
\end{align*}
where the comparison $(\beta(H_i),\beta(H_i^\Re)) >
(\alpha(H_i),\alpha(H_i^\Re))$ is performed in the poset 
illustrated in \ref{f:PartialOrderOnComplexCovectors}.
\end{Remark}

\begin{Theorem}
If $\Arrangement=\{H_1,\ldots,H_n\}$ is a complexified hyperplane
arrangement in $\mb C^d$, then $\OIG=\{\alpha_{\vec z}: \vec z\in\mb
C^d\}$ is an oriented interval greedoid over
$(E_\Arrangement,\F_\Arrangement)$.
\end{Theorem}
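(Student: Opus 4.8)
The plan is to verify axioms (OG1)--(OG4) for $\OIG=\{\alpha_{\vec z}:\vec z\in\mb C^d\}$ directly from the realization, using that each $\alpha_{\vec z}$ is already known to be a covector of $(E_\Arrangement,\F_\Arrangement)$ together with the descriptions of $\xi$ and $\Gamma$ in \ref{eqone} and \ref{eqtwo}. I will repeatedly use that the zero set $\{h\in E_\Arrangement:\alpha_{\vec z}(h)=0\}$ equals $\{h:\vec z\in h\}$, so that $\supp(\alpha_{\vec z})=\mu(\{h:\vec z\in h\})$. For (OG1) I realize each flat $A\in\Phi$ by a generic point of its subspace: writing $A=[X]$ and $M=\bigcap_{h\in X}h$, \ref{eqone} gives $\xi(A)=\{h:M\subseteq h\}$, and choosing $\vec z\in M$ outside the finitely many proper subspaces $M\cap h$ with $M\not\subseteq h$ (possible, as a real vector space is not a finite union of proper subspaces) makes the zero set of $\alpha_{\vec z}$ equal to $\xi(A)$; hence $\supp(\alpha_{\vec z})=\mu(\xi(A))=A$ by \ref{p:MuAndXiProperties}, proving surjectivity. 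For (OG2) I take the antipode $-\vec z$: since each $\ell_i$ is linear and $\sigma_\Re,\sigma_\Im$ negate signs while fixing $0$ and $1$, a direct check gives $\alpha_{-\vec z}=-\alpha_{\vec z}$.

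The core of (OG3) is the identity $\alpha_{\vec z}\circ\alpha_{\vec w}=\alpha_{\vec z+\epsilon\vec w}$ for all sufficiently small $\epsilon>0$ outside a finite exceptional set. First, for such $\epsilon$ one checks (subspace by subspace) that $\{h:\vec z+\epsilon\vec w\in h\}=\{h:\vec z\in h\}\cap\{h:\vec w\in h\}=\xi(A)\cap\xi(B)$, where $A=\supp(\alpha_{\vec z})$ and $B=\supp(\alpha_{\vec w})$; hence $\supp(\alpha_{\vec z+\epsilon\vec w})=\mu(\xi(A)\cap\xi(B))=A\vee B=\supp(\alpha_{\vec z}\circ\alpha_{\vec w})$ by \ref{p:PhiIsSemimodularLattice}. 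It then remains to match values. Writing $\ell_i(\vec z)=a+ib$ and $\ell_i(\vec w)=c+id$, a short case analysis according to which of $b,d,a,c$ vanish shows that $\alpha_{\vec z+\epsilon\vec w}(H_i^\Re)=\adb(H_i^\Re)$ and $\alpha_{\vec z+\epsilon\vec w}(H_i)=\adb(H_i)$ for small $\epsilon>0$. On $\Gamma(A\vee B)\cup\xi(A\vee B)$ this agrees with $\acb$, and off that set both $\alpha_{\vec z+\epsilon\vec w}$ and $\acb$ take the value $1$; thus $\alpha_{\vec z}\circ\alpha_{\vec w}=\alpha_{\vec z+\epsilon\vec w}\in\OIG$.

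For (OG4), let $\alpha=\alpha_{\vec z}$, $\beta=\alpha_{\vec w}$ and $x\in\SeparationSet(\alpha,\beta)$ with $\acb(x)\neq1$. The pair constraint $(\alpha(H_i),\alpha(H_i^\Re))\in\{(0,0),(+,0),(-,0),(1,+),(1,-)\}$ forces $x$ to be either an $H_i$ with $\ell_i(\vec z),\ell_i(\vec w)$ real of opposite sign, or an $H_i^\Re$ with $\Im\ell_i(\vec z),\Im\ell_i(\vec w)$ of opposite sign. In either case I set $\gamma=\alpha_{\vec v}$ with $\vec v=(1-t)\vec z+t\vec w$ for the unique $t\in(0,1)$ making the relevant part of $\ell_i(\vec v)$ (its real part in the first case, its imaginary part in the second) vanish; then $\gamma\in\OIG$ and $\gamma(x)=0$. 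For any other coordinate, $\ell_j(\vec v)=(1-t)\ell_j(\vec z)+t\ell_j(\vec w)$ is a positive real combination, so when $f\notin\SeparationSet(\alpha,\beta)$ the sign of the relevant part of $\ell_j(\vec v)$ equals $\adb(f)$ by positivity of the coefficients. Using the pair constraint to pass between the $H_j^\Re$- and $H_j$-values when one of them is $1$, together with the corollary that $\acb$ and $\bca$ agree off $\SeparationSet(\alpha,\beta)$, gives $\gamma(f)=\acb(f)=\bca(f)$ whenever $\acb(f)\neq1$, as (OG4) requires.

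I expect (OG4) to be the main obstacle. In the purely real case the segment point $\vec v$ works because positive combinations never flip the sign of a coordinate on which $\vec z$ and $\vec w$ do not disagree. Here the complication is that zeroing the real or imaginary part of a single form $\ell_i$ simultaneously controls the two coordinates $H_i$ and $H_i^\Re$, and one must ensure the combination never turns a value $1$ into $\{0,+,-\}$ or conversely. This is precisely where the hypothesis that $\Arrangement$ is \emph{complexified} enters, through the fact (used already in the proof of \ref{eqtwo}) that the imaginary part of $M$ is a subspace of its real part, equivalently the pair constraint above; granting it, the positivity of the convex-combination coefficients rules out any unwanted sign change on the remaining coordinates.
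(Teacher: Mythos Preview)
Your proof is correct and follows essentially the same approach as the paper's: realize (OG1) by a generic point of the associated subspace, (OG2) by $\vec z\mapsto -\vec z$, (OG3) by $\alpha_{\vec z}\circ\alpha_{\vec w}=\alpha_{\vec z+\epsilon\vec w}$ for small $\epsilon>0$, and (OG4) by taking $\gamma=\alpha_{(1-t)\vec z+t\vec w}$ for the $t\in(0,1)$ that kills the relevant part of $\ell_i$. The one visible difference is that the paper, before carrying out the (OG4) argument, first perturbs $\vec z$ and $\vec w$ so that the open segment between them avoids every $H_i$ for which $\vec z,\vec w$ are not both in $H_i^\Re$; your argument shows this precaution is not actually needed for the stated form of (OG4), since whenever $(\alpha\circ\beta)(H_j)\neq1$ one already has $\Im\ell_j(\vec z)=\Im\ell_j(\vec w)=0$, so the convex combination stays real and the positivity argument goes through unperturbed.
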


\begin{proof}
We show that $\OIG$ satisfies (OG1)--(OG4). 

(OG1) Suppose $[X]$ is a flat of $(E_\Arrangement,\F_\Arrangement)$
for some $X\in\F_\Arrangement$. Let $\vec z$ be a generic point of
$\bigcap_{x\in X} x$. The support of $\alpha_{\vec z}$ is the flat
$[Y]$ such that $Y$ is maximal among feasible sets contained in $\{h:
\alpha_{\vec z}(h)=0\}$. Since $\vec z$ is generic, this set is equal
to $\xi(X)$. It follows that $Y$ is equivalent to $X$, so
$[X]=[Y]=\supp(\alpha_{\vec z})$.

(OG2) Suppose $\alpha_{\vec z}\in\OIG$. Then $-\alpha_{\vec z} =
\alpha_{-\vec z}$ because $\sigma_i(\ell_j(-\vec z)) =
\sigma_i(-\ell_j(\vec z))$. So $-\alpha_{\vec z}\in\OIG$.

(OG3) 
Let $\alpha_{\vec x},\alpha_{\vec y}\in\OIG$.
For sufficiently small $t>0$, we have 
$\sigma_\Re(\vec u+ t\vec v) =
\sigma_\Re(\vec u) \star \sigma_\Re(\vec v)$ and
$\sigma_\Im(\vec u+ t\vec v) =
\sigma_\Im(\vec u) \star \sigma_\Im(\vec v)$.
It follows that $\alpha_{\vec x + t\vec y}=
\alpha_{\vec x} \circ \alpha_{\vec y}$
for a sufficiently small $t>0$.

(OG4) 
Let $\alpha,\beta\in\OIG$ and $h\in\SeparationSet(\alpha,\beta)$ such
that $(\alpha\circ\beta)(h)\neq1$.
Pick $\vec x,\vec y\in\mb C^d$ such that $\alpha=\alpha_{\vec x}$ and
$\beta=\alpha_{\vec y}$. 
We can assume for all $1\leq i\leq n$
that the line $t\vec x + (1-t)\vec y$, for $0<t<1$, does
not intersect $H_i$ if $\vec x$ and $\vec y$ are not both contained in
$H_i^\Re$ (otherwise perturb $\vec x$ and $\vec y$ slightly).

Since $h\in\SeparationSet(\alpha_{\vec x},\alpha_{\vec y})$, we have
$\alpha_{\vec x}(h) = -\alpha_{\vec y}(h)\in\{+,-\}$. 
Hence,
$\Re(\ell_h(\vec x))$ and $\Re(\ell_h(\vec y))$ 
or
$\Im(\ell_h(\vec x))$ and $\Im(\ell_h(\vec y))$ 
have opposite
signs, where $\ell_h$ is the form associated to $h$ (that is, $h =
\ker(\ell_h)$ or $h=\ker(\ell_h)^\Re$). 
So there exists $0<t<1$ such that the real part (or imaginary part) of
$\ell_h(t\vec x + (1-t)\vec y)$ is zero. Let $\gamma = \alpha_{t\vec x
+ (1-t)\vec y}$. Then $\gamma(h) = 0$.


Let $e\notin\SeparationSet(\alpha_{\vec x},\alpha_{\vec y})$
and $(\alpha_{\vec x}\circ\alpha_{\vec y})(e)\neq1$.
Suppose first that $e=H_i$ for some $i$.
Then $\Im(\ell_i(\vec x))=0=\Im(\ell_i(\vec y))$, for otherwise
$(\alpha_{\vec x}\circ\alpha_{\vec y})(e)=1$.
This implies that 
$\Im(t\ell_i(\vec x)+(1-t)\ell_i(\vec y))=0$,
so 
$\gamma(e)=\sigma_\Re(\ell_i(t\vec x + (1-t)\vec y))$ is
the sign of
\begin{align*}
\Re\left(\ell_i(t\vec x + (1-t)\vec y)\right)
= t \Re\left(\ell_i(\vec x)\right) + (1-t)\Re\left(\ell_i(\vec y)\right).
\end{align*}
Since both of the coefficients $t$ and $(1-t)$ are positive and since
$\Re\left(\ell_i(\vec x)\right)$ and $\Re\left(\ell_i(\vec y)\right)$
are not of opposite signs, it follows that 
$\gamma(e)$ is the sign of
$\Re(\ell_i(\vec x))$ if it is nonzero and the sign of
$\Re(\ell_i(\vec y))$ otherwise. This is precisely 
$(\alpha_{\vec x}\circ\alpha_{\vec y})(e)$.
Similarly, 
if $e=H_i^\Re\notin\SeparationSet(\alpha_{\vec x},\alpha_{\vec y})$,
then $\gamma(e)=(\alpha_{\vec x}\circ\alpha_{\vec y})(e)$.
\end{proof}

\section{Restriction and contraction of oriented interval greedoids}

\subsection{Contraction}
\label{ss:Contraction}

This section introduces an operation on oriented interval greedoids
that produces an oriented interval greedoid on the contraction of the
underlying interval greedoid. We begin by studying the relationship
between an interval greedoid and its contractions.

\subsubsection{Contraction of interval greedoids}
Let $(E,\F)$ denote an interval greedoid and $\Phi$ its lattice of
flats. Recall that for $X \in \F$, the \defn{contraction} of $(E,\F)$
by $X$ is the interval greedoid with feasible sets
\begin{gather*}
\F/X = \{ Y \subseteq E\m X : Y \cup X \in \F \}
\end{gather*}
and ground set $\bigcup_{Y\in\F/X}Y$. 
We let $\Phi/X$, $\Gamma/X$ and $\xi/X$ denote the corresponding notions in
the contraction. For $Y\in\F/X$, we let $(\Phi/X)(Y)$ denote the flat
in the contraction that contains $Y$.

\begin{Proposition}
\label{p:PropertiesOfContraction}
Suppose $(E,\F)$ is an interval greedoid and $X \in \F$.
Then
\begin{enumerate}
\item $\Phi/X \cong [\hat 0, [X]] \subseteq \Phi$. 
\item If $Y \in \F/X$, then $(\Gamma/X)(Y) = \Gamma(X \cup Y)$.
\item If $Y \in \F/X$, then $(\xi/X)(Y) \sseq \xi(Y \cup X) \cap \bigcup_{Z\in\F/X} Z$.
\end{enumerate}
\end{Proposition}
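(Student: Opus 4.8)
The plan is to reduce all three parts to a single structural identity relating iterated contraction to contraction by a union. First I would establish that for $Y \in \F/X$,
\begin{gather*}
(\F/X)/Y = \F/(X \cup Y),
\end{gather*}
where both sides are regarded as collections of subsets of $E$. This is immediate from the definitions: a set $W$ lies in $(\F/X)/Y$ exactly when $(W \cup Y) \cup X \in \F$, which is exactly the condition $W \cup (X \cup Y) \in \F$ defining $\F/(X \cup Y)$; the only point to check is that the two ambient ground-set constraints agree, and this holds because any $W$ with $W \cup X \cup Y \in \F$ satisfies $Y \cup W \in \F/X$, hence $W \sseq \bigcup_{Z \in \F/X} Z$. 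Combined with the definition of the flat equivalence relation, this yields the key equivalence: for $Y, Y' \in \F/X$, one has $Y \sim Y'$ in the contraction if and only if $X \cup Y \sim X \cup Y'$ in $(E,\F)$, since each side says $(\F/X)/Y=(\F/X)/Y'$, i.e. $\F/(X\cup Y)=\F/(X\cup Y')$.

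For part (1), I would use this equivalence to define the candidate isomorphism $(\Phi/X)(Y) \mapsto [X \cup Y]$. The equivalence statement gives both well-definedness and injectivity at once. Surjectivity onto $[\hat 0, [X]]$ follows from the characterization of flats below $[X]$: since $X \sseq X\cup Z$ forces $[X\cup Z]\leq[X]$, and conversely any $[W]\leq[X]$ equals $[X\cup Z]$ for some $Z\in\F/X$ by the definition of the order on $\Phi$, the flats $A \leq [X]$ are precisely those of the form $[X \cup Z]$ with $Z \in \F/X$. Finally I would verify that the map and its inverse are order-preserving by unwinding the definition of $\leq$ on each side: in the contraction, $(\Phi/X)(Y) \leq (\Phi/X)(Y')$ means there is $W \in (\F/X)/Y' = \F/(X\cup Y')$ with $Y'\cup W \sim Y$ in the contraction; translating the latter through the key equivalence gives $(X \cup Y') \cup W \sim (X\cup Y)$, which is verbatim the condition $[X\cup Y] \leq [X\cup Y']$ in $\Phi$. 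I expect this order-preservation step to be the main obstacle, since it requires carefully matching the two (reverse-inclusion) order definitions through the identity above; everything else is bookkeeping.

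For parts (2) and (3) I would simply unwind the definitions, using again that continuations and $\xi$-sets of the contraction live inside the ground set $E' = \bigcup_{Z \in \F/X} Z$. For (2), $y \in (\Gamma/X)(Y)$ means $y \in E'\m Y$ and $Y \cup y \in \F/X$, i.e. $(X\cup Y)\cup y \in \F$ with $y \notin X \cup Y$, which is exactly $y \in \Gamma(X\cup Y)$; the reverse containment $\Gamma(X\cup Y) \sseq E'$ holds because each such $y$ satisfies $Y\cup y \in \F/X$, giving equality. For (3), take $e \in (\xi/X)(Y)$, so $e \in Y'$ for some $Y' \sim Y$ in the contraction; the key equivalence gives $X \cup Y' \sim X \cup Y$, and since $e \in Y' \sseq X\cup Y'$ we get $e \in \xi(X\cup Y)$, while $e \in Y' \sseq E'$ gives membership in $\bigcup_{Z\in\F/X} Z$. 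I would remark that the inclusion in (3) is generally proper and that no reverse containment is expected, since $\xi(X\cup Y)$ collects elements from feasible sets equivalent to $X\cup Y$ that need not contain $X$.
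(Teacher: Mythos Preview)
Your proof is correct and follows essentially the same approach as the paper: both hinge on the identity $(\F/X)/Y = \F/(X\cup Y)$ to define the map $(\Phi/X)(Y)\mapsto[X\cup Y]$ and to handle parts (2) and (3). Your surjectivity argument is slightly more direct (using the definition of $\leq$ on $\Phi$ rather than the paper's route through $\xi$ and maximal feasible subsets), and you explicitly verify order-preservation for the isomorphism in (1), which the paper's proof leaves implicit.
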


\begin{proof}
(1) Define a map $\Phi/X \to [\hat 0, [X]]$ by mapping the flat containing
$Y$ (in the contraction $\F/X$) to the flat $[Y \cup X]$ of
$(E,\F)$. The fact that this map is well-defined follows from the
identity: $(\F/X)/Y = \F/(X \cup Y)$ for $X \in \F$ and $Y
\in \F/X$. This identity also implies that the map is injective. It
remains to show that the map is surjective. Let $[Z] \leq [X]$.
Then $\xi(X) \subseteq \xi(Z)$. Hence, there exists $Z'$ containing
$X$ with $Z'$ maximal among the feasible sets contained in $\xi(Z)$.
Therefore, $Z' \m X \in \F/X$, and $Z' \m X$ maps to the flat
containing $(Z' \m X) \cup X = Z'$, which is $[Z]$ by
\ref{l:MaximalSubsetsAreEquivalent}.

(2) Suppose $x \in (\Gamma/X)(Y)$. Then $Y \cup x \in \F/X$. So $(X \cup Y)
\cup x \in \F$.  That is, $x \in \Gamma(X \cup Y)$. Conversely, suppose $x
\in \Gamma(X \cup Y)$.  Then $X \cup (Y \cup x) \in \F$, and so $(Y \cup x)
\in \F/X$. That is, $x \in (\Gamma/X)(Y)$.

(3) Let $x \in (\xi/X)(Y)$. Then $x \in W$ for some $W\in\F/X$ that is
equivalent (in $\F/X)$ to $Y$. So $x\in\bigcup_{Z\in\F/X}Z$. And since the
map defined in (1) is well-defined, we have $[X\cup Y]=[X\cup W]$. Hence,
$x \in W\sseq\xi(X\cup W)=\xi(X\cup Y)$.
\end{proof}

We remark that the containment in the previous result can be proper.

\subsubsection{Contractions of oriented interval greedoids}

Let $(E,\F,\OIG)$ be an oriented interval greedoid and
$\Phi=\supp(\OIG)$ the lattice of flats of $(E,\F)$. 
For $A\in\Phi$, let
\begin{align*}
 \OIG_{\leq A} = \{ \alpha \in \OIG : \supp(\alpha) \leq A \}.
\end{align*}
Then $\OIG_{\leq A}$ is a subsemigroup of $\OIG$. We'll show that it is
isomorphic to an oriented interval greedoid over the contraction of
$(E,\F)$ by $X\in\F$, where $A=[X]$.

Let $\alpha$ be a covector of $(E,\F)$ with $\supp(\alpha)\leq[X]$. By
definition of the partial order, there exists $Y\in\F/X$ such that
$\supp(\alpha)=[X\cup Y]$. Therefore, $Y$ is a feasible set in the
contracted interval greedoid and so it makes sense to talk about its flat
$(\Phi/X)(Y)$. By restricting $\alpha$ to the subset $(\Gamma/X)(Y)$, we
get a signed flat $((\Phi/X)(Y),\alpha|_{(\Gamma/X)(Y)})$ of the contracted
interval greedoid. We denote the covector of this signed flat by
$\con_{X}(\alpha)$. Then,
\begin{align}
\label{e:CovectorOfContraction}
\con_X(\alpha)(e) =
\begin{cases}
0, & \text{if } e \in (\xi/X)(Y), \\
\alpha(e), & \text{if } e \in (\Gamma/X)(Y), \\
1, & \text{otherwise.} 
\end{cases}
\end{align}
It follows from \ref{p:PropertiesOfContraction} that if
$\con_X(\alpha)(e)\neq 1$, then $\con_X(\alpha)(e) = \alpha(e)$.

\begin{Lemma}
\label{l:ContractedCovectorsAreCovectors}
Suppose $(E,\F)$ is an interval greedoid and let $X \in \F$. Let $\alpha$
and $\beta$ be covectors of $(E,\F)$ with
$\supp(\alpha),\supp(\beta)\leq[X]$. 
\begin{enumerate}
\item
$(\supp/X)(\con_X(\alpha)) = (\Phi/X)(Y)$ and 
$(\supp/X)(\con_X(\beta)) = (\Phi/X)(Z)$,
where $Y,Z\in\F/X$ satisfy 
$[X\cup Y]=\supp(\alpha)$ and $[X\cup Z]=\supp(\beta)$.
\item
$\con_X(\alpha)\circ\con_X(\beta) = \con_X(\alpha\circ\beta)$.
\end{enumerate}
\end{Lemma}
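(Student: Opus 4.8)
For part (1), I would note that this is essentially immediate from the construction of $\con_X$. By definition $\con_X(\alpha)$ is the covector of the signed flat $((\Phi/X)(Y),\alpha|_{(\Gamma/X)(Y)})$ of $\F/X$, which makes sense since $(\Gamma/X)(Y)=\Gamma(X\cup Y)$ by \ref{p:PropertiesOfContraction}(2) and $\alpha$ takes values in $\{+,-\}$ on $\Gamma(X\cup Y)=\Gamma(\supp(\alpha))$. Because the support of the covector of a signed flat is by construction the flat of that signed flat, $(\supp/X)(\con_X(\alpha))=(\Phi/X)(Y)$, and likewise for $\beta$ and $Z$.

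For part (2), my plan is to show the two covectors have a common support and then compare them on the continuation set of that support; this suffices, since two covectors with equal support vanish together on $\xi$ of the support and equal $1$ off the continuations, so they are equal once they agree on the continuations. To identify the supports, let $W\in\F/X$ satisfy $[X\cup W]=\supp(\alpha)\vee\supp(\beta)=\supp(\alpha\circ\beta)$; here $\supp(\alpha)\vee\supp(\beta)\leq[X]$ because $[X]$ is an upper bound of the two supports. The support of $\con_X(\alpha)\circ\con_X(\beta)$ is the join $(\Phi/X)(Y)\vee(\Phi/X)(Z)$ in $\Phi/X$, by \ref{p:CovectorOfProduct} together with part (1). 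Under the isomorphism $\Phi/X\cong[\hat 0,[X]]$ of \ref{p:PropertiesOfContraction}(1), which sends $(\Phi/X)(Y)$ to $\supp(\alpha)$ and $(\Phi/X)(Z)$ to $\supp(\beta)$, this join corresponds to $\supp(\alpha)\vee\supp(\beta)$, since an order isomorphism of lattices preserves joins and joins inside the interval $[\hat 0,[X]]$ agree with those of $\Phi$. Hence $(\Phi/X)(Y)\vee(\Phi/X)(Z)=(\Phi/X)(W)$, which by part (1) is also $(\supp/X)(\con_X(\alpha\circ\beta))$.

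It then remains to verify agreement on $(\Gamma/X)(W)$. Fix $e\in(\Gamma/X)(W)$. Since $e$ lies in the continuation of the support of $\con_X(\alpha)\circ\con_X(\beta)$, we have $(\con_X(\alpha)\circ\con_X(\beta))(e)\neq 1$, so the contrapositive of \ref{l:CovectorsAndSeparationSets}\eqref{i:CovectorProductInvolving1}, applied in $\F/X$, gives $\con_X(\alpha)(e)\neq 1$ and $\con_X(\beta)(e)\neq 1$. The observation following \eqref{e:CovectorOfContraction} then yields $\con_X(\alpha)(e)=\alpha(e)$ and $\con_X(\beta)(e)=\beta(e)$, so $(\con_X(\alpha)\circ\con_X(\beta))(e)=(\con_X(\alpha)\star\con_X(\beta))(e)=(\alpha\star\beta)(e)$, where the first equality holds because $e$ is a continuation of the product's support. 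On the other side, $(\Gamma/X)(W)=\Gamma(X\cup W)=\Gamma(\supp(\alpha)\vee\supp(\beta))$, so $e$ is a continuation of $\supp(\alpha)\vee\supp(\beta)$ and the definition of the covector product gives $(\alpha\circ\beta)(e)=(\alpha\star\beta)(e)$; finally $\con_X(\alpha\circ\beta)(e)=(\alpha\circ\beta)(e)$ since $e\in(\Gamma/X)(W)$. Both covectors thus equal $(\alpha\star\beta)(e)$ at $e$.

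The step I expect to be the main obstacle is the support bookkeeping in the second paragraph: confirming that the join computed inside the contraction $\Phi/X$ matches the join of the supports in $\Phi$ under \ref{p:PropertiesOfContraction}(1), and pinning down $W$ accordingly. Once a common support is in hand, the pointwise comparison is routine, because the observation following \eqref{e:CovectorOfContraction} lets me replace the contracted covectors by $\alpha$ and $\beta$ exactly on the continuations, reducing the identity to the already-established $\star$-description of the covector product.
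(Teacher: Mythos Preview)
Your proof is correct and follows essentially the same approach as the paper's: first match the supports via the isomorphism $\Phi/X\cong[\hat 0,[X]]$ of \ref{p:PropertiesOfContraction}(1), then compare values on $(\Gamma/X)(W)$ using the observation after \eqref{e:CovectorOfContraction} to replace contracted covectors by $\alpha$ and $\beta$. The only cosmetic differences are that you invoke \ref{l:CovectorsAndSeparationSets}\eqref{i:CovectorProductInvolving1} explicitly where the paper leaves it implicit, and you phrase the pointwise computation via $\star$ rather than by cases.
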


\begin{proof}
(1) Since $\con_X(\alpha)$ is the covector of the signed flat $(
(\Phi/X)(Y), \alpha|_{(\Gamma/X)(Y)} )$, it follows from the definition
of $\supp/X$ that $(\supp/X)(\con_X(\alpha)) = (\Phi/X)(Y)$.

(2) We first argue that the supports of the two elements are the same. It
follows from the definition of $\circ$ that the support of
$\con_X(\alpha)\circ\con_X(\beta)$ is the join of their supports, so it is
$(\Phi/X)(Y)\vee(\Phi/X)(Z)$ by (1). Under the isomorphism $\Phi/X \cong
[\hat0,[X]]$, this corresponds to $[X\cup Y]\vee[X\cup Z]$, which we can
express as $[X\cup W]$ for some $W\in\F/X$. Hence,
$(\Phi/X)(Y)\vee(\Phi/X)(Z) = (\Phi/X)(W)$. Note that $[X\cup W]$ is also
the support of $\alpha\circ\beta$, so (1) implies that
$(\supp/X)(\con_X(\alpha\circ\beta))=(\Phi/X)(W)$.

Since both $\con_X(\alpha)\circ\con_X(\beta)$ and
$\con_X(\alpha\circ\beta)$ are covectors of support $(\Phi/X)(W)$, to show
that they are equal it suffices to show that they agree on $(\Gamma/X)(W)$.
Let $e\in(\Gamma/X)(W)$. Then,
\begin{align*}
\left(\con_X(\alpha)\circ\con_X(\beta)\right)(e) =
\begin{cases}
\con_X(\beta)(e), & \text{if } \con_X(\beta)(e) > \con_X(\alpha)(e), \\
\con_X(\alpha)(e), & \text{otherwise}.
\end{cases}
\end{align*}
Since $(\con_X(\alpha)\circ\con_X(\beta))(e)\neq1$, it follows that neither
$\con_X(\alpha)(e)$ nor $\con_X(\beta)(e)$ is 1. Hence, $\con_X(\alpha)(e)
= \alpha(e)$ and $\con_X(\beta)(e) = \beta(e)$ (see the sentence
following \ref{e:CovectorOfContraction}). Therefore,
\begin{align*}
\left(\con_X(\alpha)\circ\con_X(\beta)\right)(e) =
\begin{cases}
\beta(e), & \text{if } \beta(e) > \alpha(e), \\
\alpha(e), & \text{otherwise}.
\end{cases}
\end{align*}
This is precisely $(\alpha\circ\beta)(e)$, which is
$\con_X(\alpha\circ\beta)(e)$ by \ref{e:CovectorOfContraction}.
\end{proof}

\begin{Proposition}
\label{p:OIGContraction}
Let $(E,\F, \OIG)$ denote an oriented interval greedoid
and let $X \in \F$. Then 
\begin{gather*}
\OIG/X = \{ \con_X(\alpha) : \alpha \in \OIG \text{ and } \supp(\alpha) \leq [X] \}
\end{gather*}
defines an oriented interval greedoid over the contraction of $(E,\F)$ by $X$.
\end{Proposition}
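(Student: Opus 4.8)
The plan is to verify the four axioms (OG1)--(OG4) for $\OIG/X$ as a set of covectors of the contraction, leaning throughout on \ref{l:ContractedCovectorsAreCovectors}, which supplies the two facts that do most of the work: $\con_X$ respects supports (under the isomorphism $\Phi/X\cong[\hat 0,[X]]$ of \ref{p:PropertiesOfContraction}) and is multiplicative, $\con_X(\alpha)\circ\con_X(\beta)=\con_X(\alpha\circ\beta)$. Axioms (OG1)--(OG3) are then essentially formal. For (OG1) I would take a flat of the contraction, transport it to a flat $A\leq[X]$ of $\Phi$, use (OG1) for $\OIG$ to find $\alpha\in\OIG$ with $\supp(\alpha)=A$, and apply \ref{l:ContractedCovectorsAreCovectors}(1). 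For (OG2) I would check $\con_X(-\alpha)=-\con_X(\alpha)$ directly from \eqref{e:CovectorOfContraction} (negation only swaps $\pm$ and is invisible to the values $0,1$), noting $\supp(-\alpha)=\supp(\alpha)\leq[X]$. For (OG3) I would combine (OG3) for $\OIG$ with \ref{l:ContractedCovectorsAreCovectors}(2), observing $\supp(\alpha\circ\beta)=\supp(\alpha)\vee\supp(\beta)\leq[X]$.

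Before attacking (OG4) I would record one bookkeeping identity: the separation set computed in the contraction agrees with the one upstairs, $\SeparationSet(\con_X\alpha,\con_X\beta)=\SeparationSet(\alpha,\beta)$. This holds because $\SeparationSet(\alpha,\beta)\subseteq\Gamma(\supp\alpha)\cap\Gamma(\supp\beta)$, and on these continuation sets $\con_X$ leaves values unchanged (the remark following \eqref{e:CovectorOfContraction}).

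The real work is (OG4), and the strategy is to lift to $\OIG$, apply (OG4) there, and push the result back down by $\con_X$. Given $h\in\SeparationSet(\con_X\alpha,\con_X\beta)=\SeparationSet(\alpha,\beta)$ with $(\con_X\alpha\circ\con_X\beta)(h)\neq1$, multiplicativity gives $\con_X(\alpha\circ\beta)(h)\neq1$, whence $(\alpha\circ\beta)(h)\neq1$; so (OG4) for $\OIG$ produces $\gamma\in\OIG$ with $\gamma(h)=0$ agreeing with $\delta:=\alpha\circ\beta$ (and with $\beta\circ\alpha$) off the separation set wherever $\delta\neq1$. The candidate is $\gamma'=\con_X(\gamma)$. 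The main obstacle is that \emph{a priori} $\gamma$ need not have support $\leq[X]$, so $\con_X(\gamma)$ may fail to be defined; and even when it is defined, passing to the contraction can turn a non-$1$ value into $1$, so it is not clear that $\gamma'(h)=0$ or that $\gamma'$ agrees with $\con_X(\delta)$ off the separation set.

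I expect both difficulties to be resolved by two support inequalities, proved in the same way. Since $\supp(\delta)\leq[X]$, the sets $\xi([X])$ and $\xi(\supp\delta)$ both lie inside $\xi(\supp\alpha)$, hence are disjoint from $\Gamma(\supp\alpha)\supseteq\SeparationSet(\alpha,\beta)$; so on either set $\delta$ vanishes and the point lies off the separation set, which forces $\gamma$ to vanish there too. This yields $\xi([X])\subseteq\xi(\supp\gamma)$ and $\xi(\supp\delta)\subseteq\xi(\supp\gamma)$, that is, $\supp\gamma\leq[X]$ (so $\gamma'\in\OIG/X$) and $\supp\gamma\leq\supp\delta$, using \ref{p:MuAndXiProperties}. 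The second inequality is the key: transported to the contraction it gives $\supp(\con_X\gamma)\leq\supp(\con_X\delta)$, so by \ref{p:PropertiesOfContinuations}(2) applied inside the contraction the region where $\con_X(\delta)\neq1$ is contained in the region where $\con_X(\gamma)\neq1$. Consequently at $h$, and at every $h'$ off the separation set with $\con_X(\delta)(h')\neq1$, the covector $\con_X(\gamma)$ is non-$1$ and therefore equal to $\gamma$; combining this with the conclusion of (OG4) upstairs and the equality $\supp(\beta\circ\alpha)=\supp(\delta)$ gives $\gamma'(h)=0$ and $\gamma'(h')=\con_X(\delta)(h')=\con_X(\beta\circ\alpha)(h')$, which is exactly (OG4) for $\OIG/X$.
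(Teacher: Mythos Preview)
Your proposal is correct and follows essentially the same route as the paper: verify (OG1)--(OG3) formally via \ref{l:ContractedCovectorsAreCovectors}, then for (OG4) lift to $\OIG$, apply (OG4) there to obtain $\gamma$, show $\supp(\gamma)\leq\supp(\alpha\circ\beta)$ by checking zeros, and use this together with \ref{p:PropertiesOfContinuations}(2) in the contraction to see that $\con_X(\gamma)$ cannot take the value $1$ where $\con_X(\alpha\circ\beta)$ does not. One tiny redundancy: your separate verification that $\supp(\gamma)\leq[X]$ is unnecessary, since it follows immediately from $\supp(\gamma)\leq\supp(\alpha\circ\beta)\leq[X]$; the paper simply uses the latter.
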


\begin{proof}
(OG1). Let $A \in \Phi/X$. Then $A = (\Phi/X)(Y)$ for some $Y\in\F/X$, and
so $[Y \cup X] \in \Phi$. Since $\OIG$ satisfies (OG1), there exists
$\alpha \in \OIG$ with $\supp(\alpha) = [Y \cup X] \leq [X]$. Then
$\con_X(\alpha) \in \OIG/X$ and $(\supp/X)(\con_X(\alpha))=(\Phi/X)(Y)=A$
by \ref{l:ContractedCovectorsAreCovectors}.

(OG2) Suppose $\nu \in \OIG/X$. Then there exists some $\beta \in \OIG$
such that $\supp(\beta) \leq [X]$ and $\con_X(\beta) = \nu$. Then $-\beta
\in \OIG$ by (OG2), and so $-\nu = \con_X(-\beta) \in \OIG/X$.

(OG3) Suppose $\con_X(\alpha)$ and $\con_X(\beta)$ are in $\OIG/X$.
Then $\alpha \circ \beta \in \OIG$, by (OG3), and $\supp(\alpha \circ
\beta) = \supp(\alpha) \vee \supp(\beta) \leq [X]$.
Therefore, $\con_X(\alpha \circ \beta) \in \OIG/X$. By 
\ref{l:ContractedCovectorsAreCovectors}, 
$\con_X(\alpha\circ\beta) = \con_X(\alpha)\circ\con_X(\beta)$, so
$\con_X(\alpha)\circ\con_X(\beta) \in \OIG/X$.

(OG4) Suppose $\con_X(\alpha), \con_X(\beta) \in \OIG/X$, and let $x \in
(\SeparationSet/X)(\con_X(\alpha),\con_X(\beta))$ such that
$(\con_X(\alpha)\circ\con_X(\beta))(x)\neq1$. 

Since $\con_X(\alpha)(x)=-\con_X(\beta)(x)\in\{+,-\}$, it follows from
\ref{e:CovectorOfContraction} that $\alpha(x)=-\beta(x)\in\{+,-\}$. Hence,
$x\in\SeparationSet(\alpha,\beta)$. Since
$\con_X(\alpha\circ\beta)=\con_X(\alpha)\circ\con_X(\beta)$, it follows
that $\con_X(\alpha\circ\beta)(x)\neq1$, which implies that
$(\alpha\circ\beta)(x)\neq1$. Therefore, (OG4) applies to $\alpha,\beta$
and $x$ to guarantee the existence of $\gamma\in\OIG$ satisfying
$\gamma(x)=0$ and for all $y\notin\SeparationSet(\alpha,\beta)$,
if $(\alpha\circ\beta)(y)\neq1$, 
    then $\gamma(y) = (\alpha\circ\beta)(y) = (\beta\circ\alpha)(y)$.
We claim that $\con_X(\gamma)$ satisfies the conditions of (OG4) for
$\OIG/X$.

We first show that $\supp(\gamma)\leq\supp(\alpha\circ\beta)$. Indeed, if
$(\alpha\circ\beta)(y)=0$, then $\alpha(y)=0$ and $\beta(y)=0$, so
$y\notin\SeparationSet(\alpha,\beta)$. We conclude from (OG4) that $\gamma(y)
= (\alpha\circ\beta)(y) = 0$.

Next we argue that $\con_X(\gamma)(x) = 0$. Since
$\supp(\gamma)\leq\supp(\alpha\circ\beta)$, it follows that
$(\supp/X)(\con_X(\gamma)) \leq (\supp/X)(\con_X(\alpha\circ\beta))$. Then
\ref{p:PropertiesOfContinuations} and the assumption that
$\con_X(\alpha\circ\beta)(x)\neq1$ implies that $\con_X(\gamma)(x)\neq1$.
If $\con_X(\gamma)(x)\in\{+,-\}$, then $\gamma(x)\in\{+,-\}$ contradicting
the fact that $\gamma(x)=0$. Therefore, $\con_X(\gamma)(x)=0$.

Now let $y\in\bigcup_{Z\in\F/X}Z$ with
$y\notin(\SeparationSet/X)(\con_X(\alpha),\con_X(\beta))$. We claim that
$y\notin\SeparationSet(\alpha,\beta)$. If
$y\in\SeparationSet(\alpha,\beta)$, then $\alpha(y)=-\beta(y)\in\{+,-\}$,
and so $\con_X(\alpha)(y)=\alpha(y)=-\beta(y)=-\con_X(\beta)(y)\in\{+,-\}$
by \ref{e:CovectorOfContraction}, a contradiction.

Now suppose that $\con_X(\alpha\circ\beta)(y)\neq1$. As above,
\ref{p:PropertiesOfContinuations} implies that $\con_X(\gamma)(y)\neq1$.
Then the sentence following \ref{e:CovectorOfContraction} implies that
$\con_X(\alpha\circ\beta)(y)=(\alpha\circ\beta)(y)$ and that
$\con_X(\gamma)(y)=\gamma(y)$. Hence, $(\alpha\circ\beta)(y)\neq1$, so
$\gamma(y)=(\alpha\circ\beta)(y)=(\beta\circ\alpha)(y)$ by (OG4).
Therefore, $\con_X(\gamma)(y) = 
(\con_X(\alpha)\circ\con_X(\beta))(y) =
(\con_X(\beta)\circ\con_X(\alpha))(y)$.
\end{proof}

The following result identifies $\OIG/X$ with a subsemigroup of
$\OIG$.

\begin{Proposition}
\label{p:ContractionSemigroup}
Let $(E,\F,\OIG)$ denote an oriented interval greedoid and let $X\in\F$. 
Then there is a semigroup isomorphism
\begin{gather*}
\OIG_{\leq[X]}\cong\OIG/X
\end{gather*}
given by mapping
$\alpha\in\OIG$ with $\supp(\alpha)\leq[X]$ to $\con_X(\alpha)$. 
\end{Proposition}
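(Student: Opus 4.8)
The plan is to verify that the stated map $\Psi\colon \OIG_{\leq[X]} \to \OIG/X$, $\alpha \mapsto \con_X(\alpha)$, is a well-defined semigroup homomorphism that is both surjective and injective. Well-definedness and surjectivity are immediate: the target $\OIG/X$ is by definition precisely the set $\{\con_X(\alpha) : \alpha \in \OIG,\ \supp(\alpha) \leq [X]\}$, which is the image of $\OIG_{\leq[X]}$ under $\con_X$; and $\OIG_{\leq[X]}$ is a subsemigroup of $\OIG$, since $\supp(\alpha\circ\beta) = \supp(\alpha)\vee\supp(\beta) \leq [X]$ whenever $\supp(\alpha),\supp(\beta)\leq[X]$. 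The homomorphism property $\con_X(\alpha)\circ\con_X(\beta) = \con_X(\alpha\circ\beta)$ is exactly \ref{l:ContractedCovectorsAreCovectors}(2). So the only substantive point is injectivity.

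For injectivity I would exploit the fact that any covector $\alpha$ of $(E,\F)$ is uniquely recovered from its associated signed flat, that is, from the pair $\big(\supp(\alpha),\, \alpha|_{\Gamma(\supp(\alpha))}\big)$. Indeed, $\{e : \alpha(e)=0\} = \xi(\supp(\alpha))$ determines the flat $\supp(\alpha)$ by \ref{p:MuAndXiProperties}, and then $\alpha$ is forced: it equals $0$ on $\xi(\supp(\alpha))$, equals its prescribed signs on $\Gamma(\supp(\alpha))$, and equals $1$ elsewhere. Consequently, to prove $\Psi$ injective it suffices to show that $\con_X(\alpha)$ recovers both $\supp(\alpha)$ and $\alpha|_{\Gamma(\supp(\alpha))}$.

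Concretely, suppose $\con_X(\alpha) = \con_X(\beta)$ with $\supp(\alpha), \supp(\beta) \leq [X]$, and write $\supp(\alpha) = [X\cup Y]$, $\supp(\beta) = [X\cup Z]$ for $Y,Z \in \F/X$. Comparing supports in the contraction and applying \ref{l:ContractedCovectorsAreCovectors}(1) gives $(\Phi/X)(Y) = (\Phi/X)(Z)$; under the poset isomorphism $\Phi/X \cong [\hat 0, [X]]$ of \ref{p:PropertiesOfContraction}(1) this yields $[X\cup Y] = [X \cup Z]$, i.e.\ $\supp(\alpha) = \supp(\beta)$, a common flat which we call $A$. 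Next, $(\Gamma/X)(Y) = \Gamma(X\cup Y) = \Gamma(A)$ by \ref{p:PropertiesOfContraction}(2), and on this set the sentence following \eqref{e:CovectorOfContraction} gives $\con_X(\alpha)(e) = \alpha(e)$ and $\con_X(\beta)(e) = \beta(e)$. Since $\con_X(\alpha)$ and $\con_X(\beta)$ agree everywhere, they agree on $(\Gamma/X)(Y)$, whence $\alpha(e) = \beta(e)$ for all $e \in \Gamma(A)$. Thus $\alpha$ and $\beta$ share the support $A$ and the same restriction to $\Gamma(A)$, so by the reconstruction remark above $\alpha = \beta$.

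The only genuinely delicate point, and the step I would be most careful about, is the bookkeeping in the injectivity argument: one must keep straight the two different notions of support and of continuation, those of $(E,\F)$ versus those of the contraction $\F/X$, and invoke \ref{p:PropertiesOfContraction}(1)--(2) to translate between them. Once the dictionary $(\Phi/X)(Y) \leftrightarrow [X\cup Y]$ and $(\Gamma/X)(Y) = \Gamma(X\cup Y)$ is installed, the conclusion that $\con_X$ discards no information about a covector of support $\leq[X]$ is essentially forced, and the isomorphism follows.
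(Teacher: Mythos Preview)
Your proof is correct and follows essentially the same approach as the paper: the semigroup morphism property is \ref{l:ContractedCovectorsAreCovectors}(2), surjectivity is by definition of $\OIG/X$, and injectivity is established by recovering the signed flat $(\supp(\alpha),\alpha|_{\Gamma(\supp(\alpha))})$ from $\con_X(\alpha)$ via the dictionary of \ref{p:PropertiesOfContraction}(1)--(2). The paper's proof is more terse but the logical skeleton is identical.
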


\begin{proof}
\ref{l:ContractedCovectorsAreCovectors} shows this is a
semigroup morphism. The morphism is surjective by definition of
$\OIG/X$. It remains to show that the morphism is injective.  Suppose
$\con_X(\alpha)= \con_X(\beta)$.  Then $\supp(\alpha)=\supp(\beta)$,
which can be written as $[X\cup Y]$.  Now $(\Gamma/X)(Y)=\Gamma(X\cup Y)$, 
so $\alpha$ and $\beta$ agree on $\Gamma(X\cup Y)$, and they each are zero
on exactly $\xi([X\cup Y])$, so $\alpha$ and $\beta$ agree, as desired.  
%
%
\end{proof}

\subsection{Restriction}
\label{ss:Restriction}

We introduce a restriction operation for an oriented interval greedoid
$(E,\F,\OIG)$ that produces an oriented interval greedoid on a
restriction of the interval greedoid $(E,\F)$. We begin by recalling
restriction for interval greedoids.

\subsubsection{Restriction of an interval greedoid}
\label{sss:RestrictionofIGs}

Let $(E,\F)$ denote an interval greedoid and $\Phi$ its lattice of
flats. If $W \subseteq E$ is an arbitrary subset, then the
\defn{restriction} of $(E,\F)$ to $W$ is the interval greedoid 
$(W,\F|_W)$, where
\begin{gather*}
\F|_W = \{ X \in \F : X \subseteq W \}.
\end{gather*}

To distinguish between objects defined for $(E,\F)$ and $(W,\F|_W)$,
we take the following convention.  If $\Xi$ is an object defined for
$(E, \F)$ (for example, its lattice of flats $\Phi$, the set of
continuations $\Gamma$), then $\Xi|_W$ will denote the corresponding
object defined for $(W,\F|_W)$ (for example, $\Phi|_W$, $\Gamma|_W$).

There is a map $\Phi \to \Phi|_W$ that maps a flat $C\in\Phi$ onto the flat
$\mu|_W(W \cap \xi(C))$. We denote the image of $C$ by $C|_W$. Note that if
$Y\in\F|_W$, then $Y\in\F$ and the image of $[Y]\in\Phi$ under $\Phi \to
\Phi|_W$ is the flat in $\Phi|_W$ that contains $Y$, which by our above
convention is denoted by $[Y]|_W$.

\begin{Lemma}
\label{l:IntervalGreedoidsAndRestriction}
Suppose $(E,\F)$ is an interval greedoid and 
let $W \subseteq E$. 
\begin{enumerate}
\item 
If $Y \in \F|_W$, then $\Gamma|_W(Y) = W \cap \Gamma(Y)$.
\item 
If $Y \in \F|_W$ and $\xi(Y) \subseteq W$, then $\xi|_W(Y) = \xi(Y)$.
\item 
If $A \in \Phi$, 
  then $\Gamma|_W(A|_W)\subseteq W\cap\Gamma(A)$.
\item 
If $A \in \Phi$, 
  then $\xi|_W(A|_W) \subseteq W \cap \xi(A)$.
\end{enumerate}
\end{Lemma}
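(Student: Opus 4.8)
The plan is to establish the four assertions in the order (1), (3), a single key claim, and then (4) and (2), since the later parts depend on the earlier ones and on that claim. Part (1) is immediate from the definitions: for $Y\in\F|_W$, a continuation $x\in\Gamma|_W(Y)$ is an element $x\in W\m Y$ with $Y\cup x\in\F|_W$, and because $Y\subseteq W$ the requirement $Y\cup x\subseteq W$ is automatic once $x\in W$. Hence $Y\cup x\in\F|_W$ is equivalent to $Y\cup x\in\F$, and $\Gamma|_W(Y)=W\cap\Gamma(Y)$.

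For part (3), I would write $A|_W=[X]|_W$, where $X$ is maximal among the feasible sets of $\F|_W$ contained in $W\cap\xi(A)$, so that $X\subseteq W\cap\xi(A)$. By (1), $\Gamma|_W(A|_W)=W\cap\Gamma(X)$, so it suffices to show $W\cap\Gamma(X)\subseteq\Gamma(A)$. Since $X\subseteq\xi(A)$, \ref{p:MuAndXiProperties}(5) gives $A\leq[X]$, and then \ref{p:PropertiesOfContinuations}(2) gives $\Gamma(X)\subseteq\Gamma(A)\cup\xi(A)$. For $e\in W\cap\Gamma(X)$ the possibility $e\in\xi(A)$ is excluded: it would make $X\cup e$ a feasible subset of $W\cap\xi(A)$ strictly larger than $X$, contradicting maximality. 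Hence $e\in\Gamma(A)$, which proves (3).

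The crux of the whole argument, and what I expect to be the main obstacle, is the claim on which both (4) and (2) rest: for $Y,Y'\in\F|_W$, equivalence in the restriction (which I write $Y\sim_W Y'$) implies equivalence in $(E,\F)$, that is $Y\sim Y'$. To prove it I would first unwind $\sim_W$ via (1) as $W\cap\Gamma(Y)=W\cap\Gamma(Y')$, and then show that $Y$ and $Y'$ are each maximal among the feasible sets of $\F$ contained in $Y\cup Y'$. Indeed, if some feasible $U$ with $Y\subsetneq U\subseteq Y\cup Y'$ existed, then (IG2) would produce $e\in U\m Y\subseteq Y'$ with $Y\cup e\in\F$; thus $e\in W\cap\Gamma(Y)=W\cap\Gamma(Y')\subseteq\Gamma(Y')$, which is impossible since $e\in Y'$ forces $e\notin\Gamma(Y')$. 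Maximality of both $Y$ and $Y'$ then yields $\F/Y=\F/Y'$ by \ref{l:MaximalSubsetsAreEquivalent}, i.e.\ $Y\sim Y'$; the converse implication is trivial from (1).

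With the claim available, (4) follows quickly: writing $A|_W=[X]|_W$ as in (3), any $X'\in\F|_W$ with $X'\sim_W X$ satisfies $X'\sim X$ by the claim, so $X'\subseteq\xi([X'])=\xi([X])\subseteq\xi(A)$, the last inclusion because $A\leq[X]$ and $\xi$ is order-reversing (\ref{p:MuAndXiProperties}). Since also $X'\subseteq W$, the set $\xi|_W(A|_W)$, which is the union of all such $X'$, lies in $W\cap\xi(A)$. Finally, (2) is the equality case of (4): when $\xi(Y)\subseteq W$, applying (4) to $A=[Y]$ gives $\xi|_W(Y)\subseteq W\cap\xi(Y)=\xi(Y)$; for the reverse inclusion, any $e\in\xi(Y)$ lies in some $Y'\in\F$ with $Y'\sim Y$ and $Y'\subseteq\xi(Y)\subseteq W$, so $Y'\in\F|_W$ and $Y'\sim_W Y$ by (1), whence $e\in\xi|_W(Y)$.
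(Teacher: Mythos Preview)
Your proof is correct, and the overall logical structure matches the paper's, but you organize the argument differently and prove the key step by a different device.

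The paper proves the parts in the order (1), (2), (3), (4); you do (1), (3), then isolate the claim ``$Y\sim_W Y'\Rightarrow Y\sim Y'$'' as a lemma, and derive (4) and (2) from it. The substantive difference is in how this claim is established. The paper, inside its proof of (4), shows directly that $\Gamma(C)=\Gamma(D)$ in $(E,\F)$: for a continuation $x$ of $C$ with $x\notin W$, one augments $D$ from $C\cup x$ using (IG2), and the only element available that does not contradict $\Gamma|_W(C)=\Gamma|_W(D)$ is $x$ itself, so $x\in\Gamma(D)$; symmetry finishes. You instead argue that $Y$ and $Y'$ are each maximal among feasible subsets of $Y\cup Y'$, then invoke \ref{l:MaximalSubsetsAreEquivalent}. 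Your route is a bit more conceptual and reuses the machinery of \ref{l:MaximalSubsetsAreEquivalent}; the paper's route is more hands-on but avoids the detour through $Y\cup Y'$. Your derivation of (2) from (4) and the claim also makes the equality $\xi|_W(Y)=\xi(Y)$ fully explicit in both directions, whereas the paper's treatment of (2) spells out only $\xi(Y)\subseteq\xi|_W(Y)$ and leaves the other inclusion to the reader.
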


\begin{proof}
(1) If $Y \in \F|_W$, then
$\Gamma_W(Y) = \{y \in W \m Y : Y \cup y \in \F\}
= W \cap \{ y \in E \m Y : Y \cup y \in \F\} = W \cap \Gamma(Y)$.

(2) Suppose $Y \in \F|_W$ and $\xi(Y) \subseteq W$. The latter
assumption implies that all feasible sets that are equivalent to $Y$
in $(E,\F)$ are contained in $W$. So they are contained in 
$\F|_W$. Moreover, they are also equivalent in $\F|_W$ since they are
all maximal among the feasible sets contained in $\xi(Y)$ (see 
\ref{l:MaximalSubsetsAreEquivalent}). 

(3) Let $Z \in A|_W$ and let $x \in \Gamma|_W(A|_W) =
\Gamma|_W(Z)$. By (1), $x\in W\cap\Gamma(Z)$.
Since $A|_W=\mu|_W(W\cap\xi(A))$, there exists $Y\in\F$ containing $Z$
that is maximal among the feasible sets contained in $\xi(A)$.
Thus, $[Z]\geq[Y]=A$, and by \ref{p:PropertiesOfContinuations}, 
\begin{align*}
\Gamma|_W(Z) =
(\Gamma(Z)\cap W)\subseteq(\Gamma(A)\cap W)\cup(\xi(A)\cap W).
\end{align*}
If $x \in W \cap \xi(A)$, then $Z \cup x \in W \cap \xi(A)$,
contradicting that $Z$ is maximal among the feasible sets contained in
$W \cap \xi(A)$. Therefore, $x \in W \cap \Gamma(A)$.

(4) By definition $A|_W = \mu|_W(W \cap \xi(A))$, so the sets contained in
$A|_W$ are the sets that are maximal among the feasible sets contained in
$W \cap \xi(A)$. Let $D$ be a maximal feasible set in $W\cap\xi(A)$, and
let $C$ be a set in $W$ that is equivalent to $D$ in the restriction. We
want to show that $C$ is contained in $\xi(A)$.

Since $C$ and $D$ are equivalent in the restriction, they have the same
continuations inside $W$. Let $x$ be a continuation of $C$ with $x \notin
W$. Then $D$ can be augmented from $C\cup x$, and clearly $D$ can't be
augmented from $C$, so it can be augmented by $x$. Thus $\Gamma(C)$
contains $\Gamma(D)$, and the converse is also true.  So $C$ and $D$ have
the same continuations in the original interval greedoid, and therefore are
equivalent. In particular, $C$ is in $\xi(A)$ as well.
\end{proof}

\begin{Remark}
\label{r:IGsAndRestriction}
The inclusions in (3) and (4) can be proper, as can be seen in the
following example.
Let $E=\{a,b,c\}$ and $\F=\{\O,\{a\},\{a,b\},\{a,c\}\}$. 
Then $(E,\F)$ is an interval greedoid.
If $W=\{b,c\}$, then $\F|_W = \{\O\}$, so
\begin{gather*}
\Gamma|_W([\{a\}]|_W)=\Gamma|_W(\O)
  =\O\subsetneq\Gamma([\{a\}])\cap W=\{b,c\}, \\
\xi|_W([\{a,b\}]|_W)=\xi|_W(\O)
 =\O\subsetneq\xi([\{a,b\}])\cap W=\{b,c\}.
\end{gather*}
\end{Remark}

\begin{Proposition}
\label{p:CanonicalSurjectionForFlatsInRestriction}
The map $\Phi \to \Phi|_W$ defined by $C \mapsto C|_W = \mu|_W(W \cap
\xi(C))$ for all $C \in \Phi$ is order-preserving, surjective and preserves
joins: $(A \vee B)|_W = A|_W \vee B|_W$ for all $A, B \in \Phi$.
\end{Proposition}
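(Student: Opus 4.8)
The plan is to verify the three assertions separately, spending almost all the effort on join-preservation. Throughout I write the map as $C \mapsto C|_W = \mu|_W(W \cap \xi(C))$, and I lean on the observation recorded just before the statement: if $Y \in \F|_W$, then the image of $[Y] \in \Phi$ under this map is precisely the flat of the restriction containing $Y$, so the two possible readings of $[Y]|_W$ coincide. For order-preservation, suppose $A \leq B$ in $\Phi$; by \ref{p:MuAndXiProperties}(4) this means $\xi(B) \subseteq \xi(A)$, hence $W \cap \xi(B) \subseteq W \cap \xi(A)$, and applying $\mu|_W$ (order-reversing by \ref{p:MuAndXiProperties}(2) read in the restriction) gives $A|_W \leq B|_W$. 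Surjectivity is then immediate: every flat of $\Phi|_W$ is the class of some $Y \in \F|_W$, and by the observation above that class is the image $[Y]|_W$ of $[Y]\in\Phi$.

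For join-preservation, one inequality is free. Since $A, B \leq A \vee B$ and the map is order-preserving, both $A|_W$ and $B|_W$ lie below $(A \vee B)|_W$, so the least upper bound satisfies $A|_W \vee B|_W \leq (A \vee B)|_W$ in the lattice $\Phi|_W$ (which is a lattice by \ref{p:PhiIsSemimodularLattice} applied to $(W,\F|_W)$).

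The reverse inequality is the crux. Using the join formula of \ref{p:PhiIsSemimodularLattice} in the restriction, I would write $A|_W \vee B|_W = [Y_G]|_W$, where $Y_G \in \F|_W$ is maximal among the feasible sets of the restriction contained in $\xi|_W(A|_W) \cap \xi|_W(B|_W)$. The key move is to push $Y_G$ back into the ambient greedoid: by \ref{l:IntervalGreedoidsAndRestriction}(4) we have $\xi|_W(A|_W) \subseteq W \cap \xi(A) \subseteq \xi(A)$, and likewise $\xi|_W(B|_W) \subseteq \xi(B)$, so $Y_G \subseteq \xi(A) \cap \xi(B)$. Since $Y_G$ is a feasible set of $(E,\F)$ contained in $\xi(A)$, \ref{p:MuAndXiProperties}(5) gives $A \leq [Y_G]$, and symmetrically $B \leq [Y_G]$; hence $A \vee B \leq [Y_G]$ in $\Phi$. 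Applying the order-preservation already established then yields $(A \vee B)|_W \leq [Y_G]|_W = A|_W \vee B|_W$, and equality follows.

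I expect the only genuine obstacle to be this last paragraph. The containments in \ref{l:IntervalGreedoidsAndRestriction}(3)--(4) are in general proper (see \ref{r:IGsAndRestriction}), so one cannot identify $\xi|_W(A|_W)$ with $W \cap \xi(A)$ and must instead use only the weaker inclusion $\xi|_W(A|_W) \subseteq \xi(A)$. The delicate point is therefore to recognize that this one-sided inclusion, combined with property \ref{p:MuAndXiProperties}(5) and order-preservation, is exactly enough to make the pullback of the restriction's join-representative $Y_G$ land inside $\xi(A) \cap \xi(B)$, and hence witness $A \vee B \leq [Y_G]$.
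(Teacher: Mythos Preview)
Your proof is correct and follows essentially the same approach as the paper's: both use the containment $\xi|_W(A|_W)\subseteq W\cap\xi(A)$ from \ref{l:IntervalGreedoidsAndRestriction}(4) to place a feasible representative of $A|_W\vee B|_W$ inside $\xi(A)\cap\xi(B)$. The only difference is in the finish: the paper extends that representative $Z$ to a set $Y$ maximal in $W\cap\xi(A)\cap\xi(B)$ and argues directly that $(A\vee B)|_W=[Y]|_W\leq[Z]|_W$, whereas you invoke \ref{p:MuAndXiProperties}(5) to get $A\vee B\leq[Y_G]$ in $\Phi$ and then apply the already-established order-preservation---a slightly cleaner shortcut that avoids the intermediate maximality argument.
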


\begin{proof}
The mapping is order-preserving since $\xi$ and $\mu|_W$ are
order-reversing. The map is surjective since if $[Y]|_W \in \Phi|_W$,
then it follows that $Y \in \F$ and that $Y$ is maximal among the
feasible subsets contained in $W \cap \xi([Y])$. So, $[Y] \mapsto
[Y]|_W$ under this mapping.

Since $A, B \leq A \vee B$, and since the map is order-preserving,
$A|_W \vee B|_W \leq (A \vee B)|_W$. Since $A|_W \vee
B|_W \in \Phi|_W$, there exists $Z \in \F|_W$ such that $A|_W \vee B|_W =
[Z]|_W$. Then $Z$ is maximal among the feasible sets contained in
$\xi|_W(A|_W) \cap \xi|_W(B|_W)$ by definition of $\vee$
(\ref{p:PhiIsSemimodularLattice}). Since $A|_W$ is the
collection of sets that are maximal among the feasible sets contained
in $W \cap \xi(A)$, it follows that $\xi|_W(A|_W) \subseteq W \cap
\xi(A)$.  Therefore, $\xi|_W(A|_W) \cap \xi|_W(B|_W) \subseteq W \cap
\xi(A) \cap \xi(B)$. So there exists $Y$ containing $Z$ that is
maximal among the feasible sets contained in $W \cap \xi(A) \cap
\xi(B)$. We now argue that $Y$ is maximal among the feasible sets
contained in $W \cap \xi(A \vee B)$. There exists $U$ containing $Y$
that is maximal among the feasible sets contained in $\xi(A) \cap
\xi(B)$. Thus, $[Y] \geq [U] = A \vee B$.
This implies $Y \subseteq \xi(Y) \subseteq \xi(A \vee B)$ since $\xi$
is order-reversing. So $Y \subseteq W \cap \xi(A \vee B)$.  Since
$\xi(A \vee B) \subseteq \xi(A) \cap \xi(B)$, it follows that $Y$ is
maximal among the feasible sets contained in $W \cap \xi(A \vee B)$.
Therefore, $(A \vee B)|_W = [Y]|_W$. Since $Y \supseteq Z$, we have
$[Y]|_W \leq [Z]|_W$. Thus, $(A \vee B)|_W \leq A|_W \vee B|_W$. Therefore,
$(A \vee B)|_W = A|_W \vee B|_W$.  
\end{proof}

Since $\F|_W \subseteq \F$, there is also a map in the reverse
direction $\Phi|_W\to\Phi$, defined by $A \mapsto [Y]$ for any $Y\in
A$. \ref{l:MaximalSubsetsAreEquivalent} implies the map is well-defined,
and the identity
$(\F|_W)/Y = (\F/Y)|_W = \{X \subseteq W \m Y : X \cup Y \in \F\}$ 
implies the map is injective.
It is order-preserving and its image is contained in the interval
$[[X],\hat1]$, where $X$ is maximal among the feasible sets contained
in $W$.

Unlike for matroids, for an arbitrary interval greedoid, the lattice of
flats $\Phi|_W$ is not, in general, an interval of $\Phi$. However, if
$W\supseteq\xi(X)$, where $X$ is maximal among the feasible subsets
contained in $W$, then $\Phi|_W\cong[[X],\hat1]\subseteq \Phi$. (This
is obtained by considering the compositions of the maps defined
above.) 

\subsubsection{Restricting Covectors} 

Let $W \subseteq E$ and let $\alpha$ be a covector of $(E,\F)$.  Let
$A = \supp(\alpha)$. It follows from
\ref{l:IntervalGreedoidsAndRestriction} that $\Gamma|_W(A|_W)
\subseteq \Gamma(A)$, so $(A|_W, \alpha|_{\Gamma|_W(A|_W))})$ is a
signed flat of $(W, \F|_W)$. Let $\res_W(\alpha)$ denote the covector
of this signed flat:
\begin{gather}
\label{e:RestrictedCovectors}
\res_W(\alpha)(w) =
\begin{cases}
0, & \text{if } w \in \xi|_W(A|_W),\\
\alpha(w), & \text{if } w \in \Gamma|_W(A|_W), \\
1, & \text{otherwise},
\end{cases}
\end{gather}
for all $w\in W$. 
Observe that by construction
$\supp|_W(\res_W(\alpha))=\supp(\alpha)|_W$.
Also note that by \ref{l:IntervalGreedoidsAndRestriction}, if
$\res_W(\alpha)(w)\neq\alpha(w)$, then $\res_W(\alpha)(w)=1$. 

\begin{Example}[Antimatroid from three colinear points]
Let $(E,\F,\OIG)$ be the oriented interval greedoid arising from
the convex geometry of three colinear points, $x,y,z$ with $y$ between
$x,z$.  Let $W=\{x,y\}$.  The covectors of $\OIG|_W$ are: $(\pm,1)$;
$(0,\pm)$; $(0,0)$.  For $\alpha\in \OIG$, if $\alpha(z)\ne0$, then 
$\res_W(\alpha)$ equals the restriction of $\alpha$ to $W$.  However, 
if, for example, $\alpha=(+,+,0)$, then $\res_W(\alpha)=(+,1)$.  
\end{Example}

As we have just seen in an example, $\res_W(\alpha)$ cannot 
necessarily be obtained by restricting $\alpha$ to $W$.  The following
proposition sheds more light on this.

\begin{Proposition}
\label{p:RestrictionOfCovectors}
Suppose $(E,\F)$ is an interval greedoid and let $W \subseteq E$.
\begin{enumerate}
\item 
If $\alpha$ is a covector of $(E,\F)$ and $A = \supp(\alpha)$, 
 then $\res_W(\alpha)=\alpha|_W$ if and only if 
  $\xi|_W(A|_W) = W \cap \xi(A)$ and 
  $\Gamma|_W(A|_W) = W \cap \Gamma(A)$.
\item 
If $\alpha$ and $\beta$ are covectors of $(E,\F)$, then 
  $\res_W(\alpha\circ\beta)=\res_W(\alpha)\circ\res_W(\beta)$.
\end{enumerate}
\end{Proposition}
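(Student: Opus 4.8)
The plan is to dispatch the two parts separately, with part~(1) a direct unwinding of definitions and part~(2) the substantive one. For part~(1), I would observe that both $\res_W(\alpha)$ and $\alpha|_W$ are maps $W\to\{0,+,-,1\}$, so it suffices to compare the three level sets on which each takes the value $0$, a value in $\{+,-\}$, and the value $1$. By \ref{e:RestrictedCovectors} the zero set of $\res_W(\alpha)$ is $\xi|_W(A|_W)$ and its sign set is $\Gamma|_W(A|_W)$, while, since $\alpha$ is a covector with support $A$, the zero set of $\alpha|_W$ is $W\cap\xi(A)$ and its sign set is $W\cap\Gamma(A)$. If $\res_W(\alpha)=\alpha|_W$, these level sets must coincide, giving the stated equalities; conversely, assuming $\xi|_W(A|_W)=W\cap\xi(A)$ and $\Gamma|_W(A|_W)=W\cap\Gamma(A)$, I would check $\res_W(\alpha)(w)=\alpha(w)$ in the three exhaustive cases $w\in\xi|_W(A|_W)$, $w\in\Gamma|_W(A|_W)$, and $w$ in neither.

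For part~(2), the strategy is the standard one for covectors: show both sides are covectors of $(W,\F|_W)$ with the \emph{same} support, and then use that two covectors of equal support agree if and only if they agree on the set of continuations of that support. First I would identify the supports. By the remark following \ref{e:RestrictedCovectors}, $\res_W$ sends support $A$ to $A|_W$, so $\supp|_W\big(\res_W(\alpha\circ\beta)\big)=\supp(\alpha\circ\beta)|_W=(A\vee B)|_W$, while $\supp|_W\big(\res_W(\alpha)\circ\res_W(\beta)\big)=A|_W\vee B|_W$ because the support of a product is the join of the supports. These agree by \ref{p:CanonicalSurjectionForFlatsInRestriction}. Hence it remains to prove that the two covectors agree on $\Gamma|_W\big((A\vee B)|_W\big)$, since off that set both are forced to equal $0$ (on $\xi|_W$) or $1$.

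The heart of the argument is this agreement. Fix $w\in\Gamma|_W\big((A\vee B)|_W\big)$. By \ref{l:IntervalGreedoidsAndRestriction}(3), $w\in\Gamma(A\vee B)$, so $\res_W(\alpha\circ\beta)(w)=(\alpha\circ\beta)(w)$; and by \ref{p:PropertiesOfContinuations}(2) and (3) the value $(\alpha\circ\beta)(w)$ equals $\alpha(w)$ when $w\in\Gamma(A)$ and equals $\beta(w)$ when $w\in\xi(A)\cap\Gamma(B)$ (this is exactly the computation in the proof of \ref{p:CovectorOfProduct}). On the other side I would run the same analysis inside the interval greedoid $(W,\F|_W)$: writing $a=\res_W(\alpha)$ and $b=\res_W(\beta)$, for $w\in\Gamma|_W(A|_W\vee B|_W)$ the restricted product equals $a(w)$ when $w\in\Gamma|_W(A|_W)$ and $b(w)$ otherwise. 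In the first subcase $a(w)=\alpha(w)$ and the containment $\Gamma|_W(A|_W)\subseteq W\cap\Gamma(A)$ places $w\in\Gamma(A)$, so both products return $\alpha(w)$. In the second subcase, since the restricted product is a sign (not $1$), \ref{l:CovectorsAndSeparationSets}(4) forces $a(w)\neq1$, hence $w\in\xi|_W(A|_W)\subseteq W\cap\xi(A)$; then $b(w)=\beta(w)$ and on the original side $w\in\xi(A)\cap\Gamma(B)$, so both products return $\beta(w)$.

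I expect the main obstacle to be precisely the mismatch exploited above: the case split governing the \emph{restricted} product is by membership in $\Gamma|_W(A|_W)$, whereas the case split for the \emph{original} product is by membership in $\Gamma(A)$, and by \ref{r:IGsAndRestriction} these sets can genuinely differ (only the containments $\Gamma|_W(A|_W)\subseteq W\cap\Gamma(A)$ and $\xi|_W(A|_W)\subseteq W\cap\xi(A)$ hold in general). The nontrivial point that reconciles them is that on $\Gamma|_W\big((A\vee B)|_W\big)$ the restricted product takes a value in $\{+,-\}$, which via \ref{l:CovectorsAndSeparationSets}(4) rules out the value $1$ for $a(w)$ and thereby forces $w$ into $\xi|_W(A|_W)$ in the second subcase; this is what aligns the two case analyses and closes the argument.
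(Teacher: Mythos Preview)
Your proposal is correct and follows the same approach as the paper: for part~(1) an unwinding of definitions, and for part~(2) first establishing equality of supports via \ref{p:CanonicalSurjectionForFlatsInRestriction} and then checking agreement on the continuation set of the common support. The paper's own proof of part~(2) is extremely terse (``It is therefore clear that they coincide''), and your case analysis---particularly the use of \ref{l:CovectorsAndSeparationSets}\eqref{i:CovectorProductInvolving1} to force $a(w)\ne 1$ and hence $w\in\xi|_W(A|_W)$ in the second subcase---is exactly what is needed to make that step rigorous, since the potential mismatch between $\Gamma|_W(A|_W)$ and $W\cap\Gamma(A)$ noted in \ref{r:IGsAndRestriction} must be ruled out.
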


\begin{proof}
(1) This is obvious from the definitions.
%

(2) 
Let $A=\supp(\alpha)$ and $B=\supp(\beta)$. 
By \ref{sss:RestrictionofIGs}, $A|_W \vee B|_W = (A\vee B)|_W$. 
Thus, $\res_W(\alpha)\circ\res_W(\beta)$ and $\res_W(\alpha\circ\beta)$
have the same support.  It is therefore clear that they coincide.
\end{proof}

\subsubsection{Restriction of an oriented interval greedoid}

The following results shows that the covectors obtained by restricting
the covectors of an oriented interval greedoid $(E,\F,\OIG)$ satisfy
the first three axioms for an oriented interval greedoid.

\begin{Proposition}
\label{p:RestrictedOIGs}
Suppose $(E,\F,\OIG)$ is an oriented interval greedoid.
If $W \subseteq E$, then $(W,\F|_W,\OIG|_W)$ satisfies
(OG1), (OG2) and (OG3), where
\begin{gather*}
\OIG|_W = \{\res_W(\alpha):\alpha\in\OIG\}.
\end{gather*}
\end{Proposition}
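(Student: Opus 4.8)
The plan is to verify (OG1), (OG2) and (OG3) for $(W,\F|_W,\OIG|_W)$ one at a time, in each case \emph{lifting} the required covector of the restriction to a covector of $(E,\F,\OIG)$ along $\res_W$ and then pushing back down. Three facts already established make this possible and keep every step short: the surjection $\Phi\to\Phi|_W$, $C\mapsto C|_W$, from \ref{p:CanonicalSurjectionForFlatsInRestriction}; the support identity $\supp|_W(\res_W(\alpha))=\supp(\alpha)|_W$ recorded just after \ref{e:RestrictedCovectors}; and the multiplicativity $\res_W(\alpha\circ\beta)=\res_W(\alpha)\circ\res_W(\beta)$ of \ref{p:RestrictionOfCovectors}(2). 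Note that (OG4) is deliberately \emph{not} part of this statement, so none of the delicate separation-set reasoning of \ref{d:OrientedIntervalGreedoid} is needed here; each axiom reduces to a formal consequence of these three facts.

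First I would handle (OG1). Given any flat $A'\in\Phi|_W$, surjectivity of $\Phi\to\Phi|_W$ yields $A\in\Phi$ with $A|_W=A'$, and (OG1) for $\OIG$ produces $\alpha\in\OIG$ with $\supp(\alpha)=A$. Then $\res_W(\alpha)\in\OIG|_W$ and, by the support identity, $\supp|_W(\res_W(\alpha))=\supp(\alpha)|_W=A|_W=A'$, so $\supp|_W:\OIG|_W\to\Phi|_W$ is surjective. Next, for (OG3), take $\nu=\res_W(\alpha)$ and $\nu'=\res_W(\beta)$ in $\OIG|_W$; since $\alpha\circ\beta\in\OIG$ by (OG3) for $\OIG$, the multiplicativity of \ref{p:RestrictionOfCovectors}(2) gives $\nu\circ\nu'=\res_W(\alpha)\circ\res_W(\beta)=\res_W(\alpha\circ\beta)\in\OIG|_W$.

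The one step that warrants genuine care is (OG2), which requires that $\res_W$ commute with negation. Fix $\nu=\res_W(\alpha)\in\OIG|_W$ and set $A=\supp(\alpha)$. The key observation is that negation does not move the support, i.e. $\supp(-\alpha)=\supp(\alpha)=A$, so $\res_W(-\alpha)$ is computed against the \emph{same} restricted flat $A|_W$ as $\res_W(\alpha)$. Comparing the three cases of \ref{e:RestrictedCovectors} then shows $\res_W(-\alpha)=-\res_W(\alpha)$: the values on $\Gamma|_W(A|_W)$ flip sign, while the zeros on $\xi|_W(A|_W)$ and the ones elsewhere are unchanged. Since $-\alpha\in\OIG$ by (OG2) for $\OIG$, we obtain $-\nu=\res_W(-\alpha)\in\OIG|_W$. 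I expect no serious obstacle overall—the argument is essentially bookkeeping once the compatibility of $\res_W$ with $\supp$, negation, and $\circ$ is in place—and the only real difficulty, the preservation of (OG4), is precisely the issue excluded from this statement and treated separately.
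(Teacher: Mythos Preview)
Your proposal is correct and follows essentially the same approach as the paper: lift each axiom along $\res_W$ using the support identity $\supp|_W(\res_W(\alpha))=\supp(\alpha)|_W$, the commutation $\res_W(-\alpha)=-\res_W(\alpha)$, and the multiplicativity $\res_W(\alpha\circ\beta)=\res_W(\alpha)\circ\res_W(\beta)$. The only cosmetic difference is that for (OG1) the paper picks a representative $Y\in A'$ directly (since $Y\in\F|_W\subseteq\F$ gives $[Y]\in\Phi$ with $[Y]|_W=A'$) rather than invoking \ref{p:CanonicalSurjectionForFlatsInRestriction}, and for (OG2) you spell out the case analysis behind $\res_W(-\alpha)=-\res_W(\alpha)$ that the paper leaves implicit.
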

\begin{proof}
By construction, we have that $\OIG|_W$ is a collection of covectors
of $(W,\F|_W)$ and that 
$\supp|_W(\res_W(\alpha))=\supp(\alpha)|_W\in\Phi|_W$.

(OG1) Let $A \in \Phi|_W$ and let $Y \in A$. Then $[Y] \in \Phi$ and so
there exists $\alpha\in\OIG$ with $\supp(\alpha)=[Y]$. So
$\res_W(\alpha)\in\OIG|_W$ and
$\supp|_W(\res_W(\alpha))=\supp(\alpha)|_W = A|_W$. 

(OG2) If $\res_W(\alpha)\in\OIG|_W$, then $-\alpha\in\OIG$.
Hence, $-\res_W(\alpha)=\res_W(-\alpha)\in\OIG|_W$.

(OG3) Suppose $\res_W(\alpha), \res_W(\beta) \in \OIG|_W$ and let $A =
\supp(\alpha)$ and $B = \supp(\beta)$. Then $\alpha \circ \beta \in
\OIG$, and so $\res_W(\alpha \circ \beta) \in
\OIG|_W$. By \ref{p:RestrictionOfCovectors},
$\res_W(\alpha\circ\beta) = \res_W(\alpha) \circ \res_W(\beta)$, so 
$\res_W(\alpha)\circ\res_W(\beta) \in \OIG|_W$. 
\end{proof}

Although (OG4) may not hold for an arbitrary restriction, it does hold for
certain restrictions, so we get an oriented interval greedoid. 

\begin{Theorem}
\label{c:ConditionsForRestrictedOIG}
Suppose $(E,\F,\OIG)$ is an oriented interval greedoid and 
let $W\subseteq E$.
If $\res_W(\alpha) = \alpha|_W$ for all $\alpha\in\OIG$,
then $(W,\F|_W,\OIG|_W)$ is an oriented interval greedoid.
\end{Theorem}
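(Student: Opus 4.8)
The plan is to invoke \ref{p:RestrictedOIGs}, which already establishes (OG1), (OG2) and (OG3) for $(W,\F|_W,\OIG|_W)$ for an arbitrary $W$; so the only thing left to verify is (OG4), and for this I would use the hypothesis that $\res_W(\alpha)=\alpha|_W$ for every $\alpha\in\OIG$.

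First I would extract the consequences of this hypothesis. For $\alpha\in\OIG$ and $w\in W$ it says $\res_W(\alpha)(w)=\alpha(w)$. Since $\OIG$ is closed under $\circ$ by (OG3), the hypothesis also applies to $\alpha\circ\beta$ and to $\beta\circ\alpha$; combining this with \ref{p:RestrictionOfCovectors}(2), which gives $\res_W(\alpha\circ\beta)=\res_W(\alpha)\circ\res_W(\beta)$, yields for all $w\in W$
\[
(\res_W(\alpha)\circ\res_W(\beta))(w)=(\alpha\circ\beta)(w)
\quad\text{and}\quad
(\res_W(\beta)\circ\res_W(\alpha))(w)=(\beta\circ\alpha)(w).
\]
In particular $\SeparationSet(\res_W(\alpha),\res_W(\beta))=\SeparationSet(\alpha,\beta)\cap W$, since the separating indices of the restricted covectors are exactly the $w\in W$ with $\alpha(w)=-\beta(w)\in\{+,-\}$.

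Then, to check (OG4), I would suppose $x\in\SeparationSet(\res_W(\alpha),\res_W(\beta))$ with $(\res_W(\alpha)\circ\res_W(\beta))(x)\neq1$. By the previous paragraph this means $x\in W$, $x\in\SeparationSet(\alpha,\beta)$, and $(\alpha\circ\beta)(x)\neq1$. I would now apply (OG4) in $\OIG$ to $\alpha$, $\beta$ and $x$ to obtain $\gamma\in\OIG$ with $\gamma(x)=0$ and with $\gamma(y)=(\alpha\circ\beta)(y)=(\beta\circ\alpha)(y)$ for all $y\notin\SeparationSet(\alpha,\beta)$ satisfying $(\alpha\circ\beta)(y)\neq1$. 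Setting $\gamma'=\res_W(\gamma)\in\OIG|_W$, one has $\gamma'(x)=\gamma(x)=0$ since $x\in W$.

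Finally I would verify the agreement condition for $\gamma'$. Let $y\in W$ with $y\notin\SeparationSet(\res_W(\alpha),\res_W(\beta))=\SeparationSet(\alpha,\beta)\cap W$; since $y\in W$, this forces $y\notin\SeparationSet(\alpha,\beta)$. If $(\res_W(\alpha)\circ\res_W(\beta))(y)\neq1$, then $(\alpha\circ\beta)(y)\neq1$ by the displayed identities, so (OG4) in $\OIG$ gives $\gamma(y)=(\alpha\circ\beta)(y)=(\beta\circ\alpha)(y)$. Restricting back to $W$, using $\gamma'(y)=\gamma(y)$ together with the two displayed identities, yields $\gamma'(y)=(\res_W(\alpha)\circ\res_W(\beta))(y)=(\res_W(\beta)\circ\res_W(\alpha))(y)$, which is exactly (OG4) for $\OIG|_W$. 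The only delicate point in this argument is bookkeeping: the crucial observation is that the hypothesis $\res_W=(\cdot)|_W$ applies not just to $\alpha$ and $\beta$ but to the products $\alpha\circ\beta,\beta\circ\alpha\in\OIG$, so that restriction commutes both with the products and with the formation of the separation set. Once this is in place, the witness $\gamma$ supplied by (OG4) in the ambient oriented interval greedoid restricts directly to a witness in $\OIG|_W$, and no genuine obstacle remains.
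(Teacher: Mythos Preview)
Your proposal is correct and follows exactly the same approach as the paper: invoke \ref{p:RestrictedOIGs} for (OG1)--(OG3), and then transfer (OG4) from $\OIG$ to $\OIG|_W$ using the hypothesis. The paper's proof is the single sentence ``The assumption that $\res_W(\alpha)=\alpha|_W$ for all $\alpha\in\OIG$ means that (OG4) for $\OIG$ implies (OG4) for $(W,\F|_W,\OIG|_W)$''; you have simply unpacked this transfer in full detail, correctly noting that the hypothesis applies to products (since $\OIG$ is closed under $\circ$) and hence that separation sets and the witness $\gamma$ all restrict compatibly.
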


\begin{proof}
(OG1)--(OG3) hold by \ref{p:RestrictedOIGs}. 
The assumption that $\res_W(\alpha) = \alpha|_W$ for all $\alpha\in\OIG$
means that (OG4) for $\OIG$ implies (OG4) for $(W,\F|_W,\OIG|_W)$.
\end{proof}

\ref{ss:RestrictionToGamma} 
and 
\ref{ss:RestrictionToXi} 
describe
restriction to two particular types of subsets of $E$.

\subsection{Restriction to $\Gamma(\O)$} 
\label{ss:RestrictionToGamma}
In this section we treat restriction to $\Gamma(\O)$.

\begin{Proposition}
\label{p:RestrictionToUnderlyingOrientedMatroid}
Suppose $(E,\F,\OIG)$ is an oriented interval greedoid.
Then the restriction $(\Gamma(\O),\F\restd, \OIG\restd)$ is an oriented 
matroid, and
\begin{gather*}
	\OIG\restd = \{ \alpha\restd : \alpha\in\OIG \}.
\end{gather*}
\end{Proposition}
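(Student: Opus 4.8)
The plan is to deduce everything from two facts about the restriction to $W=\Gamma(\O)$: that the underlying restricted interval greedoid $(\Gamma(\O),\F\restd)$ is a matroid without loops, and that restriction of covectors reduces to ordinary restriction of functions, i.e.\ $\res_{\Gamma(\O)}(\alpha)=\alpha\restd$ for every covector $\alpha$ of $(E,\F)$. Granting these, \ref{c:ConditionsForRestrictedOIG} shows that $(\Gamma(\O),\F\restd,\OIG\restd)$ is an oriented interval greedoid with $\OIG\restd=\{\alpha\restd:\alpha\in\OIG\}$, and then \ref{p:OMAndOIGs}, which applies precisely because the underlying interval greedoid is a loopless matroid, promotes this to the conclusion that $\OIG\restd$ is an oriented matroid.

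To see that $(\Gamma(\O),\F\restd)$ is a matroid, recall that a restriction of an interval greedoid is again an interval greedoid (\ref{sss:RestrictionofIGs}), hence a greedoid; so by the characterization of matroids as greedoids satisfying the interval property without lower bounds (LIP), it suffices to verify (LIP). Suppose $X\sse Y$ are in $\F\restd$ and $e\in\Gamma(\O)\m Y$ with $Y\cup e\in\F\restd$. The defining property of $W=\Gamma(\O)$ is that $\{e\}=\O\cup e\in\F$, so applying (IG3) to $\O\sse X\sse Y$ together with the element $e$ yields $X\cup e\in\F$; since $X\cup e\sse\Gamma(\O)$, in fact $X\cup e\in\F\restd$. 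This is (LIP), so $(\Gamma(\O),\F\restd)$ is a matroid, and it has no loops because $\{w\}\in\F\restd$ for every $w\in\Gamma(\O)$.

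The hard part will be the identity $\res_{\Gamma(\O)}(\alpha)=\alpha\restd$, since \ref{r:IGsAndRestriction} shows this kind of equality fails for general restrictions and so the special structure $W=\Gamma(\O)$ must be exploited. By \ref{p:RestrictionOfCovectors}(1) the identity is equivalent, for $A=\supp(\alpha)$, to the two equalities $\xi\restd(A\restd)=\Gamma(\O)\cap\xi(A)$ and $\Gamma\restd(A\restd)=\Gamma(\O)\cap\Gamma(A)$, and \ref{l:IntervalGreedoidsAndRestriction}(3),(4) already give the inclusions $\xi\restd(A\restd)\sse\Gamma(\O)\cap\xi(A)$ and $\Gamma\restd(A\restd)\sse\Gamma(\O)\cap\Gamma(A)$. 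For the reverse inclusion for $\xi$ I would use the matroid just established: by \ref{x:MatroidXi}, in the loopless matroid $(\Gamma(\O),\F\restd)$ the composite $\xi\restd\circ\mu\restd$ is the closure operator, so $\xi\restd(A\restd)=\xi\restd\big(\mu\restd(\Gamma(\O)\cap\xi(A))\big)$ is the matroid closure of $\Gamma(\O)\cap\xi(A)$ and therefore contains it; this gives $\xi\restd(A\restd)=\Gamma(\O)\cap\xi(A)$. The equality for $\Gamma$ then follows formally: in a loopless matroid the continuations and the $\xi$ of a flat are complementary, so $\Gamma\restd(A\restd)=\Gamma(\O)\m\xi\restd(A\restd)=\Gamma(\O)\m\xi(A)$, and since $\Gamma(A)$ and $\xi(A)$ are disjoint we have $\Gamma(\O)\cap\Gamma(A)\sse\Gamma(\O)\m\xi(A)=\Gamma\restd(A\restd)$, which together with the inclusion from \ref{l:IntervalGreedoidsAndRestriction}(3) forces $\Gamma\restd(A\restd)=\Gamma(\O)\cap\Gamma(A)$. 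With both equalities in place, \ref{p:RestrictionOfCovectors}(1) delivers $\res_{\Gamma(\O)}(\alpha)=\alpha\restd$, completing the reduction and hence the proof.
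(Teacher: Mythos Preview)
Your proof is correct and follows the same overall architecture as the paper: reduce via \ref{p:RestrictionOfCovectors}(1) and \ref{c:ConditionsForRestrictedOIG} to the two equalities $\xi\restd(A\restd)=\Gamma(\O)\cap\xi(A)$ and $\Gamma\restd(A\restd)=\Gamma(\O)\cap\Gamma(A)$, then invoke \ref{p:OMAndOIGs}. The difference lies in how the two equalities are established. The paper proves each inclusion directly: for $\Gamma$, it takes $x\in\Gamma(\O)\cap\Gamma(A)$, picks $Y\in A\restd$, extends $Y$ to a maximal feasible $Z\subseteq\xi(A)$ (so $Z\in A$), and applies (IG3) to $\O\subseteq Y\subseteq Z$ with $x$; for $\xi$, it takes $x\in\Gamma(\O)\cap\xi(A)$ and extends $\{x\}$ to a maximal feasible subset of $\Gamma(\O)\cap\xi(A)$, which lies in $A\restd$. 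Your route instead first makes explicit that $(\Gamma(\O),\F\restd)$ is a loopless matroid (which the paper simply asserts) and then exploits two matroid-specific facts: \ref{x:MatroidXi} identifies $\xi\restd\circ\mu\restd$ with the closure operator, giving $\Gamma(\O)\cap\xi(A)\subseteq\xi\restd(A\restd)$ immediately; and complementarity of $\Gamma$ and $\xi$ in a loopless matroid then forces the $\Gamma$-equality from the $\xi$-equality. Your argument is a bit more conceptual, recycling the matroid structure you need anyway; the paper's is more hands-on and self-contained, not relying on \ref{x:MatroidXi}.
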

\begin{proof}
Since $(\Gamma(\O),\F|_{\Gamma(\O)})$ is a matroid, it will follow
from \ref{p:OMAndOIGs} that $(\Gamma(\O),\F\restd,\OIG\restd)$ is an
oriented matroid once we show that it is an oriented interval
greedoid. By \ref{p:RestrictionOfCovectors} and
\ref{c:ConditionsForRestrictedOIG}, we need only show that
$\Gamma|_{\Gamma(\O)}(A|_{\Gamma(\O)})=\Gamma(\O)\cap\Gamma(A)$ and
$\xi|_{\Gamma(\O)}(A|_{\Gamma(\O)})=\Gamma(\O)\cap\xi(A)$. 

Suppose $x \in \Gamma(\O) \cap \Gamma(A)$. Let $Y\in A|_{\Gamma(\O)}$. 
Then $Y\sseq\Gamma(\O)\cap\xi(A)$, so there exists $Z\supseteq Y$ such that
$Z$ is maximal among the sets in $\F$ contained in $\xi(A)$. By
\ref{l:MaximalSubsetsAreEquivalent}, $Z\in A$, so
$\Gamma(A)=\Gamma(Z)$. Hence, $Z\cup x\in\F$ since
$x\in\Gamma(A)$. Also, $\{x\}\in\F$ since $x\in\Gamma(\O)$. Therefore,
(IG3) applied to $\O\sseq Y\sseq Z$ implies
$Y\cup x\in\F$. Since $Y\cup x\sseq\Gamma(\O)$, we have $Y\cup
x\in\F|_{\Gamma(\O)}$. So $x\in\Gamma|_{\Gamma(\O)}(A|_{\Gamma(\O)})$.
This establishes one inclusion. The reverse inclusion follows from
\ref{l:IntervalGreedoidsAndRestriction}.

It remains to show that $\xi|_{\Gamma(\O)}(A|_{\Gamma(\O)}) =
\Gamma(\O) \cap \xi(A)$. Suppose $x \in \Gamma(\O) \cap \xi(A)$. Then
$\{x\} \in \F$ since $x \in \Gamma(\O)$. Therefore, there exists $Y$
containing $x$ such that $Y$ is maximal among the feasible sets
contained in $\Gamma(\O) \cap \xi(A)$. Then $Y\in A|_{\Gamma(\O)}$.
Therefore, $Y \subseteq \xi|_{\Gamma(\O)}(A|_{\Gamma(\O)})$, and so $x
\in \xi|_{\Gamma(\O)}(A|_{\Gamma(\O)})$. This, combined with
\ref{l:IntervalGreedoidsAndRestriction}, establishes the
equality $\xi|_{\Gamma(\O)}(A|_{\Gamma(\O)})=\Gamma(\O)\cap\xi(A)$. 
\end{proof}

\subsection{Restriction to $\xi(X)$}

To simplify notation, we write $\xi(X)$ for $\xi([X])$
for any feasible set $X\in\F$.

\label{ss:RestrictionToXi}
We show that restriction to $\xi(X)$ for $X\in\F$ produces an
oriented interval greedoid $(\xi(X),\F|_{\xi(X)},\OIG|_{\xi(X)})$
and that there is a semigroup isomorphism
\begin{gather*}
\OIG|_{\xi(X)}\buildrel\cong\over\longrightarrow\OIG_{\geq\alpha} 
 =\{\beta\in\OIG:\beta\geq\alpha\}
 =\{\alpha\circ\beta:\beta\in\OIG\},
\end{gather*}
where $\alpha$ is any covector with $\supp(\alpha) = [X]$.

\begin{Lemma}
\label{l:XiAndGammaInRestrictionToXi}
Suppose $(E,\F)$ is an interval greedoid and let $X\in\F$
and $A\in\Phi$.
\begin{enumerate}
\item $A|_{\xi(X)} = A \vee [X]$.
\item 
$\xi|_{\xi(X)}(A|_{\xi(X)}) 
  = \xi(A \vee [X]) = \xi(A \vee [X]) \cap \xi(X)$.  
\item $\Gamma|_{\xi(X)}(A|_{\xi(X)}) = \Gamma(A \vee [X])\cap\xi(X)$.  
\end{enumerate}
\end{Lemma}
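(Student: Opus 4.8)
The plan is to reduce all three statements to properties of a single feasible set $Z$ that is maximal among those contained in $\xi(A)\cap\xi(X)$, together with the identification of $\Phi|_{\xi(X)}$ with the interval $[[X],\hat1]$ of $\Phi$ recorded at the end of \ref{sss:RestrictionofIGs}. That identification is available here because, by \ref{p:MuAndXiProperties}(1), $X$ is maximal among the feasible sets contained in $\xi(X)$ and $\xi(X)\supseteq\xi(X)$, which is exactly the hypothesis under which $\Phi|_{\xi(X)}\cong[[X],\hat1]$.

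First I would fix $Z\in\F$ maximal among the feasible sets contained in $\xi(A)\cap\xi(X)$. By the join formula of \ref{p:PhiIsSemimodularLattice}, $[Z]=\mu(\xi(A)\cap\xi(X))=A\vee[X]$. The key observation, and the step I expect to require the most care, is that the feasible sets of $(E,\F)$ contained in $\xi(A)\cap\xi(X)$ coincide with the feasible sets of the restriction $\F|_{\xi(X)}$ contained in $\xi(A)\cap\xi(X)$; this holds because $\xi(A)\cap\xi(X)\sseq\xi(X)$, so membership in $\F|_{\xi(X)}$ imposes no further condition. Consequently $Z$ is also maximal among the feasible sets of the restriction contained in $\xi(X)\cap\xi(A)$, so $Z$ represents the flat $A|_{\xi(X)}=\mu|_{\xi(X)}(\xi(X)\cap\xi(A))$. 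Under the reverse map $\Phi|_{\xi(X)}\to\Phi$ sending a flat to $[Z']$ for any representative $Z'$, the flat $A|_{\xi(X)}$ is sent to $[Z]=A\vee[X]$, which establishes (1).

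For (2), I would first note that $A\vee[X]\geq[X]$, so since $\xi$ is order-reversing (\ref{p:MuAndXiProperties}) we get $\xi(A\vee[X])\sseq\xi([X])=\xi(X)$; this gives the second equality $\xi(A\vee[X])=\xi(A\vee[X])\cap\xi(X)$ for free. For the first equality, since $\xi(Z)=\xi(A\vee[X])\sseq\xi(X)$, I would apply \ref{l:IntervalGreedoidsAndRestriction}(2) to $Z$ to conclude $\xi|_{\xi(X)}(Z)=\xi(Z)$; as $Z$ represents $A|_{\xi(X)}$, this reads $\xi|_{\xi(X)}(A|_{\xi(X)})=\xi(A\vee[X])$.

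Finally, for (3), I would apply \ref{l:IntervalGreedoidsAndRestriction}(1) to $Z$, which gives $\Gamma|_{\xi(X)}(Z)=\xi(X)\cap\Gamma(Z)$. Since $\Gamma(Z)=\Gamma(A\vee[X])$ and $Z$ represents $A|_{\xi(X)}$, this is precisely $\Gamma|_{\xi(X)}(A|_{\xi(X)})=\Gamma(A\vee[X])\cap\xi(X)$. The only genuine subtlety throughout is the bookkeeping that identifies the restriction's feasible sets with the ambient ones lying inside $\xi(X)$, needed to recognize $Z$ as a representative of $A|_{\xi(X)}$; once that is in place, parts (2) and (3) follow immediately from \ref{l:IntervalGreedoidsAndRestriction}.
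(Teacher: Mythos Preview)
Your proof is correct and follows essentially the same route as the paper. Both fix a maximal feasible set in $\xi(A)\cap\xi(X)$, recognize it simultaneously as a representative of $A|_{\xi(X)}$ and of $A\vee[X]$ via \ref{p:PhiIsSemimodularLattice}, and then read off (2) and (3) from the representative; the paper computes $\Gamma|_{\xi(X)}$ and $\xi|_{\xi(X)}$ directly while you package those computations as citations of \ref{l:IntervalGreedoidsAndRestriction}(1)--(2), and you are somewhat more explicit about the identification $\Phi|_{\xi(X)}\cong[[X],\hat1]$, but the substance is the same.
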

\begin{proof}
(1) $A|_{\xi(X)}$ is $\mu|_{\xi(X)}(\xi(X) \cap \xi(A))$, the flat
that consists of the sets that are maximal among the feasible sets
contained in $\xi(X) \cap \xi(A)$. By \ref{p:PhiIsSemimodularLattice},
this is $A \vee [X]$. 

(2) This follows from (1) since all feasible sets in $A \vee
[X]$ are contained in $\xi(X)$. 

(3) Write $A \vee [X] =
[Y]$ for some $Y \in \F$. Then $[X] \leq [Y]$, so $Y \in \F|_{\xi(X)}$
since $\xi(Y) \subseteq \xi(X)$. Thus, $\Gamma|_{\xi(X)}(A|_{\xi(X)})
=  \Gamma|_{\xi(X)}(Y) = \{y \in \xi(X)
\m Y : Y \cup y \in \F \} = \xi(X) \cap \Gamma(Y) = \xi(X) \cap
\Gamma(A \vee [X])$. 
\end{proof}

\begin{Lemma}
\label{l:RestrictionOfAnOIG}
Suppose $(E,\F,\OIG)$ is an oriented interval greedoid
and let $X\in\F$.
\begin{align*}
\OIG|_{\xi(X)} 
 &= \{\res_{\xi(X)}(\alpha) :
     \alpha\in\OIG \text{ and } \supp(\alpha)\geq[X]\}\\
 &= \{\alpha|_{\xi(X)} :
     \alpha\in\OIG \text{ and } \supp(\alpha)\geq[X]\}.
\end{align*}
\end{Lemma}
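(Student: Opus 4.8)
The plan is to prove the two stated equalities by establishing a chain of set equalities and inclusions. The first equality characterizes $\OIG|_{\xi(X)}$ as the restrictions of precisely those covectors $\alpha\in\OIG$ whose support lies above $[X]$; the second says that for such covectors the restriction operation $\res_{\xi(X)}$ coincides with plain restriction $\alpha|_{\xi(X)}$. I would begin with the \emph{second} equality, since it provides the tool needed to clean up the first.

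\textbf{Establishing $\res_{\xi(X)}(\alpha) = \alpha|_{\xi(X)}$ when $\supp(\alpha)\geq[X]$.} Let $A = \supp(\alpha)$ and assume $A \geq [X]$. By \ref{p:RestrictionOfCovectors}(1), it suffices to verify that $\xi|_{\xi(X)}(A|_{\xi(X)}) = \xi(X)\cap\xi(A)$ and $\Gamma|_{\xi(X)}(A|_{\xi(X)}) = \xi(X)\cap\Gamma(A)$. Here \ref{l:XiAndGammaInRestrictionToXi} does almost all the work: it gives $\xi|_{\xi(X)}(A|_{\xi(X)}) = \xi(A\vee[X])\cap\xi(X)$ and $\Gamma|_{\xi(X)}(A|_{\xi(X)}) = \Gamma(A\vee[X])\cap\xi(X)$. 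Since $A\geq[X]$, we have $A\vee[X] = A$, so these reduce to $\xi(A)\cap\xi(X)$ and $\Gamma(A)\cap\xi(X)$ respectively, which is exactly what \ref{p:RestrictionOfCovectors}(1) requires.

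\textbf{Establishing the first equality.} I would prove the two inclusions separately. For $(\supseteq)$: given $\alpha\in\OIG$ with $\supp(\alpha)\geq[X]$, the set $\res_{\xi(X)}(\alpha)$ lies in $\OIG|_{\xi(X)}$ by definition of $\OIG|_{\xi(X)} = \{\res_W(\alpha):\alpha\in\OIG\}$, and by the second equality it equals $\alpha|_{\xi(X)}$. This simultaneously gives both the containment of the middle set in $\OIG|_{\xi(X)}$ and the agreement of the middle and bottom sets. For $(\subseteq)$: take an arbitrary $\res_{\xi(X)}(\alpha)$ with $\alpha\in\OIG$; I must produce a covector $\alpha'\in\OIG$ with $\supp(\alpha')\geq[X]$ and $\res_{\xi(X)}(\alpha')=\res_{\xi(X)}(\alpha)$. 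The natural candidate is $\alpha' = \sigma\circ\alpha$, where $\sigma$ is any covector with $\supp(\sigma)=[X]$ (such $\sigma$ exists by (OG1)); then $\alpha'\in\OIG$ by (OG3) and $\supp(\alpha') = [X]\vee\supp(\alpha)\geq[X]$ automatically.

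\textbf{The main obstacle} is showing $\res_{\xi(X)}(\sigma\circ\alpha) = \res_{\xi(X)}(\alpha)$, i.e. that multiplying by $\sigma$ on the left does not disturb the values on the restricted ground set $\xi(X)$. Intuitively this holds because on $\xi(X)=\xi([X])$ the covector $\sigma$ takes only the value $0$, so $\sigma$ never overrides $\alpha$ there; but I must phrase this carefully using the definition of $\circ$ for covectors. The cleanest route is to observe that $\res_{\xi(X)}$ is determined by the restricted support $A|_{\xi(X)}$ together with the values of $\alpha$ on $\Gamma|_{\xi(X)}(A|_{\xi(X)})$, and that $(\sigma\circ\alpha)|_{\xi(X)}$ agrees with $\alpha$ on $\Gamma(A\vee[X])\cap\xi(X)$: for $e$ in this set we have $e\in\xi([X])=\xi(\supp(\sigma))$, so $\sigma(e)=0$, whence $(\sigma\circ\alpha)(e)=\alpha(e)$ by the definition of the covector product combined with \ref{p:CovectorOfProduct}. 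One subtlety to handle is that $\supp(\sigma\circ\alpha) = [X]\vee A$ may differ from $A$, so the equality of restricted covectors should be checked on the common restricted support, which is $([X]\vee A)|_{\xi(X)} = (A)|_{\xi(X)} = A\vee[X]$ using \ref{p:CanonicalSurjectionForFlatsInRestriction} and \ref{l:XiAndGammaInRestrictionToXi}(1); I would verify the supports match before comparing values coordinatewise.
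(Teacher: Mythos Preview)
Your proposal is correct and follows essentially the same approach as the paper: both prove the second equality via \ref{l:XiAndGammaInRestrictionToXi} and \ref{p:RestrictionOfCovectors}(1), and both handle the nontrivial inclusion of the first equality by taking a covector $\sigma$ (the paper calls it $\gamma$) with $\supp(\sigma)=[X]$ and showing $\res_{\xi(X)}(\sigma\circ\alpha)=\res_{\xi(X)}(\alpha)$. The only differences are cosmetic: you treat the two equalities in the reverse order, and in the key step you argue via $\sigma(e)=0$ on $\xi(X)$, whereas the paper argues that $e\in\Gamma(A\vee[X])\cap\xi(X)$ forces $e\in\Gamma(A)$ (since $\Gamma(A\vee[X])\subseteq\Gamma(A)\cup\Gamma(X)$ and $\xi(X)\cap\Gamma(X)=\varnothing$); both observations yield $(\sigma\circ\alpha)(e)=\alpha(e)$ immediately.
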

\begin{proof}
We begin by proving the first equality.
We show that if $\beta\in\OIG$, then there exists $\alpha\in\OIG$ 
with $\supp(\alpha)\geq[X]$ and 
$\res_{\xi(X)}(\alpha)=\res_{\xi(X)}(\beta)$. 
Let $\beta\in\OIG$ and let $B=\supp(\beta)$. Since $\OIG$ 
satisfies (OG1), there exists $\gamma\in\OIG$ such that 
$\supp(\gamma)=[X]$. 
Since $\supp(\gamma\circ\beta)=[X]\vee B$ and
$B|_{\xi(X)}=B\vee[X]=(B\vee[X])|_{\xi(X)}$ by
\ref{l:XiAndGammaInRestrictionToXi},
\begin{gather*}
\res_{\xi(X)}(\gamma \circ \beta)=
\begin{cases}
0, & 
  \text{if } x \in \xi|_{\xi(X)}( B|_{\xi(X)}),\\
(\gamma \circ \beta)(x), & 
  \text{if } x \in \Gamma|_{\xi(X)}(B|_{\xi(X)}), \\
1, & \text{otherwise}.
\end{cases}
\end{gather*}
Therefore, $\res_{\xi(X)}(\gamma\circ\beta)=\res_{\xi(X)}(\beta)$ if
and only if $(\gamma\circ\beta)(x)=\beta(x)$ for
$x\in\Gamma|_{\xi(X)}(B|_{\xi(X)})$. 
So suppose $x\in\Gamma|_{\xi(X)}(B|_{\xi(X)})$. 
By \ref{l:XiAndGammaInRestrictionToXi},
$x\in\xi(X)\cap\Gamma(B\vee[X])$, and
by \ref{p:PropertiesOfContinuations}, 
$x\in\xi(X)\cap(\Gamma(B)\cup\Gamma(X))$. 
This implies $x\in\Gamma(B)$
because $\xi(X)\cap\Gamma(X)=\O$. Therefore, 
$(\gamma\circ\beta)(x)=\beta(x)$, and so 
$\res_{\xi(X)}(\gamma\circ\beta)(x)
=\beta(x)=\res_{\xi(X)}(\beta)(x)$.

We now prove the second equality.
Let $\res_{\xi(X)}(\alpha)$ such that $\alpha\in\OIG$ and
$A=\supp(\alpha)\geq[X]$. By \ref{l:XiAndGammaInRestrictionToXi}
we have $A|_{\xi(X)}=A\vee[X]=A$, 
$\xi|_{\xi(X)}(A|_{\xi(X)})=\xi(X)\cap\xi(A)$ and
$\Gamma|_{\xi(X)}(A|_{\xi(X)})=\xi(X)\cap\Gamma(A)$. Therefore, by
\ref{p:RestrictionOfCovectors},
$\res_{\xi(X)}(\alpha)=\alpha|_{\xi(X)}$ for all $\alpha\in\OIG$ such
that $\supp(\alpha)\geq[X]$.
\end{proof}

\begin{Theorem}
Let $(E,\F,\OIG)$ denote an oriented interval greedoid and let $X \in
\F$. Then $(\xi(X), \F|_{\xi(X)}, \OIG|_{\xi(X)})$ is an oriented
interval greedoid.
\end{Theorem}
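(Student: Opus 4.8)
The plan is to invoke \ref{p:RestrictedOIGs}, which already gives (OG1)--(OG3) for $(\xi(X),\F|_{\xi(X)},\OIG|_{\xi(X)})$, and then to verify (OG4) by hand. The subtlety is that \ref{c:ConditionsForRestrictedOIG} does not apply directly, since restriction to $\xi(X)$ is not honest for every covector of $\OIG$. However, by \ref{l:RestrictionOfAnOIG} every element of $\OIG|_{\xi(X)}$ has the form $\alpha|_{\xi(X)}$ with $\supp(\alpha)\geq[X]$, and on such covectors restriction \emph{is} honest, namely $\res_{\xi(X)}(\alpha)=\alpha|_{\xi(X)}$. I would use this partial honesty throughout to pass between the restricted structure and $(E,\F,\OIG)$.

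Concretely, given two elements of $\OIG|_{\xi(X)}$, I first choose representatives $\alpha,\beta\in\OIG$ with $\supp(\alpha),\supp(\beta)\geq[X]$, so that $\res_{\xi(X)}(\alpha)=\alpha|_{\xi(X)}$ and $\res_{\xi(X)}(\beta)=\beta|_{\xi(X)}$. Write $A=\supp(\alpha)$ and $B=\supp(\beta)$. A separating index $x$ for the restricted covectors with nonunit product at $x$ then lies in $\xi(X)$, lies in $\SeparationSet(\alpha,\beta)$, and satisfies $(\alpha\circ\beta)(x)\neq1$; here I use \ref{p:RestrictionOfCovectors}(2) together with honesty to identify the restricted product with $(\alpha\circ\beta)|_{\xi(X)}$ and the restricted separation set with $\SeparationSet(\alpha,\beta)\cap\xi(X)$. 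Moreover $x\in\Gamma(A\vee B)$, since $(\alpha\circ\beta)(x)\in\{+,-\}$. Applying (OG4) in $\OIG$ produces $\gamma\in\OIG$ with $\gamma(x)=0$ and $\gamma(y)=(\alpha\circ\beta)(y)=(\beta\circ\alpha)(y)$ for every $y\notin\SeparationSet(\alpha,\beta)$ with $(\alpha\circ\beta)(y)\neq1$. Taking $y$ with $(\alpha\circ\beta)(y)=0$ shows $\xi(A\vee B)\subseteq\xi(\supp\gamma)$, whence $\supp(\gamma)\leq A\vee B$ by \ref{p:MuAndXiProperties}(4).

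The main obstacle is that this $\gamma$ need not have support $\geq[X]$, so $\res_{\xi(X)}(\gamma)$ may fail to record the values of $\gamma$. The fix is to push $\gamma$ up: using (OG1), pick $\gamma_0\in\OIG$ with $\supp(\gamma_0)=[X]$ and set $\gamma'=\gamma_0\circ\gamma\in\OIG$, so that $\supp(\gamma')=[X]\vee\supp(\gamma)$ satisfies $[X]\leq\supp(\gamma')\leq A\vee B$. Since $\supp(\gamma')\geq[X]$, restriction is honest and $\delta:=\res_{\xi(X)}(\gamma')=\gamma'|_{\xi(X)}\in\OIG|_{\xi(X)}$ by \ref{l:RestrictionOfAnOIG}. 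I would then check $\delta$ is the covector required by (OG4). Because $\gamma_0$ vanishes exactly on $\xi(X)$, for every $e\in\xi(X)$ lying in $\Gamma(\supp\gamma')\cup\xi(\supp\gamma')$ one computes $\gamma'(e)=(\gamma_0\star\gamma)(e)=\gamma(e)$. Now $x$ and each relevant $y$ lie in $\Gamma(A\vee B)\cup\xi(A\vee B)$, and since $\supp(\gamma')\leq A\vee B$, \ref{p:PropertiesOfContinuations}(2) places them in $\Gamma(\supp\gamma')\cup\xi(\supp\gamma')$. This gives $\delta(x)=\gamma'(x)=\gamma(x)=0$ and $\delta(y)=\gamma'(y)=\gamma(y)=(\alpha\circ\beta)(y)=(\beta\circ\alpha)(y)$ for the relevant $y$; translating these values back through honesty yields exactly the conclusion of (OG4) for $\OIG|_{\xi(X)}$, completing the verification.
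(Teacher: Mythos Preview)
Your argument is correct and follows essentially the same route as the paper: reduce (OG4) to the ambient $\OIG$ via \ref{l:RestrictionOfAnOIG}, apply (OG4) there to obtain $\gamma$, observe $\supp(\gamma)\vee[X]\leq A\vee B$, and use \ref{p:PropertiesOfContinuations}(2) to keep $x$ and the relevant $y$'s outside the ``$1$-zone''. The only cosmetic difference is that the paper works directly with $\res_{\xi(X)}(\gamma)$ (using \ref{l:XiAndGammaInRestrictionToXi} to compute its values), whereas you first replace $\gamma$ by $\gamma'=\gamma_0\circ\gamma$ so that restriction becomes honest; since the proof of \ref{l:RestrictionOfAnOIG} shows $\res_{\xi(X)}(\gamma_0\circ\gamma)=\res_{\xi(X)}(\gamma)$, the two witnesses coincide and the verifications are equivalent.
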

\begin{proof}
By \ref{p:RestrictedOIGs} we need only show that
$\OIG|_{\xi(X)}$ satisfies (OG4). 
Let $\res_{\xi(X)}(\alpha)$ and $\res_{\xi(X)}(\beta)$ be
covectors in $\OIG|_{\xi(X)}$.
By \ref{l:RestrictionOfAnOIG}, we can assume 
$\supp(\alpha)\geq[X]$, $\supp(\beta)\geq [X]$,
$\res_{\xi(X)}(\alpha)=\alpha|_{\xi(X)}$
and $\res_{\xi(X)}(\beta)=\beta|_{\xi(X)}$.

Let
$x\in\SeparationSet(\res_{\xi(X)}(\alpha),\res_{\xi(X)}(\beta))$ with
$\res_{\xi(X)}\acb(x)\neq1$. Then $x\in\SeparationSet(\alpha,\beta)$
and $\acb(x)\neq1$.
By (OG4) applied to $\OIG$, there exists $\gamma\in\OIG$ such that
$\gamma(x)=0$ and for all $y\notin\SeparationSet(\alpha,\beta)$, if
$\acb(y)\neq1$, then $\gamma(y)=\acb(y)=\bca(y)$.

We show that $\res_{\xi(X)}(\gamma)$ satisfies the conditions of
(OG4). 
Let $A=\supp(\alpha)$, $B=\supp(\beta)$ and $C=\supp(\gamma)$. 
Observe that $C\vee[X]\leq A\vee B$: indeed, if $\acb(e)=0$, then
$\alpha(e)=0$, so $e\notin\SeparationSet(\alpha,\beta)$, which implies
that $\gamma(e)=\acb(e)=0$.

We first argue that $\res_{\xi(X)}(\gamma)(x)$ is $0$.
By construction, it is either $\gamma(x)$ or $1$.
Suppose it is $1$. Then 
$x\notin\xi|_{\xi(X)}(C|_{\xi(X)})\cup\Gamma|_{\xi(X)}(C|_{\xi(X)})
=\xi(C\vee[X])\cup\Gamma(C\vee[X])$.
By \ref{p:PropertiesOfContinuations}, since $C\vee[X]\leq A\vee B$, 
$x\notin\Gx{A\vee B}$. This implies $\acb(x)=1$, which contradicts
$\acb(x)\neq1$. Thus, $\res_{\xi(X)}(\gamma)(x)=\gamma(x)=0$.

Let
$y\notin\SeparationSet(\res_{\xi(X)}(\alpha),\res_{\xi(X)}(\beta))$.
Then $y\notin\SeparationSet(\alpha,\beta)$. Suppose
$\res_{\xi(X)}\acb(y)\neq1$. Then $\acb(y)\neq1$. 
This implies, as above, that $\res_{\xi(X)}(\gamma)(y)\neq1$.
Thus, $\res_{\xi(X)}(\gamma)(y)=\gamma(y)=\acb(y)$. 
By \ref{l:RestrictionOfAnOIG}, $\res_{\xi(X)}\acb=\acb|_{\xi(X)}$, so
$\res_{\xi(X)}(\gamma)(y)
 =\res_{\xi(X)}\acb(y)
 =(\res_{\xi(X)}(\alpha)\circ\res_{\xi(X)}(\beta))(y)$.
%
\end{proof}

The following result identifies the semigroup $\OIG|_{\xi(X)}$ with a
subsemigroup of $\OIG$.

\begin{Proposition}
\label{p:RestrictionSemigroup}
Let $(E,\F,\OIG)$ denote an oriented interval greedoid and let $X \in
\F$. Then $\res_{\xi(X)}(\beta)\mapsto\alpha\circ\beta$
defines a semigroup isomorphism
\begin{gather*}
\OIG|_{\xi(X)}\buildrel\cong\over\longrightarrow\OIG_{\geq\alpha} 
 =\{\beta\in\OIG:\beta\geq\alpha\},
\end{gather*}
where $\alpha$ is any covector with $\supp(\alpha) = [X]$.
\end{Proposition}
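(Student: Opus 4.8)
The plan is to exhibit the stated assignment $\res_{\xi(X)}(\beta)\mapsto\alpha\circ\beta$ as the inverse of the restriction map $\Phi\colon\OIG_{\geq\alpha}\to\OIG|_{\xi(X)}$, $\gamma\mapsto\res_{\xi(X)}(\gamma)$, and to show $\Phi$ is a bijective semigroup morphism; its inverse is then automatically a semigroup isomorphism. First I would record two facts. Since $\supp(\alpha)=[X]$, \ref{l:XiAndGammaInRestrictionToXi} gives $\xi|_{\xi(X)}([X]|_{\xi(X)})=\xi(A\vee[X])\cap\xi(X)=\xi(X)$, so $\res_{\xi(X)}(\alpha)$ is the all-zero covector $z$ of the restriction; being pointwise minimal, $z\leq\nu$ for every covector $\nu$ of $(\xi(X),\F|_{\xi(X)})$, whence $z\circ\nu=\nu$ by \ref{l:CovectorsAndSeparationSets}. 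Second, for any $\gamma\in\OIG_{\geq\alpha}$ we have $\supp(\gamma)\geq[X]$, so \ref{l:RestrictionOfAnOIG} shows $\Phi(\gamma)=\res_{\xi(X)}(\gamma)=\gamma|_{\xi(X)}$ is literally the restriction of $\gamma$ to $\xi(X)$.

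Next I would check surjectivity of $\Phi$. Given $\res_{\xi(X)}(\beta)\in\OIG|_{\xi(X)}$, put $\gamma=\alpha\circ\beta$; then $\gamma\geq\alpha$ by \ref{p:ProductAndPartialOrder}, $\gamma\in\OIG$ by (OG3), so $\gamma\in\OIG_{\geq\alpha}$, and by \ref{p:RestrictionOfCovectors} together with the first fact, $\Phi(\gamma)=\res_{\xi(X)}(\alpha)\circ\res_{\xi(X)}(\beta)=z\circ\res_{\xi(X)}(\beta)=\res_{\xi(X)}(\beta)$. This computation simultaneously shows that $\Phi^{-1}$ carries $\res_{\xi(X)}(\beta)$ to $\alpha\circ\beta$, which is exactly the map in the statement, and (once injectivity is in hand) that this map is well-defined, independent of the chosen representative $\beta$.

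The crux is injectivity of $\Phi$, and the pointwise sign analysis here is the step I expect to need the most care. Suppose $\gamma,\gamma'\geq\alpha$ agree on $\xi(X)$. Since $\supp(\gamma)\geq[X]$ forces $\xi(\supp(\gamma))\subseteq\xi(X)$ (by \ref{p:MuAndXiProperties}), the zero set of $\gamma$—and hence $\supp(\gamma)=\mu(\{e:\gamma(e)=0\})$—is determined by $\gamma|_{\xi(X)}$; thus $\supp(\gamma)=\supp(\gamma')=:[Z]$. On $\Gamma([Z])\cap\xi(X)$ the two agree by hypothesis, while on $\Gamma([Z])\setminus\xi(X)$ I would argue as follows: there $\alpha(e)\neq0$, and $e\in\Gamma([Z])$ gives $\gamma(e),\gamma'(e)\in\{+,-\}$; since $\alpha\leq\gamma$ and $\alpha\leq\gamma'$ pointwise (\ref{l:CovectorsAndSeparationSets}), $\alpha(e)$ cannot equal $1$, so $\alpha(e)\in\{+,-\}$, and as $+,-$ are incomparable the only value above $\alpha(e)$ is itself, forcing $\gamma(e)=\alpha(e)=\gamma'(e)$. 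Hence $\gamma=\gamma'$.

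Finally, $\Phi$ is a semigroup morphism directly from \ref{p:RestrictionOfCovectors}, namely $\Phi(\gamma\circ\gamma')=\res_{\xi(X)}(\gamma\circ\gamma')=\Phi(\gamma)\circ\Phi(\gamma')$; here $\OIG_{\geq\alpha}$ is closed under $\circ$ since $\gamma,\gamma'\geq\alpha$ implies $\gamma\circ\gamma'\geq\gamma\geq\alpha$ by \ref{p:ProductAndPartialOrder}, and $\OIG|_{\xi(X)}$ is closed by (OG3). A bijective semigroup morphism is an isomorphism, so its inverse—the stated map $\res_{\xi(X)}(\beta)\mapsto\alpha\circ\beta$—is the desired semigroup isomorphism.
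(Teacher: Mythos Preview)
Your proof is correct and takes essentially the same approach as the paper: both exhibit the restriction map $g\colon\OIG_{\geq\alpha}\to\OIG|_{\xi(X)}$, $\gamma\mapsto\res_{\xi(X)}(\gamma)$, as a bijective semigroup morphism whose inverse is the stated map. The only organizational difference is that the paper verifies well-definedness of the inverse $f(\res_{\xi(X)}(\beta))=\alpha\circ\beta$ directly (splitting $\Gamma([X]\vee B)$ into the $\Gamma(X)$ and $\xi(X)$ parts), whereas you prove injectivity of $g$ directly with the same case split and deduce well-definedness of $f$ from that; these are logically equivalent.
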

\begin{proof}
Define a map $g:\OIG_{\geq \alpha}\to\OIG|_{\xi(X)}$ by
$\beta\mapsto\res_{\xi(X)}(\beta)$. Then $g$ is a semigroup morphism
by \ref{p:RestrictionOfCovectors}. 
Define a map $f:\OIG|_{\xi(X)}\to\OIG_{\geq\alpha}$ by
$f(\res_{\xi(X)}(\beta)) = \alpha\circ\beta$. 

We argue that $f$ is well-defined. Suppose $\beta, \gamma
\in \OIG$ with $B = \supp(\beta)$ and $C = \supp(\gamma)$, and suppose
$\res_{\xi(X)}(\beta) = \res_{\xi(X)}(\gamma)$. 
Then the support of
$\res_{\xi(X)}(\beta) = \res_{\xi(X)}(\gamma)$ is 
$B\vee[X]=C\vee[X]$. 
This implies that $\supp(\alpha\circ\beta)=\supp(\alpha\circ\gamma)$
because $\supp(\alpha) = [X]$. Therefore, to show
$\alpha\circ\beta=\alpha\circ\gamma$ it suffices to show that they 
agree on $\Gamma([X]\vee B)$. 
Let
$x \in
\Gamma([X] \vee B)$. Then $x \in \Gamma(X) \cup \xi(X)$ by
\ref{p:PropertiesOfContinuations}. If $x \in \Gamma(X)$, then
$(\alpha \circ \beta)(x) = \alpha(x) = (\alpha \circ \gamma)(x)$. So
suppose $x \in \xi(X)$.  Then $(\alpha \circ \beta)(x) = \beta(x)$ and
$(\alpha \circ \gamma)(x) = \gamma(x)$. Moreover, $x \in
\Gamma|_{\xi(X)}(B|_{\xi(X)})$ and $x \in \Gamma|_{\xi(X)}(C|_{\xi(X)})$
by \ref{l:XiAndGammaInRestrictionToXi}. 
So $\res_{\xi(X)}(\beta)(x)=\beta(x)$ and 
$\res_{\xi(X)}(\gamma)(x)=\gamma(x)$.
Since $\res_{\xi(X)}(\beta)=\res_{\xi(X)}(\gamma)$, we have
$(\alpha\circ\beta)=(\alpha\circ\gamma)$.

Now $f$ is a semigroup morphism since 
$\alpha\circ\beta\circ\alpha=\alpha\circ\beta$ 
for all covectors $\alpha$ and $\beta$ 
(see \ref{p:ProductAndPartialOrder}). 
To complete the proof observe that $f\circ g$ and $g\circ f$ are the
identity morphisms of $\OIG|_{\xi(X)}$ and $\OIG_{\geq \alpha}$,
respectively.
\end{proof}

\section{Structure of oriented interval greedoids}

\subsection{$\OIG$ is a graded poset}

The next result generalizes \cite[Lemma~4.1.12]{OrientedMatroids1993} from
oriented matroids to oriented interval greedoids.

\begin{Lemma}
\label{l:Lemma4.1.12}
Let $(E,\F,\OIG)$ denote an oriented interval greedoid. Suppose
$\alpha, \beta \in \OIG$ with $\supp(\alpha) \leq \supp(\beta)$ and
$\alpha \not\leq \beta$. Then there exists $\delta \in \OIG$ such that
$\delta \lessdot \beta$ and 
for all $x \not\in \SeparationSet(\alpha,\beta)$, if $\beta(x)\neq1$, then $\delta(x)=\beta(x)$.  
\end{Lemma}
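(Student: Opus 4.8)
The plan is to exhibit $\delta$ as a maximal element, below $\beta$, of the set of covectors that agree with $\beta$ off the separation set. Since $\supp(\alpha)\leq\supp(\beta)$ and $\alpha\not\leq\beta$, \ref{l:CovectorsAndSeparationSets}\eqref{i:CriterionForLEQ1} forces $S:=\SeparationSet(\alpha,\beta)\neq\O$. I set
\[
\mathcal{C}=\{\gamma\in\OIG : \gamma<\beta\text{ and }\gamma(x)=\beta(x)\text{ for all }x\notin S\text{ with }\beta(x)\neq1\}.
\]
Once I know $\mathcal{C}\neq\O$, I will pick $\delta\in\mathcal{C}$ maximal in the covector order. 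If some $\eta\in\OIG$ satisfied $\delta<\eta<\beta$, then for each $x\notin S$ with $\beta(x)\neq1$ the inequalities $\delta(x)\leq\eta(x)\leq\beta(x)$ together with $\delta(x)=\beta(x)$ would give $\eta(x)=\beta(x)$, so $\eta\in\mathcal{C}$, contradicting maximality; hence $\delta\lessdot\beta$, and $\delta$ has the required agreement property by construction. Thus the whole content of the lemma is the nonemptiness of $\mathcal{C}$.

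To produce a member of $\mathcal{C}$ I would argue by induction on $|\SeparationSet(\alpha,\beta)|$. Because $\supp(\alpha)\leq\supp(\beta)$, \ref{p:CovectorOfProduct} gives $\supp(\alpha\circ\beta)=\supp(\alpha)\vee\supp(\beta)=\supp(\beta)$, so $\alpha\circ\beta$ and $\beta$ have the same $\xi$-set and the same $1$-region; a direct check on $\Gamma(\supp\beta)$ then shows $\acb(y)=\beta(y)$ for every $y\notin S$. Fix $x_0\in S$; since $S\sse\Gamma(\supp\alpha)\cap\Gamma(\supp\beta)$ we have $\acb(x_0)\in\{+,-\}\neq1$, so (OG4) yields $\gamma_0\in\OIG$ with $\gamma_0(x_0)=0$ and $\gamma_0(y)=\acb(y)=\beta(y)$ for all $y\notin S$ with $\beta(y)\neq1$. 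In particular $\gamma_0$ vanishes on $\xi(\supp\beta)\cup\{x_0\}$, whence $\xi(\supp\beta)\sse\xi(\supp\gamma_0)$ and $\supp(\gamma_0)\leq\supp(\beta)$ by \ref{p:MuAndXiProperties}; moreover $\gamma_0(x_0)=0\neq\beta(x_0)$, so $\gamma_0\neq\beta$.

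The key point is that $\SeparationSet(\gamma_0,\beta)\sse S\setminus\{x_0\}$: outside $S$ the two covectors agree (or $\beta=1$), and $\gamma_0(x_0)=0$ is not a separation. If $\SeparationSet(\gamma_0,\beta)=\O$, then \ref{l:CovectorsAndSeparationSets}\eqref{i:CriterionForLEQ1} gives $\gamma_0\leq\beta$, hence $\gamma_0<\beta$, so $\gamma_0\in\mathcal{C}$. Otherwise $\gamma_0\not\leq\beta$ while $\supp(\gamma_0)\leq\supp(\beta)$ and $|\SeparationSet(\gamma_0,\beta)|<|S|$, so the inductive hypothesis applied to the pair $(\gamma_0,\beta)$ produces $\gamma<\beta$ agreeing with $\beta$ off $\SeparationSet(\gamma_0,\beta)$; as $\SeparationSet(\gamma_0,\beta)\sse S$, this $\gamma$ agrees with $\beta$ off $S$ as well, so $\gamma\in\mathcal{C}$. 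This closes the induction. I expect the main obstacle to be exactly this residue: a single application of (OG4) only zeroes one coordinate of $S$ and says nothing about the others, so $\gamma_0$ need not lie below $\beta$. The induction on $|S|$, made possible by the strict containment $\SeparationSet(\gamma_0,\beta)\subsetneq S$, is what absorbs the leftover separations, while the short computation verifying $\supp(\gamma_0)\leq\supp(\beta)$ is what lets the inductive hypothesis fire.
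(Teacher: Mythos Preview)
Your proof is correct and follows essentially the same route as the paper's: apply (OG4) to eliminate one separation coordinate, observe that the resulting $\gamma_0$ has $\supp(\gamma_0)\leq\supp(\beta)$ and strictly smaller separation set with $\beta$, then recurse. The only cosmetic differences are that the paper phrases the induction as a minimal-counterexample argument and, in the base case $\SeparationSet(\gamma_0,\beta)=\O$, directly picks a coatom of $[\gamma_0,\beta]$ rather than a maximal element of your set $\mathcal{C}$; the sandwich argument $\delta(x)\leq\eta(x)\leq\beta(x)$ is identical in both.
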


\begin{proof}
Let $A = \supp(\alpha)$ and $B = \supp(\beta)$. Suppose the result is
not true. Of all $\alpha,\beta\in\OIG$ that violate the result choose
a pair with $|\SeparationSet(\alpha,\beta)|$ minimal. If
$\SeparationSet(\alpha,\beta) = \O$, then $\alpha \leq \beta$ by
\ref{l:CovectorsAndSeparationSets}, contradicting the assumption that
$\alpha\not\leq\beta$. Therefore, $\SeparationSet(\alpha,\beta) \neq
\O$. Let $y \in \SeparationSet(\alpha,\beta)$. If $\acb(y)=1$, then
$\bca(y)=1$ and so $\beta(y)=1$ because $A\leq B$ implies $\bca =
\beta$. This contradicts the fact that
$y\in\SeparationSet(\alpha,\beta)$. Therefore, $\acb(y)\neq1$. (OG4)
implies there exists $\gamma \in \OIG$ with $\gamma(y)=0$ and
for all $x \not\in \SeparationSet(\alpha,\beta)$,
if $\beta(x)=\bca(x)\neq1$, then $\gamma(x)=\acb(x)=\bca(x)=\beta(x)$.

We argue that
$\SeparationSet(\gamma,\beta)\subsetneq\SeparationSet(\alpha,\beta)$.
Suppose $e\notin\SeparationSet(\alpha,\beta)$. 
Then either $\beta(e)=1$ or $\gamma(e)=\beta(e)$. In both cases
$e\notin\SeparationSet(\gamma,\beta)$.
Since $y\in\SeparationSet(\alpha,\beta)$ and
$y\notin\SeparationSet(\gamma,\beta)$ (because $\gamma(y)=0$), 
the inclusion is proper.

Let $C = \supp(\gamma)$. We argue that $C < B$ by showing
that $\beta(e)=0$ implies $\gamma(e)=0$ and that $B\neq C$. 
If $\beta(e)=0$, then $e\notin\SeparationSet(\alpha,\beta)$,
so $\beta(e)=1$ (not possible) or $\gamma(e)=\beta(e)=0$. 
Since $\gamma(y)=0$ and $\beta(y)\in\{+,-\}$, we have $B\neq C$.

We argue that $\SeparationSet(\gamma,\beta)\neq\O$. Suppose
$\SeparationSet(\gamma,\beta)=\O$. Then $\gamma < \beta$ by
\ref{l:CovectorsAndSeparationSets}. Let $\delta \in \OIG$ denote a
coatom in the interval $[\gamma, \beta]$ of the poset $\OIG$ and let
$D = \supp(\delta)$.  We will argue that $\delta$ satisfies the
result, contradicting our assumption that no such $\delta$ exists.
First note that $\delta \lessdot \beta$ by the choice of $\delta$. 
It remains to show that 
for all $x \not\in \SeparationSet(\alpha,\beta)$,
if $\beta(x)\neq1$, then $\delta(x)=\beta(x)$.
Let $x\not\in\SeparationSet(\alpha,\beta)$.
If $\beta(x)\neq1$, then $\gamma(x) = \beta(x)$. 
Since $\gamma(x)\leq\delta(x)\leq\beta(x)$ and 
$\gamma(x)=\beta(x)$, it follows that $\delta(x)=\beta(x)$.
And if $\beta(x)=1$, then $\gamma(x)\neq0$,
so $\delta(x)\geq\gamma(x)>0$.

We argued above that $C < B$ and $\SeparationSet(\gamma,\beta)\neq\O$.
Hence, $\gamma\not\leq\beta$ by \ref{l:CovectorsAndSeparationSets}.
Since $\O\neq \SeparationSet(\gamma,\beta) \subsetneq
\SeparationSet(\alpha,\beta)$, the minimality of
$|\SeparationSet(\alpha,\beta)|$ implies there exists 
$\delta\in\OIG$ such that $\delta \lessdot \beta$ and 
for all $x\not\in\SeparationSet(\gamma,\beta)$,
if $\beta(x)\neq1$, then $\delta(x)=\beta(x)$.
But since 
$\SeparationSet(\gamma,\beta)\subsetneq\SeparationSet(\alpha,\beta)$, 
if $x\not\in\SeparationSet(\alpha,\beta)$, 
then $x\not\in\SeparationSet(\gamma,\beta)$.
Thus, for all $x\not\in\SeparationSet(\alpha,\beta)$,
if $\beta(x)\neq1$, then $\delta(x)=\beta(x)$.
This contradicts the assumption that no
such $\delta$ exists for the pair $\alpha$ and $\beta$. 
We have arrived at a contradiction; so the result is true.
\end{proof}

\begin{Example}
\ref{f:Lemma4.1.12} illustrates \ref{l:Lemma4.1.12} for the
antimatroid corresponding to the convex geometry on three colinear
points (\ref{x:ThreeColinearPoints}).
\begin{figure}[!ht]
\centering
\def\xO{-0.5}\def\yO{-0.5}
\def\xl{3.5}\def\yl{1.25}
\begin{pspicture}(3,1)(0,-0.75)
\border(-0.5,-0.5)(3.5,1.25)
\psline[linecolor=blue,linewidth=0.02](0,0)(1,0)
\psdot(0,0)\psdot(1,0)\psdot(3,0)
\uput[90](0,0){$-$}
\uput[90](1,0){$+$}
\uput[90](3,0){$0$}
\uput[-90](1.5,-0.5){$\alpha$}
\end{pspicture}
\qquad
\begin{pspicture}(3,1)(0,-0.75)
\border(-0.5,-0.5)(3.5,1.25)
\psline[linecolor=red,linewidth=0.02](0,0)(3,0)
\psdot(0,0)\psdot(1,0)\psdot(3,0)
\uput[90](0,0){$+$}
\uput[90](1,0){$1$}
\uput[90](3,0){$+$}
\uput[-90](1.5,-0.5){$\beta$}
\end{pspicture}
\qquad
\begin{pspicture}(3,1)(0,-0.75)
\border(-0.5,-0.5)(3.5,1.25)
\psline[linecolor=green,linewidth=0.02](1,0)(3,0)
\psdot(0,0)\psdot(1,0)\psdot(3,0)
\uput[90](0,0){$0$}
\uput[90](1,0){$+$}
\uput[90](3,0){$+$}
\uput[-90](1.5,-0.5){$\delta$}
\end{pspicture}
\qquad
\begin{pspicture}(3,1)(0,-0.75)
\border(-0.5,-0.5)(3.5,1.25)
\psline[linecolor=green,linewidth=0.02](1,0)(3,0)
\psdot(0,0)\psdot(1,0)\psdot(3,0)
\uput[90](0,0){$0$}
\uput[90](1,0){$-$}
\uput[90](3,0){$+$}
\uput[-90](1.5,-0.5){$\delta'$}
\end{pspicture}
\caption{The covectors $\delta$ and $\delta'$ both satisfy the
statement of \ref{l:Lemma4.1.12} for the covectors $\alpha$ and
$\beta$.}
\label{f:Lemma4.1.12}
\end{figure}
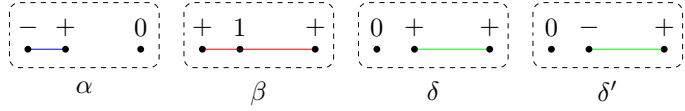
\end{Example}

The partial order on the set of all covectors restricts to a partial
order on $\OIG$. This next result shows that $\OIG$ is a graded poset
and describes the rank function of $\OIG$.

\begin{Proposition}
\label{p:SupportLatticeOIG}
Let $\OIG$ be an oriented interval greedoid over $(E,\F)$. Then
$\supp: \OIG \to \Phi$ is a cover-preserving poset surjection
of $\OIG$ onto $\Phi$ satisfying 
\begin{gather*}
\supp\big(\alpha \circ \beta\big) = 
\supp(\alpha) \vee \supp(\beta).
\end{gather*}
In particular, $\OIG$ is graded of rank equal to the rank of $\Phi$.
The rank of $\alpha \in \OIG$ is the rank of $\supp(\alpha) \in \Phi$.
\end{Proposition}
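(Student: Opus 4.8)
The plan is to verify in turn that $\supp$ is surjective, order-preserving and sends products to joins, then to establish the single substantial fact—that $\supp$ carries covering relations to covering relations—and finally to read off gradedness.

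First I would dispatch the formal properties. Surjectivity of $\supp:\OIG\to\Phi$ is exactly (OG1). If $\alpha\le\beta$ then by definition $\supp(\alpha)\le\supp(\beta)$, so $\supp$ is order-preserving (this also follows from \ref{l:CovectorsAndSeparationSets}). The identity $\supp(\alpha\circ\beta)=\supp(\alpha)\vee\supp(\beta)$ is just the observation that, by \ref{p:CovectorOfProduct}, $\alpha\circ\beta$ is the covector of the signed flat $(A\vee B,\Flat\alpha\circ\Flat\beta)$, whose support is $A\vee B$. I would also record two boundary facts used at the end. Since $\Gamma(\hzero)=\O$, there is a unique covector of support $\hzero$, and it is the global minimum of $\OIG$, as it lies below every covector (the agreement condition on $\Gamma(\hzero)=\O$ being vacuous); and a covector $\beta$ is maximal in $\OIG$ exactly when $\supp(\beta)=\hone$, for if $\supp(\beta)=B\ne\hone$ one picks $\mu$ with $\supp(\mu)\gtrdot B$ by (OG1) and uses \ref{p:ProductAndPartialOrder}(2) to get $\beta<\beta\circ\mu$.

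The heart of the argument is cover-preservation: if $\gamma\lessdot\beta$ in $\OIG$ then $\supp(\gamma)\lessdot\supp(\beta)$ in $\Phi$. Writing $D=\supp(\gamma)$ and $B=\supp(\beta)$, a covector is determined on the continuations of its support, so two covectors sharing a support are comparable only if equal; hence $\gamma<\beta$ forces $D<B$ strictly. Suppose some flat $C$ satisfies $D<C<B$; I would derive a contradiction by producing $\rho$ with $\gamma<\rho<\beta$. To organize this I first reduce to the case $\gamma=\hzero$: passing to the restriction $\OIG|_{\xi(Y)}$ for $Y\in D$ (an oriented interval greedoid whose semigroup, by \ref{p:RestrictionSemigroup}, is $\OIG_{\ge\gamma}$ and whose lattice of flats is the interval $[D,\hone]$) sends $\gamma$ to the bottom covector and preserves both the interval $[\gamma,\beta]$ and the strictly intermediate flat $C$. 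Then I reduce to $\supp(\beta)=\hone$: passing to the contraction $\OIG/X$ for $X\in B$ (an oriented interval greedoid whose semigroup, by \ref{p:ContractionSemigroup}, is $\OIG_{\le B}\supseteq[\hzero,\beta]$ and whose lattice of flats is $[\hzero,B]$) sends $\beta$ to a covector of full support. Thus it suffices to show: in an oriented interval greedoid whose lattice of flats has rank at least $2$, a covector $\beta$ of full support $\hone$ is not an atom. Here I would invoke \ref{l:Lemma4.1.12} with $\alpha=-\beta$, which satisfies $\supp(\alpha)=\hone$ and $\alpha\not\le\beta$ since $\Gamma(\hone)\ne\O$: it yields $\delta\lessdot\beta$ agreeing with $\beta$ off $\SeparationSet(-\beta,\beta)=\Gamma(\hone)$ wherever $\beta\ne1$. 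If $\delta\ne\hzero$ then $\hzero<\delta<\beta$ already contradicts $\beta$ being an atom; the residual possibility $\delta=\hzero$ is where the real work lies, and I would rule it out by instead choosing $\alpha$ of support a coatom $C$ of $\Phi$ (supplied by (OG1)) so that $\SeparationSet(\alpha,\beta)\subsetneq\Gamma(\hone)$, whence the produced $\delta$ cannot be $\hzero$. In the loopless-matroid model this is automatic, as $\Gamma(\hone)=E\not\subseteq\Gamma(C)$; in general it rests on verifying $\Gamma(\hone)\cap\xi(C)\ne\O$ for a suitable coatom $C$.

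I expect this last point—guaranteeing a face of $\beta$ of intermediate support, equivalently forcing $\SeparationSet(\alpha,\beta)$ to miss part of $\Gamma(\hone)$—to be the main obstacle, since it is exactly where one must produce a covector of $\OIG$ with a prescribed non-extremal support rather than merely combine given ones via $\circ$ (which only raises support to joins) or $-$. Once cover-preservation is in hand, gradedness is immediate: a saturated chain in $\OIG$ maps under $\supp$ to a chain in $\Phi$ each of whose steps is a cover, hence to a saturated chain of the same length. A maximal chain of $\OIG$ runs from the global minimum $\hzero$ (support $\hzero$) to a maximal element (support $\hone$ by the boundary fact above), so it has length $\operatorname{rank}_\Phi(\hone)-\operatorname{rank}_\Phi(\hzero)=\operatorname{rank}\Phi$. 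Therefore $\OIG$ is graded of rank $\operatorname{rank}\Phi$, and the rank of $\alpha$, being the length of a saturated chain from $\hzero$ to $\alpha$, equals $\operatorname{rank}_\Phi(\supp(\alpha))$.
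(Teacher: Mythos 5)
Your proposal follows essentially the same route as the paper: both arguments reduce cover-preservation to a situation where \ref{l:Lemma4.1.12} can be applied, after normalizing the endpoints via the semigroup isomorphism of \ref{p:RestrictionSemigroup} (and, in your case, also \ref{p:ContractionSemigroup}). The paper performs only the first reduction (restriction to $\xi(A)$, making $\alpha$ the bottom element of $\OIG'=\OIG|_{\xi(A)}$), takes $\epsilon'$ with support equal to the given intermediate flat $C$, sets $\gamma'=\alpha'\circ\epsilon'$, observes that $\gamma'\not\leq\beta'$ because $\alpha'\lessdot\beta'$, and applies \ref{l:Lemma4.1.12} to the pair $(\gamma',\beta')$; the resulting $\delta'$ yields the forbidden intermediate covector $\alpha\circ\delta$. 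Your extra contraction step, which lets you take $\supp(\alpha)$ to be a coatom of $\Phi$, is a harmless variant. The formal parts (surjectivity, the product identity, the boundary facts about $\bottom$ and about maximal covectors, and the deduction of gradedness) are all correct.

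The one point you leave open --- that $\Gamma(\hone)\cap\xi(C)\neq\O$ for a suitable coatom $C$, so that $\SeparationSet(\alpha,\beta)\subsetneq\Gamma(\hone)$ and the $\delta$ produced by \ref{l:Lemma4.1.12} cannot be $\bottom$ --- is not in fact an obstacle, and certainly does not require producing new covectors: it is a statement about $\Phi$ alone. Since $\Phi$ is graded with the corank of a flat equal to the cardinality of any feasible set it contains, every coatom of $\Phi$ has the form $[\{x\}]$ with $\{x\}\in\F$, i.e.\ with $x\in\Gamma(\O)=\Gamma(\hone)$; and $x\in\{x\}\subseteq\xi([\{x\}])$. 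So the required element exists for every coatom, not just a suitable one. You should also dispose of the case $\alpha\leq\beta$ before invoking \ref{l:Lemma4.1.12}, but that case is immediate, since then $\bottom<\alpha<\beta$ already exhibits $\beta$ as a non-atom. With these two observations supplied, your argument is complete. (The paper's version faces the same degeneracy issue --- it needs $\supp(\delta')$ strictly above $A$ --- and there the relevant fact is that $\Gamma(B)\cap\xi(C)\neq\O$ whenever $C<B$ in $\Phi$, which follows by the same kind of counting argument.)
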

\begin{proof}
The identity follows immediately because if $A = \supp(\alpha)$ and $B
= \supp(\beta)$, then $\supp(\alpha \circ \beta) = A \vee B$, by
definition of the product. The fact that $\supp$ is a surjection of
posets follows from its definition and axiom (OG1). It remains to show
that $\supp$ is cover-preserving. 

Suppose $\alpha\lessdot\beta$. Let $A=\supp(\alpha)$ and
$B=\supp(\beta)$. 
Suppose there exists $C\in\Phi$ such that $A<C<B$. 
Let $\OIG'=\OIG|_{\xi(A)}$.  Let $\alpha'$ and $\beta'$ be the
elements of $\OIG'$ corresponding to $\alpha,\beta$.  

Since
$\supp:\OIG'\to[A,\hat 1]$ is surjective, there exists $\epsilon'\in\OIG'$ with
$\supp(\epsilon')=C$. Let $\gamma'=\alpha'\circ\epsilon'$. Then
$\alpha'<\gamma'$ and $\supp(\gamma')=A\vee C=C$. If $\gamma'\leq\beta'$,
then $\gamma'=\beta'$, contradicting that $C<B$. Hence,
$\gamma'\not\leq\beta'$. By \ref{l:Lemma4.1.12} there exists
$\delta'\in\OIG'$ such that $\delta'\lessdot\beta'$ and for all
$x\notin\SeparationSet(\gamma',\beta')$, if $\beta'(x)\neq1$, then
$\delta'(x)=\beta'(x)$.  Let $D=\supp(\delta')$.  Since $\delta'\in\OIG'$,
$A<D$.  Thus 
$\alpha<\alpha\circ\delta<\alpha\circ\beta=\beta$
(see \ref{p:ProductAndPartialOrder}), contradicting
that $\alpha\lessdot\beta$.
%
%
\end{proof}

\subsection{Oriented interval greedoids of rank $1$}

Let $(E,\F,\OIG)$ be an oriented interval greedoid and let $\Phi$ be
its lattice of flats. The previous result shows that $\OIG$
is a graded lattice and that its rank is equal to that of $\Phi$.
We define the \defn{rank} of $(E,\F,\OIG)$ to be the rank
of $\OIG$ (equivalently, the rank of $\Phi$). 

We first make a useful observation about arbitrary oriented 
interval greedoids.

\begin{Lemma}
\label{l:bottom}
Suppose $(E,\F,\OIG)$ is an oriented interval greedoid.  Let $\hat 0$
be the minimal element of $\Phi$.  Then there is a unique element of 
$\OIG$ with support $\hat 0$.  
\end{Lemma}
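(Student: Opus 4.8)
The plan is to reduce everything to the single claim that the minimal flat $\hat0$ has no continuations, i.e.\ $\Gamma(\hat0)=\O$. Granting this, a covector $\alpha$ with $\supp(\alpha)=\hat0$ is by definition the covector of a signed flat $(\hat0,\Flat\alpha)$, where $\Flat\alpha\colon\Gamma(\hat0)\to\{+,-\}$. Since $\Gamma(\hat0)=\O$, the map $\Flat\alpha$ is the empty map and is thus uniquely determined, so there is exactly one signed flat with flat $\hat0$. As a signed flat can be recovered from its covector, this forces $\alpha$ to be unique (explicitly, $\alpha(e)=0$ for $e\in\xi(\hat0)$ and $\alpha(e)=1$ otherwise, with no sign values occurring). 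Existence is immediate: by (OG1) the map $\supp\colon\OIG\to\Phi$ is surjective, so some $\alpha\in\OIG$ has $\supp(\alpha)=\hat0$.

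The heart of the argument is therefore to prove $\Gamma(\hat0)=\O$; intuitively $\hat0$ is the flat of a maximal feasible set, which admits no continuations. To make this precise, write $\hat0=[X]$ for some $X\in\F$ and suppose, for contradiction, that some $x\in\Gamma(X)$ exists. Then $X\cup x\in\F$, and since $X\sseq X\cup x$ we have $[X\cup x]\leq[X]=\hat0$. Minimality of $\hat0$ forces $[X\cup x]=\hat0=[X]$, so $X\cup x\sim X$ and hence $\Gamma(X\cup x)=\Gamma(X)$. But a continuation of a set never lies in that set, so $x\notin\Gamma(X\cup x)$ while $x\in\Gamma(X)$; this contradicts $\Gamma(X\cup x)=\Gamma(X)$. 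Therefore $\Gamma(\hat0)=\O$.

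I expect no serious obstacle: the only genuine content is the identity $\Gamma(\hat0)=\O$, and the remainder is a formal consequence of the bijection between signed flats and their covectors together with (OG1). The one point to state carefully is why $[X\cup x]=[X]$ is impossible without invoking minimality, namely that $x$ itself witnesses $\Gamma(X)\neq\Gamma(X\cup x)$, which is exactly the step recorded above.
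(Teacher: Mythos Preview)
Your proof is correct and follows essentially the same approach as the paper: both argue existence from (OG1), then observe $\Gamma(\hat0)=\O$ so that any covector with support $\hat0$ takes values only in $\{0,1\}$ and is therefore determined by $(E,\F)$ alone. The only difference is that the paper asserts $\Gamma(\hat0)=\O$ without justification, while you supply a short argument for it.
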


\begin{proof}
By (OG1), there exists $\bottom\in\OIG$ with
$\supp(\bottom)=\hat0$. Then $\Gamma(\supp(\bottom))=\O$, so
$\bottom(e)\in\{0,1\}$ for all $e\in E$.  Thus, $\bottom$
is determined by $\F$, and consequently is the unique element of
$\OIG$ with support $\hat0$.  
\end{proof}

We will consistently denote the unique element of $\OIG$ 
with support $\hat 0$ by $\bottom$.  

The next result describes the oriented interval greedoids of rank
$1$.

\begin{Proposition}
\label{p:Rank1OIGs}
Suppose $(E,\F,\OIG)$ is an oriented interval greedoid of rank $1$. 
Then $\OIG$ contains exactly three elements, and its Hasse diagram 
is
\begin{gather*}
\xymatrix@C=1em@R=3ex{
-\beta\ar@{-}[dr] & & \beta\ar@{-}[dl] 
\\
&\bottom}
\end{gather*}
\end{Proposition}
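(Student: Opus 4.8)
The plan is to first pin down the lattice of flats $\Phi$ and the possible covectors, and then invoke (OG4) to rule out all but two covectors of full support.

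First I would use \ref{p:SupportLatticeOIG}: since $\OIG$ is graded of rank $1$, so is $\Phi$, and a graded lattice of rank $1$ has exactly two elements, a bottom $\hat 0$ and a top $\hat 1$ with $\hat 0 \lessdot \hat 1$. Here $\hat 1 = [\O]$, while $\hat 0$ is the common flat of all singleton feasible sets (the bases), which are equivalent by \ref{l:MaximalSubsetsAreEquivalent}. As $\hat 0 \neq \hat 1$, there is at least one base, so $\Gamma(\O) \neq \O$. I would then describe the covectors over each flat. A covector with support $\hat 0$ is the unique element $\bottom$ of \ref{l:bottom}. For support $\hat 1$, one checks that $\xi(\hat 1) = \xi([\O]) = \O$ (the only feasible set equivalent to $\O$ is $\O$, since any nonempty feasible set is a base with $\Gamma = \O \neq \Gamma(\O)$), so such a covector takes values in $\{+,-\}$ on $\Gamma(\O)$ and $1$ off $\Gamma(\O)$, and in particular has no zeros. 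By (OG1) there is some $\beta \in \OIG$ with $\supp(\beta) = \hat 1$, and by (OG2) also $-\beta \in \OIG$; since $\Gamma(\O) \neq \O$ these are distinct, and both differ from $\bottom$.

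The heart of the argument is to show these are the only covectors of support $\hat 1$ in $\OIG$. I would assume, toward a contradiction, that some $\gamma \in \OIG$ has $\supp(\gamma) = \hat 1$ and $\gamma \notin \{\beta, -\beta\}$. Then $\beta$ and $\gamma$ disagree somewhere on $\Gamma(\O)$, so $\SeparationSet(\beta,\gamma) \neq \O$, and agree somewhere on $\Gamma(\O)$, so $\Gamma(\O) \m \SeparationSet(\beta,\gamma) \neq \O$. Because both covectors take values in the pairwise-incomparable symbols $\{+,-\}$ on $\Gamma(\O)$ and the value $1$ elsewhere, the definition of $\circ$ gives $\beta \circ \gamma = \beta$ directly; in particular $(\beta \circ \gamma)(x) = \beta(x) \neq 1$ for every $x \in \SeparationSet(\beta,\gamma) \sse \Gamma(\O)$. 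Fixing such an $x$, (OG4) produces $\delta \in \OIG$ with $\delta(x) = 0$ and with $\delta(y) = (\beta \circ \gamma)(y) = \beta(y)$ for every $y \in \Gamma(\O)$ on which $\beta$ and $\gamma$ agree. This $\delta$ cannot have support $\hat 1$ (it has a zero at $x$, yet support-$\hat 1$ covectors have no zeros) and cannot equal $\bottom$ (it is nonzero on the nonempty agreement set), contradicting $\Phi = \{\hat 0, \hat 1\}$. Hence no such $\gamma$ exists. I expect this to be the main obstacle: computing $\beta \circ \gamma$ correctly so that (OG4) is applicable, and then reading off the support of the resulting $\delta$ to contradict the two-element description of $\Phi$; the rest is bookkeeping about $\Gamma(\O)$ and $\xi(\hat 1)$.

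Finally I would conclude $\OIG = \{\bottom, \beta, -\beta\}$ and assemble the Hasse diagram. Since $\bottom$ takes only values in $\{0,1\}$ we have $\SeparationSet(\bottom,\beta) = \O$, and $\supp(\bottom) = \hat 0 \leq \hat 1 = \supp(\beta)$, so $\bottom \leq \beta$ by \ref{l:CovectorsAndSeparationSets}\eqref{i:CriterionForLEQ1}; likewise $\bottom \leq -\beta$. As $\OIG$ is graded of rank $1$ with $\bottom$ of rank $0$ and $\beta, -\beta$ of rank $1$, these relations are covers while $\beta$ and $-\beta$ are incomparable, which yields exactly the displayed diagram.
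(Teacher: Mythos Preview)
Your argument is correct and follows essentially the same idea as the paper: produce, via the elimination axiom, a covector that has both a zero and a $\pm$ value, which is impossible when $\Phi=\{\hat0,\hat1\}$. The only difference is packaging: the paper invokes \ref{l:Lemma4.1.12} (itself a consequence of (OG4)) to obtain $\delta\lessdot\beta$ with $\delta(y)=\beta(y)$ at an agreement point $y$, whereas you apply (OG4) directly at a separation point $x$ after computing $\beta\circ\gamma=\beta$; both routes land on the same contradiction.
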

\begin{proof}
Since $\OIG$ has rank $1$, $\Phi$ contains exactly two elements, its
minimal and maximal elements $\hat0$ and $\hat1=[\O]$, respectively. 

By the previous lemma, $\bottom$ is the unique element element of $\OIG$
with support $\hat 0$. 
We now show that there exist exactly two elements in $\OIG$ of
support $\hat1$. 
By (OG1), there exists $\beta\in\OIG$ such that $\supp(\beta)=\hat1$.
Then $-\beta\in\OIG$ by (OG2). Since $\Gamma(\O)\ne \O$, 
$\beta\neq-\beta$.  
So $\OIG$ contains at least two elements of support $\hat1$.

Let $\alpha\in\OIG$, $\alpha\neq\beta$ and $\supp(\alpha)=\hat1$. 
Let $y\in \Gamma(\O)$, $y\not\in \SeparationSet(\alpha,\beta)$.  
Then let $\delta$ be the vector guaranteed by \ref{l:Lemma4.1.12}.
Then $\delta(y)=\beta(y)$.  But $\delta=\bottom$, so this is
impossible.  It follows that $\SeparationSet(\alpha,\beta)=
\Gamma(\O)$; in other words, $\alpha=-\beta$.  
\end{proof}

\subsection{Oriented interval greedoids of rank $2$}

\begin{Proposition}
\label{l:Rank2OIGs}
Suppose $(E,\F,\OIG)$ is an oriented interval greedoid of rank $2$. 
Then $\OIG$ is isomorphic to the semigroup of covectors of 
an oriented matroid of rank $2$, 
or the Hasse diagrams of $\OIG$ and $\Phi$ are, respectively,
the following two posets.
\begin{gather*}
\xymatrix@C=1em@R=3ex{
-\beta\ar@{-}[d]\ar@{-}[drr] 
& & 
\beta\ar@{-}[dll]\ar@{-}[d] 
&&& &&& [\O]\ar@{-}[d] 
\\
-\gamma\ar@{-}[dr] & & \gamma\ar@{-}[dl] 
&&& &&& [\{x\}]\ar@{-}[d]
\\
&\bottom& 
&&& &&& \hat0}
\end{gather*}
\end{Proposition}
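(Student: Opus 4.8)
The plan is to split on the number of atoms of $\Phi$. Since $(E,\F,\OIG)$ has rank $2$, the lattice $\Phi$ has bottom $\hat0$, top $\hat1=[\O]$, and all of its remaining elements are atoms; by \ref{l:bottom} there is a unique $\bottom\in\OIG$ with $\supp(\bottom)=\hat0$, and by \ref{p:SupportLatticeOIG} the map $\supp$ is a cover-preserving graded surjection, so $\OIG$ breaks into the three fibres over $\hat0$, over the atoms, and over $\hat1$. I would first record the key fibre fact: for any atom $A=[\{x\}]$ the contraction $\OIG/\{x\}$ is an oriented interval greedoid of rank $1$ (its lattice of flats is $[\hat0,A]$), so by \ref{p:Rank1OIGs} together with the isomorphism $\OIG_{\leq A}\cong\OIG/\{x\}$ of \ref{p:ContractionSemigroup} there are \emph{exactly two} covectors of support $A$, and they are negatives of one another. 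The dichotomy I aim for is: if $\Phi$ has at least two atoms then $\OIG$ is the covector semigroup of a rank-$2$ oriented matroid, and if $\Phi$ has a single atom then $\Phi$ is the displayed $3$-chain and $\OIG$ is the displayed $5$-element poset.

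For the multiple-atom case I would use the underlying oriented matroid $\OIG|_{\Gamma(\O)}$ of \ref{p:RestrictionToUnderlyingOrientedMatroid}. If $[\{x\}]\neq[\{y\}]$ are two atoms, then $\{x,y\}\in\F$ by \ref{c:Semimodularity}, so $(\Gamma(\O),\F|_{\Gamma(\O)})$ has rank $2$ and $\OIG|_{\Gamma(\O)}$ is a rank-$2$ oriented matroid (via \ref{p:OMAndOIGs}). The map $\alpha\mapsto\res_{\Gamma(\O)}(\alpha)=\alpha|_{\Gamma(\O)}$ is a surjective semigroup morphism by \ref{p:RestrictionOfCovectors} and \ref{p:RestrictionToUnderlyingOrientedMatroid}, so the entire content is to show it is injective. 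The fibre over $\hat0$ is $\{\bottom\}$; a covector of support $\hat1$ is $1$ off $\Gamma(\O)$ (because $\xi(\hat1)=\O$) and hence is recovered from its restriction; and a covector of support an atom $A$ is one of the pair $\gamma,-\gamma$, which $\res_{\Gamma(\O)}$ separates because $\Gamma(A)\cap\Gamma(\O)\neq\O$ (a second atom furnishes an element of this intersection, again by \ref{c:Semimodularity}). Finally I must verify that the supports do not collapse: the induced map $\Phi\to\Phi|_{\Gamma(\O)}$ sends $\hat0,\hat1$ to the extreme flats and atoms to matroid atoms, and it is injective on atoms since two distinct atoms $[\{x\}]\neq[\{x'\}]$ with a common image would be parallel in the matroid yet differ on some $e\notin\Gamma(\O)$ (say $\{x,e\}\in\F$ while $\{x',e\}\notin\F$ and $\{x,x'\}\notin\F$), contradicting (IG2) applied to $\{x,e\}$ and $\{x'\}$. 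This makes $\res_{\Gamma(\O)}$ a bijection, hence the desired semigroup isomorphism.

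For the single-atom case, $\Phi=\{\hat0<A<\hat1\}$ is exactly the displayed chain with $A=[\{x\}]$. Here the decisive computation is $\Gamma(A)\cap\Gamma(\O)=\Gamma(A)\cap\xi(A)=\O$ (in this case $\xi(A)=\Gamma(\O)$), the opposite of the multiple-atom situation. I again have the two covectors $\gamma,-\gamma$ of support $A$. For the top fibre, the vanishing intersection makes the agreement condition in the order on covectors vacuous, so \emph{every} covector of support $\hat1$ lies above $\gamma$; since $\OIG_{\geq\gamma}\cong\OIG|_{\xi([\{x\}])}$ is a rank-$1$ oriented interval greedoid by \ref{p:RestrictionSemigroup}, \ref{p:Rank1OIGs} yields exactly two covectors of support $\hat1$, namely $\beta,-\beta$. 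Thus $\OIG=\{\bottom,\gamma,-\gamma,\beta,-\beta\}$, and cover-preservation of $\supp$ (\ref{p:SupportLatticeOIG}) forces $\gamma,-\gamma\lessdot\beta$, $\gamma,-\gamma\lessdot-\beta$ and $\bottom\lessdot\gamma,-\gamma$, which is precisely the displayed Hasse diagram.

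The main obstacle is the injectivity argument in the multiple-atom case, and within it the two places where the hypotheses are genuinely used: separating $\gamma$ from $-\gamma$ needs $\Gamma(A)\cap\Gamma(\O)\neq\O$ (this fails in the single-atom case, and is exactly what produces the exceptional $5$-element poset there), while the non-collapse of distinct atoms relies on the greedoid exchange axiom (IG2), without which two atoms could differ only outside $\Gamma(\O)$ and be identified by the restriction.
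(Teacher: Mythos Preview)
Your proof is correct and follows the same overall strategy as the paper: split on whether $\Phi$ has one coatom or more, and in the multiple-coatom case show that $\res_{\Gamma(\O)}:\OIG\to\OIG|_{\Gamma(\O)}$ is an isomorphism onto a rank-$2$ oriented matroid. The executions differ in how the fibres are handled. For a coatom $A$, the paper argues directly in the style of \ref{l:Lemma4.1.12}: if $\gamma\neq\gamma'$ had equal restrictions then $\SeparationSet(\gamma,\gamma')=\Gamma(A)$, forcing the common restriction to be $\{0,1\}$-valued and hence of support $\hat0$, a contradiction. You instead invoke the contraction isomorphism $\OIG_{\leq A}\cong\OIG/\{x\}$ with \ref{p:Rank1OIGs} to pin the fibre down as $\{\gamma,-\gamma\}$, then separate these two via $\Gamma(A)\cap\Gamma(\O)\neq\O$. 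Similarly, in the single-coatom case the paper runs the \ref{l:Lemma4.1.12} argument again for the top fibre, while you observe that $\Gamma(A)\cap\Gamma(\O)=\O$ makes every tope lie above $\gamma$ and then apply \ref{p:RestrictionSemigroup} to reduce $\OIG_{\geq\gamma}$ to a rank-$1$ oriented interval greedoid. Your approach is more modular, recycling the rank-$1$ classification via the contraction and restriction isomorphisms rather than redoing the $\SeparationSet$-style argument; the paper's route is more self-contained. One minor point: your injectivity-on-atoms argument for $\Phi\to\Phi|_{\Gamma(\O)}$ via (IG2) is valid but roundabout---since $[\{x\}]\neq[\{x'\}]$ already gives $\{x,x'\}\in\F$ by \ref{c:Semimodularity}, the two cannot be parallel in the restricted matroid, which is the direct reason their images differ.
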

\begin{proof}
There are two cases to consider.

\begin{Case}
Suppose $\Phi$ contains at least two coatoms.
By \ref{p:RestrictionToUnderlyingOrientedMatroid},
the restriction $\OIG\restd$ is an oriented matroid.
The map $C\mapsto [Y]$ for any $Y\in C$
embeds $\Phi\restd$ into the interval $[ [X],\hat1 ]$ of $\Phi$, 
where $X$ is the maximal among the feasible sets contained in
$\Gamma(\O)$ (see \ref{sss:RestrictionofIGs}).
Since every coatom of $\Phi$ is of the
form $[\{x\}]$ for some $x\in\Gamma(\O)$, there is a bijection between
the coatoms of $\Phi$ and those of $\Phi\restd$. 
Therefore, $[X]=\hat0$, so $\Phi\restd\cong\Phi$.
So $\OIG\restd$ is a rank 2 oriented matroid.
We argue that the map $\gamma\mapsto\gamma\restd$ is an
isomorphism $\OIG\cong\OIG\restd$.
By \ref{p:RestrictionToUnderlyingOrientedMatroid}
we need only show that this is an injection.

Let $\gamma,\gamma'\in\OIG$ and suppose $\gamma\restd=\gamma'\restd$. 
Then $\supp(\gamma)\restd=\supp(\gamma')\restd$.
Since $\Phi\restd\cong\Phi$, it follows that
$\supp(\gamma)=\supp(\gamma')$.
This implies that $\gamma(x)$ is $0$ or $1$ 
if and only if $\gamma'(x)$ is $0$ or $1$, respectively. 
Let $C=\supp(\gamma)=\supp(\gamma')$.

If $C=\hat0$, then $\gamma=\gamma'$ since
there is a unique element of $\OIG$ with support $\hat0$.
If $C=\hat1=[\O]$, then $\gamma\restd=\gamma'\restd$ implies
that $\gamma=\gamma'$ since they agree on $\Gamma(\O)$. 

Let $C\gtrdot\hat0$. Suppose $\gamma\neq\gamma'$. 
Arguing as in the end of \ref{p:Rank1OIGs}, we
conclude $\Gamma(C)=\SeparationSet(\gamma,\gamma')$.
Since $\gamma$ and $\gamma'$ agree on $\Gamma(\O)$ and disagree on
$\Gamma(C)$, it follows $\gamma\restd=\gamma'\restd$ takes values in
$\{0,1\}$. Thus, $C\restd=\hat0$, which implies $C=\hat0$,
contradicting that $C\gtrdot\hat0$. Thus, $\gamma=\gamma'$.
\end{Case}

\begin{Case}
Suppose $\Phi$ contains exactly one coatom.
Then $[X]$ is this coatom, so $X=\{x\}$ for some $x\in E$.
By \ref{l:bottom}, $\bottom$ is the unique element of $\OIG$ with
support $\hat0$.  

By (OG1), there exists $\gamma\in\OIG$ such that $\supp(\gamma)=[X]$.
By arguing as in \ref{p:Rank1OIGs}, we conclude that 
$\gamma\neq-\gamma$ and that if $\nu\in\OIG$ with $\supp(\nu)=[X]$,
then $\nu=\gamma$ or $\nu=-\gamma$.
Hence, there are exactly two elements in $\OIG$ of support $[X]$.

By (OG1), there exists $\beta\in\OIG$ such that $\supp(\beta)=[\O]$.
By (OG2) $-\beta\in\OIG$. As above, we have $\beta\neq-\beta$. 
$\Gamma(\O)\cap\Gamma(X)=\O$, since if $y\in \Gamma(\O)\cap\Gamma(X)$,
then $y\not\in \xi(X)$, so $[y]\ne[x]$, contradicting our assumption
that $\Phi$ has only one coatom.  Thus $\gamma,-\gamma<\beta,-\beta$.

Let $\nu\in\OIG$ such that $\supp(\nu)=\hat1$.
By arguing as before (using \ref{l:Lemma4.1.12}), 
it follows that $\nu=\beta$ or $\nu=-\beta$.\qedhere
\end{Case}
\end{proof}

\subsection{Intervals of length two}

Let $\hat\OIG$ denote the poset obtained from $\OIG$ by adjoining a
maximal element $\hat1$. We prove that all intervals of length two in
$\hat\OIG$ contain exactly four elements. 

\begin{Proposition}
\label{p:TwoIntervals}
Suppose $\OIG$ is an oriented interval greedoid. 
Then all intervals in $\hat\OIG$ of length two contain exactly four
elements.
\end{Proposition}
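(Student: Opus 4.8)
The plan is to reduce every length-two interval to the full covector poset of a rank-$1$ or rank-$2$ oriented interval greedoid, and then read off the answer from the classifications in \ref{p:Rank1OIGs} and \ref{l:Rank2OIGs}. Recall first that $\OIG$ is graded with $\operatorname{rank}(\alpha)=\operatorname{rank}(\supp(\alpha))$ (\ref{p:SupportLatticeOIG}), and that in $\hOIG$ the adjoined top $\hat1$ covers precisely the maximal elements of $\OIG$. Thus a length-two interval is of one of two kinds: an interval $[\alpha,\tau]$ with $\alpha,\tau\in\OIG$ and $\operatorname{rank}(\tau)-\operatorname{rank}(\alpha)=2$, or an interval $[\alpha,\hat1]$ with $\alpha\in\OIG$ of rank one less than $\operatorname{rank}(\OIG)$. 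The key tool is that the order on covectors is recovered from the product, $\alpha\leq\beta$ iff $\alpha\circ\beta=\beta$ (\ref{l:CovectorsAndSeparationSets}), so the semigroup isomorphisms of \ref{p:RestrictionSemigroup} and \ref{p:ContractionSemigroup} are automatically poset isomorphisms; I will use these to localize.

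For an interval $[\alpha,\hat1]$, first I would pass to the filter above $\alpha$. Choosing $X\in\F$ with $[X]=\supp(\alpha)$, \ref{p:RestrictionSemigroup} identifies $\OIG_{\geq\alpha}$ with $\OIG|_{\xi(X)}$ by a poset isomorphism carrying $\alpha$ to the bottom element. Since the flat lattice of this restriction is $[[X],\hat1]\subseteq\Phi$, its rank is $\operatorname{rank}(\OIG)-\operatorname{rank}(\alpha)=1$, so by \ref{p:Rank1OIGs} it has exactly three elements and exactly two maximal ones. These two correspond to the only two covectors of $\OIG$ lying strictly above $\alpha$, and both are maximal in $\OIG$; together with $\alpha$ and $\hat1$ they give exactly four elements.

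For an interval $[\alpha,\tau]$ with $\tau\neq\hat1$, I would localize from both sides. Restricting above $\alpha$ as in the previous paragraph yields a poset isomorphism of $[\alpha,\tau]$ onto $[\bottom',\tau']$ inside $\OIG':=\OIG|_{\xi(X)}$, where $\tau'$ has rank $2$. Now choose a feasible set $X'$ of the restricted greedoid with $\supp(\tau')=[X']$ and contract: \ref{p:ContractionSemigroup} gives a poset isomorphism $(\OIG')_{\leq[X']}\cong\OIG'/X'=:\OIG''$, and by \ref{p:PropertiesOfContraction} the flat lattice of $\OIG''$ is the interval $[\hat0',[X']]\subseteq\Phi'$, so $\OIG''$ has rank $2$ and $[X']$ is its top flat. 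Under this isomorphism $\bottom'$ goes to the bottom $\bottom''$ and $\tau'$ goes to an element $\tau''$ of top support. Hence $[\alpha,\tau]\cong[\bottom'',\tau'']$ inside a rank-$2$ oriented interval greedoid.

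It then remains to check that in any rank-$2$ oriented interval greedoid the interval from the bottom to a top-support element has four elements, using \ref{l:Rank2OIGs}. In the second case of that classification (the five-element poset), $\tau''\in\{\beta,-\beta\}$ and the interval is $\{\bottom'',\gamma,-\gamma,\tau''\}$, which has four elements. In the first case, $\OIG''$ is the covector semigroup of a rank-$2$ oriented matroid and $\tau''$ is a tope; a tope of a rank-$2$ oriented matroid is bounded by exactly two cocircuits, each of which covers the zero covector, so again the interval has four elements. The main obstacle I anticipate is the bookkeeping in the double localization—verifying that restriction and contraction really carry $[\alpha,\tau]$ onto the whole bottom-to-top interval of $\OIG''$, and that $\tau'$ and $\tau''$ retain rank $2$ under the isomorphisms—together with the rank-$2$ oriented matroid count in the first case; everything else is a direct appeal to the rank-$1$ and rank-$2$ structure theorems.
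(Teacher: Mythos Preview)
Your proposal is correct and follows essentially the same route as the paper: restrict to $\xi(X)$ (via \ref{p:RestrictionSemigroup}) to place $\alpha$ at the bottom, contract (via \ref{p:ContractionSemigroup}) to place $\tau$ at the top, and then invoke the rank-$1$ and rank-$2$ classifications \ref{p:Rank1OIGs} and \ref{l:Rank2OIGs}, appealing to the known oriented-matroid fact in the rank-$2$ matroid case. The only differences are cosmetic---you spell out the $[\alpha,\hat1]$ reduction and the bookkeeping more explicitly than the paper does.
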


\begin{proof}
Let $\alpha,\beta,\gamma\in\hat\OIG$ such that
$\alpha\lessdot\gamma\lessdot\beta$. The case where $\beta=\hat1$ was
proved in \ref{p:Rank1OIGs}, so suppose $\beta\in\OIG$. 
Let $A=\supp(\alpha)$, $B=\supp(\beta)$ and $C=\supp(\gamma)$. By
\ref{p:SupportLatticeOIG}, $A\lessdot C\lessdot B$ in $\Phi$.

Let $\supp(\alpha)=[X]$ for some $X\in\F$.
By \ref{p:RestrictionSemigroup} and \ref{p:ProductAndPartialOrder},
$\OIG|_{\xi(X)}\cong\OIG_{\geq\alpha}$
(as posets), so 
$\{\delta\in\OIG:\alpha\lessdot\delta\lessdot\beta\}
\cong
\{\delta\in\OIG|_{\xi(X)}:
  \alpha|_{\xi(X)}\lessdot\delta\lessdot\beta|_{\xi(X)}\}.$
Thus, by passing to $\OIG|_{\xi(X)}$ we can suppose
that $A=\hat0$.

Let $\supp(\beta)=[Y]$ for some $Y\in\F$.
By \ref{p:ContractionSemigroup},
$\OIG/Y\cong\OIG_{\leq[Y]}=\{\nu\in\OIG:\supp(\nu)\leq[Y]\}$.
Since $\Phi/Y\cong[\hat0,[Y]]\sseq\Phi$ 
(\ref{p:PropertiesOfContraction}),
by passing to $\OIG/Y$, we can suppose that $B=\hat1$, 
and therefore that $\Phi$ is a lattice of rank 2.

\ref{l:Rank2OIGs} classified the oriented interval greedoids of rank
$2$ as being either an oriented matroid of rank $2$ or having
the Hasse diagram shown in the statement of \ref{l:Rank2OIGs}. 
For the latter situation a quick inspection of the given poset
establishes the result. And for the former situation, it is well-known
that this result holds for oriented matroids
(\cite[Theorem~4.1.14]{OrientedMatroids1993}). 
\end{proof}

\subsection{The Underlying Oriented Matroid}

Let $(E,\F,\OIG)$ be an oriented interval greedoid. The top element in
the poset of flats $\Phi$ is $[\O]$, and by
\ref{p:PropertiesOfContinuations} it follows that $\Gamma(\O)
\subseteq \Gamma(A) \cup \xi(A)$ for any flat $A \in \Phi$. This
implies that $\alpha(x)\in\{0,+,-\}$ for any $\alpha\in\OIG$ and any
$x\in\Gamma(\O)$. Moreover, $\Gamma(\O)$ is the largest subset of $E$
with this property: if $\alpha$ is maximal in $\OIG$, then
$\supp(\alpha) = [\O]$ and $\alpha(x) = 1$ if and only if $x \notin
\Gamma(\O)$. This observation implies that the restriction
to $\Gamma(\O)$ produces an oriented interval greedoid whose covectors
take values in $\{0,+,-\}$. Thus, $\OIG|_{\Gamma(\O)}$ is an oriented
matroid. Alternatively, one can note that the restriction 
$(\Gamma(\O),\F|_{\Gamma(\O)})$ is a matroid and appeal to
\ref{p:OMAndOIGs}.

\begin{Definition}
Let $\OIG$ denote an oriented interval greedoid over $(E,\F)$. The
\defn{underlying oriented matroid} of $\OIG$ is $\UOM =
\OIG|_{\Gamma(\O)}$.
\end{Definition}
The lattice of flats $\overline \Phi$ of $\UOM$ is a geometric lattice
because $\UOM$ is an oriented matroid. Moreover, it is isomorphic to
the sublattice of $\Phi$ generated by all the coatoms. 

\subsection{The Tope Graph}

A \defn{tope} of an oriented interval greedoid $(E,\F,\OIG)$ is a
covector that is maximal in $\OIG$ with respect to the partial order
on covectors. Alternatively, topes are covectors whose support is
$\hat 1=[\O]$. A \defn{subtope} of $\OIG$ is a covector in $\OIG$ that
is covered by some tope. From \ref{p:TwoIntervals} 
it follows that every subtope is
covered by exactly two different topes. Two topes are said to be
\defn{adjacent} if there exists a subtope that is covered by both
topes.

The \defn{tope graph} $\TG(\OIG)$, or just $\TG$, of $\OIG$ is the graph
with one vertex for each tope of $\OIG$ and an edge between two
vertices if the corresponding topes are adjacent.


\begin{Lemma}
 Suppose $\OIG$ is an oriented interval greedoid. Then the tope graph
 of $\OIG$ is isomorphic to the tope graph of the underlying oriented matroid 
 $\UOM$ of $\OIG$.
\end{Lemma}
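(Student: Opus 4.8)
The plan is to show that the restriction map $\rho=\res_{\Gamma(\O)}\colon\OIG\to\UOM$ induces the isomorphism. By \ref{p:RestrictionToUnderlyingOrientedMatroid} we have $\rho(\alpha)=\alpha\restd$ and $\UOM=\{\rho(\alpha):\alpha\in\OIG\}$, and by \ref{p:RestrictionOfCovectors} together with \ref{l:CovectorsAndSeparationSets} the map $\rho$ is a surjective, order-preserving semigroup morphism with $\supp|_{\Gamma(\O)}(\rho(\alpha))=\supp(\alpha)\restd$. First I would check that $\rho$ restricts to a bijection on topes. Injectivity is immediate: a tope has support $[\O]$, hence is $0$ on $\xi([\O])$, is $1$ off $\Gamma(\O)\cup\xi([\O])$, and takes values in $\{+,-\}$ on $\Gamma(\O)$, so it is determined by $\alpha\restd$. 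For the image, if $\rho(\alpha)$ is a tope of $\UOM$ then $\alpha(x)\neq0$ for all $x\in\Gamma(\O)$; since every coatom of $\Phi$ has the form $[\{x\}]$ with $x\in\Gamma(\O)$ and $x\in\xi([\{x\}])$, the inequality $\supp(\alpha)\leq[\{x\}]$ would force $\alpha(x)=0$, so no coatom lies above $\supp(\alpha)$ and thus $\supp(\alpha)=[\O]$, i.e. $\alpha$ is a tope.

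Next I would analyse subtopes and covers. By \ref{p:SupportLatticeOIG} a subtope $\sigma$ has support a coatom $C=[\{x\}]$ of $\Phi$, and by \ref{p:TwoIntervals} it is covered by exactly two topes. Using \ref{l:CovectorsAndSeparationSets} together with $\xi([\O])\subseteq\xi(C)$ and $\Gamma(\O)\cup\xi([\O])\subseteq\Gamma(C)\cup\xi(C)$ (from \ref{p:PropertiesOfContinuations}), a covector $\gamma$ of support $C$ satisfies $\gamma\leq\tau$ for a tope $\tau$ exactly when $\gamma$ and $\tau$ agree on $\Gamma(C)\cap\Gamma(\O)$, and then automatically $\gamma\lessdot\tau$. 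Writing $\bar C=C\restd$ for the corresponding coatom of $\overline\Phi$ (the coatoms correspond, as $\overline\Phi$ is generated by the coatoms of $\Phi$), \ref{p:RestrictionToUnderlyingOrientedMatroid} gives $\Gamma|_{\Gamma(\O)}(\bar C)=\Gamma(\O)\cap\Gamma(C)$, so the very same agreement condition governs $\bar\gamma\leq\bar\tau$ in $\UOM$. Since $\rho$ merely reads off values on $\Gamma(\O)\supseteq\Gamma(C)\cap\Gamma(\O)$, this shows that $\rho$ carries the two topes lying over $\sigma$ bijectively onto the two topes lying over $\rho(\sigma)$. The forward implication follows at once: if $\tau_1,\tau_2$ are adjacent via $\sigma$, then $\rho(\sigma)$ is a subtope of $\UOM$ covered by both $\rho(\tau_1)$ and $\rho(\tau_2)$.

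The crux is the reverse implication, which needs a lifting lemma: every subtope $\bar\sigma$ of $\UOM$ (support $\bar C$) equals $\rho(\sigma)$ for some subtope $\sigma$ of $\OIG$ of support $C$. The difficulty is that $\rho$ is far from injective on subtopes (the values on $\Gamma(C)\setminus\Gamma(\O)$ are invisible to $\rho$), and that I must produce an \emph{honest} element of $\OIG$, not merely some covector of the right support. The trick I would use is to choose any $\beta\in\OIG$ with $\rho(\beta)=\bar\sigma$ (surjectivity of $\rho$) and any $\alpha_C\in\OIG$ with $\supp(\alpha_C)=C$ (axiom (OG1)), and to set $\sigma=\beta\circ\alpha_C\in\OIG$ (axiom (OG3)). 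Then $\rho(\sigma)=\bar\sigma\circ\rho(\alpha_C)=\bar\sigma$, because $\rho(\alpha_C)$ has the same support $\bar C$ as $\bar\sigma$ and a product of covectors with equal support reproduces the first factor. Moreover $\supp(\sigma)=\supp(\beta)\vee C\geq C$, and since $C$ is a coatom this support is either $C$ or $[\O]$; it cannot be $[\O]$, for then $\rho(\sigma)$ would be a tope instead of a covector of support $\bar C$. Hence $\supp(\sigma)=C$, and being a corank-$1$ element of the graded poset $\OIG$ it is covered by a tope, so $\sigma$ is the required subtope. (If one prefers a concrete picture of the two topes over $\sigma$, \ref{p:Rank1OIGs} applied to the rank-$1$ localization at $C$ supplies it.)

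Finally I would assemble the pieces for the reverse implication. Given the lift $\sigma$ of $\bar\sigma$, the two-topes bijection established above shows that the two topes over $\sigma$ map onto the two topes over $\bar\sigma$, namely $\rho(\tau_1)$ and $\rho(\tau_2)$; by the tope bijection these two topes over $\sigma$ are exactly $\tau_1$ and $\tau_2$, so $\tau_1,\tau_2$ are adjacent in $\OIG$. Combined with the forward implication, this proves that $\rho$ is an isomorphism of tope graphs $\TG(\OIG)\cong\TG(\UOM)$. I expect the lifting lemma to be the only substantive step, the rest being bookkeeping with the definitions of $\leq$, $\circ$, and $\rho$ already laid down in the earlier propositions.
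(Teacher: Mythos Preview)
Your proof is correct and follows the same overall architecture as the paper's: restrict to $\Gamma(\O)$, show the induced map is a bijection on topes, and then check both directions of edge preservation. The tope bijection and the forward implication are handled in essentially the same way in both arguments.

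The genuine difference is in the reverse implication. The paper picks an arbitrary $\gamma\in\OIG$ with $\res_{\Gamma(\O)}(\gamma)$ equal to the given common subtope, proves $\gamma<\alpha$ and $\gamma<\beta$ via $\gamma\circ\alpha=\alpha$, and then argues that $\supp(\gamma)$ must be a coatom of $\Phi$ by claiming that corank $\geq 2$ would force $C\leq[\{x,y\}]$ for some distinct $x,y\in\Gamma(\O)$. That last step is delicate: an arbitrary preimage of a subtope need not have coatom support (consider the antimatroid on $\{a,b\}$ with $\F=\{\O,\{a\},\{a,b\}\}$, where $\gamma=\bottom$ restricts to the unique subtope of $\UOM$ but has corank $2$). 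Your approach sidesteps this entirely by \emph{constructing} the lift: taking any $\beta$ with $\rho(\beta)=\bar\sigma$ and any $\alpha_C$ with $\supp(\alpha_C)=C$, and setting $\sigma=\beta\circ\alpha_C$. The product forces $\supp(\sigma)\geq C$, and the coatom dichotomy plus the tope bijection pins it to $C$. This is the right idea, and it makes the lifting step transparent.

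Two small points worth tightening. First, your parenthetical ``the coatoms correspond, as $\overline\Phi$ is generated by the coatoms of $\Phi$'' deserves one more sentence: every coatom of $\Phi$ is $[\{x\}]$ for some $x\in\Gamma(\O)$, and the map $i\colon\overline\Phi\to\Phi$ sending $[\{x\}]|_{\Gamma(\O)}$ to $[\{x\}]$ is an injective poset map whose image contains all coatoms; this is what gives the bijection you need. Second, in the final assembly you implicitly use that a subtope of $\UOM$ is covered by exactly two topes, so that the two topes over $\bar\sigma$ are precisely $\rho(\tau_1),\rho(\tau_2)$; this follows from thinness of $\widehat\UOM$ (or \ref{p:TwoIntervals} applied to $\UOM$), and is worth stating.
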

\begin{proof}
First we will show that topes of $\OIG$ are in one-to-one
correspondence with the topes of $\UOM$. 
Suppose $\alpha$ is a tope in $\OIG$. Then
$\supp(\alpha)=[\O]=\{\O\}$, and so
$\supp|_{\Gamma(\O)}(\res_{\Gamma(\O)}(\alpha))=\{\O\}=\hat1\in\Phi\restd$.
Thus, $\res_{\Gamma(\O)}(\alpha)$ is a tope of $\UOM$.

Conversely, suppose $\res_{\Gamma(\O)}(\alpha)$ is a tope of $\UOM$. 
Then $\supp|_{\Gamma(\O)}(\res_{\Gamma(\O)}(\alpha))
 =[\O]|_{\Gamma(\O)} =\{\O\}$.
Let $A=\supp(\alpha)$. Then $A|_{\Gamma(\O)}=\{\O\}$, so $\O$ is
maximal among the feasible sets contained in $\Gamma(\O)\cap\xi(A)$.
This implies that $\Gamma(\O)\cap\xi(A)=\O$ (if
$x\in\Gamma(\O)\cap\xi(A)$, then $\{x\}\in\F$ because
$x\in\Gamma(\O)$, contradicting that maximality of $\O$). 
If $A\neq[\O]$, then $A\leq[\{y\}]$ for some $y\in\Gamma(\O)$. 
Hence, $y\in\Gamma(\O)\cap\xi(A)$, contradicting that 
$\Gamma(\O)\cap\xi(A)\neq\O$. Thus,
$A=[\O]$. So $\alpha$ is a tope of $\OIG$. 

Let $\alpha$ and $\beta$ be topes in $\OIG$ and suppose
$\res_{\Gamma(\O)}(\alpha)=\res_{\Gamma(\O)}(\beta)$.
We show that $\alpha=\beta$ by showing that they agree on
$\Gamma(\supp(\alpha))=\Gamma(\supp(\beta))=\Gamma(\O)$.
Since $\supp(\alpha)=\supp(\beta)=[\O]$, we have
$\Gamma(\O)=\Gamma|_{\Gamma(\O)}(\O)
=\Gamma|_{\Gamma(\O)}([\O]|_{\Gamma(\O)})$. Hence,
$\res_{\Gamma(\O)}(\alpha)(w)=\alpha(w)$ 
and 
$\res_{\Gamma(\O)}(\beta)(w)=\beta(w)$ 
for all $w\in\Gamma(\O)$. It follows that $\alpha(w)=\beta(w)$
for all $w\in\Gamma(\O)$.
This establishes the one-to-one correspondence.

Suppose $\alpha,\beta\in\OIG$ are two adjacent topes and let
$\gamma\in\OIG$ with $\gamma\lessdot\alpha,\beta$. Then
$\supp(\gamma)\lessdot\supp(\alpha)=\supp(\beta)=[\O]$. Since
$\res_{\Gamma(\O)}$ is a semigroup morphism, it follows that
$\res_{\Gamma(\O)}(\gamma)\leq\res_{\Gamma(\O)}(\alpha)$. We cannot
have equality since this would imply that both are topes of
$\OIG\restd$, contradicting that $\gamma$ is not a tope.
We have $\supp(\gamma)=[\{y\}]$ for some $y\in\Gamma(\O)$ since
all coatoms of $\Phi$ are of this form. Hence,
$\res_{\Gamma(\O)}(\gamma)=[\{y\}]\restd\lessdot[\O]\restd$.
Since $\supp\restd$ is cover-preserving, it follows that
$\res_{\Gamma(\O)}(\gamma)\lessdot\res_{\Gamma(\O)}(\alpha)$.
Similarly, 
$\res_{\Gamma(\O)}(\gamma)\lessdot\res_{\Gamma(\O)}(\beta)$.
So 
$\res_{\Gamma(\O)}(\alpha)$ and $\res_{\Gamma(\O)}(\beta)$
are adjacent topes.

Let $\res_{\Gamma(\O)}(\alpha),\res_{\Gamma(\O)}(\beta)\in\OIG\restd$
be adjacent topes and let $\res_{\Gamma(\O)}(\gamma)\in\OIG\restd$
with $\res_{\Gamma(\O)}(\gamma)\in\OIG$ with
$\res_{\Gamma(\O)}(\gamma)
\lessdot\res_{\Gamma(\O)}(\alpha),\res_{\Gamma(\O)}(\beta)$. Since
$\res_{\Gamma(\O)}(\gamma\circ\alpha)
=\res_{\Gamma(\O)}(\gamma)\circ\res_{\Gamma(\O)}(\alpha)
=\res_{\Gamma(\O)}(\alpha)$ and since $\gamma\circ\alpha$ and $\alpha$
are both topes, we have $\alpha=\gamma\circ\alpha$. So
$\gamma<\alpha$. To show that $\gamma\lessdot\alpha$, it suffices to
show that $\supp(\gamma)\lessdot[\O]$. Let $C=\supp(\gamma)$. If $C$
is not covered by $[\O]$, then $C\leq[\{x,y\}]$ for some
$x,y\in\Gamma(\O)$, $x\neq y$. Thus,
$\{x,y\}\sseq\Gamma(\O)\cap\xi(C)$. Let $Y\supseteq\{x,y\}$ be maximal
among the feasible sets contained in $\xi(C)\cap\Gamma(\O)$. By
definition, $\supp\restd(\res_{\Gamma(\O)}(\gamma))=C\restd$ is the
flat containing $Y$. Since $|Y|>2$, it follows that
$\supp\restd(\res_{\Gamma(\O)}(\gamma))$ is not a coatom of
$\Phi\restd$, contradicting that it is. Hence, $\gamma\lessdot\alpha$.
Similarly, $\gamma\lessdot\beta$. Therefore, $\alpha$ and $\beta$ are
adjacent topes.
\end{proof}

\section{CW-spheres from oriented interval greedoids}
\subsection{CW-spheres}
The Sphericity Theorem is an important result for oriented matroids 
which asserts that there is a certain
regular CW-sphere associated to any oriented matroid, whose cells correspond
to the covectors of the oriented matroid.  It is originally due to
Folkman and Lawrence \cite{FolkmanLawrence1978}; see also \cite[Theorem 4.3.3]{OrientedMatroids1993}.  
In this section and the next, we will prove
the corresponding result for oriented interval greedoids.  

We recall some topological definitions, following \cite[Section~4.7]{OrientedMatroids1993}.

A \defn{ball} in a topological space homeomorphic to the usual 
$d$-dimensional ball, for some nonnegative integer $d$.  

A \defn{regular cell complex} $\Delta$ is a finite set of balls in a
Hausdorff topological space $|\Delta|=\bigcup_{\sigma\in\Delta} \sigma$
with the properties that:
\begin{itemize}
\item The interiors of the balls $\sigma\in\Delta$ partition 
$|\Delta|$.  
\item For each $\sigma\in\Delta$, the boundary of $\sigma$ is the union
of some elements $\tau\in\Delta$.
\end{itemize}
This definition of a regular cell complex is (non-trivially) equivalent
to the usual definition of a regular CW-complex.  (See \cite
[Section 4.7]{OrientedMatroids1993}.)

A cell complex $\Delta$ is called a \defn{regular CW-sphere}
if its geometric realization
$|\Delta|$ is homeomorphic to a sphere.  

The \defn{face poset} of a cell complex is the poset structure on the 
cells of $\Delta$, ordered by containment.  The \defn{augmented face poset}
of a cell complex is the face poset with a maximal element $\hone$ adjoined. 

We can now state our main theorem for this section more precisely.

\begin{Theorem} \label{sphere}
For $(E,\F,\OIG)$ an oriented interval greedoid, 
$\hOIG$ is isomorphic to the augmented face poset of a regular CW-sphere.
\end{Theorem}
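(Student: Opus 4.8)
The plan is to reduce the theorem to a short list of purely order-theoretic conditions on $\hOIG$ that are known to characterize augmented face posets of regular CW-spheres, and then to verify these using the structural results already assembled. Following \cite[\S4.7]{OrientedMatroids1993}, it suffices to show that $\hOIG$ is a graded poset with $\hat0$ and $\hat1$, that it is \emph{thin} (every length-two interval has exactly four elements), and that it is \emph{shellable} (the order complex of its proper part admits a shelling); a thin, shellable poset of this kind is the augmented face poset of a regular CW-sphere. The first two conditions are already available: $\OIG$ is graded by \ref{p:SupportLatticeOIG}, its unique minimal element is $\bottom$ by \ref{l:bottom}, so $\hat0=\bottom$, and adjoining $\hat1$ above the (equirank) topes yields a graded poset with top and bottom; thinness is exactly \ref{p:TwoIntervals}. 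Thus the whole content of the theorem is concentrated in the shellability of $\hOIG$.

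To prove shellability I would induct on the rank $n$ of $\OIG$, simultaneously establishing two statements: (S) $\hOIG$ is the augmented face poset of a regular CW $(n-1)$-sphere, and (B) for every tope $T$ the lower interval $[\bottom,T]$ is the face poset of a regular CW $(n-1)$-ball. The base cases $n\le 2$ are supplied by \ref{p:Rank1OIGs} and \ref{l:Rank2OIGs}: in rank $1$ the poset $\hOIG$ is the augmented face poset of $S^0$, and in rank $2$ it is either the big face lattice of a rank-$2$ oriented matroid (a cycle, covered by oriented matroid theory) or the explicit five-element poset, which one checks directly to be the augmented face poset of the $1$-sphere built from two vertices and two edges. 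The two recursive inputs are the restriction and contraction isomorphisms. For $\alpha$ with $\supp(\alpha)=[X]$ and $X\neq\O$, \ref{p:RestrictionSemigroup} gives a poset isomorphism $[\alpha,\hat1]_{\hOIG}\cong\widehat{\OIG|_{\xi(X)}}$, and since $\OIG|_{\xi(X)}$ is an oriented interval greedoid of strictly smaller rank (\ref{ss:RestrictionToXi}), induction (S) makes every proper upper interval of $\hOIG$ an augmented face poset of a sphere. Dually, for a subtope $\sigma$ with $\supp(\sigma)=[\{y\}]$, the contraction isomorphism \ref{p:ContractionSemigroup}, together with \ref{p:PropertiesOfContraction}, identifies $[\bottom,\sigma]$ with the lower interval to a tope in $\OIG/\{y\}$, an oriented interval greedoid of rank $n-1$, so induction (B) makes each such interval a ball.

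The ordering of the coatoms of $\hOIG$, that is, of the topes of $\OIG$, is where the underlying oriented matroid enters. By the isomorphism between the tope graph of $\OIG$ and that of its underlying oriented matroid $\UOM$ established in the preceding subsection, the topes of $\OIG$ and their adjacencies coincide with those of $\UOM$; since the big face lattice of $\UOM$ is shellable by oriented matroid theory \cite[\S4.3]{OrientedMatroids1993}, a line-shelling of $\UOM$ furnishes a linear order of the topes of $\OIG$. I would take this as the coatom order of $\hOIG$ and verify that it is a \emph{recursive coatom ordering} in the sense of Bj\"orner and Wachs: the requirement on the lower intervals $[\bottom,T]$ is supplied by (B), while the intersection condition between successive topes is checked using thinness (\ref{p:TwoIntervals}) together with the fact, from the previous paragraph, that the relevant upper intervals are augmented spheres. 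Since the existence of a recursive coatom ordering is equivalent to shellability, this closes the induction.

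The main obstacle is precisely this last verification: that the tope order imported from $\UOM$ is compatible with the finer cell structure of $\OIG$, i.e.\ that it is genuinely a recursive coatom ordering of $\hOIG$. The subtlety is that $\OIG$ has strictly more low-rank cells than $\UOM$, because $\Phi$ is in general larger than the geometric lattice of $\UOM$, so the shelling cannot be transported cell by cell; one must show that the extra cells, governed by restriction and contraction, assemble compatibly along the boundaries prescribed by the oriented-matroid tope order. Establishing this compatibility, equivalently that the simultaneous induction (S)--(B) closes up, is the heart of the argument, and it is there that thinness, the restriction isomorphism, and the contraction isomorphism must be combined.
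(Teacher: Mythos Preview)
Your overall strategy matches the paper's: reduce to thinness (\ref{p:TwoIntervals}) plus a recursive coatom ordering, then invoke Bj\"orner's characterization. The paper does exactly this, with the tope order coming from the underlying oriented matroid via the tope poset $\pt(\OIG,\alpha)$.

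The gap is in your inductive framework. Your hypothesis (B), that $[\bottom,T]$ is the face poset of a ball, is too weak to feed into condition (i) of a recursive coatom ordering. Condition (i) requires not just that $[\bottom,T]$ admit \emph{some} recursive coatom ordering, but that it admit one in which the coatoms lying below earlier topes come first. Knowing $[\bottom,T]$ is a ball gives you neither this specific ordering nor a way to build it; and ``being a ball'' is not obviously inherited by restriction or contraction in a way that tracks this extra constraint. Similarly, your one-line appeal to thinness and upper-interval spheres for condition (ii) is not enough: the paper needs the nontrivial fact (imported from oriented matroid theory) that the topes above a fixed covector form an interval in $\pt(\UOM,\alpha)$, and then constructs an explicit common coatom via a product $\epsilon\circ\phi$.

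The paper resolves this by strengthening the induction to be directly about orderings rather than about topological type: it proves simultaneously (1) for any tope $\alpha$ and any maximal chain $\overline\beta$ in $\pt(\OIG,\alpha)$, every coatom order on $[\bottom,\alpha]$ \emph{compatible with $\overline\beta$} is a recursive coatom order, and (2) every linear extension of $\pt(\OIG,\alpha)$ is a recursive coatom ordering for $\hOIG$. Statement (1) is exactly the strengthened form of your (B) that makes condition (i) verifiable at the next stage, and the notion of ``compatible with $\overline\beta$'' (together with an auxiliary notion of a linear extension of $\pt(\OIG_i,\gamma_i)$ \emph{adapted to} $\alpha$) is where the work happens --- it encodes the required ``earlier coatoms first'' property. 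You correctly flag this verification as the heart of the matter, but your (S)--(B) setup as written does not close up; replacing (B) by a statement about specific recursive coatom orderings on $[\bottom,T]$ is essentially what the paper does.
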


The \defn{order complex} of a bounded poset $P$ 
is the simplicial complex consisting
of chains in $P\setminus \{\hzero,\hone\}$.
Taking a barycentric subdivision of the CW-sphere in the previous 
theorem, we obtain the following.

\begin{Corollary}\label{spherecor}
The order complex of $\hOIG$ is a simplicial sphere.  
\end{Corollary}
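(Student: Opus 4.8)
The plan is to derive the corollary immediately from \ref{sphere} together with the standard fact that the order complex of the face poset of a regular cell complex realizes its barycentric subdivision. First I would apply \ref{sphere} to fix a regular CW-sphere $\Delta$ and a poset isomorphism $\hOIG \cong \hat{\mathcal F}(\Delta)$, where $\hat{\mathcal F}(\Delta)$ is the augmented face poset of $\Delta$. Under this isomorphism the adjoined top $\hone$ corresponds to no cell, so that the nonempty cells of $\Delta$ are indexed by the proper part of $\hOIG$. I would then recall the standard identification (cf. \cite[\S4.7]{OrientedMatroids1993}): for any regular cell complex $\Gamma$, the order complex of its face poset of nonempty cells has geometric realization homeomorphic to $|\Gamma|$, this realization being exactly the barycentric subdivision of $\Gamma$.

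The substantive step is then to match the two simplicial complexes, i.e.\ to check that the order complex of $\hOIG$ (the complex of chains in $\hOIG \setminus \{\hzero,\hone\}$) coincides with the barycentric subdivision of $\Delta$. Deleting the adjoined maximum $\hone$ recovers the face poset, and deleting the minimum $\hzero$ of $\hOIG$ removes exactly the empty cell of $\Delta$: by \ref{l:bottom} the minimum $\hzero = \bottom$ is the unique covector of support $\hzero$, and under the isomorphism it plays the role of the empty face, leaving precisely the poset of nonempty cells of $\Delta$. A dimension count confirms this bookkeeping is correct rather than off by one: since $\OIG$ is graded of rank $r$ equal to the rank of $\Phi$ by \ref{p:SupportLatticeOIG}, the longest chain in $\hOIG \setminus \{\hzero,\hone\}$ has $r$ elements, so its order complex has dimension $r-1$, matching $\dim |\Delta| = r-1$. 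Hence the order complex of $\hOIG$ is the barycentric subdivision of $\Delta$.

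Finally, since barycentric subdivision preserves homeomorphism type and $|\Delta|$ is a sphere, the order complex of $\hOIG$ is a simplicial sphere, which is the assertion of the corollary. I expect the main obstacle to be the second paragraph: one must pin down that $\bottom$ is the distinguished element corresponding to the empty cell, so that removing both $\hzero$ and $\hone$ yields the face poset of the genuine nonempty cells and the resulting order complex is the barycentric subdivision of $\Delta$ itself, rather than a cone over it or a complex of one dimension too high. The remaining topological input is a citation, and the conclusion is then formal.
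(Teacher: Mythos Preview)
Your proposal is correct and follows essentially the same approach as the paper: invoke \ref{sphere} to obtain a regular CW-sphere $\Delta$, then cite the standard fact (the paper uses \cite[Proposition~4.7.8]{OrientedMatroids1993}) that the order complex of the augmented face poset of a regular cell complex is homeomorphic to its underlying space, being its barycentric subdivision. Your additional care in identifying $\bottom$ with the empty face and verifying the dimension count is sound bookkeeping that the paper leaves implicit in the citation.
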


\begin{proof}[Proof of \ref{sphere}]
As in the proof of the Sphericity Theorem in \cite{OrientedMatroids1993}, 
the main technical tool required in the proof is the notion of 
\emph{recursive coatom ordering}.  

A graded, bounded poset $P$ is said to have a \defn{recursive coatom
ordering} if it is either of rank 1, or if there is a linear ordering on
its coatoms, $q_1,\dots,q_r$ which satisfies:

\begin{enumerate}
\item[(i)]  $[\hzero,q_i]$ admits a recursive coatom ordering in which the coatoms
of $[\hzero,q_i]$ which lie below some $q_j$ with $j<i$, come first; 
\item[(ii)]  any element lying below $q_i$ and also below some $q_j$ for $j<i$, 
necessarily lies below a coatom of $[\hzero,q_i]$ which lies below some 
$q_k$ with $k<i$.  
\end{enumerate}

This concept is dual to the condition of having a recursive atom ordering,
which goes back to \cite{BjornerWachs1983}. The concept has been extended to 
non-graded posets \cite{BjornerWachs1996}, but we shall not need that here.  

The fundamental technical result is
the following lemma, whose proof we defer to the next
section.

\begin{Lemma}\label{rcord} $\hOIG$ admits a recursive coatom ordering. 
\end{Lemma}

A poset is called \defn{thin} if all intervals of length 2 have
cardinality four.  By \ref{p:TwoIntervals}, we know that $\hOIG$ is thin.  
The following theorem completes our proof.

\begin{Theorem}[{\cite{Bjorner1984},\cite[Theorem 4.7.24]{OrientedMatroids1993}}]
$P$ is isomorphic to the face poset of a shellable regular cell
decomposition of the sphere iff $P$ is thin and admits a recursive
coatom ordering. \end{Theorem}

(We shall not discuss the significance of the ``shellable'' in the
above theorem; the interested reader is directed to \cite{OrientedMatroids1993}.)
\end{proof}

We now turn to the proof of the corollary.  

\begin{proof}[Proof of \ref{spherecor}]
The order complex of the augmented face poset of a regular cell complex
$\Delta$ is homeomorphic
to $|\Delta|$ \cite[Proposition 4.7.8]{OrientedMatroids1993}.  (In fact, 
the order complex should be thought of as the barycentric subdivision of
the regular cell complex.)  The corollary follows.  
\end{proof}

\subsection{A recursive coatom ordering for $\hat{{\OIG}}$}

This section is devoted to the proof of \ref{rcord}, which asserts
that $\hOIG$ has a recursive coatom ordering.  


If one chooses a particular tope 
$\alpha$ of $\OIG$ then there is a natural poset structure on the topes with
respect to which $\alpha$ is the minimum element and $-\alpha$ is 
the maximum element, and the Hasse diagram is (a suitable orientation of)
the tope graph.  This poset is called $\pt(\OIG,\alpha)$. (Since the topes 
of $\OIG$ are identified with the topes of $\UOM$, this follows from the
analogous statements for oriented matroids; see \cite[Section 4.2]{OrientedMatroids1993}.)

Let $\alpha$ be a tope of $\OIG$. Consider a maximal chain $\overline\beta$ 
in $\pt(\OIG,\alpha)$, say $\alpha=\beta_0<\dots<\beta_r=-\alpha$.
Choose $\gamma_i$ to be a common facet of $\beta_{i-1}$ and $\beta_i$.  
Let $G_i=\supp(\gamma_i)$.  The $G_i$ are distinct and include all the
coatoms of $\Phi$.
Thus, $\overline\beta$   induces a linear order on the 
coatoms of $\Phi$.  However (unlike the situation for oriented
matroids) this does not immediately yield a linear order on the coatoms of 
$[\bottom,\alpha]$, because there may be more than one coatom with the same
support.  

For $1\leq i \leq r$, let $\OIG_i$ be the oriented matroid obtained by 
contracting $\OIG$ to $G_i$.  Consider the tope poset $\pt(\OIG_i,\gamma_i)$.

Let $\Delta$ be the set of facets of $\alpha$.  
Let $\Delta_i$ be the set of facets of $\alpha$ whose support is 
$G_i$.  (This set could be empty.)

A linear extension of $\pt(\OIG_i,\gamma_i)$ will be called 
\defn{adapted to} $\alpha$ if it contains in order:
\begin{enumerate}
\item first, the topes of $\OIG_i$ that lie on the same side as $\gamma_i$
of some $G_j$ for $j<i$,
\item then, the topes that are facets of $\alpha$,
\item finally, the remaining topes of $\OIG_i$. 
\end{enumerate}

We will need the following lemma:

\begin{Lemma} $\pt(\OIG_i,\gamma_i)$ admits a linear extension 
adapted to $\alpha$.  
\end{Lemma}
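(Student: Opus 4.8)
The plan is to work entirely inside the oriented matroid $\OIG_i$, whose tope poset $\pt(\OIG_i,\gamma_i)$ has minimum $\gamma_i$ and is ordered by $\tau\le\tau'$ iff $\SeparationSet(\gamma_i,\tau)\subseteq\SeparationSet(\gamma_i,\tau')$. Write $B_1,B_2,B_3$ for the three prescribed blocks. A linear extension listing $B_1$, then $B_2$, then $B_3$ exists precisely when $B_1$ is an order ideal and $B_1\cup B_2$ is an order ideal (equivalently, the complement $B_3$ is an order filter); so the lemma reduces to verifying these two closure properties, after which any linear extension refining the block partition works.

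First I would dispose of $B_1$. Since $\tau\mapsto\SeparationSet(\gamma_i,\tau)$ is order preserving, for each fixed $G_j$ the set of topes on the same side as $\gamma_i$ of $G_j$ (i.e. those not separated from $\gamma_i$ by the element of $\OIG_i$ determined by $G_j$) is an order ideal, and dually the set of topes separated from $\gamma_i$ there is an order filter. As $B_1$ is the union over $j<i$ of the former sets, it is an order ideal, and the set $U$ of topes lying on the opposite side from $\gamma_i$ of every $G_j$ with $j<i$ — that is, the complement of $B_1$ — is an intersection of filters, hence an order filter.

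The heart of the argument is to show $B_3=U\setminus B_2$ is an order filter. Here I would identify $B_2$ concretely: a tope $\tau$ of $\OIG_i$ corresponds to a subtope $\delta$ of $\OIG$ with $\supp(\delta)=G_i$, and by \ref{l:CovectorsAndSeparationSets} such a $\delta$ is a facet of $\alpha$ iff $\SeparationSet(\delta,\alpha)=\O$, i.e. iff $\tau$ agrees with $\alpha$ on $F:=\Gamma(G_i)\cap\Gamma(\O)$, the only coordinates on which $\delta$ and $\alpha$ both carry signs. Split $F=F^+\sqcup F^-$ according to whether $\gamma_i$ agrees with $\alpha$ ($F^+$) or is opposite to $\alpha$ ($F^-=F\cap\SeparationSet(\gamma_i,\alpha)$). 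On $F^+$ the facet condition ``$\tau(e)=\gamma_i(e)$'' is a same-side (order-ideal) condition, while on $F^-$ it is an opposite-side (order-filter) condition. The key claim is that every $\tau\in U$ already satisfies the $F^-$ conditions, i.e. $F^-\subseteq\SeparationSet(\gamma_i,\tau)$ for all $\tau\in U$. Granting this, on $U$ a tope fails to be a facet of $\alpha$ exactly when it lies on the opposite side from $\gamma_i$ of some $e\in F^+$; hence $B_3=U\cap\bigcup_{e\in F^+}\{\tau:e\in\SeparationSet(\gamma_i,\tau)\}$ is an intersection of a filter with a union of filters, so it is a filter, completing the proof.

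Thus the one substantial point — the main obstacle — is the key claim $F^-\subseteq\SeparationSet(\gamma_i,\tau)$ for $\tau\in U$. To prove it I would use the chain: $\gamma_i$ agrees with $\beta_{i-1}$ on $F$ (they share the facet $\gamma_i$, so agree on $\Gamma(G_i)\cap\Gamma(\O)$), and $\beta_{i-1}$ is separated from $\alpha$ by exactly the walls $G_1,\dots,G_{i-1}$, so an element of $F^-$ is a coordinate whose sign was flipped relative to $\alpha$ in crossing those walls. Translating this wall-crossing data from $\OIG$ into the contraction $\OIG_i$ via \ref{p:PropertiesOfContraction} and \ref{p:PropertiesOfContinuations}, one shows that lying on the opposite side from $\gamma_i$ of all of $G_1,\dots,G_{i-1}$ forces the sign opposite to $\gamma_i$ on each such coordinate, i.e. forces agreement with $\alpha$ there. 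Making this precise is delicate because, unlike for oriented matroids, several facets of $\alpha$ may share the support $G_i$, so a single coordinate of $F^-$ may be governed by more than one wall; the fact that $\OIG_i$ is genuinely an oriented matroid is exactly what lets us carry out the standard tope-poset sign/side bookkeeping once the data has been transported into it.
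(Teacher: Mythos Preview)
Your reduction is correct and matches the paper: a linear extension with the three blocks exists iff $B_1$ and $B_1\cup B_2$ are order ideals, and your argument that $B_1$ is an ideal is the same as the paper's.

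Where you diverge is in handling $B_1\cup B_2$. You try to characterize $B_3=U\setminus B_2$ explicitly as a filter via the $F^+/F^-$ decomposition, which forces you to prove the universal ``key claim'' that every $\tau\in U$ already agrees with $\alpha$ on all of $F^-$. You acknowledge that making this precise is delicate and do not carry it out; that is a genuine gap. The difficulty is real: once you pass to $\OIG_i$, several of the walls $G_j$ ($j<i$) may collapse to the same coatom $G_j\wedge G_i$, and ``opposite side of $G_j$'' only constrains $\tau$ on the contraction's cocircuit, not element-by-element on $S_j\cap\Gamma(G_i)$. So the step ``$\tau\in U$ forces $\tau(e)=-\gamma_i(e)$ for each individual $e\in F^-$'' is exactly the place where your sketch needs work, and it is not obvious that the tools you cite suffice.

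The paper avoids this entirely by arguing the contrapositive locally rather than globally. It takes $\epsilon<\delta$ in $\pt(\OIG_i,\gamma_i)$ with $\delta$ a facet of $\alpha$ and $\epsilon$ not, picks $e\in\Gamma(G_i)\cap\Gamma(\O)$ with $\epsilon(e)=-\alpha(e)=-\delta(e)$, and uses $\SeparationSet(\gamma_i,\epsilon)\subseteq\SeparationSet(\gamma_i,\delta)$ to force $\epsilon(e)=\gamma_i(e)$. Since $\gamma_i$ agrees with $\beta_{i-1}$ on $e$ and $\beta_{i-1}$ differs from $\alpha$ only across $G_1,\dots,G_{i-1}$, this puts $\epsilon$ on the same side as $\gamma_i$ of some $G_j$ with $j<i$, i.e.\ $\epsilon\in B_1$. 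The point is that having the specific $\delta\in B_2$ above $\epsilon$ gives exactly the comparison needed; your approach throws this information away and then has to recover it as a statement about all of $U$, which is strictly harder.
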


\begin{proof} It is certainly possible to define a linear extension
of $\pt(\OIG_i,\gamma_i)$ 
which begins with the elements (1) above, since they
form a lower order ideal in $\pt(\OIG_i,\gamma_i)$.  In order to be able
to construct a linear extension such that the next elements are those
from (2) above, we need to show that any tope below a tope from (2) not 
in (2), is contained in (1).  If $\delta$ is a tope of $\Delta_i$ which
is a facet of $\alpha$, and $\epsilon$ is a tope lying below
$\delta$ which is not a facet of $\alpha$, it must be separated from
$\alpha$ by some $G_j$ with $j<i$, which shows that $\epsilon$ is in (1).  
%
%
%
Thus the linear extension, whose beginning was
already described, can be continued with the set of facets of $\alpha$,
followed by the remaining topes of $\OIG_i$.  
\end{proof}

A linear order on $\Delta$ will be said to be \defn{compatible} with 
$\overline\beta$ if 
\begin{enumerate}
\item the elements of $\Delta$ are arranged first of all
in increasing order by support (so $\Delta_1$ comes first, then
$\Delta_2$, etc.), 
\item the elements of $\Delta_i$ are arranged according to a linear
order on $\pt(\OIG_i,\gamma_i)$ which is adapted to $\alpha$.  
\end{enumerate}

Now we will prove the following:

\begin{Proposition} \begin{enumerate}

\item For a tope $\alpha$ in $\OIG$, and a maximal chain
$\overline\beta$ in $\pt(\OIG,\alpha)$,  
any order on the coatoms of $[\bottom,\alpha]$ 
compatible with $\overline\beta$ is a recursive coatom order.

\item For a tope $\alpha$ in $\OIG$, any linear extension of 
$\pt(\OIG,\alpha)$ is a recursive coatom ordering for $\hOIG$. 
\end{enumerate}
\end{Proposition}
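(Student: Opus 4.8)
The plan is to prove the two statements simultaneously by induction on the rank $n$ of $\OIG$, exploiting the fact that condition (i) in the definition of a recursive coatom ordering is itself recursive. Verifying that a linear extension of $\pt(\OIG,\alpha)$ is a recursive coatom ordering of $\hOIG$ (part (2)) requires, for each tope $q_i$, that the interval $[\bottom,q_i]$ carry a recursive coatom ordering of a prescribed kind, which is exactly part (1) applied to $q_i$ at the \emph{same} rank. Part (1), in turn, feeds back to lower rank: the coatoms of $[\bottom,\alpha]$ are the facets $\Delta=\bigcup_i\Delta_i$ of $\alpha$ (each $\delta\lessdot\alpha$ has support a coatom $G_i$ of $\Phi$ since $\supp$ is cover-preserving, \ref{p:SupportLatticeOIG}), and for $\delta\in\Delta_i$ the interval $[\bottom,\delta]$ lies inside $\OIG_{\leq G_i}$, which by \ref{p:ContractionSemigroup} is isomorphic to the contraction $\OIG_i=\OIG/X$ with $[X]=G_i$, of rank $n-1$ by \ref{p:PropertiesOfContraction}. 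Under this isomorphism $\delta$ maps to a tope of $\OIG_i$ and $[\bottom,\delta]$ to the corresponding lower interval, so part (1) at rank $n-1$ supplies its recursive coatom ordering. The base cases $n\leq 2$ are read off from \ref{p:Rank1OIGs} and \ref{l:Rank2OIGs}.

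For part (1) at rank $n$ I would fix the compatible order on $\Delta$ determined by $\overline\beta$ and check (i) and (ii). Condition (i) is where the notion of a linear extension adapted to $\alpha$ is used: by the previous lemma such an extension of $\pt(\OIG_i,\gamma_i)$ exists, and by induction (part (1) at rank $n-1$, transported through the contraction isomorphism) it refines to a recursive coatom ordering of $[\bottom,\delta]$ whose initial segment consists precisely of the facets of $\delta$ lying below an earlier coatom of $[\bottom,\alpha]$; the three-block structure of an adapted extension is exactly what matches the ``earlier coatoms first'' requirement. Condition (ii) is the overlap condition: given $\sigma\leq\delta$ with $\sigma$ also below an earlier coatom of $[\bottom,\alpha]$, I must produce a facet of $\delta$ that lies above $\sigma$ and below a still-earlier coatom. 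Here I would invoke thinness (\ref{p:TwoIntervals}) to control the length-two intervals, reduce the search to adjacent topes of $\OIG_i$, and then use the adapted ordering to locate the required facet.

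For part (2) at rank $n$, let $q_1=\alpha,\dots,q_N$ be the given linear extension of $\pt(\OIG,\alpha)$; these are the coatoms of $\hOIG$. For condition (i) I would apply part (1) at rank $n$ (just established) to $q_i$, choosing the maximal chain in $\pt(\OIG,q_i)$ so that the compatible order on the facets of $q_i$ places first exactly those facets shared with a tope $q_j$, $j<i$; reorienting the tope poset about $q_i$ turns ``earlier in the linear extension'' into ``lower in $\pt(\OIG,q_i)$'', which is why a compatible order furnishes the initial segment demanded by (i). Condition (ii) reads: if a covector $\sigma$ lies below both $q_i$ and some $q_j$ with $j<i$, there must be a facet of $q_i$ above $\sigma$ shared with an earlier tope.

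The main obstacle, in both parts, is condition (ii): the existence of the intermediate facet. The tool I expect to rely on is the identification, established in the tope-graph lemma, of the topes and tope poset of $\OIG$ with those of the underlying oriented matroid $\UOM$, together with thinness from \ref{p:TwoIntervals}. Concretely, two topes above a common face $\sigma$ can be joined by a path in the tope graph staying above $\sigma$, and the first edge out of $q_i$ along a path toward $q_j$ produces a facet above $\sigma$ shared with a tope that is earlier in the linear extension. Transporting this oriented-matroid fact across the identification, and checking that it survives the contraction to $\OIG_i$ used in part (1), is the step that requires the most care.
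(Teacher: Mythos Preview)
Your proposal is essentially correct and follows the paper's approach: simultaneous induction on rank, with (1) feeding (2) at the same rank and the contraction $\OIG_i$ supplying the lower-rank instances. A few points where the paper's execution differs from what you sketch are worth noting.

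First, in the proof of (1) the paper invokes (2) at rank $n-1$, not (1). The adapted linear extension of $\pt(\OIG_i,\gamma_i)$ is itself, by the inductive instance of (2), a recursive coatom ordering for $\hOIG_i$; the required ordering on $[\bottom,\delta]$ is then the one furnished by condition (i) of that ordering. This is cleaner than re-applying (1) to $\delta$ inside $\OIG_i$, since you would otherwise have to specify a maximal chain in $\pt(\OIG_i,\delta)$ and argue separately that its compatible orders have the right initial segment.

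Second, the heart of the proof of (1) is a statement you gesture at but do not isolate: if $X$ denotes the coatoms of $[\bottom,\alpha]$ preceding $\delta$ and $Y$ the topes of $\OIG_i$ preceding $\delta$ in the adapted extension, then the coatoms of $[\bottom,\delta]$ lying below some element of $X$ coincide with those lying below some element of $Y$. The paper proves this by passing to $\OIG_{\geq\epsilon}$ (\ref{p:RestrictionSemigroup}), reducing to rank~$2$, and invoking the classification in \ref{l:Rank2OIGs}; thinness alone is not enough. Once $\check X=\check Y$ is established, both (i) and (ii) for the ordering on $[\bottom,\alpha]$ follow directly from (i) and (ii) for the linear extension of $\pt(\OIG_i,\gamma_i)$, together with the observation that $Y$ is nonempty whenever $\delta$ is not the first coatom overall.

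Third, for (ii) in part (2) your path-in-the-tope-graph heuristic is the right intuition, and the paper makes it precise via \cite[Lemma~4.2.12]{OrientedMatroids1993}: the topes over a given covector form an interval in $\pt(\UOM,\alpha)$. One then takes $\rho$ covered by $\delta$ in that interval and manufactures the needed common facet as $\epsilon\circ\phi$ for a suitable lift $\phi$ of the subtope between $\rho$ and $\delta$ in $\UOM$; the lifting step from $\UOM$ back to $\OIG$ is where a little care is required, as you anticipated.
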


\begin{proof}
The proof will be by induction on the rank of $\OIG$.  
The base case, when the rank of $\OIG$ is 1, is trivial.  
We will assume
that (1) and (2) hold for oriented interval greedoids of rank less than $n$;
we will 
prove (1) for oriented interval greedoids of rank $n$, and then make use of 
(1) to prove (2) for 
oriented interval greedoids of rank $n$.  

\emph{Proof of (1).}  Pick a coatom order for $[\bottom,\alpha]$ which is 
compatible with $\overline\beta$.  As part of this, we are given
$\gamma_i$ a common facet of $\beta_{i-1}$ and $\beta_i$.  Let $G_i$ be
the support of $\gamma_i$.  
Let $\Delta_i$ be the coatoms of $\alpha$ with support $G_i$. 
As part of our coatom order for $[\bottom,\alpha]$, we are given a linear order
on $\Delta_i$ which is the restriction of a linear extension of  
$\pt(\OIG_i,\gamma_i)$ adapted to $\alpha$. Fix such a linear extension.

Let $\delta\in\Delta_i$ be a coatom of $[\bottom,\alpha]$.  
We must define a coatom order for $[\bottom,\delta]$.  
Using our chosen linear extension of $\pt(\OIG_i,\gamma_i)$, we can apply
(2) to $\hOIG_i$, obtaining a recursive coatom order for $[\bottom,\delta]$.  
We must show that this order satisfies the necessary conditions.  

Now, $\delta$ is a coatom of two different posets,
$[\bottom,\alpha]$ and $\hOIG_i$.  Let $X$ be the set of coatoms of 
$[\bottom,\alpha]$
which precede $\delta$ with respect to the coatom order on $[\bottom,\alpha]$,
and let $Y$ be the set of coatoms of $\hOIG_i$ which precede $\delta$
with respect to the fixed linear extension of $\pt(\OIG_i,\gamma_i)$.  
Let $\check X$ be the coatoms of $[\bottom,\delta]$ lying below an element 
of $X$, and let $\check Y$ be the coatoms of $[\bottom,\delta]$ lying
below an element of $Y$.  We will now show that $\check X$ and $\check Y$
coincide.

Let $\epsilon$ be a coatom of $[\bottom,\delta]$.  
By \ref{p:RestrictionSemigroup}, $\OIG_{\geq \epsilon}$ is itself an 
oriented greedoid, so we may assume that $\epsilon = \bottom$,
or, in other words, that $\OIG$ is rank 2.    
By \ref{l:Rank2OIGs}, we know that $\OIG$ is either isomorphic to a 
rank 2 oriented matroid, or else it is of the special form described in
that Proposition.  In either case, it is straightforward to check that
$\epsilon \in \check X$ iff $\epsilon \in \check Y$.  

Since we know property (i) of recursive coatom orders holds for our
fixed linear extension of $\pt(\OIG_i,\gamma_i)$, property
(i) also follows for our coatom ordering on $[\bottom,\alpha]$.  

Next, we check property (ii).  Let $\epsilon \in [\bottom,\delta]$, which
 lies under some element $\zeta \in X$.  
We must show that it also lies below
some element of $\check X$.  

Again, by restricting, we may assume that $\epsilon=\bottom$.  The fact 
that that $\epsilon$ lies under an element of $X$ implies, in particular,
 that $X$ is non-empty, and thus that $\delta$
is not the first coatom in our coatom order on $[\bottom,\alpha]$.  
We will now show that $Y$ is non-empty.  If $\delta$ is not the first coatom
with support $G_i$ in our recursive coatom order on $[\bottom,\alpha]$ then
this is clear.  So suppose that $\delta$ is the first coatom with support
$G_i$ in our recursive coatom order.  Since $\delta$ is not the first
coatom overall, it must be that $i>1$.  Therefore $\gamma_i$ is not
a facet of $\alpha$, so $\gamma_i\in Y$.  

Now, since we have assumed that $\epsilon=\bottom$, the fact that $Y$
is non-empty means that there are elements of $Y$ lying over $\epsilon$.  
Therefore, by property (ii) for the fixed linear extension of 
$\pt(\OIG_i,\gamma_i)$, we know that there are elements of $\check Y$ 
lying over $\epsilon$.  Since $\check Y=\check X$, we are done.  

\emph{Proof of (2), assuming (1).}  
Pick a linear extension of $\pt(\OIG,\alpha)$.  
For each coatom $\delta$ of $\hOIG$, pick a maximal chain $\overline\beta$ in 
$\pt(\OIG,\delta)$ which includes $\alpha$.  Then we claim that any 
linear order on the coatoms of $[\bottom,\delta]$,
compatible with $\overline\beta$, satisfies the necessary conditions.   
First of all, it is a recursive coatom order by (1).  

Second, define $Q_\delta$ to be the set of coatoms of $[\bottom,\delta]$  
 which also lie under some $\xi$ preceding $\delta$ in the linear extension
of $\pt(\OIG,\alpha)$.  
The coatoms of $Q_\delta$ precede the other
coatoms of $\delta$ in any order compatible with $\overline\beta$.  
(In fact, for this, it suffices to know that an order compatible with
$\overline\beta$ agrees with the order induced by $\overline\beta$ on 
the coatoms of $[\hat 0,\supp(\delta)]$.)  This proves (i).

Thirdly, we check that 
$$\bigcup_{\zeta\in Q_\delta}[\bottom,\zeta]= [\bottom,\delta]\cap \bigcup_{\xi \textrm{ preceding } \delta} [\bottom,\xi].$$
The containment of the lefthand side in the righthandside is obvious.  
For the opposite inclusion, let $\epsilon\in [\bottom,\delta]\cap 
[\bottom,\xi]$ for some $\xi$ preceding $\delta$.  
The topes of $\OIG$ that contain $\epsilon$ are exactly the topes
of $\UOM$ that contain $\epsilon|_{\Gamma(\emptyset)}$.  By \cite[Lemma 4.2.12]{OrientedMatroids1993},
this is an interval $I$ in $\pt(\UOM,\alpha)$.  Since $\epsilon$ is contained
in some $\xi$ preceding $\delta$, we know that $\delta$ is not the minimum
element of the interval.  Let $\rho$ be covered by $\delta$ in $I$.  
Since
$\rho$ lies below $\delta$ in $\pt(\OIG,\alpha)$, it precedes $\delta$
in the linear extension of $\pt(\OIG,\alpha)$.  Since $\rho$ is in $I$, 
$\epsilon \in [\hzero,\rho]$.  Finally, since $\rho$ and $\delta$ are
adjacent topes, they have a common subtope  $\sigma$ in $\UOM$.  Since, 
in $\hOIG$,  
$\rho$ and $\delta$ lie over $\epsilon|_{\Gamma(\emptyset)}$, $\sigma$ lies over $\epsilon|_{\Gamma(\emptyset)}$.  Thus $\supp(\sigma)$ lies over $\supp(\epsilon)$.

Let $\phi$ be covector of $\OIG$, such that $\phi|_{\Gamma(\emptyset)}=\sigma$.  Now consider $\epsilon\circ\phi$.  This lies over $\epsilon$, and its support
is $\supp(\epsilon)\vee\supp(\phi)=\supp(\phi)$.  Since, in
$\hOIG$,  $\phi$ and $\epsilon$ 
lie below both $\delta$ and $\rho$, the same is true of $\epsilon\circ\phi$, 
and
we are done: we can take $\epsilon\circ\phi$ as the common coatom of 
$[\bottom,\delta]$ and $[\bottom,\rho]$ lying over $\epsilon$.  
\end{proof}

\subsection{Face Enumeration}\label{subsec:face}

Here, we prove formulas counting chains in an oriented interval 
greedoid $\OIG$.  These results generalize results for oriented matroids
\cite[Proposition 4.6.2]{OrientedMatroids1993} and for oriented antimatroids \cite{BilleraHsiaoProvan2008}.  

Let $P$ be a poset.  Recall that the {\it M\"obius function} of $P$, denoted 
$\mu_P$, is the unique function from pairs $(x,y)$ with $x\leq y$ in $P$ to 
$\mathbb Z$,
such that:
\begin{itemize}
\item $\mu_P(x,x)=1$.
\item For $x<y$, $\sum_{x\leq z\leq y} \mu_P(y,z)=0$.  
\end{itemize}

\begin{Theorem}\label{t:CountingFlags}
Let $(E,\F,\OIG)$ be an oriented interval greedoid.  Let
$A_1>\dots>A_{k+1}=\hzero$ be a chain of flats in $\Phi$.  
Then:
$$|\supp^{-1}(A_1,\dots,A_{k+1})|= \prod_{i=1}^k \sum_{B\in [A_{i+1},A_i]}
|\mu_{\Phi}(B,A_i)|,$$
where $\mu_\Phi$ is the M\"obius function fo $\Phi$.
\end{Theorem}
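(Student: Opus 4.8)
The plan is to prove the formula by induction on the length $k$ of the flag, peeling off the top flat $A_1$. Here $\supp^{-1}(A_1,\dots,A_{k+1})$ denotes the set of chains $\alpha_1>\dots>\alpha_{k+1}$ in $\OIG$ with $\supp(\alpha_i)=A_i$; since $\supp$ is order-preserving (\ref{p:SupportLatticeOIG}) and $A_{k+1}=\hzero$, the bottom term is forced to be $\alpha_{k+1}=\bottom$ (\ref{l:bottom}), so the case $k=0$ gives a single chain, matching the empty product. Grouping the chains by their lower part $(\alpha_2,\dots,\alpha_{k+1})$, I would establish the recursion $|\supp^{-1}(A_1,\dots,A_{k+1})| = c_1\cdot|\supp^{-1}(A_2,\dots,A_{k+1})|$, where $c_1=\#\{\alpha\in\OIG:\alpha>\alpha_2,\ \supp(\alpha)=A_1\}$, provided $c_1$ is shown to be the same for every $\alpha_2$ with $\supp(\alpha_2)=A_2$.

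To compute $c_1$ I would pass to the restricted oriented interval greedoid $\OIG|_{\xi(X_2)}\cong\OIG_{\geq\alpha_2}$ of \ref{p:RestrictionSemigroup} (where $A_2=[X_2]$), under which $\alpha_2\mapsto\bottom$ and, by \ref{l:XiAndGammaInRestrictionToXi}, the flat $A_1\ge A_2$ is carried to itself. Thus $c_1$ equals the number of covectors of support $A_1$ in the oriented interval greedoid $\OIG|_{\xi(X_2)}$, whose lattice of flats is the interval $[A_2,\hone]$ of $\Phi$. This reduces the whole theorem to the following counting lemma, which I would isolate: in any oriented interval greedoid with lattice of flats $\Psi$, the number of covectors of a fixed support $A$ equals $\sum_{B\in[\hzero_\Psi,A]}|\mu_\Psi(B,A)|$. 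Granting the lemma and applying it to $\OIG|_{\xi(X_2)}$ with the flat $A_1$ gives
\[
c_1=\sum_{B\in[A_2,A_1]}|\mu_\Phi(B,A_1)|,
\]
which is manifestly independent of the choice of $\alpha_2$, so the induction produces the product formula.

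To prove the counting lemma, note that the covectors of support $\le A$ form $\OIG_{\le A}\cong\OIG/X$ (\ref{p:ContractionSemigroup}), an oriented interval greedoid whose lattice of flats is $\Psi/X\cong[\hzero_\Psi,A]$ (\ref{p:PropertiesOfContraction}); inside it the covectors of support exactly $A$ are precisely the topes, i.e.\ the covectors of maximal support. Hence the count is the number of topes of $\OIG/X$, which by the identification of the topes of an oriented interval greedoid with those of its underlying oriented matroid equals the number of topes of $\overline{\OIG/X}$.

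The main obstacle is reconciling the two lattices. The number of topes of an oriented matroid is the classical sum of $|\mu|$ over its \emph{geometric} lattice of flats $\overline\Psi$ (\cite{OrientedMatroids1993}), whereas the lemma sums over the whole lower-semimodular lattice $[\hzero_\Psi,A]$. To bridge this I would invoke Rota's crosscut theorem: computing $\mu_\Psi(B,A)$ in the interval $[B,A]$ from the coatom crosscut expresses it through the coatoms lying above $B$ and the meets of their subsets. This shows that $\mu_\Psi(B,A)=0$ unless $B$ is a meet of coatoms, and recall that $\overline\Psi$ is exactly the sublattice generated by the coatoms, so the nonzero terms occur only for $B\in\overline\Psi$. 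For such $B$ the crosscut expression involves only coatoms and their meets, which coincide in $\Psi$ and in the sublattice $\overline\Psi$; therefore $\mu_\Psi(B,A)=\mu_{\overline\Psi}(B,A)$ there. Consequently the sum over $[\hzero_\Psi,A]$ collapses to the sum over $\overline\Psi$ and equals the tope count, completing the lemma. Verifying that $\overline\Psi$ is meet-closed and that its meets agree with those of $\Psi$ is precisely the technical point that legitimizes summing over the full lattice.
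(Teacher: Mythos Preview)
Your proposal is correct and follows essentially the same route as the paper. The paper first isolates the tope-counting identity (its Proposition: the number of topes equals $\sum_{B\in\Phi}|\mu_\Phi(B,\hone)|$), proved by reducing to $\UOM$ and comparing $\Phi$ with $\overline\Phi$ via the Crosscut Theorem plus the meet-preservation of the inclusion $\overline\Phi\hookrightarrow\Phi$; it then peels the chain from the \emph{bottom}, counting $|\supp^{-1}(A_k)|$ as the topes of $\OIG/A_k$ and pushing the remaining flag into $\OIG|_{\xi(A_k)}$. Your argument is the same in substance: your ``counting lemma'' is the paper's Proposition after a contraction, your crosscut comparison of $\Psi$ and $\overline\Psi$ is exactly the paper's two lemmas, and you merely peel from the \emph{top} via $\OIG|_{\xi(X_2)}\cong\OIG_{\geq\alpha_2}$ instead of from the bottom. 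The only point the paper makes more explicit than you do is the verification that $\overline\Phi$ is meet-closed in $\Phi$ with agreeing meets (its \ref{l:iProperties}), which is precisely the ``technical point'' you flag at the end.
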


First, we state and prove the following special case, which generalizes
\cite[Theorem 4.6.1]{OrientedMatroids1993}.  

\begin{Proposition}
Let $(E,\F,\OIG)$ be an oriented interval greedoid.  Then the number of 
topes of $\OIG$ is:
$$\sum_{B\in \Phi} |\mu_{\Phi}(B,\hone)|.$$
\end{Proposition}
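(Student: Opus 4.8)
The plan is to transport the count to the underlying oriented matroid $\UOM=\OIG|_{\Gamma(\O)}$ and then rewrite the resulting Möbius sum, which naturally lives over the geometric lattice $\overline\Phi$ of $\UOM$, as a sum over all of $\Phi$. Recall that the topes of $\OIG$ are exactly the covectors of support $\hat1=[\O]$, and that the Tope Graph subsection establishes a bijection between the topes of $\OIG$ and the topes of $\UOM$. Hence it suffices to count the topes of $\UOM$, and for this I would invoke the corresponding result for oriented matroids, \cite[Theorem~4.6.1]{OrientedMatroids1993}, which (in the reverse-inclusion convention used here, so that $\hat1$ plays the role of the empty flat) gives
\[
\#\{\text{topes of }\UOM\}=\sum_{B\in\overline\Phi}\bigl|\mu_{\overline\Phi}(B,\hat1)\bigr|.
\]

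It then remains to prove the purely order-theoretic identity $\sum_{B\in\overline\Phi}|\mu_{\overline\Phi}(B,\hat1)|=\sum_{B\in\Phi}|\mu_{\Phi}(B,\hat1)|$, and this is the step I expect to be the \emph{crux}. The mechanism is a comparison of the two top intervals. As recorded in the Underlying Oriented Matroid subsection, $\overline\Phi$ is a geometric lattice which embeds in $\Phi$ as the sublattice generated by the coatoms; since $\Phi$ is a lattice (\ref{p:PhiIsSemimodularLattice}) this sublattice is closed under the ambient meet, so it coincides with the set of all meets of coatoms of $\Phi$ (every element of a geometric lattice being a meet of coatoms, and every coatom of $\Phi$ remaining a coatom of $\overline\Phi$). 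I would then use the standard consequence of Rota's crosscut theorem that, in any finite lattice, $\mu_\Phi(B,\hat1)=0$ unless $B$ is a meet of coatoms. This immediately kills every term of $\sum_{B\in\Phi}|\mu_\Phi(B,\hat1)|$ with $B\notin\overline\Phi$.

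For the surviving terms $B\in\overline\Phi$ I would show $\mu_\Phi(B,\hat1)=\mu_{\overline\Phi}(B,\hat1)$ by downward induction on the interval $[B,\hat1]$. Starting from $\mu_\Phi(\hat1,\hat1)=1=\mu_{\overline\Phi}(\hat1,\hat1)$, the defining recurrence gives $\mu_\Phi(B,\hat1)=-\sum_{B<C\le\hat1}\mu_\Phi(C,\hat1)$, where by the crosscut vanishing only $C$ lying in $\overline\Phi$ contribute; since each such $C$ above $B$ already lies in $[B,\hat1]_{\overline\Phi}$ and satisfies $\mu_\Phi(C,\hat1)=\mu_{\overline\Phi}(C,\hat1)$ by induction, the two recurrences coincide. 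Combining the three displays yields the Proposition.

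Two remarks on potential pitfalls. First, the reverse-inclusion convention must be tracked carefully when quoting \cite[Theorem~4.6.1]{OrientedMatroids1993}, since there $\hat0$ is the empty flat whereas here the empty flat is $\hat1=[\O]$. Second, the identification of $\overline\Phi$ with the set of meets of coatoms of $\Phi$ relies on $\overline\Phi$ being a genuine sublattice of $\Phi$ (closed under the ambient meet), which is exactly what the Underlying Oriented Matroid subsection provides. Finally, I note that this Proposition is precisely the case $k=1$, $A_1=\hat1$, $A_2=\hat0$ of \ref{t:CountingFlags}: since $\hat0$ carries the unique covector $\bottom$ (\ref{l:bottom}) and $\bottom$ lies below every covector, chains of support $(\hat1,\hat0)$ are in bijection with topes, so the present computation serves as the base of the induction used to prove that theorem.
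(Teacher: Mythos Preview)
Your proposal is correct and follows essentially the same approach as the paper: reduce to the underlying oriented matroid $\UOM$ via the tope bijection, invoke \cite[Theorem~4.6.1]{OrientedMatroids1993}, then prove the Möbius identity by showing $\overline\Phi$ consists of the meets of coatoms in $\Phi$, applying the Crosscut Theorem to kill the terms outside $\overline\Phi$, and using downward induction for the surviving terms. The only minor difference is that the paper explicitly proves (as a lemma inside this proof) that the embedding $i:\overline\Phi\hookrightarrow\Phi$ preserves meets, whereas you cite the sentence in the Underlying Oriented Matroid subsection asserting $\overline\Phi$ is a sublattice; since that sentence is stated there without proof, you might want to supply the short argument that $i(A\wedge B)=i(A)\wedge i(B)$ rather than rely on it.
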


\begin{proof}
One could adapt the proof for oriented matroids to this setting, thus
reproving the result for oriented matroids, but we prefer to assume the
result if $\OIG$ is an oriented matroid; this is \cite{OrientedMatroids1993}[Theorem 4.6.1].

Recall that the topes of $\OIG$ are the same as those of $\UOM$.  Applying
the proposition to $\UOM$, and writing $\overline\Phi$ for $\Phi(\UOM)$,
we need now only show that:
\begin{equation}\label{SumIsSum}
\sum_{B\in \Phi} |\mu_{\Phi}(B,\hone)| = \sum_{B\in\overline\Phi}
|\mu_{\overline\Phi}(B,\hone)|.
\end{equation}

Consider the order-preserving map $i:\overline \Phi\rightarrow \Phi$ defined
by $i([X])=[X]$, as discussed in \ref{sss:RestrictionofIGs}.  We prove
a few more properties of it here.

\begin{Lemma}\label{l:iProperties}
\begin{enumerate}
\item $i$ is a poset isomorphism onto its image.
\item For $A,B\in \overline\Phi$, we have $i(A\wedge B)=i(A)\wedge i(B)$.  
\end{enumerate}
\end{Lemma}

\begin{proof}
(1) \ref{p:CanonicalSurjectionForFlatsInRestriction} provides a restriction
map from $\Phi$ to $\overline \Phi$ defined by $A|_{\Gamma(\emptyset)}=
\mu|_{\Gamma(\emptyset)}(\xi(A)\cap\Gamma(\emptyset))$, which
is order-preserving.  
Since $i(A)|_{\Gamma(\emptyset)}=A$, we know 
$i$ is a poset isomorphism onto its image.  

(2) Let $A,B\in \overline \Phi$.  
Let $C=i(A)\wedge i(B)$, and let 
$D=i(C|_{\Gamma(\emptyset)})$.  It is immediate that $D\geq C$.  
However, we know $D\leq i(A)$ and $D\leq i(B)$, so
$D=C$.  This implies that $C$ is in the image of $i$, so, by (1),
$C=i(A\wedge B)$.  
\end{proof}

Thanks to \ref{l:iProperties} (1), 
we can identify $\overline \Phi$ as a subposet of $\Phi$.  

Let $x\in E$ such that $\{x\}\in \F$.  Then, by definition, 
$x\in \Gamma(\emptyset)$.  It follows that every coatom of $\Phi$ is in
$\overline \Phi$.  Further, since $\overline\Phi$ is a geometric lattice,
every element of $\overline\Phi$ can be written as a meet 
(in $\overline\Phi$) of coatoms.  Thanks to \ref{l:iProperties} (2), it
follows that $\overline\Phi$ consists exactly of those elements of 
$\Phi$ that can be written as a meet of coatoms in $\Phi$.  

\begin{Lemma} \begin{enumerate} 
\item If $A\in \Phi \setminus \overline\Phi$, then
$\mu_{\Phi}(A,\hone)=0$.
\item If $A\in \overline\Phi$, then $\mu_{\Phi}(A,\hone)=\mu_{\overline\Phi}
(A,\hone)$.
\end{enumerate}
\end{Lemma}

\begin{proof} (1) Since $A\not\in\overline\Phi$, $A$ cannot be expressed
as a meet of coatoms of $\Phi$.  It follows that the meet of the
coatoms of $[A,\hone]$ is strictly greater than $A$.  The Crosscut Theorem
(see \cite{BjornerHandbook}) now implies $\mu_{\Phi}(A,\hone)=0$.  

(2) We induct on the corank of $A$.  The statement is obvious for 
$A=\hone$.  For $A$ of positive corank, 
we use the formula:
$$\mu_{\Phi}(A,\hone)=-\sum_{A<B\in\Phi} \mu_{\Phi}(B,\hone).$$
Now we observe that, by (1), only the terms with $B\in\overline\Phi$
contribute.  By induction, these terms agree with 
$\mu_{\overline\Phi}(B,\hone)$, which proves the result.  
\end{proof}

\ref{SumIsSum} is now obvious, and the proposition follows.  
\end{proof}

\begin{proof}[Proof of \ref{t:CountingFlags}]  
The proof goes exactly as in the oriented matroid case, now that the
preparations have been made. $|\supp^{-1}(A_k)|$ is the number of topes
of $\OIG/A_k$, which is $\sum_{A_k\geq B} |\mu(B,A_k)|$, and then
the rest of the chain lies in $\OIG|_{\xi(A_k)}$, which accounts for the 
remaining terms. 
\end{proof}

\bibliographystyle{amsalpha}
\bibliography{references} 

\newcommand{\etalchar}[1]{$^{#1}$}
\providecommand{\bysame}{\leavevmode\hbox to3em{\hrulefill}\thinspace}
\providecommand{\MR}{\relax\ifhmode\unskip\space\fi MR }
\providecommand{\MRhref}[2]{%
  \href{http://www.ams.org/mathscinet-getitem?mr=#1}{#2}
}
\providecommand{\href}[2]{#2}
\begin{thebibliography}{BLVS{\etalchar{+}}93}

\bibitem[BHP08]{BilleraHsiaoProvan2008}
Louis~J. Billera, Samuel~K. Hsiao, and J.~Scott Provan, \emph{Enumeration in
  convex geometries and associated polytopal subdivisions of spheres}, Discrete
  Comput. Geom. \textbf{39} (2008), no.~1-3, 123--137. \MR{MR2383754
  (2009b:52046)}

\bibitem[Bj{\"o}84]{Bjorner1984}
A.~Bj{\"o}rner, \emph{Posets, regular {CW} complexes and {B}ruhat order},
  European J. Combin. \textbf{5} (1984), no.~1, 7--16. \MR{MR746039
  (86e:06002)}

\bibitem[Bj{\"o}95]{BjornerHandbook}
Anders Bj{\"o}rner, \emph{Topological methods}, Handbook of combinatorics,
  Elsevier, Amsterdam, 1995, pp.~1819--1872. \MR{MR1373690 (96m:52012)}

\bibitem[BLVS{\etalchar{+}}93]{OrientedMatroids1993}
Anders Bj{\"o}rner, Michel Las~Vergnas, Bernd Sturmfels, Neil White, and
  G{\"u}nter~M. Ziegler, \emph{Oriented matroids}, Encyclopedia of Mathematics
  and its Applications, vol.~46, Cambridge University Press, Cambridge, 1993.
  \MR{MR1226888 (95e:52023)}

\bibitem[BW83]{BjornerWachs1983}
Anders Bj{\"o}rner and Michelle Wachs, \emph{On lexicographically shellable
  posets}, Trans. Amer. Math. Soc. \textbf{277} (1983), no.~1, 323--341.
  \MR{MR690055 (84f:06004)}

\bibitem[BW96]{BjornerWachs1996}
Anders Bj{\"o}rner and Michelle~L. Wachs, \emph{Shellable nonpure complexes and
  posets. {I}}, Trans. Amer. Math. Soc. \textbf{348} (1996), no.~4, 1299--1327.
  \MR{MR1333388 (96i:06008)}

\bibitem[BZ92a]{BjornerZiegler1992:b}
Anders Bj{\"o}rner and G{\"u}nter~M. Ziegler, \emph{Combinatorial
  stratification of complex arrangements}, J. Amer. Math. Soc. \textbf{5}
  (1992), no.~1, 105--149. \MR{MR1119198 (92k:52022)}

\bibitem[BZ92b]{BjornerZiegler1992:a}
\bysame, \emph{Introduction to greedoids}, Matroid applications, Encyclopedia
  Math. Appl., vol.~40, Cambridge Univ. Press, Cambridge, 1992, pp.~284--357.
  \MR{MR1165545 (94a:05038)}

\bibitem[FL78]{FolkmanLawrence1978}
Jon Folkman and Jim Lawrence, \emph{Oriented matroids}, J. Combin. Theory Ser.
  B \textbf{25} (1978), no.~2, 199--236. \MR{MR511992 (81g:05045)}

\bibitem[KLS91]{KorteLovaszSchrader1991}
Bernhard Korte, L{\'a}szl{\'o} Lov{\'a}sz, and Rainer Schrader,
  \emph{Greedoids}, Algorithms and Combinatorics, vol.~4, Springer-Verlag,
  Berlin, 1991. \MR{MR1183735 (93f:90003)}

\end{thebibliography}

\end{document}